\documentclass[11pt, a4paper,reqno]{amsart}
\pdfoutput=1
\usepackage{tikz,enumitem,rotating,latexsym,bm,stmaryrd,caption,}
\usetikzlibrary{positioning,intersections,decorations,shadings}
\usetikzlibrary{shapes}
\usetikzlibrary{arrows}
\usetikzlibrary{patterns}
\usepackage{tkz-euclide}
\usetikzlibrary{calc}
\usetikzlibrary{shapes.geometric}
\tikzset{snake it/.style={decorate, decoration=snake}}
\tikzset{zigzag/.style={decorate, decoration=zigzag}}
\usepackage{bbm}
\definecolor{ao(english)}{rgb}{0.0, 0.5, 0.0}
\definecolor{darkgreen}{rgb}{0.0, 0.5, 0.0}
\def\dover#1{\underline{\underline{#1}}}
\newcommand{\uvee}{{\underline{v} }}
\newcommand{\vx}{{\color{violet} \underline{x} }}
\usepackage{comment}
 
\newcommand{\sgnlamu}{{\sf sgn}(\la,\mu)}
\newcommand{\sgnnula}{{\sf sgn}(\nu,\la)}
\newcommand{\sgnalla}{{\sf sgn}(\alpha,\la)} 
\newcommand{\sgnnual}{{\sf sgn}(\nu,\alpha)} 
\newcommand{\sgnalnu}{{\sf sgn}(\alpha,\nu)}
\newcommand{\sgnnumu}{{\sf sgn}(\nu,\mu)}  
\newcommand{\first}{{\sf first}}  
\newcommand{\last}{{\sf last}}

\pgfdeclareverticalshading{rainbow}{100bp}
{color(0bp)=(orange);color(25bp)=(orange);
color(50bp)=(magenta); color(75bp)=(cyan);
 color(100bp)=(cyan)}


\usetikzlibrary{decorations.pathmorphing}

	\definecolor{eng}{rgb}{0.0, 0.5, 0.0}
\definecolor{apple}{rgb}{0.55, 0.71, 0.0}
\definecolor{cadmium}{rgb}{0.0, 0.42, 0.24}
\definecolor{darkspringgreen}{rgb}{0.09, 0.45, 0.27}
\definecolor{amethyst}{rgb}{0.6, 0.4, 0.8}
\definecolor{ao}{rgb}{0.0, 0.0, 1.0}
\definecolor{atomictangerine}{rgb}{1.0, 0.6, 0.4}
\definecolor{carmine}{rgb}{0.59, 0.0, 0.09}

 \newcommand{\blor}{
 \begin{tikzpicture}
\draw[orange,->](0,0) to(0.2,0);
\draw[blue,->](0,0) to(-0.2,0);
  \end{tikzpicture}}
 \newcommand{\orbl}{
 \begin{tikzpicture}
\draw[blue,->](0,0) to(0.2,0);
\draw[orange,->](0,0) to(-0.2,0);
  \end{tikzpicture}}
\newcommand{\blbl}{
 \begin{tikzpicture}
\draw[blue,->](0,0) to(0.2,0);
\draw[blue,->](0,0) to(-0.2,0);
  \end{tikzpicture}}
\newcommand{\oror}{
 \begin{tikzpicture}
\draw[orange,->](0,0) to(0.2,0);
\draw[orange,->](0,0) to(-0.2,0);
  \end{tikzpicture}}

\definecolor{toggle}{rgb}{1.0, 0.94, 0.96}

\usepackage[normalem]{ulem}
 \newcommand{\plus}{ }
 \newcommand{\pDelta}{\Delta}
 \newcommand{\dom}{{\sf dom}}
  \newcommand{\cod}{{\sf cod}}
  \newcommand{\on}{{\sf 1}}

 \newcommand{\gsigma}{{{{\color{gray}\bm\sigma}}}}

 \newcommand{\brown}{{{{\color{brown}\bm\pi}}}}
 \newcommand{\violet}{{{{\color{violet}\bm\rho}}}}
  \newcommand{\purple}{{{{\color{violet}\bm\rho}}}}
 \newcommand{\green}{{{{\color{green!80!black}\bm\beta}}}}
 \newcommand{\orange}{{{{\color{orange}\bm\gamma}}}}
 \newcommand{\pink}{{{{\color{magenta}\bm\alpha}}}}
 \newcommand{\blue}{{{{\color{cyan}\bm\tau}}}}
 
 \newcommand{\colMAP}{\imath}
 \newcommand{\MAP}{\jmath }
 \newcommand{\TRNC}{\varphi}
 \newcommand{\defect}{{\sf def}}
 \newcommand{\grey}{{{{\color{gray}\bm\alpha}}}}

 \newcommand{\two}{{{{\color{orange}2}}}}
 \newcommand{\zero}{{{{\color{magenta}0}}}}
 \newcommand{\one}{{{{\color{cyan}1}}}}
 \newcommand{\trid}{{\sf trid}}

 \newcommand{\cpi}{{{{\color{violet}\bm\pi}}}}
 \newcommand{\csigma}{{{{\color{magenta}\bm\sigma}}}}
 \newcommand{\ctau}{{{{\color{cyan}\bm\tau}}}}
\newcommand{\csigmaw}{{{{\color{magenta}\bm\sigma}}}}
\newcommand{\ctauw}{{{{\color{cyan}\bm\tau}}}}

\newcommand{\ct}{{\sf ct}}

 \newcommand{\al}{\alpha}
 \newcommand{\crho}{{{{ \color{green!80!black}\bm\rho}}}}
   
\newcommand{\exx}{{b_\al }}
\newcommand{\deltax}{{{\color{magenta}\delta_\alpha}}}
 
\newcommand{\eps}{ \varepsilon}

\newcommand{\gap}{{\sf gap}}
\newcommand{\fork}{{\sf fork}}
\newcommand{\braid}{{\sf braid}}
\newcommand{\spot}{{\sf spot}}

\newcommand{\isit}{{i}}
\newcommand{\isitone}{\al(i+1)}
\newcommand{\Shl}{\widehat{S}_{h\ell}}
\newcommand{\w}{{\underline{w}}}
\newcommand{\vvv}{{\underline{v} }}
\newcommand{\uuu}{{\underline{u} }}
\newcommand{\y}{{\underline{y}}}
\newcommand{\x}{{\underline{x}}}

\newcommand{\dil}{{\sf dil}_\ctau }

\setcounter{tocdepth}{1}

\newcommand{\unvvv}{{{z}}}
\newcommand{\grade}{q}
\newcommand{\dgrm}{\mathscr{D}}
\newcommand{\dgrmdeg}{\mathscr{D}^{\rm deg}}
\newcommand{\dgrmf}{\mathscr{D}_{\rm f}}
\newcommand{\dgrmBS}{\mathscr{D}_{\rm BS}}
\newcommand{\dgrmBSdeg}{\mathscr{D}_{\rm BS}^{\rm deg}}
\newcommand{\dgrmBSdegsum}{\mathscr{D}_{\rm BS}^{{\rm deg},\oplus}}
\newcommand{\dgrmBSF}{\mathscr{D}_{\rm BS}^F}
\newcommand{\dgrmF}{\mathscr{D}^F}
\newcommand{\dgrmBSsumshift}{\mathscr{D}_{\rm BS}^{\oplus,(-)}}
\newcommand{\dgrmstd}{\mathscr{D}_{\rm std}}
\newcommand{\dgrmBSstd}{\mathscr{D}_{{\rm BS},{\rm std}}}
\newcommand{\dgrmBSpast}{\mathscr{D}_{{\rm BS},p|\ast}}
\newcommand{\dgrmBSpastsumshift}{\mathscr{D}_{{\rm BS},p|\ast}^{\oplus,\langle - \rangle}}
\newcommand{\dgrmpast}{\mathscr{D}_{p|\ast}}
\newcommand{\eee}{{\sf e}}

\newcommand{\Wf}{W_{\rm f}}
\newcommand{\W}{W}
\newcommand{\Wp}{\W_p}
\newcommand{\Ssf}{S_{{\rm f}}}
\newcommand{\Ss}{S}
\newcommand{\Ssp}{\Ss_p}
\newcommand{\Sspexpr}{\expr{\Ss}_p}
\newcommand{\Sspone}{\Ss_{p \cup 1}}
\newcommand{\sh}{s_{\rm h}}
\newcommand{\linkexpr}{\expr{\Ss}_{p|1}}
\newcommand{\Wpcosets}{\prescript{p}{}{\W}}
\newcommand{\Alc}{\text{\bf Alc}}
 \newcommand{\Pdiptwo}{{\sf M}_{i,i+2}}
 \newcommand{\Pdipj}{{{\sf M}_{i,j}}}

  \newcommand{\level}{{\sf level}}

 \newcommand{\vstwo}{{{{\color{violet}\bm \sigma_2}}}}
 \newcommand{\vsi}{{{{\color{violet}\bm \sigma_i}}}}
  \newcommand{\vsione}{{{{\color{violet}\bm \sigma_{i+1}}}}}
  \newcommand{\vsone}{{{{\color{violet}\bm \sigma_1}}}}
    \newcommand{\vsk}{{{{\color{violet!70!white}\bm \sigma_k}}}}
      \newcommand{\vsj}{{{{\color{violet}\bm \sigma_j}}}}

 \newcommand{\nodelabel}{0}

\def\down{\vee}
\def\up{\wedge}

\usepackage{standalone}

\usepackage{etex}
 
\tikzset{
  variable line width/.style={
    every variable line width/.append style={#1},
    to path={%
      \pgfextra{%
        \draw[every variable line width/.try,line width=\pgfkeysvalueof{/tikz/thickness}] (\tikztostart) -- (\tikztotarget);
      }%
      (\tikztotarget)
    },
  },
  thickness/.initial=0.6pt,
  every variable line width/.style={line cap=round, line join=round},
}

\usepackage{todonotes}
\newcommand{\warning}[1]{\todo[color=red!75]{#1}}

\usepackage{tikz}
\usetikzlibrary{matrix,  intersections, calc, decorations.pathreplacing} 

\usepackage{tikz-cd}

\newlength{\superthick}
\newlength{\cornerradius}
\setlength{\superthick}{2.4pt}
\setlength{\cornerradius}{5pt}
\tikzstyle{corner}=[rounded corners=\cornerradius]
\tikzstyle{dot}=[circle, inner sep=0pt, minimum size=4.8pt]
\tikzstyle{string}=[line width=\superthick]
\tikzstyle{std}=[string,dash pattern=on 0.9pt off 0.9pt]
\definecolor{realcyan}{rgb}{0,1,1}

 \captionsetup{width=0.8\linewidth}
 \usepackage{amsmath,amsthm,amsfonts,amssymb,mathrsfs,pb-diagram}
\usepackage[
bookmarks=true,colorlinks=true,linktoc=page,citecolor=green!80!black,linkcolor=green!80!black,urlcolor=green!80!black]{hyperref}
\usepackage{caption}
\usepackage{lipsum,wasysym}
\usepackage{mathtools}
\usepackage[a4paper,margin=2.3cm]{geometry}
\usepackage{cleveref}
 \usepackage{amsmath}
\mathchardef\mhyphen="2D
\usepackage{color}
\usepackage{xcolor}
\usepackage{ifthen}
\usepackage{sidecap}   
\definecolor{mediumblue}{rgb}{0.0, 0.0, 0.8}

\synctex=1
\newcommand{\Res}{{\rm Res}}
\newcommand{\Rem}{{\rm Rem}}
\newcommand{\Add}{{\rm Add}}
\newcommand{\Ind}{{\rm Ind}}
\newcommand{\hstar}{\mathfrak{h}^*}
\newcommand{\mptn}{{\mathscr{P}_{m,n}}}
\newcommand{\restr}{{\mathscr{R}_{m,n}}}
\newcommand{\mtau}{{\mathscr{P}^\ctau_{(W,P)}}}
\newcommand{\mptnmax}{{\mathscr{P}_n^\circ}}
\newcommand{\mptnl}{{\mathscr{P}^\ell_{(W,P)}}}
\newcommand{\mptntau}{{\mathscr{P}_{{m,n}}^\ctau}}

\renewcommand{\geq}{\geqslant}
\renewcommand{\leq}{\leqslant}
 \newcommand{\Q}{{\mathbb Q}}

\tikzset{wei/.style= 
{red,double=red,double
distance=0.5pt}}

\newcommand{\fA}{\mathfrak{A}}
\newcommand{\fB}{\mathfrak{B}}
\newcommand{\fC}{\mathfrak{C}}
\newcommand{\fD}{\mathfrak{D}}
\newcommand{\fE}{\mathfrak{E}}
\newcommand{\fF}{\mathfrak{F}}
\newcommand{\fG}{\mathfrak{G}}
\newcommand{\fH}{\mathfrak{H}}
\newcommand{\fI}{\mathfrak{I}}
\newcommand{\fJ}{\mathfrak{J}}
\newcommand{\fK}{\mathfrak{K}}
\newcommand{\fL}{\mathfrak{L}}
\newcommand{\fM}{\mathfrak{M}}
\newcommand{\fN}{\mathfrak{N}}
\newcommand{\fO}{\mathfrak{O}}
\newcommand{\fP}{\mathfrak{P}}
\newcommand{\fQ}{\mathfrak{Q}}
\newcommand{\fR}{\mathfrak{R}}
\newcommand{\fS}{S}
\newcommand{\fT}{\mathfrak{T}}
\newcommand{\fU}{\mathfrak{U}}
\newcommand{\fV}{\mathfrak{V}}
\newcommand{\fW}{\mathfrak{W}}
\newcommand{\fX}{\mathfrak{X}}
\newcommand{\fY}{\mathfrak{Y}}
\newcommand{\fZ}{\mathfrak{Z}}
\newcommand{\fa}{\mathfrak{a}}
\newcommand{\fb}{\mathfrak{b}}
\newcommand{\fc}{\mathfrak{c}}
\newcommand{\fd}{\mathfrak{d}}
\newcommand{\fe}{\mathfrak{e}}
\newcommand{\ff}{\mathfrak{f}}
\newcommand{\ffg}{\mathfrak{g}}
\newcommand{\fh}{\mathfrak{h}}
\newcommand{\ffi}{\mathfrak{i}}
\newcommand{\fj}{\mathfrak{j}}
\newcommand{\fk}{\mathfrak{k}}
\newcommand{\fl}{\mathfrak{l}}
\newcommand{\fm}{\mathfrak{m}}
\newcommand{\fn}{\mathfrak{n}}
\newcommand{\fo}{\mathfrak{o}}
\newcommand{\fp}{\mathfrak{p}}
\newcommand{\fq}{\mathfrak{q}}
\newcommand{\fr}{\mathfrak{r}}
\newcommand{\fs}{\mathfrak{s}}
\newcommand{\ft}{\mathfrak{t}}
\newcommand{\fu}{\mathfrak{u}}
\newcommand{\fv}{\mathfrak{v}}
\newcommand{\fw}{\mathfrak{w}}
\newcommand{\fx}{\mathfrak{x}}
\newcommand{\fy}{\mathfrak{y}}
\newcommand{\fz}{\mathfrak{z}}

\newcommand{\sA}{\mathscr{A}}
\newcommand{\sB}{\mathscr{B}}
\newcommand{\sC}{\mathscr{C}}
\newcommand{\sD}{\mathscr{D}}
\newcommand{\sE}{\mathscr{E}}
\newcommand{\sF}{\mathscr{F}}
\newcommand{\sG}{\mathscr{G}}
\newcommand{\sH}{\mathscr{H}}
\newcommand{\sI}{\mathscr{I}}
\newcommand{\sJ}{\mathscr{J}}
\newcommand{\sK}{K}
\newcommand{\sL}{\mathscr{L}}
\newcommand{\sM}{\mathscr{M}}
\newcommand{\sN}{\mathscr{N}}
\newcommand{\sO}{\mathscr{O}}
\newcommand{\sP}{\mathscr{P}}
\newcommand{\sQ}{\mathscr{Q}}
\newcommand{\sR}{\mathscr{R}}
\newcommand{\sS}{\mathscr{S}}
\newcommand{\sT}{\mathscr{T}}
\newcommand{\sU}{\mathscr{U}}
\newcommand{\sV}{\mathscr{V}}
\newcommand{\sW}{\mathscr{W}}
\newcommand{\sX}{\mathscr{X}}
\newcommand{\sY}{\mathscr{Y}}
\newcommand{\sZ}{\mathscr{Z}}

\newcommand{\hd}{\operatorname{hd}}
\newcommand{\cA}{ \mathscr{A}}
\newcommand{\cB}{\mathcal{B}}
\newcommand{\cC}{\mathcal{C}}
\newcommand{\cD}{\mathscr{D}}
\newcommand{\cE}{{\rm M}}
\newcommand{\cF}{f}
\newcommand{\cG}{\mathcal{G}}
\newcommand{\cH}{K_{\infty }  }
\newcommand{\cI}{\mathcal{I}}
\newcommand{\cJ}{\mathcal{J}}
\newcommand{\cK}{K}
\newcommand{\cL}{\mathcal{L}}
\newcommand{\cLR}{\mathcal{LR}}
\newcommand{\cM}{\mathcal{M}}
\newcommand{\cN}{\mathcal{N}}
\newcommand{\cO}{\mathcal{O}}
\newcommand{\cP}{\mathcal{P}}
\newcommand{\cQ}{\mathcal{Q}}
\newcommand{\cR}{\mathcal{R}}
\newcommand{\cS}{\mathcal{S}}
\newcommand{\cT}{\mathcal{T}}
\newcommand{\cU}{\mathcal{U}}
\newcommand{\cV}{\mathcal{V}}
\newcommand{\cW}{\mathcal{W}}
\newcommand{\cX}{\mathcal{X}}
\newcommand{\cY}{\mathcal{Y}}
\newcommand{\cZ}{\mathcal{Z}}

\newcommand{\Z}{\mathbb{Z}}
\newcommand{\R}{\mathbb{R}}
\newcommand{\N}{\mathbb{N}}
\newcommand{\C}{\mathbb{C}}

\DeclareMathOperator{\reg}{reg}
\DeclareMathOperator{\image}{im}

 \newcommand{\alphar}{{{\color{magenta}\boldsymbol \alpha}}}
 \newcommand{\bet}{{{\color{cyan}\boldsymbol \tau}}}
  \newcommand{\gam}{{{{\color{orange!95!black}\boldsymbol\gamma}}}}

 \newcommand{\betar}{{{\color{green!70!black}\boldsymbol \beta}}}

\tikzset{wei2/.style={red,double=red,double
distance=0.5pt}}

\allowdisplaybreaks
\numberwithin{equation}{section}
\parskip=2pt
\usepackage{scalefnt}

\newtheorem{thm}{Theorem}[section]
\newtheorem{cor}[thm]{Corollary}
 \newtheorem{algorithm}{Algorithm}[section]

\newtheorem{conj}[thm]{Conjecture}
\newtheorem{lem}[thm]{Lemma}
\newtheorem{exl}[thm]{Example}
\newtheorem{prop}[thm]{Proposition}

\newtheorem*{prop*}{Proposition}
\newtheorem*{thmA}{Theorem A}

\newtheorem*{thmB}{Theorem B}

\newtheorem*{cor*}{Corollary}

\newtheorem*{conj*}{Conjecture D}

\newtheorem*{conj1*}{Conjecture B}
\newtheorem*{Acknowledgements*}{Acknowledgements}

\theoremstyle{rmk}

\theoremstyle{defn}
\newtheorem{rmk}[thm]{Remark}
\newtheorem{defn}[thm]{Definition}
\newtheorem{eg}[thm]{Example}

\newcommand{\great}{>}
\newcommand{\less}{<}
\newcommand{\greatoreq}{\geq}
\newcommand{\lessoreq}{\leq}
\newcommand{\codeg}{\mathrm{codeg}}
\newcommand{\triv}{\mathrm{triv}}
\newcommand{\id}{\mathrm{id}}

\newcommand{\TL}{\mathrm{TL}}
\newcommand{\rad}{\mathrm{rad}}
\newcommand{\res}{\mathrm{res}}
\newcommand{\ik}{{k}}
\newcommand{\Std}{{\rm Std}}
\newcommand{\SStd}{{\rm DStd}}
\renewcommand{\det}{{\rm det}}
\newcommand{\epsilonLIRONdontchange}{\epsilon}
\newcommand{\TSStd}{\operatorname{\mathcal{T}}}
\newcommand{\Shape}{\operatorname{Shape}} 
\newcommand{\Path}{{\rm Path}}
\newcommand{\CPath}{{\rm Path}_{\underline{w}}}
\newcommand{\la}{\lambda}
\newcommand{\I}{i}
\newcommand{\J}{j}
\newcommand{\K}{k}
\newcommand{\M}{m}
\renewcommand{\L}{l}
\def\Ca{\mathcal C}
\newcommand{\Lead}{\operatorname{Lead}}

\newcommand{\mcompose}{ {\; \color{magenta}\circledast \; } }
\newcommand{\ocompose}{ {\; \color{orange}\circledast \; } }
\newcommand{\gcompose}{ {\; \color{green!80!black}\circledast \; } }
\newcommand{\greycompose}{ {\; \color{gray}\circledast \; } }
\newcommand{\compose}{ {\; \color{cyan}\circledast \; } }
\newcommand{\pcompose}{ {\; \color{violet}\circledast \; } }

\newcommand{\SSTS}{\mathsf{S}}
\newcommand{\tSSTT}{\overline{\mathsf{T}}} 
\newcommand{\tla}{\overline{\x}}
\newcommand{\tmu}{\overline{\y }}
\newcommand{\SSTT}{\mathsf{T}}  
\newcommand{\SSTP}{\mathsf{P}}  
\newcommand{\SSTU}{\mathsf{U}}  
\newcommand{\SSTV}{\mathsf{V}}  
\newcommand{\SSTQ}{\mathsf{Q}}  
\newcommand{\SSTR}{\mathsf{R}}  
\newcommand{\SSTX}{\mathsf{X}} 
\newcommand{\SSTY}{\mathsf{Y}} 
\newcommand{\sts}{\mathsf{s}}  
\newcommand{\stt}{\mathsf{t}}  
\newcommand{\stu}{\mathsf{u}}  
\newcommand{\stv}{\mathsf{v}}  
\newcommand{\stw}{\mathsf{w}}  
\newcommand{\stx}{\mathsf{x}}  
\newcommand{\sty}{\mathsf{y}}  
\newcommand{\stq}{\mathsf{q}}  
\newcommand{\str}{\mathsf{r}}  

\newcommand{\ZZ}{{\mathbb Z}}
\newcommand{\NN}{{\mathbb N}}
\newcommand{\g}{\ell}
 
\newcommand{\CC}{{\mathbb{C}}}
\newcommand{\RR}{{\mathbb L}}
\newcommand{\Hyp}{{\mathbb E}_{\alpha,me}}
\newcommand{\length}{{t}}
\DeclareMathOperator\noedge{\:\rlap{\hspace*{0.25em}/}\text{---}\:}
\DeclareMathOperator{\Hom}{Hom}
\def\Mod{\textbf{-Mod}}
\let\<=\langle
\let\>=\rangle

\newcommand\Dec[1][A]{\mathbf{D}_{#1}(t)}
\newcommand\Cart[1][A]{\mathbf{C}_{#1}(t)}

\newcommand{\north}{top }
\newcommand{\northT}{{\sf T}}
\newcommand{\south}{bottom } 
\newcommand{\southT}{{\sf B}}

\newcommand\mydots{\makebox[1em][c]{\color{cyan}.\hfil\color{magenta}.\hfil\color{cyan}.\hfil\color{magenta}.}}
\tikzset{
ultra thin/.style= {line width=0.05pt},
very thin/.style=  {line width=0.2pt},
thin/.style=       {line width=0.1pt},
semithick/.style=  {line width=0.6pt},
thick/.style=      {line width=0.8pt},
very thick/.style= {line width=1.2pt},
ultra thick/.style={line width=1.6pt}
}

\crefname{ques}{Question}{Questions}
\crefname{defn}{Definition}{Definitions}
\crefname{thm}{Theorem}{Theorems}
\crefname{prop}{Proposition}{Propositions}
\crefname{lem}{Lemma}{Lemmas}
\crefname{cor}{Corollary}{Corollaries}
\crefname{conj}{Conjecture}{Conjectures}
\crefname{section}{Section}{Sections}
\crefname{subsection}{Subsection}{Subsections}
\crefname{eg}{Example}{Examples}
\crefname{figure}{Figure}{Figures}
\crefname{rem}{Remark}{Remarks}
\crefname{rmk}{Remark}{Remarks}
\crefname{equation}{equation}{equation}

\Crefname{ques}{Question}{Questions}
\Crefname{defn}{Definition}{Definitions}
\Crefname{thm}{Theorem}{Theorems}
\Crefname{prop}{Proposition}{Propositions}
\Crefname{lem}{Lemma}{Lemmas}
\Crefname{cor}{Corollary}{Corollaries}
\Crefname{conj}{Conjecture}{Conjectures}
\Crefname{section}{Section}{Sections}
\Crefname{subsection}{Subsection}{Subsections}
\Crefname{eg}{Example}{Examples}
\Crefname{figure}{Figure}{Figures}
\Crefname{rem}{Remark}{Remarks}
\Crefname{rmk}{Remark}{Remarks}

  \newcommand{\Mull}{{\rm M}}
    
\usepackage[hang,flushmargin]{footmisc}
 
\newcommand\Dim[2][t]{\text{\rm Dim}_{#1}#2}

\newcommand\REMOVETHESE[2]{{{{\mathsf{M}}_{#1}^{#2}	}}}
\newcommand\ADDTHIS[2]{{{{\mathsf{P}}_{#1}^{#2}}}}

 \newcommand{\Spotzero}{\SSTS_{0,\al}}
  \newcommand{\Spotone}{\SSTS_{1,\al}}
   \newcommand{\Spottwo}{\SSTS_{2,\al}}
    \newcommand{\Spotthree}{\SSTS_{3,\al}}
 \newcommand{\Spotq}{\SSTS_{q,\al}}
  \newcommand{\Spotb}{\SSTS_{\exx,\al}}
  \newcommand{\Spotqplus}{\SSTS_{q+1,\al}}

 \newcommand{\Forkq}{{\sf F}_{q, \al}}
  \newcommand{\Forkqplus}{{\sf F}_{q+1, \al}}
 \newcommand{\Forkone}{{\sf F}_{1, \al}}
 \newcommand{\Forkzero}{{\sf F}_{0, \al}} 
  \newcommand{\Forktwo}{{\sf F}_{2, \al}}
    \newcommand{\Forkthree}{{\sf F}_{3, \al}}
  \newcommand{\Forkb}{{\sf F}_{\exx, \al}}

 \newcommand{\TForkq}{{\sf F}_{q,\al }}
  \newcommand{\TForkqplus}{{\sf F}_{q+1,\al}}
 \newcommand{\TForkone}{{\sf F}_{1,\al }}
 \newcommand{\TForkzero}{{\sf F}_{0,\al }} 
  \newcommand{\TForktwo}{{\sf F}_{2,\al }}

\renewcommand{\labelitemi}{$\circ $}

\hyphenation{tab-le-aux}

\def\Item{\item\abovedisplayskip=0pt\abovedisplayshortskip=5pt~\vspace*{-\baselineskip}} 

\parskip=2pt
\begin{document}

 \title [Quiver presentations and Schur--Weyl duality 
 for  Khovanov arc algebras ]{
Quiver presentations and Schur--Weyl duality
\\
 for  Khovanov arc algebras 
   }

 \author{C. Bowman}
       \address{Department of Mathematics, 
University of York, Heslington, York,  UK}
\email{chris.bowman-scargill@york.ac.uk}
  
 \author{M. De Visscher}
	\address{Department of Mathematics, City, University of London,   London, UK}
\email{maud.devisscher.1@city.ac.uk}

 \author{A. Dell'Arciprete}
       \address{Department of Mathematics, 
University of York, Heslington, York,  UK}
\email{alice.dellarciprete@york.ac.uk}

		\author{A.  Hazi}
	 
     \address{School of Mathematics, University of Leeds, Leeds, LS2 9JT}
\email{a.hazi@leeds.ac.uk}

		\author{R. Muth}
	 
     \address{Department of Mathematics and Computer Science,
Duquesne University,
Pittsburgh, PA 15282}
\email{muthr@duq.edu}

		\author{ C. Stroppel}
 \address{ Mathematical Institute, Endenicher Allee 60, 53115 Bonn}
 \email{stroppel@math.uni-bonn.de}
 
 \maketitle

 \vspace{-0.2cm}

 \begin{abstract}
We provide an ${\rm Ext}$-quiver and relations presentation of the Khovanov arc algebras and prove a precise analogue of the Kleshchev--Martin conjecture in this setting. 
 \end{abstract}

 \section{Introduction}

 The  Khovanov arc algebras,  $H^{m}_{ n}$, were first  introduced  by Khovanov (in the case $m=n$) in his  pioneering   construction of homological knot invariants for  tangles \cite{MR1740682,MR1928174}.   These homological  knot invariants have  subsequently been developed by Rasmussen and put to use in   Piccirillo's proof that the Conway knot is not slice \cite{MR2729272,MR4076631}. 
  The Khovanov arc algebras and their quasi-hereditary covers,  $K ^{m}_{ n}$,  
  have   been studied  from the point of view of 
their cohomological and representation theoretic structure  \cite{MR2600694,MR2781018,MR2955190,MR2881300,BarWang},
 symplectic geometry \cite{MR4422212}, 
  and  they  provide the  
exciting possibility of constructing  algebraic invariants suitable for Crane--Frenkel's  approach to the smooth 4-dimensional Poincar\'e  conjecture \cite{Manolescu}.

     In previous work, we gave a quadratic presentation of the 
     algebras $K ^{m}_{ n}$ as   $\ZZ$-algebras  
     in terms of Dyck path combinatorics, which upon base change to any field $\Bbbk$ 
     specialised to be the 
      ${\rm Ext}$-quiver and relations for the  $\Bbbk$-algebra $K ^{m}_{ n}$ \cite{compan4}; 
    we hence completely determined the representation  theoretic structure 
       of $K ^{m}_{ n}$
      in an entirely  characteristic-free manner.  
  In \cite{BDDHMS}  we further proved that  over any field the algebra 
$K ^{m}_{ n}$     is an $(|n-m|-1)$-faithful cover of $H^{m}_{ n}$ (in the sense of Rouquier \cite{rouquier}) thus establishing a strong cohomological 
connection between these algebras. 
In this paper, we 
 push  the 
quadratic  presentation of  $K ^{m}_{ n}$ 
   through this Schur--Weyl duality to provide 
  complete  ${\rm Ext}$-quiver and relations presentations of  $H^{m}_{ n}$, hence   completely determining the   representation  theoretic structure  of the Khovanov arc algebras. 
We will see that the
 structure of the algebras $H^{m}_{ n}$ is not as  rigid as that of their quasi-hereditary covers: the Khovanov arc algebras are cubic and  
 their structure does depend, to some extent, on the characteristic, $p$, of the underlying  field $\Bbbk$.  We assume, without loss of generality, that $m\leq n$.

 \begin{thmA}\label{thma}
The ${\rm Ext}$-quiver of $H^{m}_{ n}$ has vertices labelled by the 
 partitions $\la$ in an $(m\times n)$-rectangle such that $(m,m-1, \dots, 2,1)\subseteq \la$.  
 The edges are labelled by addable and removable Dyck paths:
  for $m=n$ and $p\neq 2$ there are $m$ additional loops at  $\la=(m^a, (m-a)^{m-a}) $ for $1\leq a \leq m$; 
 for $m=n$ and $p=2$ every vertex has an additional loop. 
The $\Bbbk$-algebra     $H^{m}_{ n}$ is the quotient of  the path algebra of its  ${\rm Ext}$-quiver modulo relations \eqref{rel1} to  \eqref{loop-relation}.
 \end{thmA}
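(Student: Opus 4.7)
The plan is to transport the quadratic Ext-quiver presentation of $K^m_n$ from \cite{compan4} across the Schur functor
\[
f = e \cdot (-) \colon K^m_n\text{-mod} \longrightarrow H^m_n\text{-mod}
\]
attached to an idempotent $e \in K^m_n$ with $eK^m_n e \cong H^m_n$, exploiting the $(|n-m|-1)$-faithful cover property from \cite{BDDHMS}. The first step is to identify the vertex set of the Ext-quiver of $H^m_n$: the simples $L(\lambda)$ of $K^m_n$ satisfy $fL(\lambda)\neq 0$ exactly when $eL(\lambda)\neq 0$, and a direct combinatorial check of the idempotent $e$ against the Dyck path basis of $K^m_n$ shows that this occurs precisely when $\lambda$ contains the staircase $(m,m-1,\ldots,1)$.

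For arrows and relations in the non-square case $m<n$, the faithfulness of the cover at homological degrees at most $|n-m|-1$ transports $\mathrm{Ext}^1$-computations between surviving simples. The addable and removable Dyck paths labelling arrows between such simples in the quiver of $K^m_n$ therefore remain arrows in the quiver of $H^m_n$, while each arrow incident to a killed vertex leaves a trace as a cubic relation on the surviving side. Each quadratic relation of $K^m_n$ either pushes directly through $f$ or, when it factors through a killed vertex, is rewritten by expressing the killed arrow as a composition of surviving arrows via another quadratic relation; this systematically produces the cubic families \eqref{rel1}--\eqref{loop-relation}. Completeness follows by matching the graded dimensions of $H^m_n$ against those of the presented path-algebra quotient, using the known basis of $H^m_n$ as checked on generators via the cover.

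The principal obstacle is the square case $m=n$, where $|n-m|-1=-1$ and the cover is not faithful at any degree. Here $f$ can both create new self-extensions and impose additional relations, and these must be computed directly from minimal projective resolutions. The $m$ loops at the partitions $\lambda=(m^a,(m-a)^{m-a})$ should arise as follows: at each such $\lambda$ there is a length-two filtration of a $K^m_n$-module in which a killed simple $L(\mu)$ is sandwiched between two copies of the surviving simple $L(\lambda)$, so that applying $f$ collapses this into a non-split self-extension of $fL(\lambda)$ in $H^m_n$. In characteristic $p=2$, a sign cancellation in the Dyck path calculus of \cite{compan4} causes an analogous length-two filtration to appear at every vertex, so a loop appears everywhere. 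The loop relation \eqref{loop-relation} would then be verified by direct multiplication in $H^m_n$, confirming that these new loops multiply into the span of the already-transported cubic relations.
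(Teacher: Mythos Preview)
Your proposal has genuine gaps at the two places where the real work lies.

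First, the faithfulness result from \cite{BDDHMS} that you invoke gives isomorphisms $\mathrm{Ext}^i_{K^m_n}(M,N)\cong\mathrm{Ext}^i_{H^m_n}(eM,eN)$ only for \emph{standard-filtered} modules $M,N$, not for simples. Simple $K^m_n$-modules are not in general standard-filtered, so you cannot transport $\mathrm{Ext}^1$ between simples this way. The paper instead uses that $H^m_n$ is basic and positively graded with generators in degrees $0,1,2$; then $\mathrm{Ext}^1$ between distinct simples is read off directly from the degree-$1$ piece of the indecomposable projectives (\cref{asbefore}), with no appeal to faithfulness.

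Second, and more seriously, your sentence ``Completeness follows by matching the graded dimensions'' hides essentially the entire content of the theorem. Knowing that certain relations hold in $H^m_n$ is easy; knowing that they \emph{present} $H^m_n$ requires controlling the dimension of the abstract algebra $\mathcal{A}_{m,n}$ defined by those relations. The paper does this via a long spanning-set argument (\cref{the spanning set} and the many implied relations in Section~7.1), showing that $\mathcal{A}_{m,n}$ is spanned by elements indexed by the same combinatorial data as the cellular basis of $H^m_n$; only then does the surjection $\mathcal{A}_{m,n}\to H^m_n$ force an isomorphism. Your proposal offers no mechanism to bound $\dim\mathcal{A}_{m,n}$ from above.

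Finally, your account of the $m=n$ loops is off. The paper does not find the loops via ``length-two filtrations with a killed simple sandwiched between two copies of $L(\lambda)$''; rather, having already established the presentation, it asks whether each degree-$2$ loop generator $\mathbb{L}^\lambda_\lambda$ can be rewritten as a $\Bbbk$-linear combination of products $\mathbb{D}^\lambda_\mu\mathbb{D}^\mu_\lambda$ of degree-$1$ generators. This reduces to inverting a specific $3\times3$ matrix (\cref{kjsahgljkdsfhgjlkdfshgjfdshgljkdfshg}) whose determinant is $-2$, which explains precisely why the answer differs between $p=2$ and $p\neq 2$, and why for $p\neq 2$ only the partitions $(m^a,(m-a)^{m-a})$ retain loops (these are exactly the $\lambda$ with too few height-zero removable Dyck paths for the linear-algebra trick to apply).
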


  Examples of these ${\rm Ext}$-quivers are given in \cref{pic1,pic2,pic3}.

Schur--Weyl duality is a more general 
phenomenon  that  relates  Lie theoretic objects (such as categories $\mathcal{O}$ of Lie algebras, or (quantum) reductive algebraic  groups)   
 to interesting  finite dimensional  algebras (such as the group algebras of symmetric groups, Iwahori--Hecke algebras, and the Brauer and walled Brauer algebras).   
Representation theorists  have long endeavoured to pass cohomological   information back-and-forth by way of these dualities.
A benchmark for our understanding (or lack thereof) for this
back-and-forth process 
has been provided by the famous Kleshchev--Martin conjecture: this states that 
 simple modules for the symmetric group  do not admit self-extensions (providing that the Schur algebra is at least a 0-faithful cover). 
 We prove that the exact analogue of this statement holds in our context.

 \begin{thmB}
Let $\Bbbk$ be a field. 
The 
 ${\rm Ext}$-quiver  of $H^{m}_{ n}$  is loop-free
  if and only if 
 $K ^{m}_{ n}$ is an $i$-faithful   quasi-hereditary cover  for some $i\geq 0$ if and only if $m\neq n$.
  \end{thmB}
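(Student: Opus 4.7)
The three conditions in the theorem will be shown equivalent by verifying that both the loop-freeness of the Ext-quiver of $H^{m}_{n}$ and the existence of $i \geq 0$ with $K^{m}_{n}$ an $i$-faithful cover are individually equivalent to $m \neq n$.

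The first equivalence is read off directly from Theorem A. Every edge labelled by an addable or removable Dyck path joins two distinct partitions, so any loop in the Ext-quiver of $H^{m}_{n}$ must arise from the additional loops enumerated in Theorem A. These occur precisely when $m=n$ (at $m$ specific rectangular partitions if $p \neq 2$, and at every vertex if $p=2$), so loop-freeness is equivalent to $m \neq n$.

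The direction $m \neq n \Rightarrow K^{m}_{n}$ is $i$-faithful for some $i \geq 0$ follows immediately from the main theorem of \cite{BDDHMS}, taking $i = n-m-1 \geq 0$. For the converse --- the Kleshchev--Martin analogue --- I would argue by contraposition: assume $m=n$ and show $K^{m}_{n}$ cannot be $0$-faithful. By the Ext-quiver presentation of $K^{m}_{n}$ established in \cite{compan4}, every edge of the Ext-quiver of $K^{m}_{n}$ is labelled by a Dyck path and thus connects distinct vertices, so ${\rm Ext}^{1}_{K^{m}_{n}}(L_{K}(\lambda),L_{K}(\lambda))=0$ for every simple of $K^{m}_{n}$. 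On the other hand, Theorem A at $m=n$ produces a simple $L_{H}(\lambda)$ of $H^{m}_{n}$ with ${\rm Ext}^{1}_{H^{m}_{n}}(L_{H}(\lambda),L_{H}(\lambda))\neq 0$. Transferring the former vanishing across a hypothetical $0$-faithful cover forces a contradiction with the latter non-vanishing.

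\textbf{Main obstacle.} The principal difficulty is the final transfer step: Rouquier's $0$-faithfulness directly controls only Hom-spaces between $\Delta$-filtered modules, whereas one needs to control ${\rm Ext}^{1}$ between simple modules. The natural strategy is to lift a hypothetical self-extension $0 \to L_{H}(\lambda) \to E \to L_{H}(\lambda) \to 0$ through the left adjoint of the cover functor applied to a projective presentation, then exploit $0$-faithfulness along the standard filtration of the projective cover of $L_{K}(\lambda)$ in $K^{m}_{n}$-mod to force the lifted sequence to split, contradicting non-splitness of $E$. Executing this lift cleanly --- including identifying the correct simple of $K^{m}_{n}$ that maps to $L_{H}(\lambda)$ under the cover functor, and reducing the ${\rm Ext}^{1}$-question to a Hom-comparison of $\Delta$-filtered objects --- is where the technical work lies.
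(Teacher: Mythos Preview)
Your arguments for the equivalences $(1)\Leftrightarrow(3)$ and $(3)\Rightarrow(2)$ are correct and coincide with the paper's.

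For the remaining implication $m=n\Rightarrow$ not $0$-faithful, the obstacle you flag is real, and the paper simply sidesteps it rather than resolving it. Instead of attempting to transfer ${\rm Ext}^1$-information about simples through a $0$-faithful cover, the paper produces a direct contradiction at the level of Hom between standard modules themselves (which is exactly what $0$-faithfulness controls). Concretely, in $K^m_m$ the standard modules $\Delta_{m,m}(m^m)$ and $\Delta_{m,m}(\varnothing)$ are non-isomorphic (the first is one-dimensional, the second is not), so for instance $\Hom_{K^m_m}(\Delta_{m,m}(\varnothing),\Delta_{m,m}(m^m))=0$ by quasi-heredity. But on the $H^m_m$ side one checks that $S_{m,m}(m^m)\cong D_{m,m}(m^m)\cong S_{m,m}(\varnothing)$: the first isomorphism holds because $(m^m)$ is maximal, and the second because ${\sf reg}(\varnothing)=(m^m)$ and the Specht module $S_{m,m}(\varnothing)$ has only this one regular composition factor. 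Hence $\Hom_{H^m_m}(S_{m,m}(\varnothing),S_{m,m}(m^m))\neq 0$, and $0$-faithfulness fails immediately. This argument is both shorter and avoids the delicate lifting you were contemplating; your route via self-extensions of simples would require genuinely more than $0$-faithfulness provides.
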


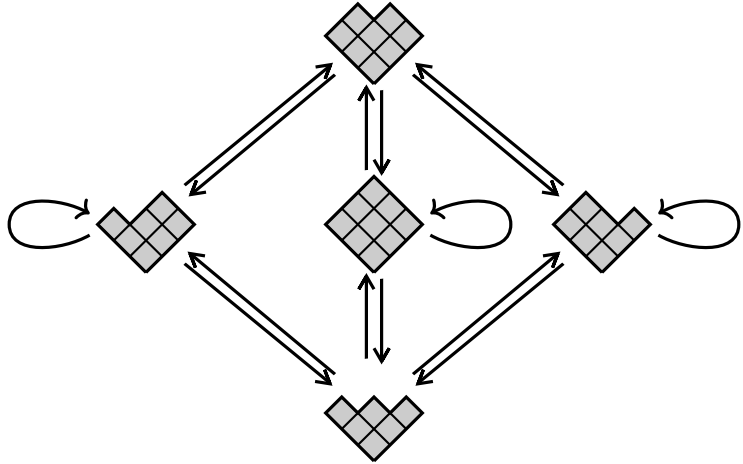
\begin{figure}[ht!]

 $$\begin{tikzpicture}
 \clip(-5,0.5) rectangle (5,-5.65);

 \path(0,0) coordinate (top)--++(180:0.1)  coordinate (topL)--++(0:0.2)  coordinate (topR);
  \path(0,-5) coordinate (bottom)--++(180:0.1)  coordinate (bottomL)--++(0:0.2)  coordinate (bottomR);;

  \path(0,-2.5) coordinate (middle) --++(180:0.1)  coordinate (middleL)--++(0:0.2)  coordinate (middleR);;  
    \path(3,-2.5) coordinate (right) --++(90:0.1)  coordinate (rightT)--++(-90:0.2)  coordinate (rightB);;  
    \path(-3,-2.5) coordinate (left)--++(90:0.1)  coordinate (leftT)--++(-90:0.2)  coordinate (leftB);;  ;

 \draw[very thick]  (topL)--(leftT)  (topR)--(leftB);

 \draw[very thick]  (topL)--(rightB)  (topR)--(rightT);

 \draw[very thick]  (bottomL)--(leftB)  (bottomR)--(leftT);

 \draw[very thick]  (bottomL)--(rightT)  (bottomR)--(rightB);

          \draw[very thick]  (topL)--(bottomL)  (topR)--(bottomR);

 \draw(0,0) node 
 {
$ \begin{tikzpicture}[scale=0.3]
 
 \path  (0,0) coordinate (Y);
 \path  (Y)--++(135:1)--++(45:2)--++(135:0.5)--++(-135:0.5)coordinate (X);
\fill [white] (X) circle(69pt);
\begin{scope}
 \draw[very thick,fill=gray!40] 
 (Y)--++(135:3)--++(45:2)
--++(-45:1) --++(45:1) 
 --++(-45:2) --++(-135:3); 
  \clip (Y)--++(135:3)--++(45:2)
--++(-45:1) --++(45:1) 
 --++(-45:2) --++(-135:3); 

  	\foreach \i in {0,1,2,3,4,5,6,7,8,9,10,11,12}
		{
			\path (Y)--++(45:1*\i) coordinate (c\i); 
			\path (Y)--++(135:1*\i)  coordinate (d\i); 
			\draw[thick, ] (c\i)--++(135:14);
			\draw[thick, ] (d\i)--++(45:14);
		}
	 	\end{scope}

 \end{tikzpicture}$
 };

 \draw(0,-2.5) node 
 {
$ \begin{tikzpicture}[scale=0.3]
 
 \path  (0,0) coordinate (Y);
 \path  (Y)--++(135:1)--++(45:2)--++(135:0.5)--++(-135:0.5)coordinate (X);
\fill [white] (X) circle(69pt);
\begin{scope}
 \draw[very thick,fill=gray!40] (Y)--++(135:3)--++(45:3)--++(-45:3) --++(-135:3)--(0,0); 
  \clip  (Y)--++(135:3)--++(45:3)--++(-45:3) --++(-135:3)--(0,0);  	
  	\foreach \i in {0,1,2,3,4,5,6,7,8,9,10,11,12}
		{
			\path (Y)--++(45:1*\i) coordinate (c\i); 
			\path (Y)--++(135:1*\i)  coordinate (d\i); 
			\draw[thick, ] (c\i)--++(135:14);
			\draw[thick, ] (d\i)--++(45:14);
		}
	 	\end{scope}

 \end{tikzpicture}$
 };

 \draw(3,-2.5) node 
 {
$ \begin{tikzpicture}[scale=0.3]
 
 \path  (0,0) coordinate (Y);
 \path  (Y)--++(135:1)--++(45:2)--++(135:0.5)--++(-135:0.5)coordinate (X);
\fill [white] (X) circle(69pt);
\begin{scope}
 \draw[very thick,fill=gray!40] 
 (Y)--++(135:3)--++(45:2)
--++(-45:2) --++(45:1) 
 --++(-45:1) --++(-135:3); 
  \clip(Y)--++(135:3)--++(45:2)
--++(-45:2) --++(45:1) 
 --++(-45:1) --++(-135:3); 

  	\foreach \i in {0,1,2,3,4,5,6,7,8,9,10,11,12}
		{
			\path (Y)--++(45:1*\i) coordinate (c\i); 
			\path (Y)--++(135:1*\i)  coordinate (d\i); 
			\draw[thick, ] (c\i)--++(135:14);
			\draw[thick, ] (d\i)--++(45:14);
		}
	 	\end{scope}

 \end{tikzpicture}$
 };

 \draw(-3,-2.5) node 
 {
$ \begin{tikzpicture}[scale=0.3,xscale=-1]
 
 \path  (0,0) coordinate (Y);
 \path  (Y)--++(135:1)--++(45:2)--++(135:0.5)--++(-135:0.5)coordinate (X);
\fill [white] (X) circle(69pt);
\begin{scope}
 \draw[very thick,fill=gray!40] 
 (Y)--++(135:3)--++(45:2)
--++(-45:2) --++(45:1) 
 --++(-45:1) --++(-135:3); 
  \clip(Y)--++(135:3)--++(45:2)
--++(-45:2) --++(45:1) 
 --++(-45:1) --++(-135:3); 

  	\foreach \i in {0,1,2,3,4,5,6,7,8,9,10,11,12}
		{
			\path (Y)--++(45:1*\i) coordinate (c\i); 
			\path (Y)--++(135:1*\i)  coordinate (d\i); 
			\draw[thick, ] (c\i)--++(135:14);
			\draw[thick, ] (d\i)--++(45:14);
		}
	 	\end{scope}

 \end{tikzpicture}$
 };

 \draw(0,-5) node 
 {
$ \begin{tikzpicture}[scale=0.3]
 
 \path  (0,-5) coordinate (Y);
 \path  (Y)--++(135:1)--++(45:2)--++(135:0.5)--++(-135:0.5)coordinate (X);
\fill [white] (X) circle(69pt);
\begin{scope}
 \draw[very thick,fill=gray!40] 
 (Y)--++(135:3) --++(45:1) 
 --++(-45:1)
 --++(45:1)  --++(-45:1)
 --++(45:1) 
 --++(-45:1) --++(-135:3); 
  \clip (Y)--++(135:3) --++(45:1) 
 --++(-45:1)
 --++(45:1)  --++(-45:1)
 --++(45:1) 
 --++(-45:1) --++(-135:3);

  	\foreach \i in {0,1,2,3,4,5,6,7,8,9,10,11,12}
		{
			\path (Y)--++(45:1*\i) coordinate (c\i); 
			\path (Y)--++(135:1*\i)  coordinate (d\i); 
			\draw[thick, ] (c\i)--++(135:14);
			\draw[thick, ] (d\i)--++(45:14);
		}
	 	\end{scope}

 \end{tikzpicture}$
 };

\path         (-3.6,-2.5)  --++(-135:0.2) coordinate (X) ;
\path         (-3.6,-2.5)  --++(135:0.2) coordinate (Y) ;
\path         (-3.6,-2.5)  --++(180:1.2) coordinate (Z) ;

\draw[very thick,->]        (X) to [out=-150,in=-90] (Z) to[out=90,in=150] (Y);

\path         (3.6,-2.5)  --++(-45:0.2) coordinate (X) ;
\path         (3.6,-2.5)  --++(45:0.2) coordinate (Y) ;
\path         (3.6,-2.5)  --++(0:1.2) coordinate (Z) ;

\draw[very thick,->]        (X) to [out=-30,in=-90] (Z) to[out=90,in=30] (Y);

\path         (0.6,-2.5)  --++(-45:0.2) coordinate (X) ;
\path         (0.6,-2.5)  --++(45:0.2) coordinate (Y) ;
\path         (0.6,-2.5)  --++(0:1.2) coordinate (Z) ;

\draw[very thick,->]        (X) to [out=-30,in=-90] (Z) to[out=90,in=30] (Y);

          \path (topL)--++(-141:17.25pt) coordinate (here);
         \draw[very thick] (here)  --++(-141+30:0.2) --++(-141+30+180:0.2) --++(-141-30:0.2); 
     \path (leftB)--++(180-140:15pt+7pt) coordinate (here);
         \draw[very thick] (here)  --++(180-141+30:0.2) --++(180-141+30+180:0.2) --++(180-141-30:0.2);

          \path (topR)--++(180+141:17.25pt) coordinate (here);
         \draw[very thick] (here)  --++(180+141-30:0.2) --++(180+141-30+180:0.2) --++(180+141+30:0.2); 
     \path (rightB)--++(+140:15pt+7pt) coordinate (here);
         \draw[very thick] (here)  --++(+141-30:0.2) --++(+141-30+180:0.2) --++(+141+30:0.2);


          \path (rightT)--++(-140:21.5pt) coordinate (here);
         \draw[very thick] (here)  --++(-141+30:0.2) --++(-141+30+180:0.2) --++(-141-30:0.2); 
     \path (bottomR)--++(180-141:17.5pt) coordinate (here);
         \draw[very thick] (here)  --++(180-141+30:0.2) --++(180-141+30+180:0.2) --++(180-141-30:0.2);

          \path (leftT)--++(180+140:21.5pt) coordinate (here);
         \draw[very thick] (here)  --++(180+141-30:0.2) --++(180+141-30+180:0.2) --++(180+141+30:0.2); 
     \path (bottomL)--++(+141:17.5pt) coordinate (here);
         \draw[very thick] (here)  --++(+141-30:0.2) --++(+141-30+180:0.2) --++(+141+30:0.2);

            \path (topL)--++(-90:19.5pt) coordinate (here);
         \draw[very thick] (here)  --++(180+90-30:0.2) --++(180+90-30+180:0.2) --++(180+90+30:0.2);  
            \path (middleR)--++(90:19.5pt) coordinate (here);
          \draw[very thick] (here)  --++(90-30:0.2) --++(90-30+180:0.2) --++(90+30:0.2);

          \path (middleL)--++(-90:19.5pt) coordinate (here);
         \draw[very thick] (here)  --++(180+90-30:0.2) --++(180+90-30+180:0.2) --++(180+90+30:0.2);  
            \path (bottomR)--++(90:19.5pt) coordinate (here);
          \draw[very thick] (here)  --++(90-30:0.2) --++(90-30+180:0.2) --++(90+30:0.2);  
 
\end{tikzpicture}
$$

\caption{The ${\rm Ext}$-quiver of $  {H}^3_3$ for $p\neq 2$.}
\label{pic1}
\end{figure}

  \begin{figure}[ht!]
   $$\begin{tikzpicture}

 \path(0,0) coordinate (top)--++(180:0.1)  coordinate (topL)--++(0:0.2)  coordinate (topR);
  \path(0,-5) coordinate (bottom)--++(180:0.1)  coordinate (bottomL)--++(0:0.2)  coordinate (bottomR);;

  \path(0,-2.5) coordinate (middle) --++(180:0.1)  coordinate (middleL)--++(0:0.2)  coordinate (middleR);;  
    \path(3,-2.5) coordinate (right) --++(90:0.1)  coordinate (rightT)--++(-90:0.2)  coordinate (rightB);;  
    \path(-3,-2.5) coordinate (left)--++(90:0.1)  coordinate (leftT)--++(-90:0.2)  coordinate (leftB);;  ;    
   
 \draw[very thick]  (topL)--(leftT)  (topR)--(leftB);

 \draw[very thick]  (topL)--(rightB)  (topR)--(rightT);

 \draw[very thick]  (bottomL)--(leftB)  (bottomR)--(leftT);

 \draw[very thick]  (bottomL)--(rightT)  (bottomR)--(rightB);

          \draw[very thick]  (topL)--(bottomL)  (topR)--(bottomR);

 \draw(0,0) node 
 {
$ \begin{tikzpicture}[scale=0.3]
 
 \path  (0,0) coordinate (Y);
 \path  (Y)--++(135:1)--++(45:2)--++(135:0.5)--++(-135:0.5)coordinate (X);
\fill [white] (X) circle(69pt);
\begin{scope}
 \draw[very thick,fill=gray!40] 
 (Y)--++(135:3)--++(45:2)
--++(-45:1) --++(45:1) 
 --++(-45:2) --++(-135:3); 
  \clip (Y)--++(135:3)--++(45:2)
--++(-45:1) --++(45:1) 
 --++(-45:2) --++(-135:3); 

  	\foreach \i in {0,1,2,3,4,5,6,7,8,9,10,11,12}
		{
			\path (Y)--++(45:1*\i) coordinate (c\i); 
			\path (Y)--++(135:1*\i)  coordinate (d\i); 
			\draw[thick, ] (c\i)--++(135:14);
			\draw[thick, ] (d\i)--++(45:14);
		}
	 	\end{scope}

 \end{tikzpicture}$
 };

 \draw(0,-2.5) node 
 {
$ \begin{tikzpicture}[scale=0.3]
 
 \path  (0,0) coordinate (Y);
 \path  (Y)--++(135:1)--++(45:2)--++(135:0.5)--++(-135:0.5)coordinate (X);
\fill [white] (X) circle(69pt);
\begin{scope}
 \draw[very thick,fill=gray!40] (Y)--++(135:3)--++(45:3)--++(-45:3) --++(-135:3)--(0,0); 
  \clip  (Y)--++(135:3)--++(45:3)--++(-45:3) --++(-135:3)--(0,0);  	
  	\foreach \i in {0,1,2,3,4,5,6,7,8,9,10,11,12}
		{
			\path (Y)--++(45:1*\i) coordinate (c\i); 
			\path (Y)--++(135:1*\i)  coordinate (d\i); 
			\draw[thick, ] (c\i)--++(135:14);
			\draw[thick, ] (d\i)--++(45:14);
		}
	 	\end{scope}

 \end{tikzpicture}$
 };

 \draw(3,-2.5) node 
 {
$ \begin{tikzpicture}[scale=0.3]
 
 \path  (0,0) coordinate (Y);
 \path  (Y)--++(135:1)--++(45:2)--++(135:0.5)--++(-135:0.5)coordinate (X);
\fill [white] (X) circle(69pt);
\begin{scope}
 \draw[very thick,fill=gray!40] 
 (Y)--++(135:3)--++(45:2)
--++(-45:2) --++(45:1) 
 --++(-45:1) --++(-135:3); 
  \clip(Y)--++(135:3)--++(45:2)
--++(-45:2) --++(45:1) 
 --++(-45:1) --++(-135:3); 

  	\foreach \i in {0,1,2,3,4,5,6,7,8,9,10,11,12}
		{
			\path (Y)--++(45:1*\i) coordinate (c\i); 
			\path (Y)--++(135:1*\i)  coordinate (d\i); 
			\draw[thick, ] (c\i)--++(135:14);
			\draw[thick, ] (d\i)--++(45:14);
		}
	 	\end{scope}

 \end{tikzpicture}$
 };

 \draw(-3,-2.5) node 
 {
$ \begin{tikzpicture}[scale=0.3,xscale=-1]
 
 \path  (0,0) coordinate (Y);
 \path  (Y)--++(135:1)--++(45:2)--++(135:0.5)--++(-135:0.5)coordinate (X);
\fill [white] (X) circle(69pt);
\begin{scope}
 \draw[very thick,fill=gray!40] 
 (Y)--++(135:3)--++(45:2)
--++(-45:2) --++(45:1) 
 --++(-45:1) --++(-135:3); 
  \clip(Y)--++(135:3)--++(45:2)
--++(-45:2) --++(45:1) 
 --++(-45:1) --++(-135:3); 

  	\foreach \i in {0,1,2,3,4,5,6,7,8,9,10,11,12}
		{
			\path (Y)--++(45:1*\i) coordinate (c\i); 
			\path (Y)--++(135:1*\i)  coordinate (d\i); 
			\draw[thick, ] (c\i)--++(135:14);
			\draw[thick, ] (d\i)--++(45:14);
		}
	 	\end{scope}

 \end{tikzpicture}$
 };

 \draw(0,-5) node 
 {
$ \begin{tikzpicture}[scale=0.3]
 
 \path  (0,-5) coordinate (Y);
 \path  (Y)--++(135:1)--++(45:2)--++(135:0.5)--++(-135:0.5)coordinate (X);
\fill [white] (X) circle(69pt);
\begin{scope}
 \draw[very thick,fill=gray!40] 
 (Y)--++(135:3) --++(45:1) 
 --++(-45:1)
 --++(45:1)  --++(-45:1)
 --++(45:1) 
 --++(-45:1) --++(-135:3); 
  \clip (Y)--++(135:3) --++(45:1) 
 --++(-45:1)
 --++(45:1)  --++(-45:1)
 --++(45:1) 
 --++(-45:1) --++(-135:3);

  	\foreach \i in {0,1,2,3,4,5,6,7,8,9,10,11,12}
		{
			\path (Y)--++(45:1*\i) coordinate (c\i); 
			\path (Y)--++(135:1*\i)  coordinate (d\i); 
			\draw[thick, ] (c\i)--++(135:14);
			\draw[thick, ] (d\i)--++(45:14);
		}
	 	\end{scope}

 \end{tikzpicture}$
 };

\path         (-3.6,-2.5)  --++(-135:0.2) coordinate (X) ;
\path         (-3.6,-2.5)  --++(135:0.2) coordinate (Y) ;
\path         (-3.6,-2.5)  --++(180:1.2) coordinate (Z) ;

\draw[very thick,->]        (X) to [out=-150,in=-90] (Z) to[out=90,in=150] (Y);

\path         (3.6,-2.5)  --++(-45:0.2) coordinate (X) ;
\path         (3.6,-2.5)  --++(45:0.2) coordinate (Y) ;
\path         (3.6,-2.5)  --++(0:1.2) coordinate (Z) ;

\draw[very thick,->]        (X) to [out=-30,in=-90] (Z) to[out=90,in=30] (Y);

\path         (0.6,-2.5)  --++(-45:0.2) coordinate (X) ;
\path         (0.6,-2.5)  --++(45:0.2) coordinate (Y) ;
\path         (0.6,-2.5)  --++(0:1.2) coordinate (Z) ;

\draw[very thick,->]        (X) to [out=-30,in=-90] (Z) to[out=90,in=30] (Y);

          \path (topL)--++(-141:17.25pt) coordinate (here);
         \draw[very thick] (here)  --++(-141+30:0.2) --++(-141+30+180:0.2) --++(-141-30:0.2); 
     \path (leftB)--++(180-140:15pt+7pt) coordinate (here);
         \draw[very thick] (here)  --++(180-141+30:0.2) --++(180-141+30+180:0.2) --++(180-141-30:0.2);

          \path (topR)--++(180+141:17.25pt) coordinate (here);
         \draw[very thick] (here)  --++(180+141-30:0.2) --++(180+141-30+180:0.2) --++(180+141+30:0.2); 
     \path (rightB)--++(+140:15pt+7pt) coordinate (here);
         \draw[very thick] (here)  --++(+141-30:0.2) --++(+141-30+180:0.2) --++(+141+30:0.2);


          \path (rightT)--++(-140:21.5pt) coordinate (here);
         \draw[very thick] (here)  --++(-141+30:0.2) --++(-141+30+180:0.2) --++(-141-30:0.2); 
     \path (bottomR)--++(180-141:17.5pt) coordinate (here);
         \draw[very thick] (here)  --++(180-141+30:0.2) --++(180-141+30+180:0.2) --++(180-141-30:0.2);

          \path (leftT)--++(180+140:21.5pt) coordinate (here);
         \draw[very thick] (here)  --++(180+141-30:0.2) --++(180+141-30+180:0.2) --++(180+141+30:0.2); 
     \path (bottomL)--++(+141:17.5pt) coordinate (here);
         \draw[very thick] (here)  --++(+141-30:0.2) --++(+141-30+180:0.2) --++(+141+30:0.2);

            \path (topL)--++(-90:19.5pt) coordinate (here);
         \draw[very thick] (here)  --++(180+90-30:0.2) --++(180+90-30+180:0.2) --++(180+90+30:0.2);  
            \path (middleR)--++(90:19.5pt) coordinate (here);
          \draw[very thick] (here)  --++(90-30:0.2) --++(90-30+180:0.2) --++(90+30:0.2);

          \path (middleL)--++(-90:19.5pt) coordinate (here);
         \draw[very thick] (here)  --++(180+90-30:0.2) --++(180+90-30+180:0.2) --++(180+90+30:0.2);  
            \path (bottomR)--++(90:19.5pt) coordinate (here);
          \draw[very thick] (here)  --++(90-30:0.2) --++(90-30+180:0.2) --++(90+30:0.2);

\path         (0,0.6)  --++(-135-90:0.2) coordinate (X) ;
\path         (0,0.6)  --++(135-90:0.2) coordinate (Y) ;
\path         (0,0.6)  --++(180-90:1.2) coordinate (Z) ;

\draw[very thick,->]        (X) to [out=-150-90,in=-90-90] (Z) to[out=90-90,in=150-90] (Y);

\path         (0,-5-0.6)  --++(-135-90-180:0.2) coordinate (X) ;
\path         (0,-5-0.6)  --++(135-90-180:0.2) coordinate (Y) ;
\path         (0,-5-0.6)  --++(180-90-180:1.2) coordinate (Z) ;

\draw[very thick,->]        (X) to [out=-150-90-180,in=-90-90-180] (Z) to[out=90-90-180,in=150-90-180] (Y);

\end{tikzpicture}
$$
\caption{The ${\rm Ext}$-quiver of $H^3_3$ for $p=2$.}
\label{pic2}
\end{figure}

\begin{figure}[ht!]

 $$\begin{tikzpicture}

 \clip(-4,0.35) rectangle (4,-5.55);

%
%
 
 \path(0,0) coordinate (top)--++(180:0.1)  coordinate (topL)--++(0:0.2)  coordinate (topR);
  \path(0,-5) coordinate (bottom)--++(180:0.1)  coordinate (bottomL)--++(0:0.2)  coordinate (bottomR);;

  \path(0,-2.5) coordinate (middle) --++(180:0.1)  coordinate (middleL)--++(0:0.2)  coordinate (middleR);;  
    \path(3,-2.5) coordinate (right) --++(90:0.1)  coordinate (rightT)--++(-90:0.2)  coordinate (rightB);;  
    \path(-3,-2.5) coordinate (left)--++(90:0.1)  coordinate (leftT)--++(-90:0.2)  coordinate (leftB);;  ;

%
%
%
%
%
%
  
 \draw[very thick]  (topL)--(leftT)  (topR)--(leftB);

 \draw[very thick]  (topL)--(rightB)  (topR)--(rightT);

 \draw[very thick]  (bottomL)--(leftB)  (bottomR)--(leftT);

 \draw[very thick]  (bottomL)--(rightT)  (bottomR)--(rightB);

          \draw[very thick]  (topL)--(bottomL)  (topR)--(bottomR);

 \draw(0,0) node 
 {
$ \begin{tikzpicture}[scale=0.3]
 
 \path  (0,0) coordinate (Y);
  \path  (Y)--++(135:0.5)--++(45:2)--++(135:0.5)--++(-135:0.5)coordinate (X);
\fill [white] (X) circle(69pt);
\begin{scope}
 \draw[very thick,fill=gray!40] 
 (Y)--++(135:2)--++(45:2)
--++(-45:1) --++(45:1) 
 --++(-45:1) --++(-135:3);   \clip (Y)--++(135:2)--++(45:2)
--++(-45:1) --++(45:1) 
 --++(-45:1) --++(-135:3); 
  	\foreach \i in {0,1,2,3,4,5,6,7,8,9,10,11,12}
		{
			\path (Y)--++(45:1*\i) coordinate (c\i); 
			\path (Y)--++(135:1*\i)  coordinate (d\i); 
			\draw[thick, ] (c\i)--++(135:14);
			\draw[thick, ] (d\i)--++(45:14);
		}
	 	\end{scope}

 \end{tikzpicture}$
 };

 \draw(0,-2.5) node 
 {
$ \begin{tikzpicture}[scale=0.3]
 
 \path  (0,0) coordinate (Y);
 \path  (Y)--++(135:0.5)--++(45:2)--++(135:0.5)--++(-135:0.5)coordinate (X);
\fill [white] (X) circle(69pt);
\begin{scope}
 \draw[very thick,fill=gray!40] (Y)--++(135:2)--++(45:3)--++(-45:2) --++(-135:3)--(0,0); 
  \clip (Y)--++(135:2)--++(45:3)--++(-45:2) --++(-135:3)--(0,0); 
  	\foreach \i in {0,1,2,3,4,5,6,7,8,9,10,11,12}
		{
			\path (Y)--++(45:1*\i) coordinate (c\i); 
			\path (Y)--++(135:1*\i)  coordinate (d\i); 
			\draw[thick, ] (c\i)--++(135:14);
			\draw[thick, ] (d\i)--++(45:14);
		}
	 	\end{scope}

 \end{tikzpicture}$
 };

 \draw(3,-2.5) node 
 {
$ \begin{tikzpicture}[scale=0.3]
 
 \path  (0,0) coordinate (Y);
  \path  (Y)--++(135:0.5)--++(45:2)--++(135:0.5)--++(-135:0.5)coordinate (X);
\fill [white] (X) circle(69pt);
\begin{scope}
 \draw[very thick,fill=gray!40] 
 (Y)--++(135:2)--++(45:2)
--++(-45:1) --++(45:1) 
 --++(-45:1) --++(-135:3); 
  \clip(Y)--++(135:2)--++(45:2)
--++(-45:1) --++(45:1) 
 --++(-45:1) --++(-135:3);

  	\foreach \i in {0,1,2,3,4,5,6,7,8,9,10,11,12}
		{
			\path (Y)--++(45:1*\i) coordinate (c\i); 
			\path (Y)--++(135:1*\i)  coordinate (d\i); 
			\draw[thick, ] (c\i)--++(135:14);
			\draw[thick, ] (d\i)--++(45:14);
		}
	 	\end{scope}

 \end{tikzpicture}$
 };

 \draw(-3,-2.5) node 
 {
$ \begin{tikzpicture}[scale=0.3,xscale=-1]
 
 \path  (0,0) coordinate (Y);
  \path  (Y)--++(135:0.5)--++(45:2)--++(135:0.5)--++(-135:0.5)coordinate (X);
\fill [white] (X) circle(69pt);
\begin{scope}
 \draw[very thick,fill=gray!40] 
 (Y)--++(135:3)--++(45:1)
--++(-45:2) --++(45:1) 
 --++(-45:1) --++(-135:2); 
  \clip (Y)--++(135:3)--++(45:1)
--++(-45:2) --++(45:1) 
 --++(-45:1) --++(-135:2);

  	\foreach \i in {0,1,2,3,4,5,6,7,8,9,10,11,12}
		{
			\path (Y)--++(45:1*\i) coordinate (c\i); 
			\path (Y)--++(135:1*\i)  coordinate (d\i); 
			\draw[thick, ] (c\i)--++(135:14);
			\draw[thick, ] (d\i)--++(45:14);
		}
	 	\end{scope}

 \end{tikzpicture}$
 };

 \draw(0,-5) node 
 {
$ \begin{tikzpicture}[scale=0.3]
 
 \path  (0,-5) coordinate (Y);
  \path  (Y)--++(135:0.5)--++(45:2)--++(135:0.5)--++(-135:0.5)coordinate (X);
\fill [white] (X) circle(69pt);
\begin{scope}
 \draw[very thick,fill=gray!40] 
 (Y)--++(135:2) --++(45:1) 
 --++(-45:1)
 --++(45:1) 
 --++(-45:1) --++(-135:2); 
  \clip(Y)--++(135:2) --++(45:1) 
 --++(-45:1)
 --++(45:1) 
 --++(-45:1) --++(-135:2);

  	\foreach \i in {0,1,2,3,4,5,6,7,8,9,10,11,12}
		{
			\path (Y)--++(45:1*\i) coordinate (c\i); 
			\path (Y)--++(135:1*\i)  coordinate (d\i); 
			\draw[thick, ] (c\i)--++(135:14);
			\draw[thick, ] (d\i)--++(45:14);
		}
	 	\end{scope}

 \end{tikzpicture}$
 };

          \path (topL)--++(-141:17.25pt) coordinate (here);
         \draw[very thick] (here)  --++(-141+30:0.2) --++(-141+30+180:0.2) --++(-141-30:0.2); 
     \path (leftB)--++(180-140:15pt+7pt) coordinate (here);
         \draw[very thick] (here)  --++(180-141+30:0.2) --++(180-141+30+180:0.2) --++(180-141-30:0.2);

          \path (topR)--++(180+141:17.25pt) coordinate (here);
         \draw[very thick] (here)  --++(180+141-30:0.2) --++(180+141-30+180:0.2) --++(180+141+30:0.2); 
     \path (rightB)--++(+140:15pt+7pt) coordinate (here);
         \draw[very thick] (here)  --++(+141-30:0.2) --++(+141-30+180:0.2) --++(+141+30:0.2);


          \path (rightT)--++(-140:21.5pt) coordinate (here);
         \draw[very thick] (here)  --++(-141+30:0.2) --++(-141+30+180:0.2) --++(-141-30:0.2); 
     \path (bottomR)--++(180-141:17.5pt) coordinate (here);
         \draw[very thick] (here)  --++(180-141+30:0.2) --++(180-141+30+180:0.2) --++(180-141-30:0.2);

          \path (leftT)--++(180+140:21.5pt) coordinate (here);
         \draw[very thick] (here)  --++(180+141-30:0.2) --++(180+141-30+180:0.2) --++(180+141+30:0.2); 
     \path (bottomL)--++(+141:17.5pt) coordinate (here);
         \draw[very thick] (here)  --++(+141-30:0.2) --++(+141-30+180:0.2) --++(+141+30:0.2);

            \path (topL)--++(-90:19.5pt) coordinate (here);
         \draw[very thick] (here)  --++(180+90-30:0.2) --++(180+90-30+180:0.2) --++(180+90+30:0.2);  
            \path (middleR)--++(90:19.5pt) coordinate (here);
          \draw[very thick] (here)  --++(90-30:0.2) --++(90-30+180:0.2) --++(90+30:0.2);

          \path (middleL)--++(-90:19.5pt) coordinate (here);
         \draw[very thick] (here)  --++(180+90-30:0.2) --++(180+90-30+180:0.2) --++(180+90+30:0.2);  
            \path (bottomR)--++(90:19.5pt) coordinate (here);
          \draw[very thick] (here)  --++(90-30:0.2) --++(90-30+180:0.2) --++(90+30:0.2);

\end{tikzpicture}
$$

\caption{The ${\rm Ext}$-quiver of $H^2_3$ for all $p\geq2$.}
\label{pic3}
\end{figure}
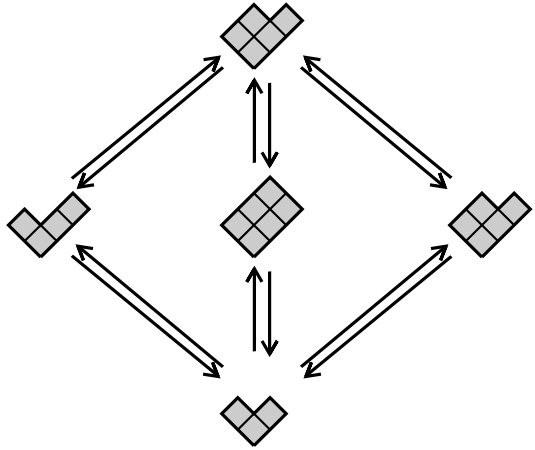

The paper is structured as follows. Section 2 recalls the combinatorics of partitions and Dyck paths necessary for the statements of our results. 
Sections  3 and 4 develops this Dyck  combinatorics further, providing the language needed to discuss the simple heads of cell modules of 
Khovanov arc algebras.  
Section 5 recalls the definition of the Khovanov arc algebras, their quasi-hereditary covers  {\em  the extended Khovanov arc algebras}  and 
discusses their cellular structures; here we prove that every cell  module has a simple head and deduce the  full submodule structures of these cell modules. 
In Section 6 we define an abstract $\ZZ$-algebra via Dyck combinatorial generators and relations and prove that this algebra projects surjectively onto the Khovanov arc algebra.
  In Section 7 we show that this map is an isomorphism of algebras,
therefore  the  arc algebras inherit these 
  Dyck combinatorial presentations.    
  Finally, in Section 8 we make the few tweaks necessary to refine these presentations into ${\rm Ext}$-quiver and relations presentations over a field $\Bbbk$; as our algebras are basic, this simply requires that we find a minimal set of generators. 
In Section 8 we also  recouch the Kleshchev--Martin conjecture in our language and propose  a vast generalisation of this conjecture to all anti-spherical Hecke categories.

\begin{Acknowledgements*}
 The first and third authors were funded by  EPSRC grant 
 EP/V00090X/1.  
 
 \end{Acknowledgements*}

  \section{Partitions, cup and cap diagrams, and $p$-Kazhdan-Lusztig polynomials}

     We begin by reviewing and unifying the combinatorics of Khovanov arc algebras 
 \cite{MR2600694,MR2781018,MR2955190,MR2881300} and the Hecke categories of interest in this paper \cite{compan,compan2}.
     
 
Let $S_n$ denote the symmetric group of degree $n$. Throughout this paper, we will work with the parabolic Coxeter system $(W,P) = (S_{m+n}, S_m \times S_n)$. 
For the entire   paper we assume, without loss of generality,   that $m\leq n$.  
We label the simple reflections with the slightly unusual subscripts $s_i, \, -m+1 \leq i \leq n-1$ so that $P = \langle s_i \, | \, i\neq 0\rangle \leq W$. We view $W$ as the group of permutations of the $n+m$ points on a horizontal strip numbered by the half integers $i\pm \tfrac{1}{2}$ where the simple reflection $s_i$ swaps the points $i-\tfrac{1}{2}$ and $i+\tfrac{1}{2}$ and fixes every other point. 
The right cosets of $P$ in $W$ can then be identified by labelled horizontal strips called {\sf weights}, where each point $i\pm \tfrac{1}{2}$ is labelled by either $\up$ or $\down$ in such a way that the total number of $\up$ is equal to $m$ (and so the total number of $\down$ is equal to $n$). Specifically, the trivial coset $P$ is represented by the weight with negative points labelled by  $\up$  and positive points labelled by $\down$. The other cosets are obtained by permuting the labels of the identity weight. An example is given in  \ref{Figweightpartition}. 
For more details on this combinatorics, see     \cite[Section 2]{compan4}.

  \begin{figure}[ht!]
  $$ \scalefont{0.7}
 \begin{tikzpicture} [scale=0.82]

\path (0,0) coordinate (origin2);

\begin{scope}

     \foreach \i in {0,1,2,3,4,5,6,7,8,9,10,11,12}
{
\path (origin2)--++(45:0.5*\i) coordinate (c\i);
\path (origin2)--++(135:0.5*\i)  coordinate (d\i);
  }

\path(origin2)  ++(135:2.5)   ++(-135:2.5) coordinate(corner1);
\path(origin2)  ++(45:2)   ++(135:7) coordinate(corner2);
\path(origin2)  ++(45:2)   ++(-45:2) coordinate(corner4);
\path(origin2)  ++(135:2.5)   ++(45:6.5) coordinate(corner3);

\draw[thick] (origin2)--(corner1)--(corner2)--(corner3)--(corner4)--(origin2);

\clip(corner1)--(corner2)--++(90:0.3)--++(0:6.5)--(corner3)--(corner4)
--++(90:-0.3)--++(180:6.5) --(corner1);

\path[name path=pathd1] (d1)--++(90:7);
 \path[name path=top] (corner2)--(corner3);
 \path [name intersections={of = pathd1 and top}];
   \coordinate (A)  at (intersection-1);
     \path(A)--++(-90:0.1) node { $\up$ };

\path[name path=pathd3] (d3)--++(90:7);
 \path[name path=top] (corner2)--(corner3);
 \path [name intersections={of = pathd3 and top}];
   \coordinate (A)  at (intersection-1);
  \path(A)--++(90:-0.1) node { $\up$ };

\path[name path=pathd5] (d5)--++(90:7);
 \path[name path=top] (corner2)--(corner3);
 \path [name intersections={of = pathd5 and top}];
   \coordinate (A)  at (intersection-1);
  \path(A)--++(90:0.1) node { $\down$ };

\path[name path=pathd7] (d7)--++(90:7);
 \path[name path=top] (corner2)--(corner3);
 \path [name intersections={of = pathd7 and top}];
   \coordinate (A)  at (intersection-1);
     \path(A)--++(90:-0.1) node { $\up$ };

\path[name path=pathd9] (d9)--++(90:7);
 \path[name path=top] (corner2)--(corner3);
 \path [name intersections={of = pathd9 and top}];
   \coordinate (A)  at (intersection-1);
    \path(A)--++(-90:-0.1) node { $\down$ };

\path[name path=pathc1] (c1)--++(90:7);
 \path[name path=top] (corner2)--(corner3);
 \path [name intersections={of = pathc1 and top}];
   \coordinate (A)  at (intersection-1);
  \path(A)--++(-90:-0.1) node { $\down$ };

\path[name path=pathc3] (c3)--++(90:7);
 \path[name path=top] (corner2)--(corner3);
 \path [name intersections={of = pathc3 and top}];
   \coordinate (A)  at (intersection-1);
    \path(A)--++(90:-0.1) node { $\up$ };

\path[name path=pathc5] (c5)--++(90:7);
 \path[name path=top] (corner2)--(corner3);
 \path [name intersections={of = pathc5 and top}];
   \coordinate (A)  at (intersection-1);
    \path(A)--++(-90:-0.1) node { $\down$ };

\path[name path=pathc7] (c7)--++(90:7);
 \path[name path=top] (corner2)--(corner3);
 \path [name intersections={of = pathc7 and top}];
   \coordinate (A)  at (intersection-1);
   \path(A)--++(-90:0.1) node { $\up$ };

 \path[name path=pathd1] (d1)--++(-90:7);
 \path[name path=bottom] (corner1)--(corner4);
 \path [name intersections={of = pathd1 and bottom}];
   \coordinate (A)  at (intersection-1);
   \path (A)--++(90:-0.1) node { $\up$  };

 \path[name path=pathd3] (d3)--++(-90:7);
 \path[name path=bottom] (corner1)--(corner4);
 \path [name intersections={of = pathd3 and bottom}];
   \coordinate (A)  at (intersection-1);
   \path (A)--++(90:-0.1) node { $\up$  };

  \path[name path=pathd5] (d5)--++(-90:7);
 \path[name path=bottom] (corner1)--(corner4);
 \path [name intersections={of = pathd5 and bottom}];
   \coordinate (A)  at (intersection-1);
   \path (A)--++(90:-0.1) node { $\up$  };

 \path[name path=pathd7] (d7)--++(-90:7);
 \path[name path=bottom] (corner1)--(corner4);
 \path [name intersections={of = pathd7 and bottom}];
   \coordinate (A)  at (intersection-1);
   \path (A)--++(90:-0.1) node { $\up$  };

 \path[name path=pathd9] (d9)--++(-90:7);
 \path[name path=bottom] (corner1)--(corner4);
 \path [name intersections={of = pathd9 and bottom}];
   \coordinate (A)  at (intersection-1);
   \path (A)--++(90:-0.1) node { $\up$  };

 \path[name path=pathc1] (c1)--++(-90:7);
 \path[name path=bottom] (corner1)--(corner4);
 \path [name intersections={of = pathc1 and bottom}];
   \coordinate (A)  at (intersection-1);
   \path (A)--++(90:0.1) node { $\down$  };

    \path[name path=pathc3] (c3)--++(-90:7);
 \path[name path=bottom] (corner1)--(corner4);
 \path [name intersections={of = pathc3 and bottom}];
   \coordinate (A)  at (intersection-1);
   \path (A)--++(90:0.1) node { $\down$  };

 \path[name path=pathc5] (c5)--++(-90:7);
 \path[name path=bottom] (corner1)--(corner4);
 \path [name intersections={of = pathc5 and bottom}];
   \coordinate (A)  at (intersection-1);
   \path (A)--++(90:0.1) node { $\down$  };

 \path[name path=pathc7] (c7)--++(-90:7);
 \path[name path=bottom] (corner1)--(corner4);
 \path [name intersections={of = pathc7 and bottom}];
   \coordinate (A)  at (intersection-1);
   \path (A)--++(90:0.1) node { $\down$  };

\clip(corner1)--(corner2)--(corner3)--(corner4)--(corner1);

  \foreach \i in {1,3,5,7,9,11}
{
 \draw[thick, gray](c\i)--++(90:7);
 \draw[thick, gray](c\i)--++(-90:7);
\draw[thick, gray](d\i)--++(90:7);
\draw[thick, gray](d\i)--++(-90:7);
   }

\end{scope}

\begin{scope}

\draw[ very thick, fill=gray!20](0,0) --++(45:4)--++(135:1)
--++(-135:1)--++(135:1)--++(-135:1)--++(135:2)--++(-135:1)--++(135:1)
--++(-135:1)--++(-45:5);

\clip(0,0) --++(45:4)--++(135:1)
--++(-135:1)--++(135:1)--++(-135:1)--++(135:2)--++(-135:1)--++(135:1)
--++(-135:1)--++(-45:5);
 
\path (0,0) coordinate (origin2);

 \foreach \i\j in {0,1,2,3,4,5,6,7,8,9,10,11,12}
{
\path (origin2)--++(45:0.5*\i) coordinate (a\i);
\path (origin2)--++(135:0.5*\i)  coordinate (b\j);}

 \foreach \i\j in {0,2,4,6,8,10,12}
{
\draw[line width=2  ](a\i)--++(135:14);
\draw[line width=2](b\j)--++(45:14);

\path (origin2)--++(45:0.5*\i)--++(135:14) coordinate (x\i);
\path (origin2)--++(135:0.5*\i)--++(45:14)  coordinate (y\j);

   }

\draw[  thick,magenta,gray](c1) --++(135:1) coordinate (cC1);
\draw[  thick,darkgreen,gray](cC1) --++(135:1) coordinate (cC1);
\draw[  thick,orange,gray](cC1) --++(135:1) coordinate (cC1);
\draw[  thick,lime,gray](cC1) --++(135:1) coordinate (cC1);
\draw[  thick,violet,gray](cC1) --++(135:1) coordinate (cC1);

\draw[  thick,gray,gray](c3) --++(135:1) coordinate (cC1);
\draw[  thick,magenta,gray](cC1) --++(135:1) coordinate (cC1);
\draw[  thick,darkgreen,gray](cC1) --++(135:1) coordinate (cC1);
\draw[  thick,orange,gray](cC1) --++(135:1) coordinate (cC1);
\draw[  thick,lime,gray](cC1) --++(135:1) coordinate (cC1);

  \draw[  thick,cyan,gray](c5) --++(135:1) coordinate (cC1);
 \draw[  thick,gray,gray](cC1) --++(135:1) coordinate (cC1);
\draw[  thick,magenta,gray](cC1) --++(135:1) coordinate (cC1);
\draw[  thick,darkgreen,gray](cC1) --++(135:1) coordinate (cC1);
\draw[  thick,orange,gray](cC1) --++(135:1) coordinate (cC1);
\draw[  thick,lime,gray](cC1) --++(135:1) coordinate (cC1);

  \draw[  thick,pink,gray](c7) --++(135:1) coordinate (cC1);
  \draw[  thick,cyan,gray](cC1) --++(135:1) coordinate (cC1);
 \draw[  thick,gray,gray](cC1) --++(135:1) coordinate (cC1);
\draw[  thick,magenta,gray](cC1) --++(135:1) coordinate (cC1);
\draw[  thick,darkgreen,gray](cC1) --++(135:1) coordinate (cC1);
\draw[  thick,orange,gray](cC1) --++(135:1) coordinate (cC1);
\draw[  thick,lime,gray,gray](cC1) --++(135:1) coordinate (cC1);

\draw[  thick,magenta,gray](d1) --++(45:1) coordinate (x1);
\draw[  thick,gray,gray](x1) --++(45:1) coordinate (x1);
 \draw[  thick,cyan,gray](x1) --++(45:1) coordinate (x1);
 \draw[  thick,pink,gray](x1) --++(45:1) coordinate (x1);

\draw[  thick,darkgreen,gray](d3) --++(45:1) coordinate (x1);
\draw[  thick,magenta,gray](x1) --++(45:1) coordinate (x1);
\draw[  thick,gray,gray](x1) --++(45:1) coordinate (x1);
 \draw[  thick,cyan,gray](x1) --++(45:1) coordinate (x1);
 \draw[  thick,pink,gray](x1) --++(45:1) coordinate (x1);

\draw[  thick, gray](d5) --++(45:1) coordinate (x1);
\draw[  thick, gray](x1) --++(45:1) coordinate (x1);
\draw[  thick, gray](x1) --++(45:1) coordinate (x1);
\draw[  thick, gray](x1) --++(45:1) coordinate (x1);
 \draw[  thick, ,gray](x1) --++(45:1) coordinate (x1);
 \draw[  thick ,gray](x1) --++(45:1) coordinate (x1);

\draw[  thick, gray](d7) --++(45:1) coordinate (x1);
\draw[  thick, gray](x1) --++(45:1) coordinate (x1);

\path(x1) --++(45:1) coordinate (x1);
\path(x1) --++(45:1) coordinate (x1);
\path(x1) --++(45:1) coordinate (x1);
\path(x1) --++(45:1) coordinate (x1);
\path(x1) --++(45:1) coordinate (x1);

\draw[very thick, gray](d9) --++(45:1) coordinate (x1);

\end{scope}

\path(origin2)  ++(135:2.5)   ++(-135:2.5) coordinate(corner1);
\path(origin2)  ++(45:2)   ++(135:7) coordinate(corner2);
\path(origin2)  ++(45:2)   ++(-45:2) coordinate(corner4);
\path(origin2)  ++(135:2.5)   ++(45:6.5) coordinate(corner3);

\draw[thick] (origin2)--(corner1)--(corner2)--(corner3)--(corner4)--(origin2);
\clip(origin2)--(corner1)--(corner2)--(corner3)--(corner4)--(origin2);

  \draw[ line width =2 ]
(0,0) --++(45:4)--++(135:1)--++(-135:1)--++(135:1)--++(-135:1) 
 --++(135:2)--++(-135:1) --++(135:1)--++(-135:1)--(0,0) ;
\end{tikzpicture}
$$\caption
{
Along the top we picture the weight $\down \up \down \up \up \down  \up  \down\up \down \in \Lambda_{5,5}$,
which  corresponds to the partition $(5,4,2,1)\in \mathscr{P}_{5,5}$. 
Filling each box in the partition with an $s_i$ generator constructs the corresponding coset (which when applied to the minimal element of $\Lambda_{5,5}$ permutes the $\up$s and $\down$s so as to arrive at the given weight).
}\label{Figweightpartition}
\end{figure}
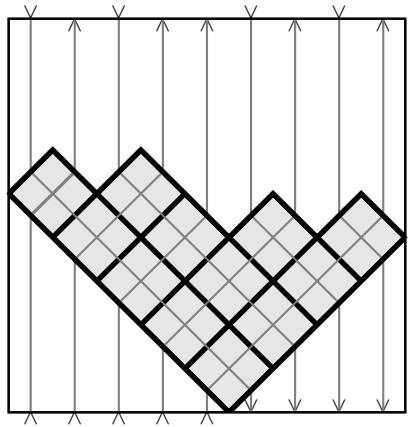

%
%
%
%

Formally, a {\sf partition}    $\lambda $ of $\ell$  is defined to be a weakly decreasing  sequence   of non-negative integers $\lambda = (\lambda_1, \lambda_2, \ldots )$ which  sum to $\ell$.  We call $\ell (\lambda) := \ell = \sum_{i}\lambda_i$ the length of the partition $\lambda$.
We define the Young diagram of a partition to be the collection of tiles 
$$[\la]=\{[r,c] \mid 1\leq c \leq \la_r\}$$
depicted in Russian style with rows at $135^\circ$ and columns at $45^\circ$.  We identify a partition with its Young diagram.
We let $\la^t$ denote the transpose partition given by reflection 
of the Russian Young diagram through the vertical axis.  
 Given  $m,n\in \NN$ we let  ${\mathscr P}_{m,n}$ denote the set of all partitions which fit into an $m\times n$ rectangle, that is 
$${\mathscr P}_{m,n}= \{ \la \mid \la_1\leq m, \la_1^t \leq n\}.$$
For $\lambda \in {\mathscr P}_{m,n}$, the $x$-coordinate of a tile $[r,c] \in \lambda$ is   equal to   $r-c \in \{-m+1, -m+2 , \ldots , n-2, n-1\}$ and we define this $x$-coordinate to be  the {\sf content}  of the tile and we  write ${\sf cont}[r,c]=r-c$.  
Given $\lambda \in {\mathscr P}_{m,n}$, we define the set ${\rm Add}(\lambda)$ to be the set of all tiles $[r,c]\notin \lambda$ such that $\lambda \cup [r,c]\in {\mathscr P}_{m,n}$. Similarly, we define the set ${\rm Rem}(\lambda)$ to be the set of all tiles $[r,c]\in \lambda$ such that $\lambda \setminus [r,c] \in {\mathscr P}_{m,n}$. 

 The following definitions come from \cite{MR2918294}.
    
 \begin{defn} 
\begin{itemize}[leftmargin=*]
 \item To each weight $\lambda$ we associate a {\sf cup diagram} $\underline{\lambda}$ and a {\sf cap diagram} $\overline{\lambda}$. To construct $\underline{\lambda}$,  
repeatedly find a pair of vertices labeled $\down$ $\up$ in order from left to right that are neighbours in the sense that there are only vertices already joined by cups in between. Join these new vertices together with a cup. Then repeat the process until there are no more such $\down$ $\up$ pairs.
Finally draw south-westerly rays   at all the remaining $\up$  vertices
  and south-easterly rays  at all the remaining  $\down$ vertices.
   The cap diagram $\overline{\lambda}$ is obtained by flipping $\underline{\lambda}$ horizontally. We stress that the vertices of the cup and cap diagrams  are not labeled.

\item Let $\lambda$ and $\mu$ be weights. We can glue $\underline{\mu}$ under $\lambda$ to obtain a new diagram $\underline{\mu}\lambda$. We say that $\underline{\mu}\lambda$ is  {\sf oriented} if (i) the vertices at the ends of each cup in $\underline{\mu}$ are labelled by exactly one $\down$  and one $\up$ in the weight $\lambda$ and (ii) it is impossible to find two rays in $\underline{\mu}$ whose top vertices are labeled $\down$ $\up$  in that order from left to right in the weight $\lambda$. 
Similarly, we obtain a new diagram $\lambda \overline{\mu}$ by gluing $\overline{\mu}$ on top of $\lambda$. We say that $\lambda \overline{\mu}$ is oriented if $\underline{\mu} \lambda$ is oriented. 
\item Let $\lambda$, $\mu$ be weights such that $\underline{\mu}\lambda$ is oriented. We set the {\sf degree} of the diagram $\underline{\mu}\lambda$ (respectively $\lambda \overline{\mu}$) to the the number of clockwise oriented cups (respectively caps) in the diagram. 
\item Let $\la, \mu ,\nu$ be weights such that $\underline{\mu}\la$ and $\la \overline{\nu}$ are oriented. Then  we form a new diagram $\underline{\mu}\la\overline{\nu}$ by gluing $\underline{\mu}$ under and $\overline{\nu}$ on top of $\la$. We set ${\rm deg}(\underline{\mu}\la \overline{\nu}) = {\rm deg}(\underline{\mu}\la)+{\rm deg}(\la \overline{\nu})$.
\end{itemize}
\end{defn}

An example is provided in Figure \ref{figure4}.

For the purposes of this paper, for $p\geq 0$, we can define the $p$-Kazhdan--Lusztig polynomials of type $(W,P) = (S_{n+m},S_m \times S_n)$ as follows.  
For $\la,  \mu \in \mptn$ we set
$$
{^p}n_{\la,\mu}(q)= 
\begin{cases}
q^{\deg(\underline{\mu} \la)}		&\text{if $ \underline{\mu} \la $ is oriented}\\
0						&\text{otherwise.}
\end{cases}
$$
We refer to \cite[Theorem 7.3]{compan2} and \cite[Theorem A]{compan} for a justification of this definition and to \cite{MR2918294} for the origins of this combinatorics.

  \begin{figure}[ht!]
  $$   \begin{tikzpicture} [scale=0.85]
		
		
		\path (4,1) coordinate (origin); 
		\path (origin)--++(0.5,0.5) coordinate (origin2);  
	 	\draw(origin2)--++(0:6); 
		\foreach \i in {1,2,3,4,5,...,10,11}
		{
			\path (origin2)--++(0:0.5*\i) coordinate (a\i); 
			\path (origin2)--++(0:0.5*\i)--++(-90:0.00) coordinate (c\i); 
			  }
		
		\foreach \i in {1,2,3,4,5,...,19}
		{
			\path (origin2)--++(0:0.25*\i) --++(-90:0.5) coordinate (b\i); 
			\path (origin2)--++(0:0.25*\i) --++(-90:0.9) coordinate (d\i); 
		}
		\path(a1) --++(90:0.12) node  {  $  \down   $} ;
		\path(a3) --++(90:0.12) node  {  $  \down   $} ;
		\path(a2) --++(-90:0.15) node  {  $  \up   $} ;
		\path(a4) --++(-90:0.15) node  {  $  \up   $} ;
		\path(a5) --++(-90:0.15) node  {  $  \up  $} ;
		\path(a6) --++(90:0.12) node  {  $  \down  $} ;
		\path(a7) --++(90:0.12) node  {  $  \down  $} ;
		\path(a8) --++(-90:0.15) node  {  $  \up  $} ;
		\path(a9) --++(-90:0.15) node  {  $  \up  $} ;
 \path(a10) --++(90:0.12) node  {  $  \down  $} ; 
 \path(a11) --++(90:0.12) node  {  $  \down  $} ; 		
		
		\draw[    thick](c2) to [out=-90,in=0] (b3) to [out=180,in=-90] (c1); 
		\draw[    thick](c4) to [out=-90,in=0] (b7) to [out=180,in=-90] (c3);


		\draw[    thick](c8) to [out=-90,in=0] (b15) to [out=180,in=-90] (c7); 
 	
			\draw[    thick](c9) to [out=-90,in=0] (d15) to [out=180,in=-90] (c6);

			\path(a1) --++(-90:1.25) --++(180:0.5)	coordinate (c1);
		\draw[    thick](c5) --++(90:-0.25) to [out=-90,in=0] (c1);

	\path(a11) --++(-90:0.7) --++(0:0.5)	coordinate (c1);
		
				\draw[    thick](c11) --++(90:-0.2) to [out=-90,in=180] (c1);  

			\path(a11) --++(-90:1.25) --++(0:0.5)	coordinate (c1);
		
				\draw[    thick](c10) --++(90:-0.4) to [out=-90,in=180] (c1);

	\end{tikzpicture}\qquad\quad
	 \begin{tikzpicture} [scale=0.85]
		
		
		\path (4,1) coordinate (origin); 
		\path (origin)--++(0.5,0.5) coordinate (origin2);  
	 	\draw(origin2)--++(0:6); 
		\foreach \i in {1,2,3,4,5,...,10,11}
		{
			\path (origin2)--++(0:0.5*\i) coordinate (a\i); 
			\path (origin2)--++(0:0.5*\i)--++(-90:0.00) coordinate (c\i); 
			  }
		
		\foreach \i in {1,2,3,4,5,...,19}
		{
			\path (origin2)--++(0:0.25*\i) --++(-90:0.5) coordinate (b\i); 
			\path (origin2)--++(0:0.25*\i) --++(-90:0.9) coordinate (d\i); 
		}
		
		\draw[    thick](c2) to [out=-90,in=0] (b3) to [out=180,in=-90] (c1); 
		\draw[    thick](c4) to [out=-90,in=0] (b7) to [out=180,in=-90] (c3);


		\draw[    thick](c8) to [out=-90,in=0] (b15) to [out=180,in=-90] (c7); 
 	
			\draw[    thick](c9) to [out=-90,in=0] (d15) to [out=180,in=-90] (c6);

			\path(a1) --++(-90:1.25) --++(180:0.5)	coordinate (c1);
		\draw[    thick](c5) --++(90:-0.25) to [out=-90,in=0] (c1);

	\path(a11) --++(-90:0.7) --++(0:0.5)	coordinate (c1);
		
				\draw[    thick](c11) --++(90:-0.2) to [out=-90,in=180] (c1);  

			\path(a11) --++(-90:1.25) --++(0:0.5)	coordinate (c1);
		
				\draw[    thick](c10) --++(90:-0.4) to [out=-90,in=180] (c1);

	\end{tikzpicture}$$
\caption{The construction of the cup diagram $\underline{\la}$ for $\la=(5,4,2^2) \in \mathscr{P}_{5,6}$. }
\label{figure4}
	\end{figure}
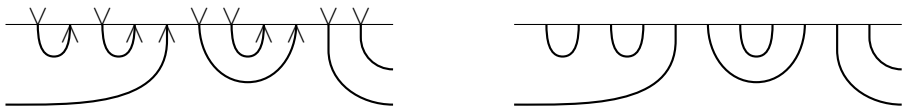
  
 It is clear that for a fixed $\mu\in \mptn$, the diagram $\underline{\mu}\lambda$ is oriented if and only if the weight $\lambda$ is obtained from the weight $\mu$ by swapping the labels on some of the pairs of vertices connected by a cup in $\underline{\mu}$. Moreover, in this case the degree of $\underline{\mu}\la$ is precisely the number of such swapped pairs. 

     We define the {\sf defect}  of  $\la  \in \mathscr{P}_{m,n}$,
  to be 
$d(\la)=d - m \in \ZZ_{\leq 0}$ if $(d,d-1,d-2,\dots, 1)\subseteq \la$ but
  $(d+1,d,d-1,\dots, 1)\not\subseteq \la$. 
  In other words, the defect records the  largest staircase partition sitting inside of $\la$ (relative to the largest staircase partition $(m,m-1,m-2,\dots,1)\in \mptn$).  
  We say that a partition $\la\in \mathscr{P}_{m,n}$ is {\sf regular} if $d(\la)=0$.
  We let $\mathscr{R}_{m,n}\subseteq  \mathscr{P}_{m,n}$ denote the subset of regular partitions. 
  It follows from definitions that \(\la \in \mptn\) has defect 
  \(d(\la)\leq 0\) if and only if \(\underline{\la} \la\) has precisely  
    \(|d(\la)|\)
   vertices labelled by   $\up$  connected to south-westerly rays.
 For example 
 the partitions $\la=(5,4,2^2)$ has defect $-1$, as seen in \cref{figure4}.

     \section{ Dyck combinatorics}
\label{dyckgens}
We have defined the $p$-Kazhdan--Lusztig polynomials via
 counting of certain 
oriented   diagrams.  
 For the purposes of this paper, we require richer combinatorial objects which {\em refine} the diagrammatic construction: these are provided by tilings by Dyck paths.  
  
 Let us start with a simple example to see how these Dyck paths come from the  oriented   diagrams. Consider the partitions $\mu = (5^3,4, 1)$ and $\la = (4^2,3,1^2)$. The oriented   diagrams $  \underline{\mu} \mu $ and $\underline{\la}\la$  are illustrated in   \cref{example-2}. We see that $\la$ is obtained from $\mu$ by swapping the labels of the vertices of one cup, $p \in \underline{\mu}$. 
The partition $\la$ is obtained from the partition $\mu$ by removing a corresponding Dyck path $P \subseteq  \mu $.
More generally, if $\la, \mu\in \mptn$ with $\underline{\mu}\la$ oriented of degree $k$, then we will see that the partition $\la$ is obtained from the partition $\mu$ by removing $k$ Dyck paths.

\begin{figure}[ht!]
$$
\begin{tikzpicture}[scale=0.6]

 \path(0,0) coordinate (origin2);
     \path(0,0)--++(135:2.5)--++(-135:2.5)
     --++(-45:0.25)--++(45:0.25)
          --++(-90:0.18)  node[gray]  {$\up$};

 \foreach \i in {5,6,7,8,9}
{    \path(0,0)--++(135:2.5-0.5*\i)--++(-135:2.5-0.5*\i)
     --++(-45:0.25)--++(45:0.25)
          --++(90:0.16)  node[gray]  {$\down$}; }

\foreach \i in {1,2,3,4}
{    \path(0,0)--++(135:2.5-0.5*\i)--++(-135:2.5-0.5*\i)
     --++(-45:0.25)--++(45:0.25)
          --++(-90:0.18)  node[gray]  {$\up$};
 
}

     \path(0,0)--++(135:4)--++(45:6)
     --++(-45:0.25)--++(45:0.25)
          --++(-90:0.18)  node {$\up$};
          
              \path(0,0)--++(135:4)--++(45:6)
     --++(-45:0.25)--++(45:0.25)
          --++(45:0.5*2)--++(-45:0.5*2)
          --++(-90:0.18)  node[gray] {$\up$};

                  \path(0,0)--++(135:4)--++(45:6)
     --++(-45:0.25)--++(45:0.25)
          --++(45:0.5*1)--++(-45:0.5*1)
     --++(90:0.16)  node[gray] {$\down$};

                  \path(0,0)--++(135:4)--++(45:6)
     --++(-45:0.25)--++(45:0.25)
 --++(-135:0.5*3)          --++(135:0.5*3)
     --++(90:0.16)  node[gray] {$\down$};
     
         \path(0,0)--++(135:4)--++(45:6)
     --++(-45:0.25)--++(45:0.25)
 --++(-135:0.5*5)          --++(135:0.5*5)
     --++(90:0.16)  node[gray] {$\down$};

         \path(0,0)--++(135:4)--++(45:6)
     --++(-45:0.25)--++(45:0.25)
 --++(-135:0.5*6)          --++(135:0.5*6)
     --++(90:0.16)  node[gray] {$\down$};

         \path(0,0)--++(135:4)--++(45:6)
     --++(-45:0.25)--++(45:0.25)
 --++(-135:0.5*7)          --++(135:0.5*7)
     --++(90:0.16)  node {$\down$};

                   \path(0,0)--++(135:4)--++(45:6)
     --++(-45:0.25)--++(45:0.25)
          --++(-135:0.5*2)--++(135:0.5*2)
          --++(-90:0.18)  node[gray] {$\up$};
          
               \path(0,0)--++(135:4)--++(45:6)
     --++(-45:0.25)--++(45:0.25)
          --++(-135:0.5*1)--++(135:0.5*1)
          --++(-90:0.18)  node[gray] {$\up$};
          
     \path(0,0)--++(135:4)--++(45:6)
     --++(-45:0.25)--++(45:0.25)
          --++(-135:0.5*4)--++(135:0.5*4)
          --++(-90:0.18)  node[gray] {$\up$};

     \draw[very thick] (0,0)--++(135:5)--++(45:3)--++(-45:1)--++(45:1)--++(-45:3)
     --++(45:1)--++(-45:1)--(0,0);

         \path(0,0)--++(135:4) coordinate (high);
  
           \fill[magenta,opacity=0.3] (high)--++(135:1)--++(45:3)--++(-45:1)
         --++(45:1) --++(-45:3
         )
          --++(-135:1)--++(135:2)--++(-135:1) --++(135:1)          --++(-135:2);

   \path(0,0)--++(45:2.75)--++(-45:2.75) coordinate (bottomR);
   \path(0,0)--++(135:2.75)--++(-135:2.75) coordinate (bottomL);
  \draw[very thick] (bottomL)--(bottomR);
    \draw[very thick,densely dotted] (bottomL)--++(180:0.5);

       \path(0,0)
       --++(45:2.75)--++(-45:2.75)--++(135:5)--++(45:5) coordinate (topR);
   \path(0,0)--++(135:2.75)--++(-135:2.75)--++(45:5)--++(135:5) coordinate (topL);
  \draw[very thick] (topL)--(topR);
    \draw[very thick,densely dotted] (topL)--++(180:0.5);
     \draw[very thick,densely dotted] (topR)--++( 0:0.5);


     \path(0,0)--++(135:4)--++(45:6)
     --++(-45:0.25)--++(45:0.25)
          --++(-90:0.18)  node {$\up$};
          
              \path(0,0)--++(135:4)--++(45:6)
     --++(-45:0.25)--++(45:0.25)
          --++(45:0.5*2)--++(-45:0.5*2)
          --++(-90:0.18)  node[gray] {$\up$};

                  \path(0,0)--++(135:4)--++(45:6)
     --++(-45:0.25)--++(45:0.25)
          --++(45:0.5*1)--++(-45:0.5*1)
     --++(90:0.16)  node[gray] {$\down$};

                  \path(0,0)--++(135:4)--++(45:6)
     --++(-45:0.25)--++(45:0.25)
 --++(-135:0.5*3)          --++(135:0.5*3)
     --++(90:0.16)  node[gray] {$\down$};
     
         \path(0,0)--++(135:4)--++(45:6)
     --++(-45:0.25)--++(45:0.25)
 --++(-135:0.5*5)          --++(135:0.5*5)
     --++(90:0.16)  node[gray] {$\down$};

         \path(0,0)--++(135:4)--++(45:6)
     --++(-45:0.25)--++(45:0.25)
 --++(-135:0.5*6)          --++(135:0.5*6)
     --++(90:0.16)  node[gray] {$\down$};

         \path(0,0)--++(135:4)--++(45:6)
     --++(-45:0.25)--++(45:0.25)
 --++(-135:0.5*7)          --++(135:0.5*7)
     --++(90:0.16)  node {$\down$};

                   \path(0,0)--++(135:4)--++(45:6)
     --++(-45:0.25)--++(45:0.25)
          --++(-135:0.5*2)--++(135:0.5*2)
          --++(-90:0.18)  node[gray] {$\up$};
          
               \path(0,0)--++(135:4)--++(45:6)
     --++(-45:0.25)--++(45:0.25)
          --++(-135:0.5*1)--++(135:0.5*1)
          --++(-90:0.18)  node[gray] {$\up$};
          
     \path(0,0)--++(135:4)--++(45:6)
     --++(-45:0.25)--++(45:0.25)
          --++(-135:0.5*4)--++(135:0.5*4)
          --++(-90:0.18)  node[gray] {$\up$};

 \clip(bottomL)--(bottomR)--(topR)--(topL);

 \path(0,0) coordinate (start);
   
     \path(start)--++(45:0.5) coordinate (X1);
     \path(start)--++(135:0.5) coordinate (X2);     
     \draw[very thick,gray!70] (X1) to [out=135,in= 45] (X2);
  \path(X1)--++(45:0.5)--++(135:0.5) coordinate (X3);
     \path(X2)--++(45:0.5)--++(135:0.5) coordinate (X4);
     \draw[very thick,gray!70] (X3) to [out=-135,in= -45] (X4);; 
 
     \draw[very thick,gray!70] (X1)--++(-90:7);
     \draw[very thick,gray!70] (X2)--++(-90:7); 
 
 \path(start)--++(45:1) coordinate (start);
   
     \path(start)--++(45:0.5) coordinate (X1);
     \path(start)--++(135:0.5) coordinate (X2);     
     \draw[very thick,gray!70] (X1) to [out=135,in= 45] (X2);
  \path(X1)--++(45:0.5)--++(135:0.5) coordinate (X3) ;
     \path(X2)--++(45:0.5)--++(135:0.5) coordinate (X4);
     \draw[very thick,gray!70] (X3) to [out=-135,in= -45] (X4);; 
 
    \draw[very thick,gray!70] (X1)--++(-90:7);

 \path(start)--++(45:1) coordinate (start);
   
     \path(start)--++(45:0.5) coordinate (X1);
     \path(start)--++(135:0.5) coordinate (X2);     
     \draw[very thick,gray!70] (X1) to [out=135,in= 45] (X2);
  \path(X1)--++(45:0.5)--++(135:0.5) coordinate (X3) ;
     \path(X2)--++(45:0.5)--++(135:0.5) coordinate (X4);
     \draw[very thick,gray!70] (X3) to [out=-135,in= -45] (X4);; 
 
    \draw[very thick,gray!70] (X1)--++(-90:7);

 \path(start)--++(45:1) coordinate (start);
   
     \path(start)--++(45:0.5) coordinate (X1);
     \path(start)--++(135:0.5) coordinate (X2);     
     \draw[very thick,gray!70] (X1) to [out=135,in= 45] (X2);
  \path(X1)--++(45:0.5)--++(135:0.5) coordinate (X3) ;
     \path(X2)--++(45:0.5)--++(135:0.5) coordinate (X4);
     \draw[very thick,gray!70] (X3) to [out=-135,in= -45] (X4);; 
 
    \draw[very thick,gray!70] (X1)--++(-90:7);

 \path(start)--++(45:1) coordinate (start);
   
     \path(start)--++(45:0.5) coordinate (X1);
     \path(start)--++(135:0.5) coordinate (X2);     
     \draw[very thick,gray!70] (X1) to [out=135,in= 45] (X2);
  \path(X1)--++(45:0.5)--++(135:0.5) coordinate (X3) ;
     \path(X2)--++(45:0.5)--++(135:0.5) coordinate (X4);
     \draw[very thick,gray!70] (X3) to [out=-135,in= -45] (X4);; 
 
    \draw[very thick,gray!70] (X1)--++(-90:7);

    \draw[very thick,gray!70] (X3)--++(90:7); 

    \draw[very thick,gray!70] (X4)--++(90:7);


 \path(0,0) coordinate (start);
   
     \path(start)--++(45:0.5) coordinate (X1);
     \path(start)--++(135:0.5) coordinate (X2);     
     \draw[very thick,gray!70] (X1) to [out=135,in= 45] (X2);
  \path(X1)--++(45:0.5)--++(135:0.5) coordinate (X3);
     \path(X2)--++(45:0.5)--++(135:0.5) coordinate (X4);
     \draw[very thick,gray!70] (X3) to [out=-135,in= -45] (X4);; 
 
     \draw[very thick,gray!70] (X1)--++(-90:7);
     \draw[very thick,gray!70] (X2)--++(-90:7);

 \path(start)--++(135:1) coordinate (start);
   
     \path(start)--++(45:0.5) coordinate (X1);
     \path(start)--++(135:0.5) coordinate (X2);     
     \draw[very thick,gray!70] (X1) to [out=135,in= 45] (X2);
  \path(X1)--++(45:0.5)--++(135:0.5) coordinate (X3) ;
     \path(X2)--++(45:0.5)--++(135:0.5) coordinate (X4);
     \draw[very thick,gray!70] (X3) to [out=-135,in= -45] (X4);; 
 
    \draw[very thick,gray!70] (X2)--++(-90:7);

 \path(start)--++(135:1) coordinate (start);
   
     \path(start)--++(45:0.5) coordinate (X1);
     \path(start)--++(135:0.5) coordinate (X2);     
     \draw[very thick,gray!70] (X1) to [out=135,in= 45] (X2);
  \path(X1)--++(45:0.5)--++(135:0.5) coordinate (X3) ;
     \path(X2)--++(45:0.5)--++(135:0.5) coordinate (X4);
     \draw[very thick,gray!70] (X3) to [out=-135,in= -45] (X4);; 
 
    \draw[very thick,gray!70] (X2)--++(-90:7);

 \path(start)--++(135:1) coordinate (start);
   
     \path(start)--++(45:0.5) coordinate (X1);
     \path(start)--++(135:0.5) coordinate (X2);     
     \draw[very thick,gray!70] (X1) to [out=135,in= 45] (X2);
  \path(X1)--++(45:0.5)--++(135:0.5) coordinate (X3) ;
     \path(X2)--++(45:0.5)--++(135:0.5) coordinate (X4);
     \draw[very thick,gray!70] (X3) to [out=-135,in= -45] (X4);; 
 
    \draw[very thick,gray!70] (X2)--++(-90:7);

 \path(start)--++(135:1) coordinate (start);
   
     \path(start)--++(45:0.5) coordinate (X1);
     \path(start)--++(135:0.5) coordinate (X2);     
     \draw[very thick,gray!70] (X1) to [out=135,in= 45] (X2);
  \path(X1)--++(45:0.5)--++(135:0.5) coordinate (X3) ;
     \path(X2)--++(45:0.5)--++(135:0.5) coordinate (X4);
     \draw[very thick,black] (X3) to [out=-135,in= -45] (X4);; 
 
    \draw[very thick,gray!70] (X2)--++(-90:7);
    \draw[very thick,black] (X4)--++(90:7);


 \path(0,0)--++(45:1)--++(135:1) coordinate (start);
   
     \path(start)--++(45:0.5) coordinate (X1);
     \path(start)--++(135:0.5) coordinate (X2);     
     \draw[very thick,gray!70] (X1) to [out=135,in= 45] (X2);
  \path(X1)--++(45:0.5)--++(135:0.5) coordinate (X3) ;
     \path(X2)--++(45:0.5)--++(135:0.5) coordinate (X4);
     \draw[very thick,gray!70] (X3) to [out=-135,in= -45] (X4);;

 \path(start)--++(45:1)  coordinate (start);
   
     \path(start)--++(45:0.5) coordinate (X1);
     \path(start)--++(135:0.5) coordinate (X2);     
     \draw[very thick,gray!70] (X1) to [out=135,in= 45] (X2);
  \path(X1)--++(45:0.5)--++(135:0.5) coordinate (X3) ;
     \path(X2)--++(45:0.5)--++(135:0.5) coordinate (X4);
     \draw[very thick,gray!70] (X3) to [out=-135,in= -45] (X4);;

 \path(start)--++(45:1)  coordinate (start);
   
     \path(start)--++(45:0.5) coordinate (X1);
     \path(start)--++(135:0.5) coordinate (X2);     
     \draw[very thick,gray!70] (X1) to [out=135,in= 45] (X2);
  \path(X1)--++(45:0.5)--++(135:0.5) coordinate (X3) ;
     \path(X2)--++(45:0.5)--++(135:0.5) coordinate (X4);
     \draw[very thick,black] (X3) to [out=-135,in= -45] (X4);;

    \draw[very thick,black] (X3)--++(90:7);


 \path(0,0)--++(45:1)--++(135:1) coordinate (start);
   
     \path(start)--++(45:0.5) coordinate (X1);
     \path(start)--++(135:0.5) coordinate (X2);     
     \draw[very thick,gray!70] (X1) to [out=135,in= 45] (X2);
  \path(X1)--++(45:0.5)--++(135:0.5) coordinate (X3) ;
     \path(X2)--++(45:0.5)--++(135:0.5) coordinate (X4);
     \draw[very thick,gray!70] (X3) to [out=-135,in= -45] (X4);;

 \path(start)--++(135:1)  coordinate (start);
   
     \path(start)--++(45:0.5) coordinate (X1);
     \path(start)--++(135:0.5) coordinate (X2);     
     \draw[very thick,gray!70] (X1) to [out=135,in= 45] (X2);
  \path(X1)--++(45:0.5)--++(135:0.5) coordinate (X3) ;
     \path(X2)--++(45:0.5)--++(135:0.5) coordinate (X4);
     \draw[very thick,gray!70] (X3) to [out=-135,in= -45] (X4);; 
 
%

 \path(start)--++(135:1)  coordinate (start);
   
     \path(start)--++(45:0.5) coordinate (X1);
     \path(start)--++(135:0.5) coordinate (X2);     
     \draw[very thick,gray!70] (X1) to [out=135,in= 45] (X2);
  \path(X1)--++(45:0.5)--++(135:0.5) coordinate (X3) ;
     \path(X2)--++(45:0.5)--++(135:0.5) coordinate (X4);
     \draw[very thick,black] (X3) to [out=-135,in= -45] (X4);;

 \path(start)--++(135:1)  coordinate (start2);
   
     \path(start2)--++(45:0.5) coordinate (X1);
     \path(start2)--++(135:0.5) coordinate (X2);     
     \draw[very thick,black ] (X1) to [out=135,in= 45] (X2);
  \path(X1)--++(45:0.5)--++(135:0.5) coordinate (X3) ;
     \path(X2)--++(45:0.5)--++(135:0.5) coordinate (X4);
     \draw[very thick ,gray!70] (X3) to [out=-135,in= -45] (X4);; 
     \draw[very thick,gray!70] (X4)--++(90:7);

 \path(start)--++(-45:1)--++(45:1)  coordinate (start);
   
     \path(start)--++(45:0.5) coordinate (X1);
     \path(start)--++(135:0.5) coordinate (X2);     
     \draw[very thick,gray!70] (X1) to [out=135,in= 45] (X2);
  \path(X1)--++(45:0.5)--++(135:0.5) coordinate (X3) ;
     \path(X2)--++(45:0.5)--++(135:0.5) coordinate (X4);
     \draw[very thick,black] (X3) to [out=-135,in= -45] (X4);;

 \path(start)--++(135:1)  coordinate (start2);
   
     \path(start2)--++(45:0.5) coordinate (X1);
     \path(start2)--++(135:0.5) coordinate (X2);     
     \draw[very thick,black ] (X1) to [out=135,in= 45] (X2);
  \path(X1)--++(45:0.5)--++(135:0.5) coordinate (X3) ;
     \path(X2)--++(45:0.5)--++(135:0.5) coordinate (X4);
     \draw[very thick ,gray!70] (X3) to [out=-135,in= -45] (X4);;

 \path(start)--++(45:1)  coordinate (start2);
   
     \path(start2)--++(45:0.5) coordinate (X1);
     \path(start2)--++(135:0.5) coordinate (X2);     
     \draw[very thick,black ] (X1) to [out=135,in= 45] (X2);
  \path(X1)--++(45:0.5)--++(135:0.5) coordinate (X3) ;
     \path(X2)--++(45:0.5)--++(135:0.5) coordinate (X4);
     \draw[very thick ,gray!70] (X3) to [out=-135,in= -45] (X4);; 
     \draw[very thick,gray!70] (X3)--++(90:7); 

 \path(start2)--++(135:1)  coordinate (start3);
   
     \path(start3)--++(45:0.5) coordinate (X1);
     \path(start3)--++(135:0.5) coordinate (X2);     
     \draw[very thick,gray!70 ] (X1) to [out=135,in= 45] (X2);
  \path(X1)--++(45:0.5)--++(135:0.5) coordinate (X3) ;
     \path(X2)--++(45:0.5)--++(135:0.5) coordinate (X4);
     \draw[very thick ,gray!70] (X3) to [out=-135,in= -45] (X4);; 
     \draw[very thick,gray!70] (X4)--++(90:7); 
     \draw[very thick,gray!70] (X3)--++(90:7);

 \path(start3)--++(135:1)--++(-135:1)  coordinate (start4);
   
     \path(start4)--++(45:0.5) coordinate (X1);
     \path(start4)--++(135:0.5) coordinate (X2);     
     \draw[very thick,gray!70 ] (X1) to [out=135,in= 45] (X2);
  \path(X1)--++(45:0.5)--++(135:0.5) coordinate (X3) ;
     \path(X2)--++(45:0.5)--++(135:0.5) coordinate (X4);
     \draw[very thick ,gray!70] (X3) to [out=-135,in= -45] (X4);; 
     \draw[very thick,gray!70] (X4)--++(90:7); 
     \draw[very thick,gray!70] (X3)--++(90:7);

\clip(0,0)--++(135:5)--++(45:3)--++(-45:1)
   --++(45:1)--++(-45:3)--++(45:1)
   --++(-45:1) 
 --(0,0);

       \foreach \i in {0,1,2,3,4,5,6,7,8,9,10,11,12,13}
{
\path (origin2)--++(45:0.5*\i) coordinate (c\i); 
\path (origin2)--++(135:0.5*\i)  coordinate (d\i); 
  }

   \foreach \i in {0,1,2,3,4,5,6,7,8,9,10,11,12,13}
{
\path (origin2)--++(45:1*\i) coordinate (c\i); 
\path (origin2)--++(135:1*\i)  coordinate (d\i); 
\draw[thick,densely dotted] (c\i)--++(135:14);
\draw[thick,densely dotted] (d\i)--++(45:14);
  }

\end{tikzpicture} \qquad\qquad 
\begin{tikzpicture}[scale=0.6]

     \path(0,0)--++(135:2.5)--++(-135:2.5)
     --++(-45:0.25)--++(45:0.25)
          --++(-90:0.18)  node   {$\up$};

 \foreach \i in {5,6,7,8}
{    \path(0,0)--++(135:2.5-0.5*\i)--++(-135:2.5-0.5*\i)
     --++(-45:0.25)--++(45:0.25)
          --++(90:0.16)  node[gray]  {$\down$}; }

 \path(0,0)--++(135:2.5-0.5*9)--++(-135:2.5-0.5*9)
     --++(-45:0.25)--++(45:0.25)
          --++(90:0.16)  node   {$\down$};

\foreach \i in {1,2,3,4}
{    \path(0,0)--++(135:2.5-0.5*\i)--++(-135:2.5-0.5*\i)
     --++(-45:0.25)--++(45:0.25)
          --++(-90:0.18)  node[gray]  {$\up$};
 
}

     \path(0,0)--++(135:4)--++(45:6)
     --++(-45:0.25)--++(45:0.25)
      --++(90:0.16)  node {$\down$};
          
              \path(0,0)--++(135:4)--++(45:6)
     --++(-45:0.25)--++(45:0.25)
          --++(45:0.5*2)--++(-45:0.5*2)
          --++(-90:0.18)  node[gray] {$\up$};

                  \path(0,0)--++(135:4)--++(45:6)
     --++(-45:0.25)--++(45:0.25)
          --++(45:0.5*1)--++(-45:0.5*1)
     --++(90:0.16)  node[gray] {$\down$};

                  \path(0,0)--++(135:4)--++(45:6)
     --++(-45:0.25)--++(45:0.25)
 --++(-135:0.5*3)          --++(135:0.5*3)
     --++(90:0.16)  node[gray] {$\down$};
     
         \path(0,0)--++(135:4)--++(45:6)
     --++(-45:0.25)--++(45:0.25)
 --++(-135:0.5*5)          --++(135:0.5*5)
     --++(90:0.16)  node[gray] {$\down$};

         \path(0,0)--++(135:4)--++(45:6)
     --++(-45:0.25)--++(45:0.25)
 --++(-135:0.5*6)          --++(135:0.5*6)
     --++(90:0.16)  node[gray] {$\down$};

         \path(0,0)--++(135:4)--++(45:6)
     --++(-45:0.25)--++(45:0.25)
 --++(-135:0.5*7)          --++(135:0.5*7)
     --++(-90:0.18)  node {$\up$}--++(90:0.18) coordinate (hereitis);

\path(hereitis)--++(-45:5)--++(-135:5) coordinate (hereitsisnt);
     \draw[very thick] (hereitis)--(hereitsisnt);

                   \path(0,0)--++(135:4)--++(45:6)
     --++(-45:0.25)--++(45:0.25)
          --++(-135:0.5*2)--++(135:0.5*2)
          --++(-90:0.18)  node[gray] {$\up$};
          
               \path(0,0)--++(135:4)--++(45:6)
     --++(-45:0.25)--++(45:0.25)
          --++(-135:0.5*1)--++(135:0.5*1)
          --++(-90:0.18)  node[gray] {$\up$};
          
     \path(0,0)--++(135:4)--++(45:6)
     --++(-45:0.25)--++(45:0.25)
          --++(-135:0.5*4)--++(135:0.5*4)
          --++(-90:0.18)  node[gray] {$\up$};

    \path (0,0)--++(135:4.2)--++(45:1.2) coordinate (start);


    \fill[magenta,opacity=0.3,rounded corners](start)--++(-135:1)--++(135:0.6)--++(45:3-0.4)
    --++(-45:1)    --++(45:1)    --++(-45:3-0.4)
        --++(-135:1-0.4)    --++(135:2)
            --++(-135:1)    --++(135:1)    --++(-135:1);

     \draw[very thick] (0,0)--++(135:4)--++(45:2)--++(-45:1)--++(45:1)--++(-45:2)
     --++(45:2)--++(-45:1)--(0,0);

   \path(0,0)--++(45:2.75)--++(-45:2.75) coordinate (bottomR);
   \path(0,0)--++(135:2.75)--++(-135:2.75) coordinate (bottomL);
  \draw[very thick] (bottomL)--(bottomR);
    \draw[very thick,densely dotted] (bottomL)--++(180:0.5);

       \path(0,0)
       --++(45:2.75)--++(-45:2.75)--++(135:5)--++(45:5) coordinate (topR);
   \path(0,0)--++(135:2.75)--++(-135:2.75)--++(45:5)--++(135:5) coordinate (topL);
  \draw[very thick] (topL)--(topR);
    \draw[very thick,densely dotted] (topL)--++(180:0.5);
     \draw[very thick,densely dotted] (topR)--++( 0:0.5);

 \clip(bottomL)--(bottomR)--(topR)--(topL);

 \path(0,0) coordinate (start);
   
     \path(start)--++(45:0.5) coordinate (X1);
     \path(start)--++(135:0.5) coordinate (X2);     
     \draw[very thick,gray!70] (X1) to [out=135,in= 45] (X2);
  \path(X1)--++(45:0.5)--++(135:0.5) coordinate (X3);
     \path(X2)--++(45:0.5)--++(135:0.5) coordinate (X4);
     \draw[very thick,gray!70] (X3) to [out=-135,in= -45] (X4);; 
 
     \draw[very thick,gray!70] (X1)--++(-90:7);
     \draw[very thick,gray!70] (X2)--++(-90:7); 
 
 \path(start)--++(45:1) coordinate (start);
   
     \path(start)--++(45:0.5) coordinate (X1);
     \path(start)--++(135:0.5) coordinate (X2);     
     \draw[very thick,gray!70] (X1) to [out=135,in= 45] (X2);
  \path(X1)--++(45:0.5)--++(135:0.5) coordinate (X3) ;
     \path(X2)--++(45:0.5)--++(135:0.5) coordinate (X4);
     \draw[very thick,gray!70] (X3) to [out=-135,in= -45] (X4);; 
 
    \draw[very thick,gray!70] (X1)--++(-90:7);

 \path(start)--++(45:1) coordinate (start);
   
     \path(start)--++(45:0.5) coordinate (X1);
     \path(start)--++(135:0.5) coordinate (X2);     
     \draw[very thick,gray!70] (X1) to [out=135,in= 45] (X2);
  \path(X1)--++(45:0.5)--++(135:0.5) coordinate (X3) ;
     \path(X2)--++(45:0.5)--++(135:0.5) coordinate (X4);
     \draw[very thick,gray!70] (X3) to [out=-135,in= -45] (X4);; 
 
    \draw[very thick,gray!70] (X1)--++(-90:7);

 \path(start)--++(45:1) coordinate (start);
   
     \path(start)--++(45:0.5) coordinate (X1);
     \path(start)--++(135:0.5) coordinate (X2);     
     \draw[very thick,gray!70] (X1) to [out=135,in= 45] (X2);
  \path(X1)--++(45:0.5)--++(135:0.5) coordinate (X3) ;
     \path(X2)--++(45:0.5)--++(135:0.5) coordinate (X4);
     \draw[very thick ] (X3) to [out=-135,in= -45] (X4);;

    \draw[very thick,gray!70 ] (X1)--++(-90:7);


    \draw[very thick  ] (X4)--++(90:7);

 \path(start)--++(45:1) coordinate (start);
   
     \path(start)--++(45:0.5) coordinate (X1);
     \path(start)--++(135:0.5) coordinate (X2);     
     \draw[very thick ] (X1) to [out=135,in= 45] (X2);
  \path(X1)--++(45:0.5)--++(135:0.5) coordinate (X3) ;
     \path(X2)--++(45:0.5)--++(135:0.5) coordinate (X4);
     \draw[very thick,gray!70] (X3) to [out=-135,in= -45] (X4);;

    \draw[very thick ] (X1)--++(-90:7);

    \draw[very thick,gray!70] (X3)--++(90:7); 

    \draw[very thick,gray!70] (X4)--++(90:7);


 \path(0,0) coordinate (start);
   
     \path(start)--++(45:0.5) coordinate (X1);
     \path(start)--++(135:0.5) coordinate (X2);     
     \draw[very thick,gray!70] (X1) to [out=135,in= 45] (X2);
  \path(X1)--++(45:0.5)--++(135:0.5) coordinate (X3);
     \path(X2)--++(45:0.5)--++(135:0.5) coordinate (X4);
     \draw[very thick,gray!70] (X3) to [out=-135,in= -45] (X4);; 
 
     \draw[very thick,gray!70] (X1)--++(-90:7);
     \draw[very thick,gray!70] (X2)--++(-90:7);

 \path(start)--++(135:1) coordinate (start);
   
     \path(start)--++(45:0.5) coordinate (X1);
     \path(start)--++(135:0.5) coordinate (X2);     
     \draw[very thick,gray!70] (X1) to [out=135,in= 45] (X2);
  \path(X1)--++(45:0.5)--++(135:0.5) coordinate (X3) ;
     \path(X2)--++(45:0.5)--++(135:0.5) coordinate (X4);
     \draw[very thick,gray!70] (X3) to [out=-135,in= -45] (X4);; 
 
    \draw[very thick,gray!70] (X2)--++(-90:7);

 \path(start)--++(135:1) coordinate (start);
   
     \path(start)--++(45:0.5) coordinate (X1);
     \path(start)--++(135:0.5) coordinate (X2);     
     \draw[very thick,gray!70] (X1) to [out=135,in= 45] (X2);
  \path(X1)--++(45:0.5)--++(135:0.5) coordinate (X3) ;
     \path(X2)--++(45:0.5)--++(135:0.5) coordinate (X4);
     \draw[very thick,gray!70] (X3) to [out=-135,in= -45] (X4);; 
 
    \draw[very thick,gray!70] (X2)--++(-90:7);

 \path(start)--++(135:1) coordinate (start);
   
     \path(start)--++(45:0.5) coordinate (X1);
     \path(start)--++(135:0.5) coordinate (X2);     
     \draw[very thick,gray!70] (X1) to [out=135,in= 45] (X2);
  \path(X1)--++(45:0.5)--++(135:0.5) coordinate (X3) ;
     \path(X2)--++(45:0.5)--++(135:0.5) coordinate (X4);
     \draw[very thick,gray!70] (X3) to [out=-135,in= -45] (X4);; 
 
    \draw[very thick,gray!70] (X2)--++(-90:7);
    \draw[very thick,gray!70] (X4)--++(90:7);


 \path(0,0)--++(45:1)--++(135:1) coordinate (start);
   
     \path(start)--++(45:0.5) coordinate (X1);
     \path(start)--++(135:0.5) coordinate (X2);     
     \draw[very thick,gray!70] (X1) to [out=135,in= 45] (X2);
  \path(X1)--++(45:0.5)--++(135:0.5) coordinate (X3) ;
     \path(X2)--++(45:0.5)--++(135:0.5) coordinate (X4);
     \draw[very thick,gray!70] (X3) to [out=-135,in= -45] (X4);;

 \path(start)--++(45:1)  coordinate (start);
   
     \path(start)--++(45:0.5) coordinate (X1);
     \path(start)--++(135:0.5) coordinate (X2);     
     \draw[very thick,gray!70] (X1) to [out=135,in= 45] (X2);
  \path(X1)--++(45:0.5)--++(135:0.5) coordinate (X3) ;
     \path(X2)--++(45:0.5)--++(135:0.5) coordinate (X4);
     \draw[very thick,gray!70] (X3) to [out=-135,in= -45] (X4);;

    \draw[very thick,gray!70] (X3)--++(90:7);


 \path(0,0)--++(45:1)--++(135:1) coordinate (start);
   
     \path(start)--++(45:0.5) coordinate (X1);
     \path(start)--++(135:0.5) coordinate (X2);     
     \draw[very thick,gray!70] (X1) to [out=135,in= 45] (X2);
  \path(X1)--++(45:0.5)--++(135:0.5) coordinate (X3) ;
     \path(X2)--++(45:0.5)--++(135:0.5) coordinate (X4);
     \draw[very thick,gray!70] (X3) to [out=-135,in= -45] (X4);;

 \path(start)--++(135:1)  coordinate (start);
   
     \path(start)--++(45:0.5) coordinate (X1);
     \path(start)--++(135:0.5) coordinate (X2);     
     \draw[very thick,gray!70] (X1) to [out=135,in= 45] (X2);
  \path(X1)--++(45:0.5)--++(135:0.5) coordinate (X3) ;
     \path(X2)--++(45:0.5)--++(135:0.5) coordinate (X4);
     \draw[very thick,gray!70] (X3) to [out=-135,in= -45] (X4);;

 \path(start)--++(135:1)  coordinate (start2);
   
     \path(start2)--++(45:0.5) coordinate (X1);
     \path(start2)--++(135:0.5) coordinate (X2);     
     \draw[very thick,gray!70 ] (X1) to [out=135,in= 45] (X2);
  \path(X1)--++(45:0.5)--++(135:0.5) coordinate (X3) ;
     \path(X2)--++(45:0.5)--++(135:0.5) coordinate (X4);
     \draw[very thick ,gray!70] (X3) to [out=-135,in= -45] (X4);; 
     \draw[very thick,gray!70] (X4)--++(90:7); 

     \draw[very thick,gray!70] (X3)--++(90:7);

 \path(start)--++(-45:1)--++(45:1)  coordinate (start);
   
     \path(start)--++(45:0.5) coordinate (X1);
     \path(start)--++(135:0.5) coordinate (X2);     
     \draw[very thick,gray!70] (X1) to [out=135,in= 45] (X2);
  \path(X1)--++(45:0.5)--++(135:0.5) coordinate (X3) ;
     \path(X2)--++(45:0.5)--++(135:0.5) coordinate (X4);
     \draw[very thick,gray!70] (X3) to [out=-135,in= -45] (X4);;

 \path(start)--++(135:1)  coordinate (start2);
   
     \path(start2)--++(45:0.5) coordinate (X1);
     \path(start2)--++(135:0.5) coordinate (X2);     
     \draw[very thick,gray!70 ] (X1) to [out=135,in= 45] (X2);
  \path(X1)--++(45:0.5)--++(135:0.5) coordinate (X3) ;
     \path(X2)--++(45:0.5)--++(135:0.5) coordinate (X4);
     \draw[very thick ,gray!70] (X3) to [out=-135,in= -45] (X4);; 
     \draw[very thick,gray!70] (X4)--++(90:7); 
      \draw[very thick,gray!70] (X3)--++(90:7);

\clip(0,0)--++(135:4)--++(45:2)--++(-45:1)
   --++(45:1)--++(-45:2)--++(45:2)
   --++(-45:1) 
 --(0,0);

       \foreach \i in {0,1,2,3,4,5,6,7,8,9,10,11,12,13}
{
\path (origin2)--++(45:0.5*\i) coordinate (c\i); 
\path (origin2)--++(135:0.5*\i)  coordinate (d\i); 
  }

   \foreach \i in {0,1,2,3,4,5,6,7,8,9,10,11,12,13}
{
\path (origin2)--++(45:1*\i) coordinate (c\i); 
\path (origin2)--++(135:1*\i)  coordinate (d\i); 
\draw[thick,densely dotted] (c\i)--++(135:14);
\draw[thick,densely dotted] (d\i)--++(45:14);
  }

\end{tikzpicture} 
   $$
   \vspace{-1cm}
   
   \caption{On the left we picture the partition/cup diagram for $(5^3,4,1)$ and we highlight the arc, $p$, and the corresponding 
removable    Dyck  path $P$. On the right we have the
   partition/cup diagram obtained by removing $P$.      }
   \label{example-2}
  \end{figure}
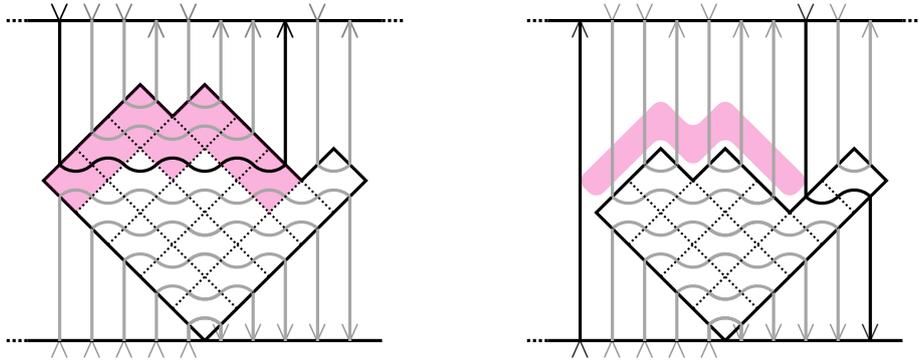

    \subsection{Dyck paths}
 
 We define a path on the $m\times n$ tiled rectangle to    
be a finite non-empty set $P$
 of  tiles  that are ordered $[r_1, c_1], \ldots , [r_s, c_s]$ for some $s\geq 1$ such that 
 for each $1\leq i\leq s-1$ we have $[r_{i+1}, c_{i+1}] = [r_i+1, c_i]$ or $[r_i, c_i -1]$. Note that the set $\underline{\sf cont}(P)$ of contents of the tiles in a path $P$ form an interval of integers.
 We say that $P$ is a {\sf Dyck path} if
 $$\min \{ r_i+c_i-1 \, : \, 1\leq i\leq s\} = r_1+c_1-1 = r_s+c_s-1,$$that is, the minimal height of the path is achieved at the start and end of the path.
 We will write 
 \begin{align*}
 {\sf first}(P) = {\sf cont}([r_1, c_1]) \qquad\textup{ and } \qquad {\sf last}(P) = {\sf cont}([r_s, c_s]). 
 \end{align*}
  We designate the {\sf height} \(h(P)\) and {\sf breadth} \(b(P)\) as:
 \begin{align*}
{\sf ht}(P) = r_1+c_1-1-m = r_s+c_s-1 -m\qquad \textup{ and }
  \qquad b(P) =  \tfrac{1}{2}(|P|+1)
 \end{align*}
 so that \(h(P)\) records the vertical position of the lowest nodes in \(P\) and \(b(P)\) records the horizontal distance covered by \(P\).

\begin{defn} Let $P$ and $Q$ be Dyck paths.
\begin{itemize}[leftmargin=*]
\item We say that $P$ and $Q$ are {\sf adjacent} if and only if the multiset given by the disjoint union ${\sf cont}(P) \sqcup {\sf cont}(Q)$ is an interval.
\item We say that $P$ and $Q$ are {\sf distant} if and only if $$\min \{  |{\sf cont}[r,c] - {\sf cont}[x,y]| \, : \, [r,c]\in P, [x,y]\in Q\} \geq 2.$$
\item We say that $P$ {\sf covers} $Q$ and write $Q\prec P$ if and only if $${\sf first}(Q) >{\sf first}(P) \,\,  \mbox{and} \,\,  {\sf last}(Q) < {\sf last} (P).$$
\end{itemize}
 \end{defn}
 
 Examples of such Dyck paths $P$ and $Q$ are given in Figure \ref{adjdistcover}.

\begin{figure}[ht!]
$$   \begin{tikzpicture}[scale=0.35]
 \path(0,0) coordinate (origin2);
  \path(0,0)--++(135:2) coordinate (origin3);

     \foreach \i in {0,1,2,3,4,5,6,7,8}
{
\path (origin3)--++(45:0.5*\i) coordinate (c\i); 
\path (origin3)--++(135:0.5*\i)  coordinate (d\i); 
  }

   \foreach \i in {0,1,2,3,4,5,6,7,8}
{
\path (origin3)--++(45:1*\i) coordinate (c\i); 
\path (c\i)--++(-45:0.5) coordinate (c\i); 
\path (origin3)--++(135:1*\i)  coordinate (d\i); 
\path (d\i)--++(-135:0.5) coordinate (d\i); 
\draw[thick,densely dotted] (c\i)--++(135:9);
\draw[thick,densely dotted] (d\i)--++(45:9);
  }

\path(origin3)--++(45:-0.5)--++(135:7.5) coordinate (X);

\path(X)--++(45:1) coordinate (X) ;
\fill[magenta](X) circle (4pt);
\draw[ thick, magenta](X)--++(45:1) coordinate (X) ;
\fill[magenta](X) circle (4pt);
\draw[ thick, magenta](X)--++(45:1) coordinate (X) ;
\fill[magenta](X) circle (4pt);
\draw[ thick, magenta](X)--++(-45:1) coordinate (X) ;
\fill[magenta](X) circle (4pt);
\draw[ thick, magenta](X)--++(45:1) coordinate (X) ;
 \fill[magenta](X) circle (4pt);
\draw[ thick, magenta](X)--++(45:1) coordinate (X) ;

\fill[magenta](X) circle (4pt);
\draw[ thick, magenta](X)--++(-45:1) coordinate (X) ;
\fill[magenta](X) circle (4pt);
\draw[ thick, magenta](X)--++(-45:1) coordinate (X) ;
\fill[magenta](X) circle (4pt);
\draw[ thick, magenta](X)--++(-45:1) coordinate (X) ;
\fill[magenta](X) circle (4pt);

\path (X)--++(45:1) coordinate (X) ; 
\fill[cyan](X) circle (4pt);
\draw[ thick, cyan](X)--++(45:1) coordinate (X) ;
\fill[cyan](X) circle (4pt);
\draw[ thick, cyan](X)--++(45:1) coordinate (X) ;
\fill[cyan](X) circle (4pt);
\draw[ thick, cyan](X)--++(-45:1) coordinate (X) ;
\fill[cyan](X) circle (4pt);
\draw[ thick, cyan](X)--++(-45:1) coordinate (X) ;
\fill[cyan](X) circle (4pt);
\end{tikzpicture} 
\qquad
   \begin{tikzpicture}[scale=0.35]
 \path(0,0) coordinate (origin2);
  \path(0,0)--++(135:2) coordinate (origin3);

     \foreach \i in {0,1,2,3,4,5,6,7,8}
{
\path (origin3)--++(45:0.5*\i) coordinate (c\i); 
\path (origin3)--++(135:0.5*\i)  coordinate (d\i); 
  }

   \foreach \i in {0,1,2,3,4,5,6,7,8}
{
\path (origin3)--++(45:1*\i) coordinate (c\i); 
\path (c\i)--++(-45:0.5) coordinate (c\i); 
\path (origin3)--++(135:1*\i)  coordinate (d\i); 
\path (d\i)--++(-135:0.5) coordinate (d\i); 
\draw[thick,densely dotted] (c\i)--++(135:9);
\draw[thick,densely dotted] (d\i)--++(45:9);
  }

\path(origin3)--++(45:-0.5)--++(135:7.5) coordinate (X);

\path(X)--++(45:1) coordinate (X) ;
\fill[magenta](X) circle (4pt);
\draw[ thick, magenta](X)--++(45:1) coordinate (X) ;
\fill[magenta](X) circle (4pt);
\draw[ thick, magenta](X)--++(45:1) coordinate (X) ;
\fill[magenta](X) circle (4pt);
\draw[ thick, magenta](X)--++(-45:1) coordinate (X) ;
\fill[magenta](X) circle (4pt);
\draw[ thick, magenta](X)--++(45:1) coordinate (X) ;

\fill[magenta](X) circle (4pt);
\draw[ thick, magenta](X)--++(-45:1) coordinate (X) ;
\fill[magenta](X) circle (4pt);
\draw[ thick, magenta](X)--++(-45:1) coordinate (X) ;
\fill[magenta](X) circle (4pt);
\path(X)--++(-45:1) coordinate (X) ;

\path (X)--++(45:2) coordinate (X) ; 
\fill[cyan](X) circle (4pt);
\draw[ thick, cyan](X)--++(45:1) coordinate (X) ;
\fill[cyan](X) circle (4pt);
\draw[ thick, cyan](X)--++(-45:1) coordinate (X) ;
\fill[cyan](X) circle (4pt);
\draw[ thick, cyan](X)--++(45:1) coordinate (X) ;
\fill[cyan](X) circle (4pt);
\draw[ thick, cyan](X)--++(-45:1) coordinate (X) ;
\fill[cyan](X) circle (4pt);
\end{tikzpicture}
\qquad
   \begin{tikzpicture}[scale=0.35]
 \path(0,0) coordinate (origin2);
  \path(0,0)--++(135:2) coordinate (origin3);

     \foreach \i in {0,1,2,3,4,5,6,7,8}
{
\path (origin3)--++(45:0.5*\i) coordinate (c\i); 
\path (origin3)--++(135:0.5*\i)  coordinate (d\i); 
  }

   \foreach \i in {0,1,2,3,4,5,6,7,8}
{
\path (origin3)--++(45:1*\i) coordinate (c\i); 
\path (c\i)--++(-45:0.5) coordinate (c\i); 
\path (origin3)--++(135:1*\i)  coordinate (d\i); 
\path (d\i)--++(-135:0.5) coordinate (d\i); 
\draw[thick,densely dotted] (c\i)--++(135:9);
\draw[thick,densely dotted] (d\i)--++(45:9);
  }

\path(origin3)--++(45:-0.5)--++(135:7.5) coordinate (X);

\path(X)--++(45:1) coordinate (X) ;
\fill[magenta](X) circle (4pt);
\draw[ thick, magenta](X)--++(45:1) coordinate (X) ;
\fill[magenta](X) circle (4pt);
\draw[ thick, magenta](X)--++(45:1) coordinate (X) ;
\fill[magenta](X) circle (4pt);
\draw[ thick, magenta](X)--++(45:1) coordinate (X) ;

\fill[magenta](X) circle (4pt);
\draw[ thick, magenta](X)--++(45:1) coordinate (X) ;
\fill[magenta](X) circle (4pt);
\draw[ thick, magenta](X)--++(45:1) coordinate (X) ;
\fill[magenta](X) circle (4pt);
\draw[ thick, magenta](X)--++(-45:1) coordinate (X) ;
\fill[magenta](X) circle (4pt);
 \draw[ thick, magenta](X)--++(45:1) coordinate (X) ;
\fill[magenta](X) circle (4pt);
\draw[ thick, magenta](X)--++(-45:1) coordinate (X) ;
\fill[magenta](X) circle (4pt);
\draw[ thick, magenta](X)--++(45:1) coordinate (X) ;
\fill[magenta](X) circle (4pt);

 \draw[ thick, magenta](X)--++(-45:1) coordinate (X) ;
\fill[magenta](X) circle (4pt);
 \draw[ thick, magenta](X)--++(-45:1) coordinate (X) ;
\fill[magenta](X) circle (4pt);
 \draw[ thick, magenta](X)--++(-45:1) coordinate (X) ;
\fill[magenta](X) circle (4pt);
 \draw[ thick, magenta](X)--++(-45:1) coordinate (X) ;
\fill[magenta](X) circle (4pt);
 \draw[ thick, magenta](X)--++(-45:1) coordinate (X) ;
\fill[magenta](X) circle (4pt);

\path (X)--++(-135:1)--++(135:2) coordinate (X) ; 
\fill[cyan](X) circle (4pt);
\draw[ thick, cyan](X)--++(135:1) coordinate (X) ;
\fill[cyan](X) circle (4pt);
\draw[ thick, cyan](X)--++(135:1) coordinate (X) ;
\fill[cyan](X) circle (4pt);
 
 \draw[ thick, cyan](X)--++(-135:1) coordinate (X) ;
\fill[cyan](X) circle (4pt);
\draw[ thick, cyan](X)--++(135:1) coordinate (X) ;
\fill[cyan](X) circle (4pt);
\draw[ thick, cyan](X)--++(-135:1) coordinate (X) ;
\fill[cyan](X) circle (4pt);
\draw[ thick, cyan](X)--++(135:1) coordinate (X) ;
\fill[cyan](X) circle (4pt);
\draw[ thick, cyan](X)--++(-135:1) coordinate (X) ;
\fill[cyan](X) circle (4pt);
\draw[ thick, cyan](X)--++(-135:1) coordinate (X) ;
\fill[cyan](X) circle (4pt);

 \end{tikzpicture}
 $$
\caption{Examples of $\color{magenta}P$ and $\color{cyan}Q$ adjacent, distant, and ${\color{cyan}Q }\prec \color{magenta}  P$ respectively.}
\label{adjdistcover}
\end{figure}

Now we fix a partition $\mu\in \mptn$.   
Throughout the paper, we will identify all Dyck paths $P$ having the same content interval $\underline{\sf cont}(P)$. 
There are a few of places where we will need to fix a particular representative for a Dyck path $P$ and in that case we will use subscripts, such as $P_b$.

\begin{defn}
Let $\mu \in \mptn$ and $P$ be a Dyck path. We say that $P$ is a {\sf removable  Dyck path} from $\mu$ if 
 there is a representative $P_{b}$ of $P$   such that $\la  :=  \mu\setminus P_b\in \mptn$.
 In this case we will write $\la = \mu-P$, and \({\sf ht}^\mu(P):={\sf ht}(P_b)\). 
 We define the set ${\rm DRem}(\mu)$ to be the set of all removable Dyck paths from $\mu$.
 
 We say that $P$ is an {\sf addable Dyck path} of $\mu$ if there is a representative $P_b$ of $P$ such that $\la := \mu \sqcup P_b \in \mptn$. In this case we will write $\la = \mu + P$  and  ${\sf ht}^\la(P):=h(P_b)$. 
 We define the set ${\rm DAdd}(\mu)$ to be the set of all addable Dyck paths of $\mu$. 
 
   We let ${\rm DRem}_k(\mu)$ denote the set of all removable Dyck paths of $\mu$ of height 
  $k$ and similarly ${\rm DRem}_{\geq k}(\mu)$, ${\rm DRem}_{\leq k}(\mu)$  (and we define similarly ${\rm DAdd}_k(\mu)$ et cetera).
  \end{defn}
 
 
A removable Dyck path \(P \in {\rm DRem}(\mu)\) corresponds to an anti-clockwise arc in the diagram \(\underline{\mu} \mu\) with endpoints at \({\sf first}(P)\) and \({\sf last}(P)\). In particular, if \(P_b\) is the removable representative of \(P \in {\rm DRem}(\mu)\), then its height \({\sf ht}^\mu(P)\) is given by
\begin{align*}
{\sf ht}^\mu(P) = \#\{\down \textup{ to the left of } \first(P) \textup{ in } \mu\} - \#\{\up \textup{ to the left of } \first(P) \textup{ in } \mu\}. 
\end{align*}


%

\begin{defn}
Let $\mu\in \mptn$ and $P,Q\in {\rm DRem}(\mu)$. We say that $P$ and $Q$ {\sf commute} if $P\in {\rm DRem}(\mu -Q)$ and $Q\in {\rm DRem}(\mu - P)$.
\end{defn}

%

  We wish to consider the effect of adding two Dyck paths, or subtracting one from the other.

\begin{defn}
Let $P\in {\rm DRem}(\mu)$ and $Q\in {\rm DRem}(\mu - P)$ be adjacent. 
We define the {\sf merge} of $P$ and $Q$, denoted 
 $\langle P\cup Q \rangle_\mu$, if it exists, to be  the smallest removable Dyck path of $\mu$ containing $P\cup Q$. 
\end{defn}

\begin{defn}
 Let $P,Q\in {\rm DRem}(\mu)$   be adjacent, with \(P \prec Q\).
 Then we define the {\sf  split} of $Q$ by $P$, denoted \({\sf split}_Q(P)\), to be the Dyck tiling 
 $Q\setminus P = Q^1\sqcup Q^2$ where $Q^1, Q^2 \in {\rm DRem}(\mu - P)$. 
 
 \end{defn}


 \begin{defn}\label{Dyckpair}
 Let $\lambda\subseteq  \mu\in \mptn$. A {\sf Dyck tiling} of the skew partition $\mu\setminus \la$ is a set $\{P^1,\dotsc,P^k\}$ of Dyck paths such that
 $$\mu\setminus \la = \bigsqcup_{i=1}^k P^i $$
 and for each $i\neq j$ we have either $P^i$ covers $P^j$ (or vice versa), or $P^i$ and $P^j$ are distant. 
 We call $(\la,\mu)$ a {\sf Dyck pair of degree $k$} if $\mu \setminus \la$ has a Dyck tiling with $k$ Dyck paths.
 \end{defn}
 
 The Dyck tiling \(\{P^1, \ldots, P^k\}\) for \(\mu \backslash \la\) (if it exists) is unique, though there are generally distinct choices of representatives \(\{P^1_b, \ldots, P^k_b\}\) for \(\{P^1, \ldots, P^k\}\) (we will consider these choices in the next subsection). We will freely associate the skew partition \(\mu \backslash \la\) with its Dyck tiling, and abuse notation in using \(\mu \backslash \la\) to refer to the Dyck tiling as well.

\subsection{Dyck tableaux}
A {\sf good move}   is a map  
$ \underline{\mu}\la\to \underline{\nu} \la$
 such that $\deg( \underline{\nu}\la)=\deg( \underline{\mu}\la)-1$ and is of one of 
  the following four possible forms.
   The first  case to consider is when 
$\nu=\mu-P$ where $P \in {\rm DRem}(\mu)$ and $ d(\nu)=d(\mu)-1$,
in which case we can break the arc $p$ into 2 strands as follows: 
\begin{align} \tag{G1}\label{gm1}
 \begin{minipage}{2.2cm}  \begin{tikzpicture} [scale=0.6 ]
\draw[densely dotted](0,0)--++(180:0.25);
\draw[densely dotted](2.75,0)--++(0:0.25);
\draw(0,0)--++(0:1);
\draw[densely dotted](1,0)--++(0:0.75);
\draw(1.75,0)--++(0:1);
\path(2.25,0) --++(90:0.12) node  {  $  \down   $} ;
\path(0.5,0) --++(-90:0.15) node  {  $  \up   $} ;
\draw(0.5,0)--++(-90:0.2)   to [out=-90,in=180] (1+0.25,-0.85) --(1.75-0.25,-0.85)   to [out=0,in=-90]  (2.25,-0.2)--(2.25,0);
\draw[densely dotted](1.375,0) circle (1.625cm);
\draw[white,thick,densely dotted](1+0.125,-0.85) --(1.75-0.125,-0.85);
 	\end{tikzpicture}	
	\end{minipage}
	 \xrightarrow { \ \ \ }\;
	 \begin{minipage}{2.3cm}  \begin{tikzpicture} [scale=0.6 ]
\draw[densely dotted](0,0)--++(180:0.25);
\draw[densely dotted](2.75,0)--++(0:0.25);
\draw(0,0)--++(0:1);
\draw[densely dotted](1,0)--++(0:0.75);
\draw(1.75,0)--++(0:1);
\path(2.25,0) --++(90:0.12) node  {  $  \down   $} ;
\path(0.5,0) --++(-90:0.15) node  {  $  \up   $} ;
\draw[densely dotted](1.375,0) circle (1.625cm);
\clip(1.375,0) circle (1.625cm);
\draw(0.5,0)--++(-90:0.2) to [out=-90,in=15] (0,-0.8);
\draw(2.25,0)--++(-90:0.2) to [out=-90,in=180-15] (2.75,-0.8);
 	\end{tikzpicture}	
	\end{minipage} 
\end{align}
The second   case is that 
$\nu=\mu-P$ where $P \in {\rm DRem}(\mu)$ and
$d(\mu)=d(\nu)$.  
 We first suppose that    there does not exist $P\prec Q\in {\rm DRem}  (\la)$, in which case we can do precisely one of  the following good moves
\begin{align} \tag{G2}\label{gm2}
 \begin{minipage}{2.6cm}  \begin{tikzpicture} [scale=0.6,xscale=-1 ]
\draw[densely dotted](0,0)--++(180:0.25);
\draw[densely dotted](3.5,0)--++(0:0.25);
\draw(0,0)--++(0:1);
\draw[densely dotted](1,0)--++(0:0.75);
\draw(1.75,0)--++(0:1.75);
\path(0.5,0) --++(90:0.12) node  {  $  \down   $} ;
\path(2.25 ,0) --++(-90:0.15) node  {  $  \up   $} ;
\path(3,0) --++(90:0.12) node  {  $  \down   $} ;
\draw(0.5,0)--++(-90:0.2)   to [out=-90,in=180] (1+0.25,-0.75) --(1.75-0.25,-0.75)   to [out=0,in=-90]  (2.25,-0.2)--(2.25,0);
\draw(3,0)--++(-90:0.2) to [out=-90,in=5] (0.15,-1.2);
\draw[densely dotted](1.75,0) circle (2cm);
\draw[white,thick,densely dotted](1+0.125,-0.75) --(1.75-0.125,-0.75);
 	\end{tikzpicture}	
	\end{minipage}
	  \;\xrightarrow { \ \ \ }\;\;
\begin{minipage}{2.6cm}  \begin{tikzpicture} [scale=0.6,xscale=-1 ]
\draw[densely dotted](0,0)--++(180:0.25);
\draw[densely dotted](3.5,0)--++(0:0.25);
\draw(0,0)--++(0:1);
\draw[densely dotted](1,0)--++(0:0.75);
\draw(1.75,0)--++(0:1.75);
\path(0.5,0) --++(90:0.12) node  {  $  \down   $} ;
\path(2.25 ,0) --++(-90:0.15) node  {  $  \up   $} ;
\path(3,0) --++(90:0.12) node  {  $  \down   $} ;
\draw(2.25,0)--++(-90:0.2) to [out=-90,in=180]  (2.25+0.75/2,-0.8) 
to [out=0,in=-90]   (3,-0.2)--(3,0);
\draw(0.5,0)--++(-90:0.2) to [out=-90,in=0] (-0.05,-0.8);
\draw[densely dotted](1.75,0) circle (2cm);
\draw[white,thick,densely dotted](1.35,-1.2)  --++(0:0.5);
 	\end{tikzpicture}	
	\end{minipage}
	\qquad
	\qquad
	 \begin{minipage}{2.6cm}  \begin{tikzpicture} [scale=0.6 ]
\draw[densely dotted](0,0)--++(180:0.25);
\draw[densely dotted](3.5,0)--++(0:0.25);
\draw(0,0)--++(0:1);
\draw[densely dotted](1,0)--++(0:0.75);
\draw(1.75,0)--++(0:1.75);
\path(2.25,0) --++(90:0.12) node  {  $  \down   $} ;
\path(0.5,0) --++(-90:0.15) node  {  $  \up   $} ;
\path(3,0) --++(-90:0.15) node  {  $  \up   $} ;
\draw(0.5,0)--++(-90:0.2)   to [out=-90,in=180] (1+0.25,-0.75) --(1.75-0.25,-0.75)   to [out=0,in=-90]  (2.25,-0.2)--(2.25,0);
\draw(3,0)--++(-90:0.2) to [out=-90,in=5] (0.15,-1.2);
\draw[densely dotted](1.75,0) circle (2cm);
\draw[white,thick,densely dotted](1+0.125,-0.75) --(1.75-0.125,-0.75);
 	\end{tikzpicture}	
	\end{minipage}
	  \;\xrightarrow { \ \ \ }\;\;
\begin{minipage}{2.6cm}  \begin{tikzpicture} [scale=0.6 ]
\draw[densely dotted](0,0)--++(180:0.25);
\draw[densely dotted](3.5,0)--++(0:0.25);
\draw(0,0)--++(0:1);
\draw[densely dotted](1,0)--++(0:0.75);
\draw(1.75,0)--++(0:1.75);
\path(2.25,0) --++(90:0.12) node  {  $  \down   $} ;
\path(0.5,0) --++(-90:0.15) node  {  $  \up   $} ;
\path(3,0) --++(-90:0.15) node  {  $  \up   $} ;
\draw(2.25,0)--++(-90:0.2) to [out=-90,in=180]  (2.25+0.75/2,-0.8) 
to [out=0,in=-90]   (3,-0.2)--(3,0);
\draw(0.5,0)--++(-90:0.2) to [out=-90,in=0] (-0.015,-1);
\draw[densely dotted](1.75,0) circle (2cm);
\draw[white,thick,densely dotted](1.35,-1.2)  --++(0:0.5);
 	\end{tikzpicture}	
	\end{minipage}
\end{align}We now suppose that  there does  exist  $P\prec Q\in {\rm DRem}  (\la)$ and suppose $b(Q)$ is minimal with respect to this property.
We can further assume that  there is no $P\prec R\prec Q$  such that $R \in \mu /\la$. 
In which case we   do  the following good move 
\begin{align} \tag{G3}\label{gm3}
	 \begin{minipage}{2.6cm}  \begin{tikzpicture} [scale=0.6 ]
\path(0,0)coordinate (X);
\draw[densely dotted](X)--++(0:0.2 ) coordinate (X);
 \draw(X)--++(0:0.6)coordinate (X);
 \draw[densely dotted](X)--++(0:0.4 ) coordinate (X);
 \draw(X)--++(0:0.6)coordinate (X);
\draw[densely dotted](X)--++(0:0.4 ) coordinate (X);
 \draw(X)--++(0:0.6)coordinate (X);
\draw[densely dotted](X)--++(0:0.4 ) coordinate (X);
 \draw(X)--++(0:0.6)coordinate (X);
\draw[densely dotted](X)--++(0:0.2 ) coordinate (X);
  \path(0,0) --++(0:0.5)--++(90:0.12)  node  {  $  \down   $} ;
\path(0,0)--++(0:0.5+1) --++(-90:0.15) node  {  $  \up   $} ;
 \path(0,0) --++(0:0.5+2)--++(90:0.12)  node  {  $  \down   $} ;
\path(0,0)--++(0:0.5+3) --++(-90:0.15) node  {  $  \up   $} ;
 \draw(1.5,0)--++(-90:0.4)   to  [out=-90,in=180] (2,-1) to [out=0,in=-90]  (2.5,-0.4)--(2.5,0);
 \draw(0.5,0)--++(-90:0.4)   to  [out=-90,in=180] (2,-1.3) to [out=0,in=-90]  (3.5,-0.4)--(3.5,0);
 \draw[densely dotted](2,0) circle (2cm);
 \draw[white,thick,densely dotted] (1.8,-1)--(2.2,-1);
  \draw[white,thick,densely dotted] (1.6,-1.3)--(2.4,-1.3);
 	\end{tikzpicture}	
	\end{minipage}
	\xrightarrow { \ \ \ }\;	 \begin{minipage}{2.6cm}  \begin{tikzpicture} [scale=0.6 ]
\path(0,0)coordinate (X);
\draw[densely dotted](X)--++(0:0.2 ) coordinate (X);
 \draw(X)--++(0:0.6)coordinate (X);
 \draw[densely dotted](X)--++(0:0.4 ) coordinate (X);
 \draw(X)--++(0:0.6)coordinate (X);
\draw[densely dotted](X)--++(0:0.4 ) coordinate (X);
 \draw(X)--++(0:0.6)coordinate (X);
\draw[densely dotted](X)--++(0:0.4 ) coordinate (X);
 \draw(X)--++(0:0.6)coordinate (X);
\draw[densely dotted](X)--++(0:0.2 ) coordinate (X);
  \path(0,0) --++(0:0.5)--++(90:0.12)  node  {  $  \down   $} ;
\path(0,0)--++(0:0.5+1) --++(-90:0.15) node  {  $  \up   $} ;
 \path(0,0) --++(0:0.5+2)--++(90:0.12)  node  {  $  \down   $} ;
\path(0,0)--++(0:0.5+3) --++(-90:0.15) node  {  $  \up   $} ;
  \draw(0.5,0)--++(-90:0.4)   to  [out=-90,in=180] (1,-1) to [out=0,in=-90]  (1.5,-0.4)--(1.5,0);
  \draw(2.5,0)--++(-90:0.4)   to  [out=-90,in=180] (3,-1) to [out=0,in=-90]  (3.5,-0.4)--(3.5,0);
 \draw[densely dotted](2,0) circle (2cm);
  \draw[white,thick,densely dotted] (0.8,-1)--(1.2,-1);
    \draw[white,thick,densely dotted] (2.8,-1)--(3.2,-1);
	\end{tikzpicture}	
	\end{minipage}
\end{align}

\begin{prop}
For any Dyck pair $(\la,\mu)$ of degree $k$, there clearly exists a sequence of good moves
 $$\underline{\mu}\la=\underline{\mu_k}\la
 \to \underline{\mu_{k-1}}\la
 \to \dots \to 
 \underline{\mu_0}\la
 =\underline{\la}\la.$$
 \end{prop}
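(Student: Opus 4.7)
The plan is to proceed by induction on the degree $k$ of the Dyck pair $(\la, \mu)$. The base case $k = 0$ forces $\mu = \la$, and the empty sequence of moves suffices. For the inductive step with $k \geq 1$, I would exhibit one good move $\underline{\mu}\la \to \underline{\nu}\la$ with $\nu = \mu - P$ for a carefully chosen $P$ in the Dyck tiling of $\mu \setminus \la$. Deleting $P$ from that tiling yields a valid Dyck tiling of $\nu \setminus \la$ into $k-1$ paths (the covering/distance conditions among the remaining paths being preserved), so $(\la, \nu)$ is a Dyck pair of degree $k-1$ and the inductive hypothesis supplies the remaining $k-1$ moves.

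To select $P$, I would view the Dyck tiling $\{P^1,\dotsc,P^k\}$ of $\mu \setminus \la$ as a poset under the covering relation $\prec$ and pick $P$ to be maximal (no tile of the tiling covers $P$). Let $p$ denote the clockwise cup of $\underline{\mu}$ in $\underline{\mu}\la$ corresponding to $P$. Three mutually exclusive cases then arise. Case (i): if $d(\nu) = d(\mu) - 1$, the removal of $P$ creates an additional $\up$-labelled south-western ray in $\underline{\nu}\nu$, forcing the local picture of $\underline{\mu}\la$ near $p$ to match \eqref{gm1}, so the move \eqref{gm1} applies. Case (ii): if $d(\nu) = d(\mu)$ and no $Q \in {\rm DRem}(\la)$ satisfies $P \prec Q$, the cup-formation algorithm applied to $\la$ forces an endpoint of $p$ to be adjacent to a solitary ray of $\underline{\la}$, and the local picture matches \eqref{gm2}. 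Case (iii): if $d(\nu) = d(\mu)$ and some $Q \in {\rm DRem}(\la)$ satisfies $P \prec Q$, take $Q$ of minimal breadth; the maximality of $P$ in the tiling poset ensures no $R \in \mu \setminus \la$ satisfies $P \prec R \prec Q$, which is precisely the additional hypothesis of \eqref{gm3}, so that move applies.

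The principal obstacle is the pictorial verification in each of the three cases that the local configuration of $\underline{\mu}\la$ around $p$ truly matches the corresponding diagram. This amounts to unravelling the cup-formation algorithm for $\underline{\la}$ at the endpoints $\first(P)$ and $\last(P)+1$: the defect drop in case (i) must come from an unpaired $\up$-labelled endpoint of $\la$ at (or just outside) $p$; in case (ii), absence of a covering arc in $\la$ together with unchanged defect forces an adjacent $\up$- or $\down$-ray beside $p$; and in case (iii), the minimality of $Q$ combined with the maximality of $P$ in the tiling poset precludes any intervening Dyck structure between $P$ and $Q$. In each scenario, one checks that the only remaining configuration compatible with the constraints on $\mu, \la, P$ is the relevant one of \eqref{gm1}, \eqref{gm2}, \eqref{gm3}, completing the inductive step.
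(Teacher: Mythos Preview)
Your proposal is correct and follows essentially the same strategy as the paper. The paper's proof is a one-line observation that the only obstruction is the side condition in \eqref{gm3}, and that whenever it fails for $P$ because of some $R$ with $P\prec R\prec Q$, one should treat $R$ first; iterating this is exactly your choice of a $\prec$-maximal tile $P$, and your inductive framework makes the argument explicit where the paper leaves it implicit.
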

 \begin{proof}
We simply note that the only {\em apparent} restriction 
 for this process is that the \eqref{gm3} good move requires
there not exist  $P\prec R\prec Q$; 
 however, 
if there does exist such an $R$ 
then we can consider the pair $(R,Q)$ first.
\end{proof}

 \begin{defn}
 Let \((\la,\mu)\) be a Dyck pair of degree \(k\). A {\sf Dyck tableau} for \((\la, \mu)\) is any sequence of good moves 
$$\underline{\mu}\la=\underline{\mu_k}\la
 \to \underline{\mu_{k-1}}\la
 \to \dots \to 
 \underline{\mu_0}\la
 =\underline{\la}\la.$$
 \end{defn}

\subsection{Height of paths in a tiling}

Let \((\la,\mu)\) be a Dyck pair. Let \(P\) be a clockwise arc in \(\underline{\mu} \la\).
We define the set of Dyck paths  {\sf supported} by  $P\in \mu/\la$
 (and denote this set by  \({\sf supp}^\mu_\la(P)\))
    as follows. Extend a line downward from the lowest point of the arc \(P\) in \(\underline{\mu} \la\) which terminates as soon as it has intersected equal numbers of clockwise arcs (including \(P)\) as anti-clockwise arcs/half-arcs, or else never terminates. Then \({\sf supp}^\mu_\la(P)\) is the set of clockwise arcs in \( \underline{\mu}\la \) intersected by this line.  
 We define the {\sf height} of   a Dyck path $P\in \mu/\la$ as follows 
\begin{align}\label{realheight}
{\sf ht}^\mu_\la(P) = \min\{{\sf ht}^\mu(P), {\sf ht}^\mu(Q) -1 \mid Q \in {\sf supp}^\mu_\la(P)\backslash P \}.
\end{align}
Note that if \(P \in {\rm DRem}(\mu)\) then  \({\sf ht}^\mu(P) = {\sf ht}^\mu_{\mu - P}(P)\). 

  \begin{prop}
Let \((P^1, \ldots, P^k)\) be the unique ordering of the clockwise arcs in \(\underline{\mu} \lambda\) such that 
\begin{align*}
{\sf ht}^\mu_\la(P^i) < {\sf ht}^\mu_\la(P^{i+1})  \qquad \textup{or}  \qquad {\sf ht}^\mu_\la(P^i) = {\sf ht}^\mu_\la(P^{i+1}) \textup{ and } \last(P^i) < \first(P^{i+1}).
\end{align*}
Then, setting $\mu_i=\mu_{i+1}-P^{i+1}$, we have that 
\begin{align}\label{kfjghsdjkghdjfk}\underline{\mu}\la=\underline{\mu_k}\la
 \to \underline{\mu_{k-1}}\la
 \to \dots \to 
 \underline{\mu_0}\la
 =\underline{\la}\la\end{align}is a Dyck tableau. 
\end{prop}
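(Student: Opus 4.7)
The plan is to proceed by downward induction on $i$, verifying at each point that $\underline{\mu_{i+1}}\la \to \underline{\mu_i}\la$ is a valid good move. After $k - (i+1)$ previous removals, the clockwise oriented cups remaining in $\underline{\mu_{i+1}}\la$ correspond precisely to the Dyck paths $P^1,\ldots,P^{i+1}$, and by the specified ordering $P^{i+1}$ has the largest contextual height among them. To see that $P^{i+1}\in {\rm DRem}(\mu_{i+1})$, I would invoke the inequality implicit in \eqref{realheight}: for any $Q \in {\sf supp}^\mu_\la(P^{i+1}) \setminus \{P^{i+1}\}$ we have ${\sf ht}^\mu_\la(P^{i+1}) \leq {\sf ht}^\mu(Q) - 1 < {\sf ht}^\mu_\la(Q)$, so every arc supporting $P^{i+1}$ has been removed in an earlier step, forcing $P^{i+1}$ to be an outermost remaining clockwise cup in $\underline{\mu_{i+1}}\la$.

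It then remains to classify each transformation as one of \eqref{gm1}, \eqref{gm2}, or \eqref{gm3} by inspecting the local structure around $P^{i+1}$ in $\underline{\mu_{i+1}}\la$. If removing $P^{i+1}$ strictly decreases the defect, the endpoints of the arc detach as rays, giving \eqref{gm1}. Otherwise the defect is preserved, and the move is \eqref{gm2} if no anti-clockwise cup of $\underline{\la}$ covers $P^{i+1}$, or else \eqref{gm3} with the minimal (smallest-breadth) covering arc $Q \in \underline{\la}$, provided no intermediate clockwise arc $R \in \mu_{i+1}/\la$ satisfies $P^{i+1} \prec R \prec Q$.

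The main obstacle is verifying this last condition in case \eqref{gm3}. If such an intermediate $R$ existed and still lay in $\mu_{i+1}/\la$, then tracing the downward line from the lowest point of $P^{i+1}$ would cross $R$ before encountering any balancing anti-clockwise structure, placing $R \in {\sf supp}^\mu_\la(P^{i+1})$; the contextual height inequality of the previous paragraph would then force $R$ to have been removed already, a contradiction. The left-to-right tie-breaker ${\sf last}(P^i) < {\sf first}(P^{i+1})$ resolves the remaining ambiguity among arcs of equal contextual height, by ensuring we always pick the leftmost candidate among them, which corresponds precisely to the \emph{distant} configuration needed for the commuting local moves to be applied sequentially without interference.
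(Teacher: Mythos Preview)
Your overall strategy is the same as the paper's: both reduce the verification to the (G3) obstruction, arguing that any remaining clockwise arc $R$ with $P^{i+1}\prec R\prec Q$ (for $Q$ the minimal anticlockwise cover) lies in ${\sf supp}^\mu_\la(P^{i+1})$ and hence has strictly larger contextual height, so was removed earlier. The paper's proof consists of essentially that one sentence, so you are not missing any additional idea.

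There is, however, a genuine gap in your justification of the height comparison. In the chain
\[
{\sf ht}^\mu_\la(P^{i+1}) \;\leq\; {\sf ht}^\mu(Q) - 1 \;<\; {\sf ht}^\mu_\la(Q),
\]
the second inequality is equivalent, since ${\sf ht}^\mu_\la(Q)\leq{\sf ht}^\mu(Q)$ always holds by \eqref{realheight}, to the equality ${\sf ht}^\mu_\la(Q)={\sf ht}^\mu(Q)$. This fails whenever $Q$ has its own supporting arcs: if $S\in{\sf supp}^\mu_\la(Q)\setminus Q$ then ${\sf ht}^\mu_\la(Q)\leq{\sf ht}^\mu(S)-1$, and since $Q\prec S$ as cups in $\underline\mu$ forces ${\sf ht}^\mu(S)<{\sf ht}^\mu(Q)$, one obtains ${\sf ht}^\mu_\la(Q)<{\sf ht}^\mu(Q)$. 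So the middle term ${\sf ht}^\mu(Q)-1$ is simply the wrong intermediary.

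The argument that actually works is to first observe that $Q\in{\sf supp}^\mu_\la(P^{i+1})$ forces ${\sf supp}^\mu_\la(Q)\subseteq{\sf supp}^\mu_\la(P^{i+1})$: the running clockwise-minus-anticlockwise count along the vertical line from $P^{i+1}$ strictly exceeds that along the line from $Q$ at every depth below $Q$ (the former has already accumulated at least one extra clockwise crossing, namely $Q$ itself), so it terminates no sooner. One then compares the two minimums in \eqref{realheight} directly: every term appearing in the definition of ${\sf ht}^\mu_\la(Q)$ either appears verbatim in that of ${\sf ht}^\mu_\la(P^{i+1})$, or is ${\sf ht}^\mu(Q)$, which is bounded below by the term ${\sf ht}^\mu(Q)-1$ present in the latter. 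The paper asserts the resulting inequality without spelling out this containment.
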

\begin{proof}
Let 
$P\prec Q\in {\rm DRem}  (\la)$  and assume $b(Q)$ is minimal with respect to this property. 
If 
  $P\prec R\prec Q$ 
for 
  $R \in \mu /\la$ then 
  $R \in {\sf supp}^\mu_\la(P)$ and so 
  ${\sf ht}^\mu_\la(P)<
 {\sf ht}^\mu_\la(R)$.  Therefore we can always remove any maximal height Dyck path using a good move.
\end{proof}

%

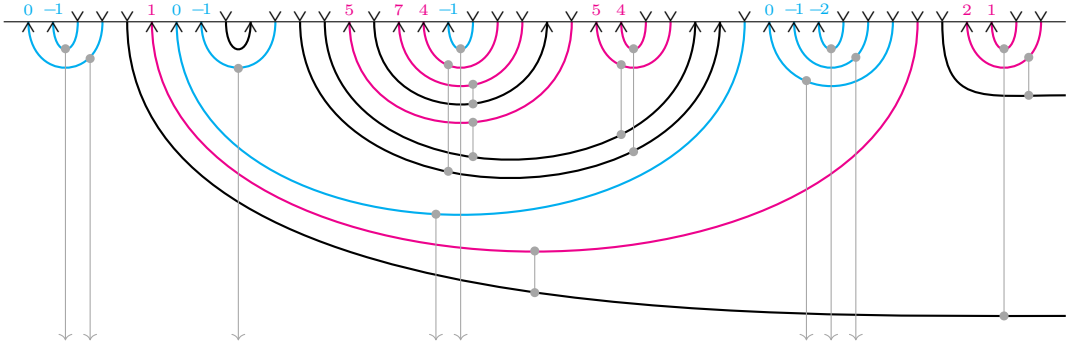
\begin{figure}[h]
\begin{align*}
 \begin{tikzpicture} [scale=0.325]
\draw[thick, cyan] (1,0) .. controls ++(0,-2.5) and ++(0,-2.5) .. (4,0);
\draw[thick, cyan] (2,0) .. controls ++(0,-1.5) and ++(0,-1.5) .. (3,0);
\draw[thick, cyan] (8,0) .. controls ++(0,-2.5) and ++(0,-2.5) .. (11,0);
\draw[thick, black](9,0) .. controls ++(0,-1.5) and ++(0,-1.5) .. (10,0);
\draw[thick, magenta] (14,0) .. controls ++(0,-5.5) and ++(0,-5.5) .. (23,0);
\draw[thick, black] (15,0) .. controls ++(0,-4.5) and ++(0,-4.5) .. (22,0);
\draw[thick, magenta] (16,0) .. controls ++(0,-3.5) and ++(0,-3.5) .. (21,0);
\draw[thick, magenta] (17,0) .. controls ++(0,-2.5) and ++(0,-2.5) .. (20,0);
\draw[thick, cyan] (18,0) .. controls ++(0,-1.5) and ++(0,-1.5) .. (19,0);
\draw[thick, magenta] (24,0) .. controls ++(0,-2.5) and ++(0,-2.5) .. (27,0);
\draw[thick, magenta] (25,0) .. controls ++(0,-1.5) and ++(0,-1.5) .. (26,0);
\draw[thick, cyan] (31,0) .. controls ++(0,-3.5) and ++(0,-3.5) .. (36,0);
\draw[thick, cyan] (32,0) .. controls ++(0,-2.5) and ++(0,-2.5) .. (35,0);
\draw[thick, cyan] (33,0) .. controls ++(0,-1.5) and ++(0,-1.5) .. (34,0);
\draw[thick, magenta] (39,0) .. controls ++(0,-2.5) and ++(0,-2.5) .. (42,0);
\draw[thick, magenta] (40,0) .. controls ++(0,-1.5) and ++(0,-1.5) .. (41,0);
%
\draw[thick, black] (12,0) .. controls ++(0,-8.5) and ++(0,-8.5) .. (29,0);
\draw[thick, black] (13,0) .. controls ++(0,-7.5) and ++(0,-7.5) .. (28,0);
\draw[thick, cyan] (7,0) .. controls ++(0,-10.5) and ++(0,-10.5) .. (30,0);
\draw[thick, magenta] (6,0) .. controls ++(0,-12.5) and ++(0,-12.5) .. (37,0);
\draw[thick, black] (5,0) .. controls ++(0,-12.5) and ++(-12.5,0) .. (43,-12);
\draw[thick, black] (38,0) .. controls ++(0,-3.5) and ++(-3.5,0) .. (43,-3);
\node at (1,-0.22) {${\scriptscriptstyle \boldsymbol{\wedge}}$}; 
\node at (2,-0.22) {${\scriptscriptstyle \boldsymbol{\wedge}}$}; 
\node at (6,-0.22) {${\scriptscriptstyle \boldsymbol{\wedge}}$}; 
\node at (7,-0.22) {${\scriptscriptstyle \boldsymbol{\wedge}}$}; 
\node at (8,-0.22) {${\scriptscriptstyle \boldsymbol{\wedge}}$}; 
\node at (10,-0.22) {${\scriptscriptstyle \boldsymbol{\wedge}}$}; 
\node at (14,-0.22) {${\scriptscriptstyle \boldsymbol{\wedge}}$}; 
\node at (16,-0.22) {${\scriptscriptstyle \boldsymbol{\wedge}}$}; 
\node at (17,-0.22) {${\scriptscriptstyle \boldsymbol{\wedge}}$}; 
\node at (18,-0.22) {${\scriptscriptstyle \boldsymbol{\wedge}}$}; 
\node at (22,-0.22) {${\scriptscriptstyle \boldsymbol{\wedge}}$}; 
\node at (24,-0.22) {${\scriptscriptstyle \boldsymbol{\wedge}}$}; 
\node at (25,-0.22) {${\scriptscriptstyle \boldsymbol{\wedge}}$}; 
\node at (28,-0.22) {${\scriptscriptstyle \boldsymbol{\wedge}}$}; 
\node at (29,-0.22) {${\scriptscriptstyle \boldsymbol{\wedge}}$}; 
\node at (31,-0.22) {${\scriptscriptstyle \boldsymbol{\wedge}}$}; 
\node at (32,-0.22) {${\scriptscriptstyle \boldsymbol{\wedge}}$}; 
\node at (33,-0.22) {${\scriptscriptstyle \boldsymbol{\wedge}}$}; 
\node at (39,-0.22) {${\scriptscriptstyle \boldsymbol{\wedge}}$}; 
\node at (40,-0.22) {${\scriptscriptstyle \boldsymbol{\wedge}}$}; 
\node at (3,0.20) {${\scriptscriptstyle \boldsymbol{\vee}}$}; 
\node at (4,0.20) {${\scriptscriptstyle \boldsymbol{\vee}}$}; 
\node at (5,0.20) {${\scriptscriptstyle \boldsymbol{\vee}}$}; 
\node at (9,0.20) {${\scriptscriptstyle \boldsymbol{\vee}}$}; 
\node at (11,0.20) {${\scriptscriptstyle \boldsymbol{\vee}}$}; 
\node at (12,0.20) {${\scriptscriptstyle \boldsymbol{\vee}}$}; 
\node at (13,0.20) {${\scriptscriptstyle \boldsymbol{\vee}}$}; 
\node at (15,0.20) {${\scriptscriptstyle \boldsymbol{\vee}}$}; 
\node at (19,0.20) {${\scriptscriptstyle \boldsymbol{\vee}}$}; 
\node at (20,0.20) {${\scriptscriptstyle \boldsymbol{\vee}}$}; 
\node at (21,0.20) {${\scriptscriptstyle \boldsymbol{\vee}}$}; 
\node at (23,0.20) {${\scriptscriptstyle \boldsymbol{\vee}}$}; 
\node at (26,0.20) {${\scriptscriptstyle \boldsymbol{\vee}}$}; 
\node at (27,0.20) {${\scriptscriptstyle \boldsymbol{\vee}}$}; 
\node at (30,0.20) {${\scriptscriptstyle \boldsymbol{\vee}}$}; 
\node at (34,0.20) {${\scriptscriptstyle \boldsymbol{\vee}}$}; 
\node at (35,0.20) {${\scriptscriptstyle \boldsymbol{\vee}}$}; 
\node at (36,0.20) {${\scriptscriptstyle \boldsymbol{\vee}}$}; 
\node at (37,0.20) {${\scriptscriptstyle \boldsymbol{\vee}}$}; 
\node at (38,0.20) {${\scriptscriptstyle \boldsymbol{\vee}}$}; 
\node at (41,0.20) {${\scriptscriptstyle \boldsymbol{\vee}}$}; 
\node at (42,0.20) {${\scriptscriptstyle \boldsymbol{\vee}}$}; 
\node[cyan] at (1,0.5) {${\scriptscriptstyle 0}$}; 
\node[cyan] at (2,0.5) {${\scriptscriptstyle -\hspace{-0.5mm}1}$};
\node[magenta] at (6,0.5) {${\scriptscriptstyle 1}$}; 
\node[cyan] at (7,0.5) {${\scriptscriptstyle 0}$}; 
\node[cyan] at (8,0.5) {${\scriptscriptstyle -\hspace{-0.5mm}1}$};  
\node[magenta] at (14,0.5) {${\scriptscriptstyle 5}$}; 
\node[magenta] at (16,0.5) {${\scriptscriptstyle 7}$}; 
\node[magenta] at (17,0.5) {${\scriptscriptstyle 4}$};
\node[cyan] at (18,0.5)  {${\scriptscriptstyle -\hspace{-0.5mm}1}$};  
\node[magenta] at (24,0.5) {${\scriptscriptstyle 5}$}; 
\node[magenta] at (25,0.5) {${\scriptscriptstyle 4}$};
\node[cyan] at (31,0.5) {${\scriptscriptstyle 0}$};
\node[cyan] at (32,0.5)  {${\scriptscriptstyle -\hspace{-0.5mm}1}$};  
\node[cyan] at (33,0.5)  {${\scriptscriptstyle -\hspace{-0.5mm}2}$}; 
\node[magenta] at (39,0.5) {${\scriptscriptstyle 2}$};
\node[magenta] at (40,0.5) {${\scriptscriptstyle 1}$}; 
 \draw[] (0,0)--(43,0);
   \draw[gray!70, thick, fill=gray!70]  (2.5,-1.1) circle (4pt);
\draw[mark=*,gray!70, ->] (2.5,-1)--(2.5,-13);
   \draw[gray!70, thick, fill=gray!70]  (3.5,-1.5) circle (4pt);
\draw[mark=*,gray!70, ->] (3.5,-1.5)--(3.5,-13);
   \draw[gray!70, thick, fill=gray!70]  (21.5,-9.35) circle (4pt);
      \draw[gray!70, thick, fill=gray!70]  (21.5,-11.05) circle (4pt);
\draw[gray!70]  (21.5,-9.35)-- (21.5,-11.05);
   \draw[gray!70, thick, fill=gray!70]  (18.5,-1.1) circle (4pt);
\draw[mark=*,gray!70, ->] (18.5,-1.1)--(18.5,-13);
   \draw[gray!70, thick, fill=gray!70]  (19,-2.55) circle (4pt);
      \draw[gray!70, thick, fill=gray!70]  (19,-3.35) circle (4pt);
      \draw[gray!70] (19,-2.55)--(19,-3.35) ;
   \draw[gray!70, thick, fill=gray!70]  (19,-4.1) circle (4pt);
      \draw[gray!70, thick, fill=gray!70]  (19,-5.5) circle (4pt);
      \draw[gray!70] (19,-4.1)--(19,-5.5) ;
   \draw[gray!70, thick, fill=gray!70]  (18,-1.75) circle (4pt);
      \draw[gray!70, thick, fill=gray!70]  (18,-6.1) circle (4pt);
        \draw[gray!70] (18,-1.75)--(18,-6.1);
   \draw[gray!70, thick, fill=gray!70]  (9.5,-1.9) circle (4pt);
     \draw[mark=*,gray!70, ->]  (9.5,-1.9)-- (9.5,-13);      
   \draw[gray!70, thick, fill=gray!70]  (25.5,-1.1) circle (4pt);
      \draw[gray!70, thick, fill=gray!70]  (25.5,-5.3) circle (4pt);
        \draw[gray!70]  (25.5,-1.1)--(25.5,-5.3) ; 
   \draw[gray!70, thick, fill=gray!70]  (25,-1.75) circle (4pt);
      \draw[gray!70, thick, fill=gray!70]  (25,-4.6) circle (4pt);
        \draw[gray!70] (25,-1.75)--(25,-4.6) ;  
   \draw[gray!70, thick, fill=gray!70]  (17.5,-7.85) circle (4pt);
    \draw[mark=*,gray!70, ->]   (17.5,-7.85)--(17.5,-13) ;     
   \draw[gray!70, thick, fill=gray!70]  (33.5,-1.1) circle (4pt);
    \draw[mark=*,gray!70, ->]  (33.5,-1.1) --(33.5,-13)  ;         
   \draw[gray!70, thick, fill=gray!70]  (34.5,-1.45) circle (4pt);
    \draw[mark=*,gray!70, ->]  (34.5,-1.45)--(34.5,-13)  ;         
   \draw[gray!70, thick, fill=gray!70]  (32.5,-2.4) circle (4pt);
    \draw[mark=*,gray!70, ->]  (32.5,-2.4)--(32.5,-13)  ;  
     \draw[gray!70, thick, fill=gray!70]  (40.5,-1.1) circle (4pt);
      \draw[gray!70, thick, fill=gray!70]  (40.5,-12) circle (4pt);
 \draw[gray!70]  (40.5,-1.1)--(40.5,-12)  ;        
 \draw[gray!70, thick, fill=gray!70]  (41.5,-1.45) circle (4pt);
      \draw[gray!70, thick, fill=gray!70]  (41.5,-3) circle (4pt);
\draw[gray!70]  (41.5,-1.45)--(41.5,-3) ;        
\end{tikzpicture}
\end{align*}
\begin{center}
\end{center}
\caption{The arc diagram of \(\underline{\mu} \la\), for the partitions \(\mu = (20^2,18^5,16^7,11^2,6^3,2^3)\) and \(\la = (18^3,15,14^3,13,10^3,9,7^2,5,2^5)\). For each coloured clockwise arc \(P\), the support set \({\sf supp}^\mu_\la(P)\) is indicated by the set of coloured arcs intersected by the grey line emanating downward from \(P\). 
We record the height   \({\sf ht}^\mu_\la(P)\) (in the sense of \cref{realheight})   at the upward vertex of \(P\) (and we colour pink/blue if this height is positive/non-positive). Compare with \cref{BigDyck}. }
\label{figureSatSets}
\end{figure}

\begin{figure}[ht!]
  \begin{tikzpicture}[scale=0.275]
  \path(0,0)--++(135:2) coordinate (hhhh);
 \draw[very thick] (hhhh)--++(135:20)--++(45:22)--++(-45:20)--++(-135:22);
 \clip(hhhh)--++(135:20)--++(45:22)--++(-45:20)--++(-135:22);
\path(0,0) coordinate (origin2);
  \path(0,0)--++(135:2) coordinate (origin3);

     \foreach \i in {0,1,2,3,4,5,6,7,8,...,22}
{
\path (origin3)--++(45:0.5*\i) coordinate (c\i); 
\path (origin3)--++(135:0.5*\i)  coordinate (d\i); 
  }

   \foreach \i in {0,1,2,3,4,5,6,7,8,...,22}
{
\path (origin3)--++(45:1*\i) coordinate (c\i); 
\path (c\i)--++(-45:0.5) coordinate (c\i); 
\path (origin3)--++(135:1*\i)  coordinate (d\i); 
\path (d\i)--++(-135:0.5) coordinate (d\i); 
\draw[thick,densely dotted] (c\i)--++(135:24);
\draw[thick,densely dotted] (d\i)--++(45:24);
  }

\path(0,0)--++(135:2) coordinate (hhhh);

\fill[opacity=0.2](hhhh)--++(135:18)--++(45:3) --++(-45:3) --++(45:1)--++(-45:1) --++(45:3) --++(-45:1) --++(45:1)--++(-45:3) --++(45:3)
--++(-45:1) --++(45:1)--++(-45:2) --++(45:2)--++(-45:2) --++(45:1)  --++(-45:3) --++(45:5)--++(-45:2)       ;

\path(origin3)--++(45:-0.5)--++(135:19.5) coordinate (X)coordinate (start);
\path (start)--++(45:1)--++(-45:1) coordinate (X) ;

\fill[cyan](X) circle (4pt);

\path (X)--++(135:1)  coordinate (X) ; 

\fill[cyan](X) circle (4pt);
\draw[ thick, cyan](X)--++(45:1) coordinate (X) ;
\fill[cyan](X) circle (4pt);
\draw[ thick, cyan](X)--++(-45:1) coordinate (X) ;
\fill[cyan](X) circle (4pt);

 \path (X)--++(-45:3) --++(45:2) coordinate (start) ;  \path (start) coordinate (X) ; 
\fill[cyan](X) circle (4pt);
\draw[ thick, cyan](X)--++(45:1) coordinate (X) ;
\fill[cyan](X) circle (4pt);
\draw[ thick, cyan](X)--++(-45:1) coordinate (X) ;
\fill[cyan](X) circle (4pt);

 \path (start)--++(135:1)  coordinate (start) ;  \path (start) coordinate (X) ; 
\fill[cyan](X) circle (4pt);
\draw[ thick, cyan](X)--++(45:1) coordinate (X) ;
\fill[cyan](X) circle (4pt);
\draw[ thick, cyan](X)--++(45:1) coordinate (X) ;
\fill[cyan](X) circle (4pt);
\draw[ thick, cyan](X)--++(-45:1) coordinate (X) ;
\fill[cyan](X) circle (4pt);
\draw[ thick, cyan](X)--++(-45:1) coordinate (X) ;
\fill[cyan](X) circle (4pt);
\draw[ thick, cyan](X)--++(45:1) coordinate (X) ;
\fill[cyan](X) circle (4pt);
\draw[ thick, cyan](X)--++(45:1) coordinate (X) ;

\fill[cyan](X) circle (4pt);
\draw[ thick, cyan](X)--++(-45:1) coordinate (X) ;
\fill[cyan](X) circle (4pt);
\draw[ thick, cyan](X)--++(45:1) coordinate (X) ;

\fill[cyan](X) circle (4pt);
\draw[ thick, cyan](X)--++(-45:1) coordinate (X) ;
\fill[cyan](X) circle (4pt);
\draw[ thick, cyan](X)--++(-45:1) coordinate (X) ;
\fill[cyan](X) circle (4pt);
\draw[ thick, cyan](X)--++(45:1) coordinate (X) ;

\fill[cyan](X) circle (4pt);
\draw[ thick, cyan](X)--++(-45:1) coordinate (X) ;
\fill[cyan](X) circle (4pt);
\draw[ thick, cyan](X)--++(45:1) coordinate (X) ;
\fill[cyan](X) circle (4pt);
\draw[ thick, cyan](X)--++(45:1) coordinate (X) ;
\fill[cyan](X) circle (4pt);
\draw[ thick, cyan](X)--++(-45:1) coordinate (X) ;

\fill[cyan](X) circle (4pt);
\draw[ thick, cyan](X)--++(45:1) coordinate (X) ;
\fill[cyan](X) circle (4pt);
\draw[ thick, cyan](X)--++(-45:1) coordinate (X) ;
\fill[cyan](X) circle (4pt);
\draw[ thick, cyan](X)--++(-45:1) coordinate (X) ;

\fill[cyan](X) circle (4pt);
\draw[ thick, cyan](X)--++(45:1) coordinate (X) ;
\fill[cyan](X) circle (4pt);
\draw[ thick, cyan](X)--++(45:1) coordinate (X) ;
\fill[cyan](X) circle (4pt);
\draw[ thick, cyan](X)--++(-45:1) coordinate (X) ;
\fill[cyan](X) circle (4pt);
\draw[ thick, cyan](X)--++(-45:1) coordinate (X) ;
\fill[cyan](X) circle (4pt);

 \path (start)--++(135:1)  coordinate (start) ;  \path (start) coordinate (X) ; 
\fill[magenta](X) circle (4pt);
\draw[ thick, magenta](X)--++(45:1) coordinate (X) ;
\fill[magenta](X) circle (4pt);
\draw[ thick, magenta](X)--++(45:1) coordinate (X) ;
\fill[magenta](X) circle (4pt);
\draw[ thick, magenta](X)--++(45:1) coordinate (X) ;
\fill[magenta](X) circle (4pt);
\draw[ thick, magenta](X)--++(-45:1) coordinate (X) ;
\fill[magenta](X) circle (4pt);
\draw[ thick, magenta](X)--++(-45:1) coordinate (X) ;
\fill[magenta](X) circle (4pt);
\draw[ thick, magenta](X)--++(45:1) coordinate (X) ;
\fill[magenta](X) circle (4pt);
\draw[ thick, magenta](X)--++(45:1) coordinate (X) ;

\fill[magenta](X) circle (4pt);
\draw[ thick, magenta](X)--++(-45:1) coordinate (X) ;
\fill[magenta](X) circle (4pt);
\draw[ thick, magenta](X)--++(45:1) coordinate (X) ;

\fill[magenta](X) circle (4pt);
\draw[ thick, magenta](X)--++(-45:1) coordinate (X) ;
\fill[magenta](X) circle (4pt);
\draw[ thick, magenta](X)--++(-45:1) coordinate (X) ;
\fill[magenta](X) circle (4pt);
\draw[ thick, magenta](X)--++(45:1) coordinate (X) ;

\fill[magenta](X) circle (4pt);
\draw[ thick, magenta](X)--++(-45:1) coordinate (X) ;
\fill[magenta](X) circle (4pt);
\draw[ thick, magenta](X)--++(45:1) coordinate (X) ;
\fill[magenta](X) circle (4pt);
\draw[ thick, magenta](X)--++(45:1) coordinate (X) ;
\fill[magenta](X) circle (4pt);
\draw[ thick, magenta](X)--++(-45:1) coordinate (X) ;

\fill[magenta](X) circle (4pt);
\draw[ thick, magenta](X)--++(45:1) coordinate (X) ;
\fill[magenta](X) circle (4pt);
\draw[ thick, magenta](X)--++(-45:1) coordinate (X) ;
\fill[magenta](X) circle (4pt);
\draw[ thick, magenta](X)--++(-45:1) coordinate (X) ;

\fill[magenta](X) circle (4pt);
\draw[ thick, magenta](X)--++(45:1) coordinate (X) ;
\fill[magenta](X) circle (4pt);
\draw[ thick, magenta](X)--++(45:1) coordinate (X) ;
\fill[magenta](X) circle (4pt);
\draw[ thick, magenta](X)--++(-45:1) coordinate (X) ;
\fill[magenta](X) circle (4pt);
\draw[ thick, magenta](X)--++(-45:1) coordinate (X) ;
\fill[magenta](X) circle (4pt);
\draw[ thick, magenta](X)--++(-45:1) coordinate (X) ;
\fill[magenta](X) circle (4pt);
 
 \draw[ thick, magenta](X)--++(45:1) coordinate (X) ;
\fill[magenta](X) circle (4pt);
\draw[ thick, magenta](X)--++(45:1) coordinate (X) ;
\fill[magenta](X) circle (4pt);
\draw[ thick, magenta](X)--++(45:1) coordinate (X) ;
\fill[magenta](X) circle (4pt);

 \fill[magenta](X) circle (4pt);
\draw[ thick, magenta](X)--++(-45:1) coordinate (X) ;
\fill[magenta](X) circle (4pt);
\draw[ thick, magenta](X)--++(-45:1) coordinate (X) ;
\fill[magenta](X) circle (4pt);
\draw[ thick, magenta](X)--++(-45:1) coordinate (X) ;
\fill[magenta](X) circle (4pt);

 \path (start)--++(45:6)--++(-45:2)  coordinate (start) ;  \path (start) coordinate (X) ; 
\fill[magenta](X) circle (4pt);
\draw[ thick, magenta](X)--++(45:1) coordinate (X) ;
\fill[magenta](X) circle (4pt);
\draw[ thick, magenta](X)--++(-45:1) coordinate (X) ;
\fill[magenta](X) circle (4pt);
\draw[ thick, magenta](X)--++(45:1) coordinate (X) ;
\fill[magenta](X) circle (4pt);
\draw[ thick, magenta](X)--++(45:1) coordinate (X) ;
\fill[magenta](X) circle (4pt);
\draw[ thick, magenta](X)--++(-45:1) coordinate (X) ;
\fill[magenta](X) circle (4pt);
\draw[ thick, magenta](X)--++(-45:1) coordinate (X) ;

\fill[magenta](X) circle (4pt);
\draw[ thick, magenta](X)--++(45:1) coordinate (X) ;
\fill[magenta](X) circle (4pt);
\draw[ thick, magenta](X)--++(-45:1) coordinate (X) ;
\fill[magenta](X) circle (4pt);

 \path (start)--++(45:2)  coordinate (start) ;  \path (start) coordinate (X) ; 
\fill[magenta](X) circle (4pt);
\draw[ thick, magenta](X)--++(45:1) coordinate (X) ;
\fill[magenta](X) circle (4pt);
\draw[ thick, magenta](X)--++(45:1) coordinate (X) ;
 \fill[magenta](X) circle (4pt);
\draw[ thick, magenta](X)--++(-45:1) coordinate (X) ;
\fill[magenta](X) circle (4pt);
\draw[ thick, magenta](X)--++(-45:1) coordinate (X) ;
\fill[magenta](X) circle (4pt);

  \path (start)--++(-45:2)--++(-135:1)  coordinate (start) ;  \path (start) coordinate (X) ; 
\fill[magenta](X) circle (4pt);
\draw[ thick, magenta](X)--++(45:1) coordinate (X) ;
\fill[magenta](X) circle (4pt);
\draw[ thick, magenta](X)--++(-45:1) coordinate (X) ;
\fill[magenta](X) circle (4pt);

  \path (start)--++(-45:4)--++(45:4)  coordinate (start) ;  \path (start) coordinate (X) ; 
\fill[magenta](X) circle (4pt);
  \path (start)--++(135:1) coordinate (start) ;  \path (start) coordinate (X) ; 
\fill[magenta](X) circle (4pt);
\draw[ thick, magenta](X)--++(45:1) coordinate (X) ;
\fill[magenta](X) circle (4pt);
\draw[ thick, magenta](X)--++(-45:1) coordinate (X) ;
\fill[magenta](X) circle (4pt);

  \path (start)--++(-45:8)--++(45:1)  coordinate (start) ;  \path (start) coordinate (X) ; 
\fill[cyan](X) circle (4pt);
  \path (start)--++(135:1) coordinate (start) ;  \path (start) coordinate (X) ; 
\fill[cyan](X) circle (4pt);
\draw[ thick, cyan](X)--++(45:1) coordinate (X) ;
\fill[cyan](X) circle (4pt);
\draw[ thick, cyan](X)--++(-45:1) coordinate (X) ;
\fill[cyan](X) circle (4pt);

  \path (start)--++(135:1) coordinate (start) ;  \path (start) coordinate (X) ; 
\fill[cyan](X) circle (4pt);
\draw[ thick, cyan](X)--++(45:1) coordinate (X) ;
\fill[cyan](X) circle (4pt);
\draw[ thick, cyan](X)--++(45:1) coordinate (X) ;
\fill[cyan](X) circle (4pt);
\draw[ thick, cyan](X)--++(-45:1) coordinate (X) ;
\fill[cyan](X) circle (4pt);
\draw[ thick, cyan](X)--++(-45:1) coordinate (X) ;
\fill[cyan](X) circle (4pt);

  \path (start)--++(45:5) --++(-45:4) coordinate (start) ;  \path (start) coordinate (X) ; 
\fill[magenta](X) circle (4pt);
  \path (start)--++(135:1) coordinate (start) ;  \path (start) coordinate (X) ; 
\fill[magenta](X) circle (4pt);
\draw[ thick, magenta](X)--++(45:1) coordinate (X) ;
\fill[magenta](X) circle (4pt);
\draw[ thick, magenta](X)--++(-45:1) coordinate (X) ;
\fill[magenta](X) circle (4pt);

  \path (0,0)--++(135:12.5) --++(45:8.5) coordinate (start) ;  \path (start) coordinate (X) ; 

  \fill[cyan](X) circle (4pt);

\end{tikzpicture} 
\caption{A Dyck tiling for the Dyck pair $(\la,\mu)$ as in \cref{figureSatSets} chosen so as to highlight the heights. The canonical tableau is the tableau which adds these specific Dyck paths in order from bottom-to-top and left-to-right. 
The Dyck paths of strictly positive height are coloured
{\color{magenta}pink} and all others are coloured {\color{cyan}blue}.
Compare with \cref{figureSatSets}.}
\label{BigDyck}
\end{figure}
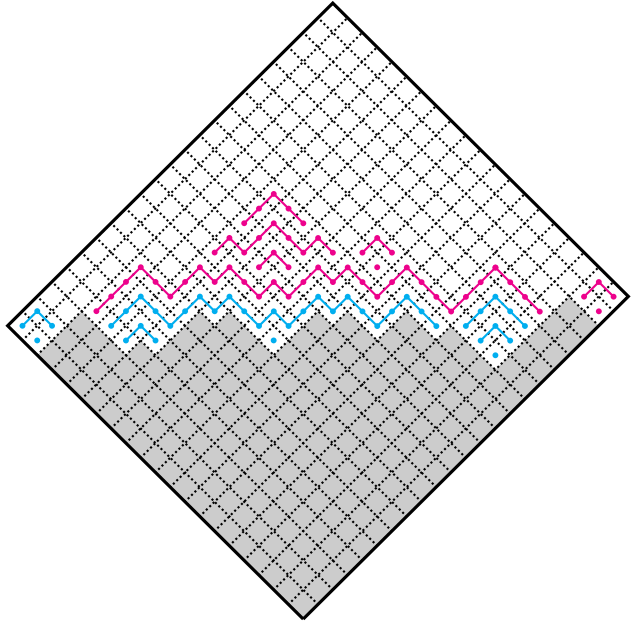

\begin{defn}
We refer to the tableau of \eqref{kfjghsdjkghdjfk} as 
  the {\sf canonical Dyck tableau}. 
 For $k \in \ZZ$ we define 
 $$
( \mu\setminus \la)_k = \{ P \mid P \in  \mu\setminus \la \text { and } {\sf ht}^\mu_\la(P)=k\}
 $$and we define $( \mu\setminus \la)_{\leq k }$ and $( \mu\setminus \la)_{\geq k }$ in a similar fashion. 
See \cref{BigDyck} for an example. \end{defn}

\begin{rmk}
The canonical Dyck tiling corresponds to the unique choice of representatives \(\{P_b^1, \ldots, P_b^k\}\) for the Dyck tiles \(\{P^1, \ldots, P^k\}\) 
with the following property: for all \(i,j\) with \(P^i \prec P^j\) we have that  \(P^i_b\) is below \(P^j_b\) only if there is no other choice of representatives \(\{P_{b'}^1, \ldots, P_{b'}^k\}\) with \(P^i_b\) above \(P^j_b\).

One may arrive at the canonical tiling for \(\mu \backslash \la\) from any choice of representatives for the Dyck tiling of \(\mu \backslash \la\) by iteratively commuting shorter Dyck tiles upwards past longer Dyck tiles whenever possible.
The acceptable arrangements for Dyck paths $\color{cyan}P\color{black} \prec \color{magenta}Q$  in a canonical tiling are captured 
in \cref{goodbad}.
\end{rmk}

\begin{figure}[ht!]
\begin{align*}
\begin{tikzpicture}[scale=0.35]
 \path(0,0) coordinate (origin2);
  \path(0,0)--++(135:2) coordinate (origin3);
     \foreach \i in {0,1,2,3,4}
{
\path (origin3)--++(45:0.5*\i) coordinate (c\i); 
\path (origin3)--++(135:0.5*\i)  coordinate (d\i); 
  }
   \foreach \i in {0,1,2,3,4}
{
\path (origin3)--++(45:1*\i) coordinate (c\i); 
\path (c\i)--++(-45:0.5) coordinate (c\i); 
\path (origin3)--++(135:1*\i)  coordinate (d\i); 
\path (d\i)--++(-135:0.5) coordinate (d\i); 
\draw[thick,densely dotted] (c\i)--++(135:5);
\draw[thick,densely dotted] (d\i)--++(45:5);
  }
\path(origin3)--++(45:-0.5)--++(135:3.5) coordinate (X);
%
\path(X)--++(45:1) coordinate (X) ;
\fill[magenta](X) circle (4pt);
\draw[ thick, magenta](X)--++(45:1) coordinate (X) ;
\fill[magenta](X) circle (4pt);
\draw[ thick, magenta](X)--++(45:1) coordinate (X) ;
\fill[magenta](X) circle (4pt);
\draw[ thick, magenta](X)--++(45:1) coordinate (X) ;
\fill[magenta](X) circle (4pt);
\draw[ thick, magenta](X)--++(-45:1) coordinate (X) ;
 \fill[magenta](X) circle (4pt);
\draw[ thick, magenta](X)--++(-45:1) coordinate (X) ;
\fill[magenta](X) circle (4pt);
\draw[ thick, magenta](X)--++(-45:1) coordinate (X) ;
\fill[magenta](X) circle (4pt);
 \path (X)--++(45:-3)--++(-45:-2) coordinate (X) ; 
\fill[cyan](X) circle (4pt);
\draw[ thick, cyan](X)--++(45:1) coordinate (X) ;
\fill[cyan](X) circle (4pt);
\draw[ thick, cyan](X)--++(45:1) coordinate (X) ;
\fill[cyan](X) circle (4pt);
\draw[ thick, cyan](X)--++(-45:1) coordinate (X) ;
\fill[cyan](X) circle (4pt);
\draw[ thick, cyan](X)--++(-45:1) coordinate (X) ;
\fill[cyan](X) circle (4pt);
 \path (X)--++(45:-4)--++(-45:-5) coordinate (X) ; 
\end{tikzpicture} 
\qquad
\begin{tikzpicture}[scale=0.35]
 \path(0,0) coordinate (origin2);
  \path(0,0)--++(135:2) coordinate (origin3);
     \foreach \i in {0,1,2,3,4}
{
\path (origin3)--++(45:0.5*\i) coordinate (c\i); 
\path (origin3)--++(135:0.5*\i)  coordinate (d\i); 
  }
   \foreach \i in {0,1,2,3,4}
{
\path (origin3)--++(45:1*\i) coordinate (c\i); 
\path (c\i)--++(-45:0.5) coordinate (c\i); 
\path (origin3)--++(135:1*\i)  coordinate (d\i); 
\path (d\i)--++(-135:0.5) coordinate (d\i); 
\draw[thick,densely dotted] (c\i)--++(135:5);
\draw[thick,densely dotted] (d\i)--++(45:5);
  }
\path(origin3)--++(45:-0.5)--++(135:3.5) coordinate (X);
%
\path(X)--++(45:1) coordinate (X) ;
\fill[magenta](X) circle (4pt);
\draw[ thick, magenta](X)--++(45:1) coordinate (X) ;
\fill[magenta](X) circle (4pt);
\draw[ thick, magenta](X)--++(-45:1) coordinate (X) ;
\fill[magenta](X) circle (4pt);
\draw[ thick, magenta](X)--++(45:1) coordinate (X) ;
\fill[magenta](X) circle (4pt);
\draw[ thick, magenta](X)--++(-45:1) coordinate (X) ;
 \fill[magenta](X) circle (4pt);
\draw[ thick, magenta](X)--++(45:1) coordinate (X) ;
\fill[magenta](X) circle (4pt);
\draw[ thick, magenta](X)--++(-45:1) coordinate (X) ;
\fill[magenta](X) circle (4pt);
 \path (X)--++(45:-1)--++(-45:-3) coordinate (X) ; 
\fill[cyan](X) circle (4pt);
\draw[ thick, cyan](X)--++(45:1) coordinate (X) ;
\fill[cyan](X) circle (4pt);
\draw[ thick, cyan](X)--++(-45:1) coordinate (X) ;
\fill[cyan](X) circle (4pt);
\end{tikzpicture} 
\begin{tikzpicture}[scale=0.35]
 \path(0,0) coordinate (origin2);
  \path(0,0)--++(135:2) coordinate (origin3);
     \foreach \i in {0,1,2,3,4}
{
\path (origin3)--++(45:0.5*\i) coordinate (c\i); 
\path (origin3)--++(135:0.5*\i)  coordinate (d\i); 
  }
   \foreach \i in {0,1,2,3,4}
{
\path (origin3)--++(45:1*\i) coordinate (c\i); 
\path (c\i)--++(-45:0.5) coordinate (c\i); 
\path (origin3)--++(135:1*\i)  coordinate (d\i); 
\path (d\i)--++(-135:0.5) coordinate (d\i); 
\draw[thick,densely dotted] (c\i)--++(135:5);
\draw[thick,densely dotted] (d\i)--++(45:5);
  }
\path(origin3)--++(45:-0.5)--++(135:3.5) coordinate (X);
%
\path(X)--++(45:1) coordinate (X) ;
\fill[magenta](X) circle (4pt);
\draw[ thick, magenta](X)--++(45:1) coordinate (X) ;
\fill[magenta](X) circle (4pt);
\draw[ thick, magenta](X)--++(45:1) coordinate (X) ;
\fill[magenta](X) circle (4pt);
\draw[ thick, magenta](X)--++(45:1) coordinate (X) ;
\fill[magenta](X) circle (4pt);
\draw[ thick, magenta](X)--++(-45:1) coordinate (X) ;
 \fill[magenta](X) circle (4pt);
\draw[ thick, magenta](X)--++(-45:1) coordinate (X) ;
\fill[magenta](X) circle (4pt);
\draw[ thick, magenta](X)--++(-45:1) coordinate (X) ;
\fill[magenta](X) circle (4pt);
 \path (X)--++(45:-2)--++(-45:-2) coordinate (X) ; 
\fill[cyan](X) circle (4pt);
\draw[ thick, cyan](X)--++(45:1) coordinate (X) ;
\fill[cyan](X) circle (4pt);
\draw[ thick, cyan](X)--++(-45:1) coordinate (X) ;
\fill[cyan](X) circle (4pt);
 \path (X)--++(45:-4)--++(-45:-4) coordinate (X) ; 
 \end{tikzpicture} 
\end{align*}
\caption{The first two arrangements of Dyck paths are good and the final arrangement is bad.  
This is because the small blue Dyck path in the third diagram can be commuted upwards to obtained the second diagram (thus turning the bad arrangement into a good one).
See    \cref{figureSatSets}  for an  example.}
\label{goodbad}
\end{figure}
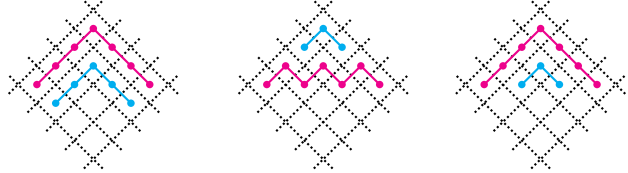

\section{The regularisation map} \label{regulatiaasaiofgsaf}
 
We now define a regularisation map from non-regular partitions to regular partitions. 
For any Dyck pair $(\alpha,\mu)$, we  
 then  construct a sequence  from ${\sf reg}(\alpha)$ to 
 $\mu$ given by adding and splitting Dyck paths. This is the combinatorial shadow of a representation theoretic   result: 
  in  \cref{arcalgebras} we will use this map to show that  the   $H^m_n$-cell-module $S_{m,n}(\alpha)$ has simple head
 $D_{m,n}({\sf reg}(\alpha))$.
  
 \begin{defn}
 Given $\alpha \in \mptn $ with $d(\alpha)=d<0$, we define the regularisation of $\alpha$, denoted $ {\sf reg}(\alpha)$,  as follows.  
We define a sequence of partitions  
\begin{align}\label{Step1}
\alpha= {\sf reg}_{d}(\alpha) \subset  {\sf reg}_{d+1}(\alpha) \subset \dots \subset  {\sf reg}_{-1}(\alpha) \subset  {\sf reg}_{0}(\alpha) = {\sf reg}(\alpha)
\end{align}such that $P^{k}= {\sf reg}_{k}(\alpha) \setminus  {\sf reg}_{k-1}(\alpha)$ is the maximal breadth addable Dyck path  of height $k$.  
In other words, $P^{k}$ is the Dyck path obtained by connecting all the nodes $[r,c]\not \in {\sf reg}_{k-1}(\alpha) $ such that $r+c-1-m = k$.  

From the arc diagram point of view,  $ \underline{{\sf reg}(\alpha)}\alpha$ is the diagram obtained from $\underline{\alpha}\alpha$ by 
applying (the reverse of) as many  $(G1)$ good moves as possible, thus replacing all $|d(\alpha)|$ south-westerly strands 
(and therefore a corresponding  $|d(\alpha)|$ south-easterly strands) with cups.
\end{defn}


 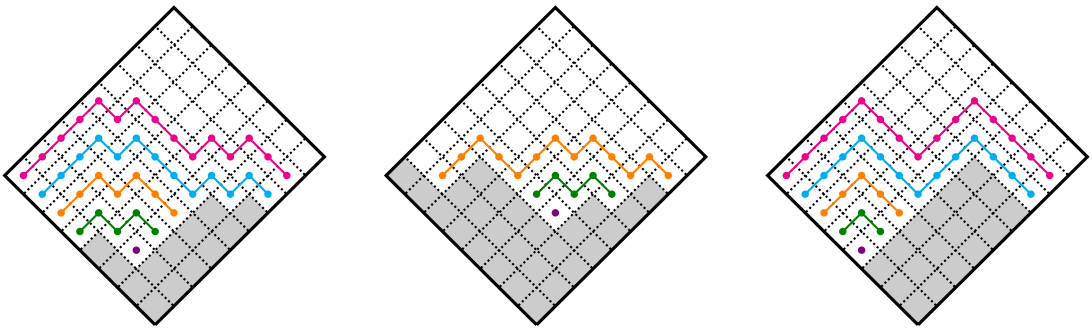
\begin{figure}[ht!]
 $$   \begin{tikzpicture}[scale=0.35]
  \path(0,0)--++(135:2) coordinate (hhhh);
 \draw[very thick] (hhhh)--++(135:8)--++(45:9)--++(-45:8)--++(-135:9);
 \clip(hhhh)--++(135:8)--++(45:9)--++(-45:8)--++(-135:9);
\path(0,0) coordinate (origin2);
  \path(0,0)--++(135:2) coordinate (origin3);

     \foreach \i in {0,1,2,3,4,5,6,7,8}
{
\path (origin3)--++(45:0.5*\i) coordinate (c\i); 
\path (origin3)--++(135:0.5*\i)  coordinate (d\i); 
  }

   \foreach \i in {0,1,2,3,4,5,6,7,8}
{
\path (origin3)--++(45:1*\i) coordinate (c\i); 
\path (c\i)--++(-45:0.5) coordinate (c\i); 
\path (origin3)--++(135:1*\i)  coordinate (d\i); 
\path (d\i)--++(-135:0.5) coordinate (d\i); 
\draw[thick,densely dotted] (c\i)--++(135:10);
\draw[thick,densely dotted] (d\i)--++(45:10);
  }

\path(0,0)--++(135:2) coordinate (hhhh);

\fill[opacity=0.2](hhhh)--++(135:4)--++(45:1)--++(-45:2)--++(45:4)--++(-45:1)--++(45:1)--++(-45:1);

\path(origin3)--++(45:-0.5)--++(135:7.5) coordinate (X)coordinate (start);

\path(X)--++(45:1) coordinate (X) ;
\fill[magenta](X) circle (4pt);
\draw[ thick, magenta](X)--++(45:1) coordinate (X) ;
\fill[magenta](X) circle (4pt);
\draw[ thick, magenta](X)--++(45:1) coordinate (X) ;
\fill[magenta](X) circle (4pt);
\draw[ thick, magenta](X)--++(45:1) coordinate (X) ;
\fill[magenta](X) circle (4pt);
\draw[ thick, magenta](X)--++(45:1) coordinate (X) ;
 \fill[magenta](X) circle (4pt);
\draw[ thick, magenta](X)--++(-45:1) coordinate (X) ;

\fill[magenta](X) circle (4pt);
\draw[ thick, magenta](X)--++(45:1) coordinate (X) ;
\fill[magenta](X) circle (4pt);
\draw[ thick, magenta](X)--++(-45:1) coordinate (X) ;
\fill[magenta](X) circle (4pt);
\draw[ thick, magenta](X)--++(-45:1) coordinate (X) ;
\fill[magenta](X) circle (4pt);

\draw[ thick, magenta](X)--++(-45:1) coordinate (X) ;
\fill[magenta](X) circle (4pt);
\draw[ thick, magenta](X)--++(45:1) coordinate (X) ;
\fill[magenta](X) circle (4pt);

\draw[ thick, magenta](X)--++(-45:1) coordinate (X) ;
\fill[magenta](X) circle (4pt);
\draw[ thick, magenta](X)--++(45:1) coordinate (X) ;
\fill[magenta](X) circle (4pt);
\draw[ thick, magenta](X)--++(-45:1) coordinate (X) ;
\fill[magenta](X) circle (4pt);
\draw[ thick, magenta](X)--++(-45:1) coordinate (X) ;
\fill[magenta](X) circle (4pt);

\path (start)--++(45:1)--++(-45:1) coordinate (X) ; 
\fill[cyan](X) circle (4pt);
\draw[ thick, cyan](X)--++(45:1) coordinate (X) ;
\fill[cyan](X) circle (4pt);
\draw[ thick, cyan](X)--++(45:1) coordinate (X) ;
\fill[cyan](X) circle (4pt);
\draw[ thick, cyan](X)--++(45:1) coordinate (X) ;
\fill[cyan](X) circle (4pt);
\draw[ thick, cyan](X)--++(-45:1) coordinate (X) ;
\fill[cyan](X) circle (4pt);
\draw[ thick, cyan](X)--++(45:1) coordinate (X) ;
\fill[cyan](X) circle (4pt);
\draw[ thick, cyan](X)--++(-45:1) coordinate (X) ;
\fill[cyan](X) circle (4pt);
\draw[ thick, cyan](X)--++(-45:1) coordinate (X) ;
\fill[cyan](X) circle (4pt);
\draw[ thick, cyan](X)--++(-45:1) coordinate (X) ;
\fill[cyan](X) circle (4pt);

\draw[ thick, cyan](X)--++(45:1) coordinate (X) ;
\fill[cyan](X) circle (4pt);
\draw[ thick, cyan](X)--++(-45:1) coordinate (X) ;
\fill[cyan](X) circle (4pt);
\draw[ thick, cyan](X)--++(45:1) coordinate (X) ;
\fill[cyan](X) circle (4pt);
\draw[ thick, cyan](X)--++(-45:1) coordinate (X) ;
\fill[cyan](X) circle (4pt);

\path (start)--++(45:1)--++(-45:2) coordinate (X) ; 
\fill[orange](X) circle (4pt);
\draw[ thick, orange](X)--++(45:1) coordinate (X) ;
\fill[orange](X) circle (4pt);
\draw[ thick, orange](X)--++(45:1) coordinate (X) ;
\fill[orange](X) circle (4pt);
\draw[ thick, orange](X)--++(-45:1) coordinate (X) ;
\fill[orange](X) circle (4pt);
\draw[ thick, orange](X)--++(45:1) coordinate (X) ;
\fill[orange](X) circle (4pt);
\draw[ thick, orange](X)--++(-45:1) coordinate (X) ;
\fill[orange](X) circle (4pt);
\draw[ thick, orange](X)--++(-45:1) coordinate (X) ;
\fill[orange](X) circle (4pt);

\path (start)--++(45:1)--++(-45:3) coordinate (X) ; 
\fill[darkgreen](X) circle (4pt);
\draw[ thick, darkgreen](X)--++(45:1) coordinate (X) ;
\fill[darkgreen](X) circle (4pt);
\draw[ thick, darkgreen](X)--++(-45:1) coordinate (X) ;
 \fill[darkgreen](X) circle (4pt);
\draw[ thick, darkgreen](X)--++(45:1) coordinate (X) ;
\fill[darkgreen](X) circle (4pt);
\draw[ thick, darkgreen](X)--++(-45:1) coordinate (X) ;
\fill[darkgreen](X) circle (4pt);

\path (start)--++(45:2)--++(-45:5) coordinate (X) ; 
\fill[violet](X) circle (4pt);

\end{tikzpicture} 
\qquad
  \begin{tikzpicture}[scale=0.35]
  \path(0,0)--++(135:2) coordinate (hhhh);
 \draw[very thick] (hhhh)--++(135:8)--++(45:9)--++(-45:8)--++(-135:9);
 \clip(hhhh)--++(135:8)--++(45:9)--++(-45:8)--++(-135:9);
  
 \path(0,0) coordinate (origin2);
  \path(0,0)--++(135:2) coordinate (origin3);

     \foreach \i in {0,1,2,3,4,5,6,7,8}
{
\path (origin3)--++(45:0.5*\i) coordinate (c\i); 
\path (origin3)--++(135:0.5*\i)  coordinate (d\i); 
  }

   \foreach \i in {0,1,2,3,4,5,6,7,8}
{
\path (origin3)--++(45:1*\i) coordinate (c\i); 
\path (c\i)--++(-45:0.5) coordinate (c\i); 
\path (origin3)--++(135:1*\i)  coordinate (d\i); 
\path (d\i)--++(-135:0.5) coordinate (d\i); 
\draw[thick,densely dotted] (c\i)--++(135:10);
\draw[thick,densely dotted] (d\i)--++(45:10);
  }

\path(0,0)--++(135:2) coordinate (hhhh);

\fill[opacity=0.2](hhhh)
--++(135:8)--++(45:1)--++(-45:2)--++(45:2)--++(-45:4)--++(45:2)--++(-45:1)--++(45:2)--++(-45:1)--++(45:2);

\path(origin3)--++(45:-0.5)--++(135:7.5) coordinate (X)coordinate (start);

\path (start)--++(45:2)--++(-45:1) coordinate (X) ; 
\fill[orange](X) circle (4pt);
\draw[ thick, orange](X)--++(45:1) coordinate (X) ;
\fill[orange](X) circle (4pt);
\draw[ thick, orange](X)--++(45:1) coordinate (X) ;
\fill[orange](X) circle (4pt);
\draw[ thick, orange](X)--++(-45:1) coordinate (X) ;
\fill[orange](X) circle (4pt);
\draw[ thick, orange](X)--++(-45:1) coordinate (X) ;
\fill[orange](X) circle (4pt);
\draw[ thick, orange](X)--++(45:1) coordinate (X) ;
\fill[orange](X) circle (4pt);
\draw[ thick, orange](X)--++(45:1) coordinate (X) ;
\fill[orange](X) circle (4pt);
\draw[ thick, orange](X)--++(-45:1) coordinate (X) ;
\fill[orange](X) circle (4pt);
\draw[ thick, orange](X)--++(45:1) coordinate (X) ;
\fill[orange](X) circle (4pt);

\draw[ thick, orange](X)--++(-45:1) coordinate (X) ;
\fill[orange](X) circle (4pt);
\draw[ thick, orange](X)--++(-45:1) coordinate (X) ;
\fill[orange](X) circle (4pt);
 \draw[ thick, orange](X)--++(45:1) coordinate (X) ;
\fill[orange](X) circle (4pt);

\draw[ thick, orange](X)--++(-45:1) coordinate (X) ;
\fill[orange](X) circle (4pt);

\path (start)--++(45:4)--++(-45:4) coordinate (X) ; 
\fill[darkgreen](X) circle (4pt);
\draw[ thick, darkgreen](X)--++(45:1) coordinate (X) ;
\fill[darkgreen](X) circle (4pt);
\draw[ thick, darkgreen](X)--++(-45:1) coordinate (X) ;
 \fill[darkgreen](X) circle (4pt);
 \draw[ thick, darkgreen](X)--++(45:1) coordinate (X) ;
\fill[darkgreen](X) circle (4pt);
\draw[ thick, darkgreen](X)--++(-45:1) coordinate (X) ;
 \fill[darkgreen](X) circle (4pt);

\path (start)--++(45:4)--++(-45:5) coordinate (X) ; 
\fill[violet](X) circle (4pt);

\end{tikzpicture} 
\qquad
 \begin{tikzpicture}[scale=0.35]
  \path(0,0)--++(135:2) coordinate (hhhh);
 \draw[very thick] (hhhh)--++(135:8)--++(45:9)--++(-45:8)--++(-135:9);
 \clip(hhhh)--++(135:8)--++(45:9)--++(-45:8)--++(-135:9);
\path(0,0) coordinate (origin2);
  \path(0,0)--++(135:2) coordinate (origin3);

     \foreach \i in {0,1,2,3,4,5,6,7,8}
{
\path (origin3)--++(45:0.5*\i) coordinate (c\i); 
\path (origin3)--++(135:0.5*\i)  coordinate (d\i); 
  }

   \foreach \i in {0,1,2,3,4,5,6,7,8}
{
\path (origin3)--++(45:1*\i) coordinate (c\i); 
\path (c\i)--++(-45:0.5) coordinate (c\i); 
\path (origin3)--++(135:1*\i)  coordinate (d\i); 
\path (d\i)--++(-135:0.5) coordinate (d\i); 
\draw[thick,densely dotted] (c\i)--++(135:10);
\draw[thick,densely dotted] (d\i)--++(45:10);
  }

\path(0,0)--++(135:2) coordinate (hhhh);

\fill[opacity=0.2](hhhh)--++(135:3)--++(45:6)--++(-45:3);

\path(origin3)--++(45:-0.5)--++(135:7.5) coordinate (X)coordinate (start);

\path(X)--++(45:1) coordinate (X) ;
\fill[magenta](X) circle (4pt);
\draw[ thick, magenta](X)--++(45:1) coordinate (X) ;
\fill[magenta](X) circle (4pt);
\draw[ thick, magenta](X)--++(45:1) coordinate (X) ;
\fill[magenta](X) circle (4pt);
\draw[ thick, magenta](X)--++(45:1) coordinate (X) ;
\fill[magenta](X) circle (4pt);
\draw[ thick, magenta](X)--++(45:1) coordinate (X) ;
 \fill[magenta](X) circle (4pt);
\draw[ thick, magenta](X)--++(-45:1) coordinate (X) ;

\fill[magenta](X) circle (4pt);
\draw[ thick, magenta](X)--++(-45:1) coordinate (X) ;
\fill[magenta](X) circle (4pt);
\draw[ thick, magenta](X)--++(-45:1) coordinate (X) ;
\fill[magenta](X) circle (4pt);
\draw[ thick, magenta](X)--++(45:1) coordinate (X) ;
\fill[magenta](X) circle (4pt);

\draw[ thick, magenta](X)--++(45:1) coordinate (X) ;
\fill[magenta](X) circle (4pt);
\draw[ thick, magenta](X)--++(45:1) coordinate (X) ;
\fill[magenta](X) circle (4pt);

\draw[ thick, magenta](X)--++(-45:1) coordinate (X) ;
\fill[magenta](X) circle (4pt);
\draw[ thick, magenta](X)--++(-45:1) coordinate (X) ;
\fill[magenta](X) circle (4pt);
\draw[ thick, magenta](X)--++(-45:1) coordinate (X) ;
\fill[magenta](X) circle (4pt);
\draw[ thick, magenta](X)--++(-45:1) coordinate (X) ;
\fill[magenta](X) circle (4pt);

\path (start)--++(45:1)--++(-45:1) coordinate (X) ; 
\fill[cyan](X) circle (4pt);
\draw[ thick, cyan](X)--++(45:1) coordinate (X) ;
\fill[cyan](X) circle (4pt);
\draw[ thick, cyan](X)--++(45:1) coordinate (X) ;
\fill[cyan](X) circle (4pt);
\draw[ thick, cyan](X)--++(45:1) coordinate (X) ;
\fill[cyan](X) circle (4pt);
\draw[ thick, cyan](X)--++(-45:1) coordinate (X) ;
\fill[cyan](X) circle (4pt);
\draw[ thick, cyan](X)--++(-45:1) coordinate (X) ;
\fill[cyan](X) circle (4pt);
\draw[ thick, cyan](X)--++(-45:1) coordinate (X) ;
\fill[cyan](X) circle (4pt);
\draw[ thick, cyan](X)--++(45:1) coordinate (X) ;
\fill[cyan](X) circle (4pt);
\draw[ thick, cyan](X)--++(45:1) coordinate (X) ;
\fill[cyan](X) circle (4pt);

\draw[ thick, cyan](X)--++(45:1) coordinate (X) ;
\fill[cyan](X) circle (4pt);
\draw[ thick, cyan](X)--++(-45:1) coordinate (X) ;
\fill[cyan](X) circle (4pt);
\draw[ thick, cyan](X)--++(-45:1) coordinate (X) ;
\fill[cyan](X) circle (4pt);
\draw[ thick, cyan](X)--++(-45:1) coordinate (X) ;
\fill[cyan](X) circle (4pt);

\path (start)--++(45:1)--++(-45:2) coordinate (X) ; 
\fill[orange](X) circle (4pt);
\draw[ thick, orange](X)--++(45:1) coordinate (X) ;
\fill[orange](X) circle (4pt);
\draw[ thick, orange](X)--++(45:1) coordinate (X) ;
\fill[orange](X) circle (4pt);
\draw[ thick, orange](X)--++(-45:1) coordinate (X) ;
\fill[orange](X) circle (4pt);
\draw[ thick, orange](X)--++(-45:1) coordinate (X) ;
\fill[orange](X) circle (4pt);

\path (start)--++(45:1)--++(-45:3) coordinate (X) ; 
\fill[darkgreen](X) circle (4pt);
\draw[ thick, darkgreen](X)--++(45:1) coordinate (X) ;
\fill[darkgreen](X) circle (4pt);
\draw[ thick, darkgreen](X)--++(-45:1) coordinate (X) ;
 \fill[darkgreen](X) circle (4pt);

\path (start)--++(45:1)--++(-45:4) coordinate (X) ; 
\fill[violet](X) circle (4pt);

\end{tikzpicture} $$

\caption{Three partitions of negative  defect  and their regularisations.}
\label{exls_reg}
\end{figure}

\begin{rmk}\label{deg_reg_ht}
The regularisation process produces the canonical tiling
 $${\sf reg}(\alpha) \setminus \alpha = \bigsqcup\limits_{d(\alpha)+1\leq k\leq 0}  P^{k} $$of ${\sf reg}(\alpha) \setminus \alpha$ and with  ${\sf ht}(P^k)=k\leq 0$  for all $d(\alpha)+1\leq k\leq 0$. 
 Moreover, this is the unique  tiling of  ${\sf reg}(\alpha) \setminus \alpha$ and is of degree $|d(\alpha)|$. 
\end{rmk}

\begin{exl}\label{ex:seq_reg}
Consider the partition $\alpha=(8,6^2,2^2,1^2)$ whose regularisation is the one in the centre of   \cref{exls_reg}. We have that $d(\alpha)=-3$ and the sequence of partitions produced by the regularisation is: $$\alpha={\sf reg}_{-3}(\alpha) \subset {\sf reg}_{-2}(\alpha)\subset {\sf reg}_{-1}(\alpha)\subset {\sf reg}_{0}(\alpha)= {\sf reg}(\alpha)$$
where ${\sf reg}_{-2}(\alpha)={\sf reg}_{-3}(\alpha) + \color{violet}{P^{-2}}$, ${\sf reg}_{-1}(\alpha)={\sf reg}_{-2}(\alpha) + \color{darkgreen}{P^{-1}}$, and ${\sf reg}_{0}(\alpha)={\sf reg}_{-1}(\alpha) + \color{orange}{P^{0}}$. Each $P^{k}$ for $-2 \leq k \leq 0$ corresponds to the the Dyck path of the same colour in the Figure \ref{seq_reg_exl}.  The corresponding sequence of oriented diagrams is pictured in \cref{orient1}
\end{exl}

\begin{figure}[ht!]
$$
 \begin{minipage}{3.4cm}  \begin{tikzpicture} [scale=0.4]
 		\clip(0.75,-2) rectangle (9.25,1);	
	\path (0,0) coordinate (origin); 
		\path (origin)--++(0.5,0.5) coordinate (origin2);  
\path(origin2)--++(0:0.25) 	 coordinate (origin3); 	\draw(origin3)--++(0:8.5); 
		\foreach \i in {1,2,3,4,5,...,17}
		{
			\path (origin2)--++(0:0.5*\i) coordinate (a\i); 
 			\path (origin2)--++(0:0.5*\i)--++(-90:0.00) coordinate (c\i); 
			  }
		
		\foreach \i in {1,2,3,4,5,6}
		{
	\path   (0.75,0) --++(-90:\i*0.5 )coordinate (L\i); 			
	\path   (9.25,0) --++(-90:\i*0.5-0.4)coordinate (R\i); 			
			}
		
		\foreach \i in {1,2,3,4,5,...,35}
		{
			\path (origin2)--++(0:0.25*\i) --++(-90:0.5) coordinate (b\i); 
			\path (origin2)--++(0:0.25*\i) --++(-90:0.9) coordinate (d\i); 
		}
			\foreach \i in {1,4,5,10,11,13,14,16,17}
	{		\path(a\i) --++(90:0.14) node  {\scalefont{0.7}$  \down   $} ;}
		 	
			\foreach \i in {2,3,6,7,8,9,12,15}
	{		\path(a\i) --++(90:-0.18) node  {\scalefont{0.7}$  \up   $} ;}

		\draw[    thick](c2) to [out=-90,in=0] (b3) to [out=180,in=-90] (c1); 
  	
		\draw[    thick](c6) to [out=-90,in=0] (b11) to [out=180,in=-90] (c5); 
 	
			\draw[    thick](c7) to [out=-90,in=0] (d11) to [out=180,in=-90] (c4);

 		\draw[    thick](c3) --++(90:-0.25) to [out=-90,in=0] (L1);  
 		\draw[    thick](c8) --++(90:-0.25) to [out=-90,in=0] (L2);  
 		\draw[    thick](c9) --++(90:-0.25) to [out=-90,in=0] (L3);  
  		 
		  		\draw[    thick](c10) --++(90:-0.25) to [out=-90,in=180] (R4);  

 		\draw[    thick](c12) to [out=-90,in=0] (b23) to [out=180,in=-90] (c11);

		  		\draw[    thick](c13) --++(90:-0.25) to [out=-90,in=180] (R3);  
 		\draw[    thick](c15) to [out=-90,in=0] (b29) to [out=180,in=-90] (c14); 

				\draw[    thick](c16) --++(90:-0.25) to [out=-90,in=180] (R2);   \draw[    thick](c17) --++(90:-0.25) to [out=-90,in=180] (R1);  
	\end{tikzpicture}  
	\end{minipage}
\;{\color{violet} \longrightarrow} \;   
 \begin{minipage}{3.4cm}  \begin{tikzpicture} [scale=0.4]
 		\clip(0.75,-2) rectangle (9.25,1);	
	\path (0,0) coordinate (origin); 
		\path (origin)--++(0.5,0.5) coordinate (origin2);  
\path(origin2)--++(0:0.25) 	 coordinate (origin3); 	\draw(origin3)--++(0:8.5); 
		\foreach \i in {1,2,3,4,5,...,17}
		{
			\path (origin2)--++(0:0.5*\i) coordinate (a\i); 
 			\path (origin2)--++(0:0.5*\i)--++(-90:0.00) coordinate (c\i); 
			  }
		
		\foreach \i in {1,2,3,4,5,6}
		{
	\path   (0.75,0) --++(-90:\i*0.5 )coordinate (L\i); 			
	\path   (9.25,0) --++(-90:\i*0.5-0.4)coordinate (R\i); 			
			}

		\draw[    thick](c2) to [out=-90,in=0] (b3) to [out=180,in=-90] (c1); 
  	
		\draw[    thick](c6) to [out=-90,in=0] (b11) to [out=180,in=-90] (c5); 
 	
			\draw[    thick](c7) to [out=-90,in=0] (d11) to [out=180,in=-90] (c4);

 		\draw[    thick](c3) --++(90:-0.25) to [out=-90,in=0] (L1);  
 		\draw[    thick](c8) --++(90:-0.25) to [out=-90,in=0] (L2);  
  		  		\draw[   violet,  thick](c10) to [out=-90,in=0] (b19) to [out=180,in=-90] (c9); 


 		\draw[    thick](c12) to [out=-90,in=0] (b23) to [out=180,in=-90] (c11);

		  		\draw[    thick](c13) --++(90:-0.25) to [out=-90,in=180] (R3);  
 		\draw[    thick](c15) to [out=-90,in=0] (b29) to [out=180,in=-90] (c14); 

				\draw[    thick](c16) --++(90:-0.25) to [out=-90,in=180] (R2);   \draw[    thick](c17) --++(90:-0.25) to [out=-90,in=180] (R1);  
	\foreach \i in {1,2,3,4,5,...,35}
		{
			\path (origin2)--++(0:0.25*\i) --++(-90:0.5) coordinate (b\i); 
			\path (origin2)--++(0:0.25*\i) --++(-90:0.9) coordinate (d\i); 
		}
			\foreach \i in {1,4,5,10,11,13,14,16,17}
	{		\path(a\i) --++(90:0.14) node  {\scalefont{0.7}$  \down   $} ;}
		 	
			\foreach \i in {2,3,6,7,8,9,12,15}
	{		\path(a\i) --++(90:-0.18) node  {\scalefont{0.7}$  \up   $} ;}
	
	\end{tikzpicture}  
	\end{minipage}
	\;{\color{darkgreen} \longrightarrow} \;   
 \begin{minipage}{3.4cm}  \begin{tikzpicture} [scale=0.4]
 		\clip(0.75,-2) rectangle (9.25,1);	
	\path (0,0) coordinate (origin); 
		\path (origin)--++(0.5,0.5) coordinate (origin2);  
\path(origin2)--++(0:0.25) 	 coordinate (origin3); 	\draw(origin3)--++(0:8.5); 
		\foreach \i in {1,2,3,4,5,...,17}
		{
			\path (origin2)--++(0:0.5*\i) coordinate (a\i); 
 			\path (origin2)--++(0:0.5*\i)--++(-90:0.00) coordinate (c\i); 
			  }
		
		\foreach \i in {1,2,3,4,5,6}
		{
	\path   (0.75,0) --++(-90:\i*0.5 )coordinate (L\i); 			
	\path   (9.25,0) --++(-90:\i*0.5-0.4)coordinate (R\i); 			
			}
		
		\foreach \i in {1,2,3,4,5,...,35}
		{
			\path (origin2)--++(0:0.25*\i) --++(-90:0.5) coordinate (b\i); 
			\path (origin2)--++(0:0.25*\i) --++(-90:0.9) coordinate (d\i); 
		}
			\foreach \i in {1,4,5,10,11,13,14,16,17}
	{		\path(a\i) --++(90:0.14) node  {\scalefont{0.7}$  \down   $} ;}
		 	
			\foreach \i in {2,3,6,7,8,9,12,15}
	{		\path(a\i) --++(90:-0.18) node  {\scalefont{0.7}$  \up   $} ;}

		\draw[    thick](c2) to [out=-90,in=0] (b3) to [out=180,in=-90] (c1); 
  	
		\draw[    thick](c6) to [out=-90,in=0] (b11) to [out=180,in=-90] (c5); 
 	
			\draw[    thick](c7) to [out=-90,in=0] (d11) to [out=180,in=-90] (c4);

 		\draw[    thick](c3) --++(90:-0.25) to [out=-90,in=0] (L1);  
  		  		\draw[  violet,  thick](c10) to [out=-90,in=0] (b19) to [out=180,in=-90] (c9); 

\path(d21)--++(90:-0.25) coordinate (d22);
	\draw[   darkgreen, thick](c13) to [out=-90,in=0] (d22) to [out=180,in=-90] (c8);


 		\draw[     thick](c12) to [out=-90,in=0] (b23) to [out=180,in=-90] (c11);

 		\draw[    thick](c15) to [out=-90,in=0] (b29) to [out=180,in=-90] (c14); 

				\draw[    thick](c16) --++(90:-0.25) to [out=-90,in=180] (R2);   \draw[    thick](c17) --++(90:-0.25) to [out=-90,in=180] (R1);  
	\foreach \i in {1,2,3,4,5,...,35}
		{
			\path (origin2)--++(0:0.25*\i) --++(-90:0.5) coordinate (b\i); 
			\path (origin2)--++(0:0.25*\i) --++(-90:0.9) coordinate (d\i); 
		}
			\foreach \i in {1,4,5,10,11,13,14,16,17}
	{		\path(a\i) --++(90:0.14) node  {\scalefont{0.7}$  \down   $} ;}
		 	
			\foreach \i in {2,3,6,7,8,9,12,15}
	{		\path(a\i) --++(90:-0.18) node  {\scalefont{0.7}$  \up   $} ;}
	
	\end{tikzpicture}  
	\end{minipage}
		\;{\color{orange} \longrightarrow} \;   
 \begin{minipage}{3.4cm}  \begin{tikzpicture} [scale=0.4]
 		\clip(0.75,-2) rectangle (9.25,1);	
	\path (0,0) coordinate (origin); 
		\path (origin)--++(0.5,0.5) coordinate (origin2);  
\path(origin2)--++(0:0.25) 	 coordinate (origin3); 	\draw(origin3)--++(0:8.5); 
		\foreach \i in {1,2,3,4,5,...,17}
		{
			\path (origin2)--++(0:0.5*\i) coordinate (a\i); 
 			\path (origin2)--++(0:0.5*\i)--++(-90:0.00) coordinate (c\i); 
			  }
		
		\foreach \i in {1,2,3,4,5,6}
		{
	\path   (0.75,0) --++(-90:\i*0.5 )coordinate (L\i); 			
	\path   (9.25,0) --++(-90:\i*0.5-0.4)coordinate (R\i); 			
			}
		
		\foreach \i in {1,2,3,4,5,...,35}
		{
			\path (origin2)--++(0:0.25*\i) --++(-90:0.5) coordinate (b\i); 
			\path (origin2)--++(0:0.25*\i) --++(-90:0.9) coordinate (d\i); 
		}
			\foreach \i in {1,4,5,10,11,13,14,16,17}
	{		\path(a\i) --++(90:0.14) node  {\scalefont{0.7}$  \down   $} ;}
		 	
			\foreach \i in {2,3,6,7,8,9,12,15}
	{		\path(a\i) --++(90:-0.18) node  {\scalefont{0.7}$  \up   $} ;}

		\draw[    thick](c2) to [out=-90,in=0] (b3) to [out=180,in=-90] (c1); 
  	
		\draw[    thick](c6) to [out=-90,in=0] (b11) to [out=180,in=-90] (c5); 
 	
			\draw[    thick](c7) to [out=-90,in=0] (d11) to [out=180,in=-90] (c4);

  		  		\draw[  violet,  thick](c10) to [out=-90,in=0] (b19) to [out=180,in=-90] (c9); 

\path(d21)--++(90:-0.25) coordinate (d22);
	\draw[ darkgreen,   thick](c13) to [out=-90,in=0] (d22) to [out=180,in=-90] (c8);


 		\draw[    thick](c12) to [out=-90,in=0] (b23) to [out=180,in=-90] (c11);

\path(b19)--++(90:-0.95) coordinate (b19);

 		\draw[  orange,  thick](c16) to [out=-90,in=0] (b19) to [out=180,in=-90] (c3);

 		\draw[    thick](c15) to [out=-90,in=0] (b29) to [out=180,in=-90] (c14); 

 		\draw[    thick](c17) --++(90:-0.25) to [out=-90,in=180] (R1);  
	\foreach \i in {1,2,3,4,5,...,35}
		{
			\path (origin2)--++(0:0.25*\i) --++(-90:0.5) coordinate (b\i); 
			\path (origin2)--++(0:0.25*\i) --++(-90:0.9) coordinate (d\i); 
		}
			\foreach \i in {1,4,5,10,11,13,14,16,17}
	{		\path(a\i) --++(90:0.14) node  {\scalefont{0.7}$  \down   $} ;}
		 	
			\foreach \i in {2,3,6,7,8,9,12,15}
	{		\path(a\i) --++(90:-0.18) node  {\scalefont{0.7}$  \up   $} ;}
	
	\end{tikzpicture}  
	\end{minipage}
	$$ 
	\caption{On the left we picture $\underline{\alpha}\alpha$ and on the right 
	$\underline{{\sf reg}(\alpha)}\alpha$ for $\alpha=(8,6^2,2^2,1^2)$. Reading the diagrams from right to left we are doing good move \eqref{gm1} three times.}
	\label{orient1}
	\end{figure}
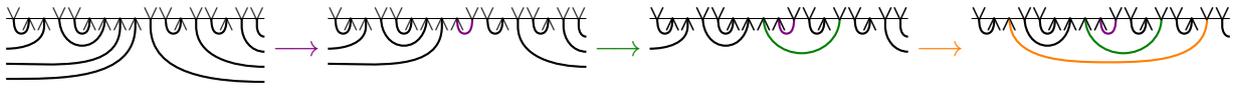
 
\begin{figure}[ht!]
 $$
\begin{minipage}{3.2cm}
\begin{tikzpicture}[scale=0.25]
 \path(0,0)--++(135:2) coordinate (hhhh);
 \draw[very thick] (hhhh)--++(135:8)--++(45:9)--++(-45:8)--++(-135:9);
 \clip(hhhh)--++(135:8)--++(45:9)--++(-45:8)--++(-135:9);
  
 \path(0,0) coordinate (origin2);
  \path(0,0)--++(135:2) coordinate (origin3);

     \foreach \i in {0,1,2,3,4,5,6,7,8}
{
\path (origin3)--++(45:0.5*\i) coordinate (c\i); 
\path (origin3)--++(135:0.5*\i)  coordinate (d\i); 
  }

   \foreach \i in {0,1,2,3,4,5,6,7,8}
{
\path (origin3)--++(45:1*\i) coordinate (c\i); 
\path (c\i)--++(-45:0.5) coordinate (c\i); 
\path (origin3)--++(135:1*\i)  coordinate (d\i); 
\path (d\i)--++(-135:0.5) coordinate (d\i); 
\draw[thick,densely dotted] (c\i)--++(135:10);
\draw[thick,densely dotted] (d\i)--++(45:10);
  }

\path(0,0)--++(135:2) coordinate (hhhh);

\fill[opacity=0.2](hhhh)
--++(135:8)--++(45:1)--++(-45:2)--++(45:2)--++(-45:4)--++(45:2)--++(-45:1)--++(45:2)--++(-45:1)--++(45:2);

\path(origin3)--++(45:-0.5)--++(135:7.5) coordinate (X)coordinate (start);


\end{tikzpicture}\end{minipage}
{\color{violet} \longrightarrow} \; \;
\begin{minipage}{3.2cm}
\begin{tikzpicture}[scale=0.25]
 \path(0,0)--++(135:2) coordinate (hhhh);
 \draw[very thick] (hhhh)--++(135:8)--++(45:9)--++(-45:8)--++(-135:9);
 \clip(hhhh)--++(135:8)--++(45:9)--++(-45:8)--++(-135:9);
  
 \path(0,0) coordinate (origin2);
  \path(0,0)--++(135:2) coordinate (origin3);

     \foreach \i in {0,1,2,3,4,5,6,7,8}
{
\path (origin3)--++(45:0.5*\i) coordinate (c\i); 
\path (origin3)--++(135:0.5*\i)  coordinate (d\i); 
  }

   \foreach \i in {0,1,2,3,4,5,6,7,8}
{
\path (origin3)--++(45:1*\i) coordinate (c\i); 
\path (c\i)--++(-45:0.5) coordinate (c\i); 
\path (origin3)--++(135:1*\i)  coordinate (d\i); 
\path (d\i)--++(-135:0.5) coordinate (d\i); 
\draw[thick,densely dotted] (c\i)--++(135:10);
\draw[thick,densely dotted] (d\i)--++(45:10);
  }

\path(0,0)--++(135:2) coordinate (hhhh);

\fill[opacity=0.2](hhhh)
--++(135:8)--++(45:1)--++(-45:2)--++(45:2)--++(-45:4)--++(45:2)--++(-45:1)--++(45:2)--++(-45:1)--++(45:2);

\path(origin3)--++(45:-0.5)--++(135:7.5) coordinate (X)coordinate (start);

\path (start)--++(45:4)--++(-45:5) coordinate (X) ; 
\fill[violet](X) circle (4pt);

\end{tikzpicture}\end{minipage}
{\color{darkgreen} \longrightarrow}\; \;
\begin{minipage}{3.2cm}
\begin{tikzpicture}[scale=0.25]
 \path(0,0)--++(135:2) coordinate (hhhh);
 \draw[very thick] (hhhh)--++(135:8)--++(45:9)--++(-45:8)--++(-135:9);
 \clip(hhhh)--++(135:8)--++(45:9)--++(-45:8)--++(-135:9);
  
 \path(0,0) coordinate (origin2);
  \path(0,0)--++(135:2) coordinate (origin3);

     \foreach \i in {0,1,2,3,4,5,6,7,8}
{
\path (origin3)--++(45:0.5*\i) coordinate (c\i); 
\path (origin3)--++(135:0.5*\i)  coordinate (d\i); 
  }

   \foreach \i in {0,1,2,3,4,5,6,7,8}
{
\path (origin3)--++(45:1*\i) coordinate (c\i); 
\path (c\i)--++(-45:0.5) coordinate (c\i); 
\path (origin3)--++(135:1*\i)  coordinate (d\i); 
\path (d\i)--++(-135:0.5) coordinate (d\i); 
\draw[thick,densely dotted] (c\i)--++(135:10);
\draw[thick,densely dotted] (d\i)--++(45:10);
  }

\path(0,0)--++(135:2) coordinate (hhhh);

\fill[opacity=0.2](hhhh)
--++(135:8)--++(45:1)--++(-45:2)--++(45:2)--++(-45:4)--++(45:2)--++(-45:1)--++(45:2)--++(-45:1)--++(45:2);

\path(origin3)--++(45:-0.5)--++(135:7.5) coordinate (X)coordinate (start);

\path (start)--++(45:4)--++(-45:4) coordinate (X) ; 
\fill[darkgreen](X) circle (4pt);
\draw[ thick, darkgreen](X)--++(45:1) coordinate (X) ;
\fill[darkgreen](X) circle (4pt);
\draw[ thick, darkgreen](X)--++(-45:1) coordinate (X) ;
 \fill[darkgreen](X) circle (4pt);
 \draw[ thick, darkgreen](X)--++(45:1) coordinate (X) ;
\fill[darkgreen](X) circle (4pt);
\draw[ thick, darkgreen](X)--++(-45:1) coordinate (X) ;
 \fill[darkgreen](X) circle (4pt);

\path (start)--++(45:4)--++(-45:5) coordinate (X) ; 
\fill[violet](X) circle (4pt);

\end{tikzpicture}
\end{minipage}
{\color{orange} \longrightarrow}\; \;
\begin{minipage}{3cm}\begin{tikzpicture}[scale=0.25]
 \path(0,0)--++(135:2) coordinate (hhhh);
 \draw[very thick] (hhhh)--++(135:8)--++(45:9)--++(-45:8)--++(-135:9);
 \clip(hhhh)--++(135:8)--++(45:9)--++(-45:8)--++(-135:9);
  
 \path(0,0) coordinate (origin2);
  \path(0,0)--++(135:2) coordinate (origin3);

     \foreach \i in {0,1,2,3,4,5,6,7,8}
{
\path (origin3)--++(45:0.5*\i) coordinate (c\i); 
\path (origin3)--++(135:0.5*\i)  coordinate (d\i); 
  }

   \foreach \i in {0,1,2,3,4,5,6,7,8}
{
\path (origin3)--++(45:1*\i) coordinate (c\i); 
\path (c\i)--++(-45:0.5) coordinate (c\i); 
\path (origin3)--++(135:1*\i)  coordinate (d\i); 
\path (d\i)--++(-135:0.5) coordinate (d\i); 
\draw[thick,densely dotted] (c\i)--++(135:10);
\draw[thick,densely dotted] (d\i)--++(45:10);
  }

\path(0,0)--++(135:2) coordinate (hhhh);

\fill[opacity=0.2](hhhh)
--++(135:8)--++(45:1)--++(-45:2)--++(45:2)--++(-45:4)--++(45:2)--++(-45:1)--++(45:2)--++(-45:1)--++(45:2);

\path(origin3)--++(45:-0.5)--++(135:7.5) coordinate (X)coordinate (start);

\path (start)--++(45:2)--++(-45:1) coordinate (X) ; 
\fill[orange](X) circle (4pt);
\draw[ thick, orange](X)--++(45:1) coordinate (X) ;
\fill[orange](X) circle (4pt);
\draw[ thick, orange](X)--++(45:1) coordinate (X) ;
\fill[orange](X) circle (4pt);
\draw[ thick, orange](X)--++(-45:1) coordinate (X) ;
\fill[orange](X) circle (4pt);
\draw[ thick, orange](X)--++(-45:1) coordinate (X) ;
\fill[orange](X) circle (4pt);
\draw[ thick, orange](X)--++(45:1) coordinate (X) ;
\fill[orange](X) circle (4pt);
\draw[ thick, orange](X)--++(45:1) coordinate (X) ;
\fill[orange](X) circle (4pt);
\draw[ thick, orange](X)--++(-45:1) coordinate (X) ;
\fill[orange](X) circle (4pt);
\draw[ thick, orange](X)--++(45:1) coordinate (X) ;
\fill[orange](X) circle (4pt);

\draw[ thick, orange](X)--++(-45:1) coordinate (X) ;
\fill[orange](X) circle (4pt);
\draw[ thick, orange](X)--++(-45:1) coordinate (X) ;
\fill[orange](X) circle (4pt);
 \draw[ thick, orange](X)--++(45:1) coordinate (X) ;
\fill[orange](X) circle (4pt);

\draw[ thick, orange](X)--++(-45:1) coordinate (X) ;
\fill[orange](X) circle (4pt);

\path (start)--++(45:4)--++(-45:4) coordinate (X) ; 
\fill[darkgreen](X) circle (4pt);
\draw[ thick, darkgreen](X)--++(45:1) coordinate (X) ;
\fill[darkgreen](X) circle (4pt);
\draw[ thick, darkgreen](X)--++(-45:1) coordinate (X) ;
 \fill[darkgreen](X) circle (4pt);
 \draw[ thick, darkgreen](X)--++(45:1) coordinate (X) ;
\fill[darkgreen](X) circle (4pt);
\draw[ thick, darkgreen](X)--++(-45:1) coordinate (X) ;
 \fill[darkgreen](X) circle (4pt);

\path (start)--++(45:4)--++(-45:5) coordinate (X) ; 
\fill[violet](X) circle (4pt);

\end{tikzpicture}\end{minipage}
$$
\caption{The sequence of partitions produced by the regularisation of $\alpha=(8,6^2,2^2,1^2)$.}
\label{seq_reg_exl}
\end{figure}
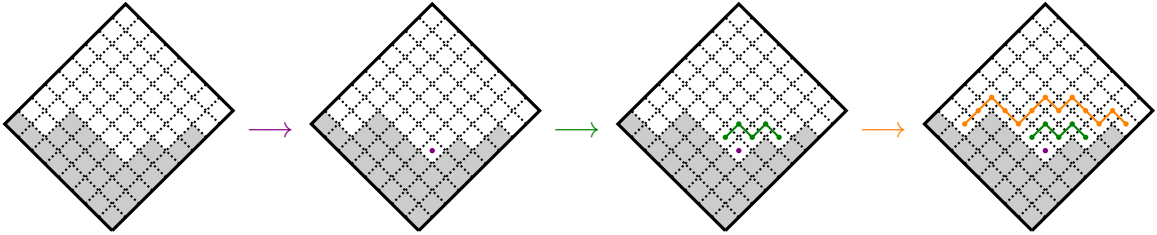

\begin{prop}\label{RegularizePairProp}
Let \(\alpha \in \mptn, \mu    \in \mathscr{R}_{m,n}\). Assume \(\mu    \backslash \alpha = (\mu    \backslash \alpha)_{\leq 0}\) is a Dyck pair of degree \(k\). 
Then \((\mu  , {\sf reg}(\alpha)
 )\) is a Dyck pair of degree \(k +d(\alpha)\).
\end{prop}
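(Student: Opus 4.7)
The plan is to prove the result by a two-step analysis: first establishing the containment $\mu \subseteq {\sf reg}(\alpha)$ (which is what makes $(\mu,{\sf reg}(\alpha))$ a candidate Dyck pair in the first place), then computing its degree using the split operation on Dyck tilings.

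For the containment, the key observation is that ${\sf reg}(\alpha)$ is by construction maximal among regular partitions $\nu$ for which $(\alpha,\nu)$ is a Dyck pair with all canonical-tiling heights $\leq 0$. This maximality follows inductively from the maximal-breadth defining property of the regularisation paths $P^{d(\alpha)+1},\ldots,P^0$: if some tile of $\nu\setminus\alpha$ failed to lie in any $P^j$, then the Dyck path of the canonical tiling of $\nu\setminus\alpha$ containing that tile would be forced to attain ${\sf ht}^\nu_\alpha$-height $>0$, contradicting the hypothesis. Applied to $\mu$ this yields $\mu\subseteq{\sf reg}(\alpha)$, and moreover every tile of $\mu\setminus\alpha$ lies in a unique regularisation path $P^j$.

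For the degree computation, write $\{Q^1,\ldots,Q^k\}$ for the canonical Dyck tiling of $\mu\setminus\alpha$. By the first step, each $Q^i$ has all its tiles contained in a unique $P^{j(i)}$, and setting $\ell_j:=\#\{i:j(i)=j\}$ one further verifies $\ell_j\geq 1$ for every $j\in\{d(\alpha)+1,\ldots,0\}$: otherwise the entire path $P^{j_0}$ would sit inside ${\sf reg}(\alpha)\setminus\mu$ and the count below would fail. Iteratively applying the split operation on each $P^j$ by its contained $Q^i$'s (in order of decreasing breadth) produces a Dyck tiling of $P^j\setminus\bigsqcup_{j(i)=j}Q^i$ consisting of exactly $\ell_j-1$ Dyck paths. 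Taking the disjoint union over $j$ gives the canonical Dyck tiling of ${\sf reg}(\alpha)\setminus\mu$, with total degree
\[
\sum_{j=d(\alpha)+1}^{0}(\ell_j-1) \;=\; k - |d(\alpha)| \;=\; k+d(\alpha),
\]
using $\sum_j\ell_j=k$ and the fact that there are $|d(\alpha)|$ indices $j$.

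The main obstacle is the careful combinatorial verification that (i) each $Q^i$ is contained in a single regularisation path rather than straddling several, and (ii) iteratively splitting $P^j$ by $\ell_j$ internal paths produces precisely $\ell_j-1$ output paths (including the boundary case where some $Q^i$ coincides with all of $P^{j(i)}$, contributing zero paths). Both facts hinge on analysing how clockwise cups in $\underline{\mu}\alpha$ nest inside those of $\underline{{\sf reg}(\alpha)}\alpha$, which is itself the inductive consequence of the heights $\leq 0$ hypothesis together with the maximal-breadth property of the regularisation paths $P^j$ established in \cref{regulatiaasaiofgsaf}.
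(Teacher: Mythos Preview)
Your approach is genuinely different from the paper's, and while the strategy could in principle be made to work, the argument as written has a real gap at its core.

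The paper argues entirely on the level of arc diagrams. It partitions the arcs and strands of $\underline{\alpha}\alpha$ and $\underline{\mu}\alpha$ into a common part $\mathcal{A}'$ and complementary parts $\mathcal{A}$, $\mathcal{B}$, $\mathcal{C}$; the hypothesis $(\mu\setminus\alpha)=(\mu\setminus\alpha)_{\leq 0}$ is used only to say that passing from $\underline{\mu}\alpha$ to $\underline{\alpha}\alpha$ uses no move of type (G3), which pins down the relationship between $\mathcal{A}$ and $\mathcal{B}\sqcup\mathcal{C}$. One then reads off directly that $\underline{{\sf reg}(\alpha)}\mu$ is oriented (hence $(\mu,{\sf reg}(\alpha))$ is a Dyck pair) with degree $|\mathcal{B}|=k+d(\alpha)$. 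No tile-level bookkeeping is needed, and in particular the containment $\mu\subseteq{\sf reg}(\alpha)$ is never argued separately---it falls out of orientedness.

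Your route instead tries to build the Dyck tiling of ${\sf reg}(\alpha)\setminus\mu$ explicitly from the $P^j$ and the $Q^i$. This is essentially Corollary~\ref{cor_pure_fav}, which in the paper is \emph{derived from} Proposition~\ref{RegularizePairProp} rather than the other way round. The pivot of your argument is claim (ii): that every tile of each $Q^i$ (in the canonical tiling of $\mu\setminus\alpha$) lies in a single regularisation layer $P^{j(i)}$. You flag this as the ``main obstacle'' but do not prove it, and it is not obvious: the canonical representative of $Q^i$ in $\mu\setminus\alpha$ is determined by the support sets ${\sf supp}^\mu_\alpha$, whereas the representative of $P^j$ is determined by the regularisation process, and there is no a priori reason these choices should be compatible tile-by-tile. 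Your proposed resolution---analysing how clockwise cups in $\underline{\mu}\alpha$ nest inside those of $\underline{{\sf reg}(\alpha)}\alpha$---is precisely the arc-diagram analysis the paper carries out, so at that point you would be doing the paper's proof anyway. Similarly, your argument for $\ell_j\geq 1$ (``the count below would fail'') is circular, though this is easily repaired: the endpoints of $P^j$ lie at physical height $j\leq 0$, hence in the staircase, hence in $\mu$.

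In short: the paper's arc-diagram proof is shorter and avoids the tile-compatibility question entirely; your approach inverts the paper's logical order and, to close its main gap, would need to import the same diagrammatic reasoning.
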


%
%
%
%
%
%
%

\begin{proof}
 Consider the diagrams $\underline{\alpha}\alpha$ and $\underline{\mu }\alpha$.  
 We let $\mathcal{A}'$ denote the set of arcs and strands common to both  diagrams  $\underline{\alpha}\alpha$ and $\underline{\mu }\alpha$ and we set  $\color{cyan}\mathcal{A}$ to the 
set of arcs in $\underline{\mu }\alpha$ not belonging to  $\mathcal{A}'$.  
Our assumption that $(\mu \backslash	\alpha)_{\leq 0}= \mu  \backslash	\alpha $ implies that $\underline{\alpha}\alpha$ is obtained from $\underline{\mu}\alpha $ by a sequence of good moves of the form \eqref{gm1} and \eqref{gm2}.  In particular, no end point of an arc in $\color{cyan}\mathcal A$ lies within a region above an arc or strand from $\mathcal A'$.

We let 
$\color{darkgreen}\mathcal{B}$  denote the set of   arcs  in 
 $\underline{\alpha}\alpha$ and which are    not in $\mathcal{A}'$.   
We let  $\color{magenta}\mathcal{C}$
 denote the set of strands in $\underline{\alpha}\alpha$ which do not belong to $\mathcal{A}'$.  
Since $\mu  $ is of defect zero, 
the end points 
 of the arcs 
  in
  $\color{cyan}\mathcal{A}$ are in bijection with the end points of the arcs and strands in  ${\color{darkgreen}\mathcal{B}}\sqcup\color{magenta}\mathcal{C}$.  
We have that   $\color{magenta}\mathcal{C}$ consists of $2|d(\alpha)|$ strands (half south-easterly and half south-westerly) and that $\color{darkgreen}\mathcal{B}$  consists solely of anti-clockwise oriented cups.

We observe that $\underline{{\sf reg}(\alpha)}$ is obtained from $\underline{\alpha}$ by replacing all the strands in $\color{magenta}\mathcal{C}$ with   cups.  
Therefore ${{\sf reg}(\alpha)}$ is obtained from 
$\alpha$ by flipping the labels of the vertices at the end points 
 of the  strands  from  $\color{magenta}\mathcal{C}$.
On the other hand, $\mu  $ is obtained from ${\sf reg}(\alpha)$ by flipping all 
the labels of the vertices 
at the end points 
 of the arcs   in
 $\color{darkgreen}\mathcal{B}$. 

Therefore $\underline{{\sf reg}(\alpha)}{\mu  }		 $ is an oriented diagram of degree equal to the number of arcs in $\color{darkgreen}\mathcal{B}$ (which is equal to \(k + d(\alpha)\)) as required.
%
%
%
%
\end{proof}

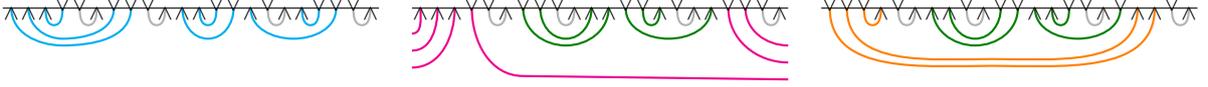
\begin{figure}[h]
 $$
 \begin{minipage}{5cm}  \begin{tikzpicture} [scale=0.45]
 \clip(0.75,-2) rectangle (11.75,1);	
	\path (0,0) coordinate (origin); 
		\path (origin)--++(0.5,0.5) coordinate (origin2);  
\path(origin2)--++(0:0.25) 	 coordinate (origin3); 	\draw(origin3)--++(0:11); 
		\foreach \i in {1,2,3,4,5,...,24}
		{
			\path (origin2)--++(0:0.5*\i) coordinate (a\i); 
 			\path (origin2)--++(0:0.5*\i)--++(-90:0.00) coordinate (c\i); 
			  }
		
		\foreach \i in {1,2,3,4,5,6}
		{
	\path   (0.75,0) --++(-90:\i*0.5 )coordinate (L\i); 			
	\path   (11.25,0) --++(-90:\i*0.5-0.4)coordinate (R\i); 			
			}
	\foreach \i in {1,2,3,4,5,...,80}
		{
			\path (origin2)--++(0:0.25*\i) --++(-90:0.5) coordinate (b\i); 
			\path (origin2)--++(0:0.25*\i) --++(-90:0.9) coordinate (d\i); 
		}

		\path(b8)--++(-90:0.6) coordinate (X);
		\draw[  cyan,  thick](c8) to [out=-90,in=0] (X) to [out=180,in=-90] (c1); 

		\path(b8)--++(-90:0.4) coordinate (X);	
\draw[  cyan,  thick](c7) to [out=-90,in=0] (X) to [out=180,in=-90] (c2); 		

\draw[  cyan,  thick](c4) to [out=-90,in=0] (b7) to [out=180,in=-90] (c3); 		

 \draw[ cyan,  thick](c13) to [out=-90,in=0] (b25) to [out=180,in=-90] (c12); 		

		\path(b25)--++(-90:0.4) coordinate (X);	
 \draw[ cyan,  thick](c14) to [out=-90,in=0] (X) to [out=180,in=-90] (c11); 		

 \draw[  cyan,  thick](c19) to [out=-90,in=0] (b37) to [out=180,in=-90] (c18); 		

		\path(b35)--++(-90:0.4) coordinate (X);	
 \draw[  cyan,  thick](c20) to [out=-90,in=0] (X) to [out=180,in=-90] (c15);

\draw[  gray!60,  thick](c6) to [out=-90,in=0] (b11) to [out=180,in=-90] (c5); 		
 \draw[  gray!60,  thick](c10) to [out=-90,in=0] (b19) to [out=180,in=-90] (c9); 		
 \draw[  gray!60,  thick](c17) to [out=-90,in=0] (b33) to [out=180,in=-90] (c16); 		
 \draw[  gray!60,  thick](c22) to [out=-90,in=0] (b43) to [out=180,in=-90] (c21); 		
 		\foreach \i in  {4,5,7,8,9,13,14,16,19,20,21}
	{		\path(a\i) --++(90:0.12) node  {\scalefont{0.7}$  \down   $} ;}
		 	
			\foreach \i in {1,2,3,6,10,11,12,15,17,18,22} 	{		\path(a\i) --++(90:-0.16) node  {\scalefont{0.7}$  \up   $} ;}
		\end{tikzpicture}  
	\end{minipage}
	\quad
	 \begin{minipage}{5cm}  \begin{tikzpicture} [scale=0.45]
 \clip(0.75,-2) rectangle (11.75,1);	
	\path (0,0) coordinate (origin); 
		\path (origin)--++(0.5,0.5) coordinate (origin2);  
\path(origin2)--++(0:0.25) 	 coordinate (origin3); 	\draw(origin3)--++(0:11); 
		\foreach \i in {1,2,3,4,5,...,23}
		{
			\path (origin2)--++(0:0.5*\i) coordinate (a\i); 
 			\path (origin2)--++(0:0.5*\i)--++(-90:0.00) coordinate (c\i); 
			  }
		
		\foreach \i in {1,2,3,4,5,6}
		{
	\path   (0.75,0) --++(-90:\i*0.5-0.25 )coordinate (L\i); 			
	\path   (11.75,0) --++(-90:\i*0.5-0.4)coordinate (R\i); 			
			}
	\foreach \i in {1,2,3,4,5,...,80}
		{
			\path (origin2)--++(0:0.25*\i) --++(-90:0.5) coordinate (b\i); 
			\path (origin2)--++(0:0.25*\i) --++(-90:0.9) coordinate (d\i); 
		}

 	\draw[  magenta,  thick](c1) to [out=-90,in=0]  (L1);
	 	\draw[   magenta,  thick](c2) to [out=-90,in=0]  (L2);
 	\draw[   magenta,  thick](c3) to [out=-90,in=0]  (L3);		%

	\draw[  magenta,  thick](c4) to [out=-90,in=180]  (4,-1.5)-- (R4);
 \draw[  magenta,  thick](c20) to [out=-90,in=180]  (R2);--++(0:0.1);
  \draw[  magenta,  thick](c19) to [out=-90,in=180]  (R3);--++(0:0.1);

 \path(b19)--++(-90:0.4) coordinate (X);	
  \draw[ darkgreen,  thick](c11) to [out=-90,in=0] (X) to [out=180,in=-90] (c8); 		
   \path(b19)--++(-90:0.6) coordinate (X);	
  \draw[ darkgreen,  thick](c12) to [out=-90,in=0] (X) to [out=180,in=-90] (c7); 		
%

  \draw[  darkgreen,  thick](c15) to [out=-90,in=0] (b29) to [out=180,in=-90] (c14); 		
   \path(b31)--++(-90:0.4) coordinate (X);	

  \draw[  darkgreen,  thick](c18) to [out=-90,in=0] (X) to [out=180,in=-90] (c13); 		
%

\draw[  gray!60,  thick](c6) to [out=-90,in=0] (b11) to [out=180,in=-90] (c5); 		
 \draw[  gray!60,  thick](c10) to [out=-90,in=0] (b19) to [out=180,in=-90] (c9); 		
 \draw[  gray!60,  thick](c17) to [out=-90,in=0] (b33) to [out=180,in=-90] (c16); 		
 \draw[  gray!60,  thick](c22) to [out=-90,in=0] (b43) to [out=180,in=-90] (c21);

	 			\foreach \i in  {4,5,7,8,9,13,14,16,19,20,21}
	{		\path(a\i) --++(90:0.12) node  {\scalefont{0.7}$  \down   $} ;}
		 	
			\foreach \i in {1,2,3,6,10,11,12,15,17,18,22} 	{		\path(a\i) --++(90:-0.16) node  {\scalefont{0.7}$  \up   $} ;}		\end{tikzpicture}  \end{minipage}
\quad	 \begin{minipage}{5cm}  \begin{tikzpicture} [scale=0.45]
 \clip(0.75,-2) rectangle (11.75,1);	
	\path (0,0) coordinate (origin); 
		\path (origin)--++(0.5,0.5) coordinate (origin2);  
\path(origin2)--++(0:0.25) 	 coordinate (origin3); 	\draw(origin3)--++(0:11); 
		\foreach \i in {1,2,3,4,5,...,23}
		{
			\path (origin2)--++(0:0.5*\i) coordinate (a\i); 
 			\path (origin2)--++(0:0.5*\i)--++(-90:0.00) coordinate (c\i); 
			  }
		
		\foreach \i in {1,2,3,4,5,6}
		{
	\path   (0.75,0) --++(-90:\i*0.5-0.25 )coordinate (L\i); 			
	\path   (11.75,0) --++(-90:\i*0.5-0.4)coordinate (R\i); 			
			}
	\foreach \i in {1,2,3,4,5,...,80}
		{
			\path (origin2)--++(0:0.25*\i) --++(-90:0.5) coordinate (b\i); 
			\path (origin2)--++(0:0.25*\i) --++(-90:0.9) coordinate (d\i); 
		}

 	\draw[  orange,  thick](c4) to [out=-90,in=0] (b7) to [out=180,in=-90] (c3); 

 \path(b21)--++(-90:1) coordinate (X);	
	\draw[  orange,  thick](c19) to [out=-90,in=0] (X) to [out=180,in=-90] (c2); 
	 \path(b21)--++(-90:1.2) coordinate (X);	
	\draw[  orange,  thick](c20) to [out=-90,in=0] (X) to [out=180,in=-90] (c1); 
%
%

 \path(b19)--++(-90:0.4) coordinate (X);	
  \draw[ darkgreen,  thick](c11) to [out=-90,in=0] (X) to [out=180,in=-90] (c8); 		
   \path(b19)--++(-90:0.6) coordinate (X);	
  \draw[ darkgreen,  thick](c12) to [out=-90,in=0] (X) to [out=180,in=-90] (c7); 		
 
  \draw[  darkgreen,  thick](c15) to [out=-90,in=0] (b29) to [out=180,in=-90] (c14); 		
   \path(b31)--++(-90:0.4) coordinate (X);	

  \draw[  darkgreen,  thick](c18) to [out=-90,in=0] (X) to [out=180,in=-90] (c13);

\draw[  gray!60,  thick](c6) to [out=-90,in=0] (b11) to [out=180,in=-90] (c5); 		
 \draw[  gray!60,  thick](c10) to [out=-90,in=0] (b19) to [out=180,in=-90] (c9); 		
 \draw[  gray!60,  thick](c17) to [out=-90,in=0] (b33) to [out=180,in=-90] (c16); 		
 \draw[  gray!60,  thick](c22) to [out=-90,in=0] (b43) to [out=180,in=-90] (c21);

 \foreach \i in  {1,2,3,11,12,15,18,5,9,16,21}
	{		\path(a\i) --++(90:0.12) node  {\scalefont{0.7}$  \down   $} ;}
 \foreach \i in {4,7,8,13,14,20,6,10,17,22,19} 	{		\path(a\i) --++(90:-0.16) node  {\scalefont{0.7}$  \up   $} ;}		\end{tikzpicture}  \end{minipage}		
$$
\caption{
From left to right we picture
$\underline{\mu}\alpha$, 
$\underline{\alpha}\alpha$, 
$\underline{{\sf reg}(\alpha)}\mu$. The sets $\color{cyan}\mathcal{A}$, 
 $\color{darkgreen}\mathcal{B}$, and  $\color{magenta}\mathcal{C}$ are coloured.
}
\label{RegProcFig}
\end{figure}

Instead of adding Dyck paths to get from  $\la$
to 
  \( {\sf reg}(\alpha)\), we can remove Dyck paths to get from   \(  {\sf reg}(\alpha)\) to $\la$.  In fact, since $\mu/\alpha$ is a Dyck tiling, these removals can 
  be thought of as splitting Dyck paths of $ {\sf reg}(\alpha)/\alpha$.  In particular, we have the following.

\begin{cor}\label{cor_pure_fav}
Let $(\mu\setminus\alpha)_{\leq 0}$ be a Dyck tiling with $\alpha\notin\mathscr{R}_{m,n}$. Then there is a sequence of partitions of the form
\begin{align}\label{Step2}
 {\sf reg}(\alpha)
 ={\sf split}_{d(\alpha)}(\mu\setminus\alpha) \supseteq {\sf split}_{d(\alpha)+1}(\mu\setminus \alpha)
\dots 
  \supseteq {\sf  split}_{0}(\mu\setminus \alpha)=   
\alpha\sqcup   ( \mu\setminus \alpha)_{\leq 0}
\end{align}
with 
$
({\sf split}_{k}(\mu\setminus\alpha) )\setminus ( {\sf split}_
{k-1}(\mu\setminus \alpha))
=
 R_1^k\sqcup  R_2^k \sqcup\dots \sqcup  R_K^k
$
a disjoint union of commuting Dyck paths such that 
 $$P^k - R_1^k- R_2^k- \dots -R_K^k= (\mu\setminus\alpha)_k.$$
In particular, $R_1^k, R_2^k, \ldots, R_K^k$ are ordered from left to right.  \end{cor}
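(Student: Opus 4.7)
The plan is to build the filtration explicitly one height at a time. For each $d(\alpha) \leq k \leq 0$ I would define
\[
{\sf split}_k(\mu\setminus\alpha) := \alpha \sqcup \bigsqcup_{d(\alpha)<j\leq k}(\mu\setminus\alpha)_j \sqcup \bigsqcup_{k<j\leq 0} P^j,
\]
which by \cref{deg_reg_ht} equals ${\sf reg}(\alpha)$ at $k=d(\alpha)$ and $\alpha\sqcup (\mu\setminus\alpha)_{\leq 0}$ at $k=0$, giving the desired endpoints of the sequence. The problem then reduces to showing, for each $k$, that the tile-set difference $P^k\setminus (\mu\setminus\alpha)_k$ decomposes as a disjoint union of pairwise-commuting Dyck paths $R_1^k,\dots,R_K^k$, so that $P^k-R_1^k-\cdots-R_K^k=(\mu\setminus\alpha)_k$ as in the statement.

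For this decomposition I would work combinatorially with $P^k$: by the definition of regularisation it is the unique Dyck path of height $k$ whose bottom tiles are precisely the height-$k$ tiles not in ${\sf reg}_{k-1}(\alpha)$, while the Dyck paths in $(\mu\setminus\alpha)_k$ are height-$k$ Dyck paths in the canonical tiling of $\mu\setminus\alpha$ whose bottom tiles lie in the same row. Reading along $P^k$ from $\first(P^k)$ to $\last(P^k)$, these components partition the bottom row of $P^k$ into blocks, and the ``gaps'' and ``excursions'' above and between these blocks assemble into candidate Dyck paths $R_i^k$. The key structural input is \cref{RegularizePairProp}, which endows ${\sf reg}(\alpha)\setminus\mu$ with its own Dyck tiling compatible with the tilings of ${\sf reg}(\alpha)\setminus\alpha$ and $\mu\setminus\alpha$; this compatibility is what guarantees that the fillers really are honest Dyck paths rather than more complicated skew shapes.

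Pairwise distance of the $R_i^k$ is immediate from the fact that their content intervals are separated by the content intervals of the $(\mu\setminus\alpha)_k$ components, so they commute in the sense of the preceding definitions; the left-to-right ordering is forced by this enumeration along $P^k$. Well-definedness of each ${\sf split}_k(\mu\setminus\alpha)$ as a partition in $\mptn$ is then verified by induction on $k$, using at each step that we are merely substituting the single wide Dyck path $P^k$ inside ${\sf split}_k(\mu\setminus\alpha)$ by the finer, canonically-tiled Dyck paths of $(\mu\setminus\alpha)_k$.

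The main obstacle is the structural decomposition itself: establishing that each connected component of $P^k\setminus (\mu\setminus\alpha)_k$ really is a Dyck path, and pinning down the local compatibility between the three Dyck tilings (of ${\sf reg}(\alpha)\setminus\alpha$, $\mu\setminus\alpha$, and ${\sf reg}(\alpha)\setminus\mu$) at height level $k$. The canonical-tiling convention of \cref{goodbad}, which places shorter Dyck tiles strictly above longer ones, is exactly the tool that rules out any pathological leftover shapes; once this combinatorial compatibility is in hand, the rest of the corollary -- the filtration, the commutation, and the ordering -- follows formally.
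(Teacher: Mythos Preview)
Your proposal is correct and follows the same route the paper takes: the paper states this corollary immediately after \cref{RegularizePairProp} with only the one-line remark that ``these removals can be thought of as splitting Dyck paths of ${\sf reg}(\alpha)/\alpha$'', and gives no separate proof. Your explicit construction of ${\sf split}_k$ and your identification of \cref{RegularizePairProp} as the structural input supplying the compatibility of the three tilings is exactly the content the paper leaves implicit; you are simply filling in the details the authors omit.
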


  \begin{rmk}\label{useful-forsplit}
  It's worth noting that if $R_i^k$ and $R_j^{\ell} $ are such that $k<\ell$, then   either $R_i^k$ and $R_j^{\ell}$ commute  or  $R_i^{k} \succ R_j^{\ell}$.  
  
  \end{rmk}

\begin{exl}\label{ex:split_seq}
Consider the Dyck tiling $\mu\setminus\alpha$  on the left of    \cref{fig:fav_seq_add} where $\alpha$ is the partition of Example \ref{ex:seq_reg}.
  We know the regularisation of $\alpha$ from Example \ref{ex:seq_reg}. Following Corollary \ref{cor_pure_fav}, we have that our ``favourite path'' from ${\sf reg}(\alpha)\setminus\alpha$ to $(\mu\setminus\alpha)_{\leq0}$ is depicted in \cref{favpath}. With the notation introduced in Corollary \ref{cor_pure_fav}, we have that
$${\sf reg}(\alpha)={\sf split}_{-3}(\mu\setminus\alpha)\supseteq {\sf split}_{-1}(\mu\setminus\alpha)\supseteq {\sf split}_{0}(\mu\setminus\alpha)=\alpha\sqcup (\mu\setminus\alpha)_{\leq 0}$$and we have that 
\begin{itemize}[leftmargin=*]
    \item ${\sf reg}(\alpha) = \alpha \sqcup {\color{violet}P^{-2}}\sqcup {\color{darkgreen}P^{-1}}\sqcup {\color{orange}P^{0}}$;
    \item there are no splitting at height $-2$, so ${\sf split}_{-2}(\mu\setminus\alpha) = {\sf split}_{-3}(\mu\setminus\alpha)$;
    \item ${\sf split}_{-1}(\mu\setminus\alpha) \setminus {\sf split}_{-2}(\mu\setminus\alpha) = R^{-1}$ and so $(\mu\setminus\alpha)_{-1}={\color{darkgreen}P^{-1}}-{\color{darkgreen}R^{-1}}$;
    \item  ${\sf split}_{0}(\mu\setminus\alpha) \setminus {\sf split}_{-1}(\mu\setminus\alpha) = R_1^{0} \sqcup R_2^{0}$ and so  $(\mu\setminus\alpha)_{0}={\color{orange}P^{0}}-{\color{orange}R_1^{0}}-{\color{orange}R_2^0}$.
\end{itemize}

\begin{figure}[ht!]
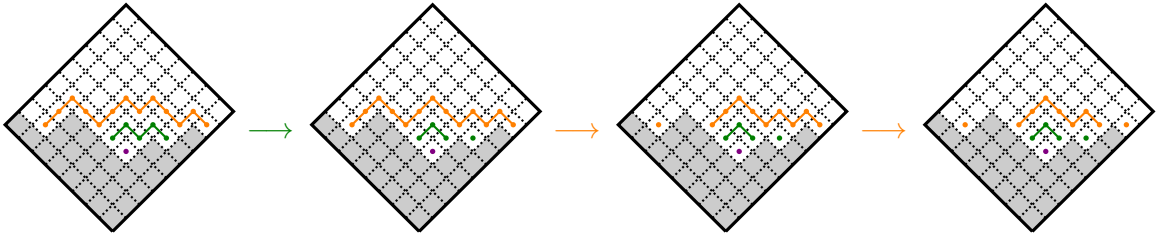

   
 $ 

\end{minipage}
$$
\caption{Our favourite path as in \cref{ex:split_seq}.}
\label{favpath}
\end{figure}

\end{exl}

\begin{rmk}\label{cor_not_pure_fav}
Let $\mu\setminus\alpha$ be a Dyck tiling with $\alpha\notin\mathscr{R}_{m,n}$. Then there is a sequence of partitions of the form
\begin{align}\label{Step3}
 ( \mu\setminus \alpha)_{\leq 0}={\sf add}_0(\mu\setminus \alpha) \subseteq {\sf add}_{1}(\mu\setminus \alpha)
\dots  \subseteq {\sf add}_{k-1}(\mu\setminus \alpha)  
  \subseteq {\sf add}_{k}(\mu\setminus \alpha)=   \mu
\end{align}
   where 
   $${\sf add}_k(\mu\setminus \alpha)\setminus 
   {\sf add}_{k-1}(\mu\setminus \alpha)=
    A_1^k\sqcup  A_2^k \sqcup\dots \sqcup  A_K^k
$$is a union of addable Dyck paths of height $k>0$.
\end{rmk}

\begin{exl}\label{yetmore}
Consider the Dyck tiling $\mu\setminus \alpha= (8^3,7,6^3,4,2) \setminus \alpha=(8,6^2,2^2,1^2)$ depicted in \cref{fig:fav_seq_add}.
 Notice that the Dyck tiling of $\mu\setminus\alpha$ has a Dyck path of height $1$ and that $(\mu \setminus \alpha)_{\leq 0}$ is the Dyck tiling consider in Example \ref{ex:split_seq}.  
\end{exl}

 \begin{algorithm}\label{PURE1}
 Let $(\alpha,\mu)$ be a Dyck pair. 
We construct a canonical sequence of add/splits from $\alpha \to \mu$ by first adding Dyck paths  as in  
\eqref{Step1}, then splitting Dyck paths as in \eqref{Step2},   then adding Dyck paths as in \eqref{Step3}.
 \end{algorithm}
  
An example of the three ``big steps" in this algorithm is given in \cref{seq_reg_exl2}, the smaller steps are given in \cref{seq_reg_exl,favpath}.

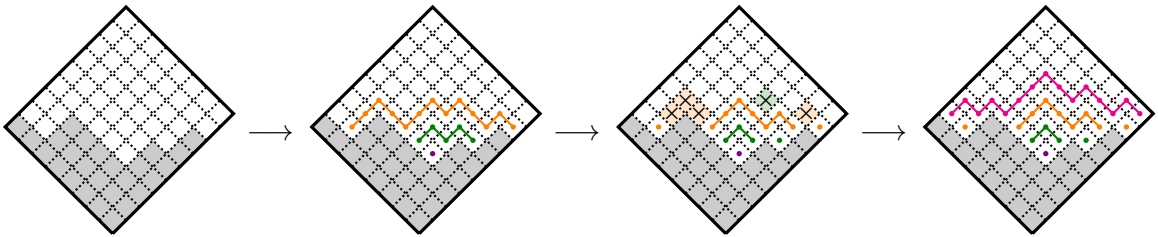
\begin{figure}[ht!]
 $$
\begin{minipage}{3.2cm}
\begin{tikzpicture}[scale=0.25]
 \path(0,0)--++(135:2) coordinate (hhhh);
 \draw[very thick] (hhhh)--++(135:8)--++(45:9)--++(-45:8)--++(-135:9);
 \clip(hhhh)--++(135:8)--++(45:9)--++(-45:8)--++(-135:9);
  
 \path(0,0) coordinate (origin2);
  \path(0,0)--++(135:2) coordinate (origin3);

     \foreach \i in {0,1,2,3,4,5,6,7,8}
{
\path (origin3)--++(45:0.5*\i) coordinate (c\i); 
\path (origin3)--++(135:0.5*\i)  coordinate (d\i); 
  }

   \foreach \i in {0,1,2,3,4,5,6,7,8}
{
\path (origin3)--++(45:1*\i) coordinate (c\i); 
\path (c\i)--++(-45:0.5) coordinate (c\i); 
\path (origin3)--++(135:1*\i)  coordinate (d\i); 
\path (d\i)--++(-135:0.5) coordinate (d\i); 
\draw[thick,densely dotted] (c\i)--++(135:10);
\draw[thick,densely dotted] (d\i)--++(45:10);
  }

\path(0,0)--++(135:2) coordinate (hhhh);

\fill[opacity=0.2](hhhh)
--++(135:8)--++(45:1)--++(-45:2)--++(45:2)--++(-45:4)--++(45:2)--++(-45:1)--++(45:2)--++(-45:1)--++(45:2);

\path(origin3)--++(45:-0.5)--++(135:7.5) coordinate (X)coordinate (start);


\end{tikzpicture}\end{minipage} 
{  \longrightarrow}\; \;
\begin{minipage}{3.2cm}\begin{tikzpicture}[scale=0.25]
 \path(0,0)--++(135:2) coordinate (hhhh);
 \draw[very thick] (hhhh)--++(135:8)--++(45:9)--++(-45:8)--++(-135:9);
 \clip(hhhh)--++(135:8)--++(45:9)--++(-45:8)--++(-135:9);
  
 \path(0,0) coordinate (origin2);
  \path(0,0)--++(135:2) coordinate (origin3);

     \foreach \i in {0,1,2,3,4,5,6,7,8}
{
\path (origin3)--++(45:0.5*\i) coordinate (c\i); 
\path (origin3)--++(135:0.5*\i)  coordinate (d\i); 
  }

   \foreach \i in {0,1,2,3,4,5,6,7,8}
{
\path (origin3)--++(45:1*\i) coordinate (c\i); 
\path (c\i)--++(-45:0.5) coordinate (c\i); 
\path (origin3)--++(135:1*\i)  coordinate (d\i); 
\path (d\i)--++(-135:0.5) coordinate (d\i); 
\draw[thick,densely dotted] (c\i)--++(135:10);
\draw[thick,densely dotted] (d\i)--++(45:10);
  }

\path(0,0)--++(135:2) coordinate (hhhh);

\fill[opacity=0.2](hhhh)
--++(135:8)--++(45:1)--++(-45:2)--++(45:2)--++(-45:4)--++(45:2)--++(-45:1)--++(45:2)--++(-45:1)--++(45:2);

\path(origin3)--++(45:-0.5)--++(135:7.5) coordinate (X)coordinate (start);

\path (start)--++(45:2)--++(-45:1) coordinate (X) ; 
\fill[orange](X) circle (4pt);
\draw[ thick, orange](X)--++(45:1) coordinate (X) ;
\fill[orange](X) circle (4pt);
\draw[ thick, orange](X)--++(45:1) coordinate (X) ;
\fill[orange](X) circle (4pt);
\draw[ thick, orange](X)--++(-45:1) coordinate (X) ;
\fill[orange](X) circle (4pt);
\draw[ thick, orange](X)--++(-45:1) coordinate (X) ;
\fill[orange](X) circle (4pt);
\draw[ thick, orange](X)--++(45:1) coordinate (X) ;
\fill[orange](X) circle (4pt);
\draw[ thick, orange](X)--++(45:1) coordinate (X) ;
\fill[orange](X) circle (4pt);
\draw[ thick, orange](X)--++(-45:1) coordinate (X) ;
\fill[orange](X) circle (4pt);
\draw[ thick, orange](X)--++(45:1) coordinate (X) ;
\fill[orange](X) circle (4pt);

\draw[ thick, orange](X)--++(-45:1) coordinate (X) ;
\fill[orange](X) circle (4pt);
\draw[ thick, orange](X)--++(-45:1) coordinate (X) ;
\fill[orange](X) circle (4pt);
 \draw[ thick, orange](X)--++(45:1) coordinate (X) ;
\fill[orange](X) circle (4pt);

\draw[ thick, orange](X)--++(-45:1) coordinate (X) ;
\fill[orange](X) circle (4pt);

\path (start)--++(45:4)--++(-45:4) coordinate (X) ; 
\fill[darkgreen](X) circle (4pt);
\draw[ thick, darkgreen](X)--++(45:1) coordinate (X) ;
\fill[darkgreen](X) circle (4pt);
\draw[ thick, darkgreen](X)--++(-45:1) coordinate (X) ;
 \fill[darkgreen](X) circle (4pt);
 \draw[ thick, darkgreen](X)--++(45:1) coordinate (X) ;
\fill[darkgreen](X) circle (4pt);
\draw[ thick, darkgreen](X)--++(-45:1) coordinate (X) ;
 \fill[darkgreen](X) circle (4pt);

\path (start)--++(45:4)--++(-45:5) coordinate (X) ; 
\fill[violet](X) circle (4pt);

\end{tikzpicture}\end{minipage}
{\longrightarrow} \;\;
\begin{minipage}{3.2cm}
\begin{tikzpicture}[scale=0.25]
 \path(0,0)--++(135:2) coordinate (hhhh);
 \draw[very thick] (hhhh)--++(135:8)--++(45:9)--++(-45:8)--++(-135:9);
 \clip(hhhh)--++(135:8)--++(45:9)--++(-45:8)--++(-135:9);
  
 \path(0,0) coordinate (origin2);
  \path(0,0)--++(135:2) coordinate (origin3);

\path (0,0)--++(135:8)--++(45:2) coordinate (X) ; 

\fill[opacity=0.2,orange] (X)--++(135:1)--++(45:2)--++(-45:2)--++(-135:1)--++(135:1);;

\path(X)--++(45:0.5)--++(135:0.5) node {\scalefont{0.6}$\boldsymbol\times$} coordinate(X);
\path(X) --++(45:1) node {\scalefont{0.6}$\boldsymbol\times$} coordinate(X);
\path(X) --++(-45:1) node {\scalefont{0.6}$\boldsymbol\times$} coordinate(X);

\path(0,0)--++(135:2) coordinate (hhhh);

\path (0,0)--++(135:5)--++(45:6) coordinate (X) ; 

\fill[opacity=0.2,darkgreen] (X)--++(135:1)--++(45:1)--++(-45:1);

\path(X)--++(45:0.5)--++(135:0.5) node {\scalefont{0.6}$\boldsymbol\times$} coordinate(X);

\path (0,0)--++(135:3)--++(45:7) coordinate (X) ; 

\fill[opacity=0.2,orange] (X)--++(135:1)--++(45:1)--++(-45:1);

\path(X)--++(45:0.5)--++(135:0.5) node {\scalefont{0.6}$\boldsymbol\times$} coordinate(X);

     \foreach \i in {0,1,2,3,4,5,6,7,8}
{
\path (origin3)--++(45:0.5*\i) coordinate (c\i); 
\path (origin3)--++(135:0.5*\i)  coordinate (d\i); 
  }

   \foreach \i in {0,1,2,3,4,5,6,7,8}
{
\path (origin3)--++(45:1*\i) coordinate (c\i); 
\path (c\i)--++(-45:0.5) coordinate (c\i); 
\path (origin3)--++(135:1*\i)  coordinate (d\i); 
\path (d\i)--++(-135:0.5) coordinate (d\i); 
\draw[thick,densely dotted] (c\i)--++(135:10);
\draw[thick,densely dotted] (d\i)--++(45:10);
  }

\path(0,0)--++(135:2) coordinate (hhhh);

\fill[opacity=0.2](hhhh)
--++(135:8)--++(45:1)--++(-45:2)--++(45:2)--++(-45:4)--++(45:2)--++(-45:1)--++(45:2)--++(-45:1)--++(45:2);

\path(origin3)--++(45:-0.5)--++(135:7.5) coordinate (X)coordinate (start);

%
%
%
%
%

\path (start)--++(45:2)--++(-45:1) coordinate (X) ; 
\fill[orange](X) circle (4pt);

\path (start)--++(45:4)--++(-45:3) coordinate (X) ; 
\fill[orange](X) circle (4pt);
\draw[ thick, orange](X)--++(45:1) coordinate (X) ;
\fill[orange](X) circle (4pt);
\draw[ thick, orange](X)--++(45:1) coordinate (X) ;
\fill[orange](X) circle (4pt);
\draw[ thick, orange](X)--++(-45:1) coordinate (X) ;
\fill[orange](X) circle (4pt);
\draw[ thick, orange](X)--++(-45:1) coordinate (X) ;
\fill[orange](X) circle (4pt);

\draw[ thick, orange](X)--++(45:1) coordinate (X) ;
\fill[orange](X) circle (4pt);
\draw[ thick, orange](X)--++(-45:1) coordinate (X) ;
\fill[orange](X) circle (4pt);


\path (start)--++(45:8)--++(-45:7) coordinate (X) ; 
\fill[orange](X) circle (4pt);

%
%
%

\path (start)--++(45:4)--++(-45:4) coordinate (X) ; 
\fill[darkgreen](X) circle (4pt);
\draw[ thick, darkgreen](X)--++(45:1) coordinate (X) ;
\fill[darkgreen](X) circle (4pt);
\draw[ thick, darkgreen](X)--++(-45:1) coordinate (X) ;
 \fill[darkgreen](X) circle (4pt);
\path (start)--++(45:6)--++(-45:6) coordinate (X) ;
 \fill[darkgreen](X) circle (4pt);

\path (start)--++(45:4)--++(-45:5) coordinate (X) ; 
\fill[violet](X) circle (4pt);

\end{tikzpicture}
\end{minipage}
 {\longrightarrow}\;\;
\begin{minipage}{3.2cm}
\begin{tikzpicture}[scale=0.25]
 \path(0,0)--++(135:2) coordinate (hhhh);
 \draw[very thick] (hhhh)--++(135:8)--++(45:9)--++(-45:8)--++(-135:9);
 \clip(hhhh)--++(135:8)--++(45:9)--++(-45:8)--++(-135:9);
  
 \path(0,0) coordinate (origin2);
  \path(0,0)--++(135:2) coordinate (origin3);

     \foreach \i in {0,1,2,3,4,5,6,7,8}
{
\path (origin3)--++(45:0.5*\i) coordinate (c\i); 
\path (origin3)--++(135:0.5*\i)  coordinate (d\i); 
  }

   \foreach \i in {0,1,2,3,4,5,6,7,8}
{
\path (origin3)--++(45:1*\i) coordinate (c\i); 
\path (c\i)--++(-45:0.5) coordinate (c\i); 
\path (origin3)--++(135:1*\i)  coordinate (d\i); 
\path (d\i)--++(-135:0.5) coordinate (d\i); 
\draw[thick,densely dotted] (c\i)--++(135:10);
\draw[thick,densely dotted] (d\i)--++(45:10);
  }

\path(0,0)--++(135:2) coordinate (hhhh);

\fill[opacity=0.2](hhhh)
--++(135:8)--++(45:1)--++(-45:2)--++(45:2)--++(-45:4)--++(45:2)--++(-45:1)--++(45:2)--++(-45:1)--++(45:2);

\path(origin3)--++(45:-0.5)--++(135:7.5) coordinate (X)coordinate (start);

\path(X)--++(45:2) coordinate (X) ;
\fill[magenta](X) circle (4pt);
\draw[ thick, magenta](X)--++(45:1) coordinate (X) ;
\fill[magenta](X) circle (4pt);
\draw[ thick, magenta](X)--++(-45:1) coordinate (X) ;
\fill[magenta](X) circle (4pt);
\draw[ thick, magenta](X)--++(45:1) coordinate (X) ;
 \fill[magenta](X) circle (4pt);
\draw[ thick, magenta](X)--++(-45:1) coordinate (X) ;

\fill[magenta](X) circle (4pt);
\draw[ thick, magenta](X)--++(45:1) coordinate (X) ;
\fill[magenta](X) circle (4pt);
\draw[ thick, magenta](X)--++(45:1) coordinate (X) ;
\fill[magenta](X) circle (4pt);
\draw[ thick, magenta](X)--++(45:1) coordinate (X) ;
\fill[magenta](X) circle (4pt);

\draw[ thick, magenta](X)--++(-45:1) coordinate (X) ;
\fill[magenta](X) circle (4pt);
\draw[ thick, magenta](X)--++(-45:1) coordinate (X) ;
\fill[magenta](X) circle (4pt);

\draw[ thick, magenta](X)--++(45:1) coordinate (X) ;
\fill[magenta](X) circle (4pt);
\draw[ thick, magenta](X)--++(-45:1) coordinate (X) ;
\fill[magenta](X) circle (4pt);
\draw[ thick, magenta](X)--++(-45:1) coordinate (X) ;
\fill[magenta](X) circle (4pt);
\draw[ thick, magenta](X)--++(45:1) coordinate (X) ;
\fill[magenta](X) circle (4pt);
\draw[ thick, magenta](X)--++(-45:1) coordinate (X) ;
\fill[magenta](X) circle (4pt);

\path (start)--++(45:2)--++(-45:1) coordinate (X) ; 
\fill[orange](X) circle (4pt);

\path (start)--++(45:4)--++(-45:3) coordinate (X) ; 
\fill[orange](X) circle (4pt);
\draw[ thick, orange](X)--++(45:1) coordinate (X) ;
\fill[orange](X) circle (4pt);
\draw[ thick, orange](X)--++(45:1) coordinate (X) ;
\fill[orange](X) circle (4pt);
\draw[ thick, orange](X)--++(-45:1) coordinate (X) ;
\fill[orange](X) circle (4pt);
\draw[ thick, orange](X)--++(-45:1) coordinate (X) ;
\fill[orange](X) circle (4pt);

\draw[ thick, orange](X)--++(45:1) coordinate (X) ;
\fill[orange](X) circle (4pt);
\draw[ thick, orange](X)--++(-45:1) coordinate (X) ;
\fill[orange](X) circle (4pt);


\path (start)--++(45:8)--++(-45:7) coordinate (X) ; 
\fill[orange](X) circle (4pt);

%
%
%

\path (start)--++(45:4)--++(-45:4) coordinate (X) ; 
\fill[darkgreen](X) circle (4pt);
\draw[ thick, darkgreen](X)--++(45:1) coordinate (X) ;
\fill[darkgreen](X) circle (4pt);
\draw[ thick, darkgreen](X)--++(-45:1) coordinate (X) ;
 \fill[darkgreen](X) circle (4pt);
\path (start)--++(45:6)--++(-45:6) coordinate (X) ;
 \fill[darkgreen](X) circle (4pt);

\path (start)--++(45:4)--++(-45:5) coordinate (X) ; 
\fill[violet](X) circle (4pt);

\end{tikzpicture}
\end{minipage}
$$
\caption{The sequence $\alpha \to {\sf reg}(\alpha) \to \alpha\sqcup (\mu\setminus\alpha)_{\leq 0} \to \mu\setminus\alpha$ where the steps are regularise, split, add as in \eqref{Step1},  \eqref{Step2}, and  \eqref{Step3}. In the third diagram the boxes marked with an $\times$ are the ones deleted in the splitting step.}
\label{seq_reg_exl2}
\end{figure}

    \color{black}

    \section{The  (extended) Khovanov arc algebras, cellularity,  and the Schur functor}\label{arcalgebras}

    We are now ready to introduce the algebras of interest in this paper and recall the facts we will require about their representation theory.
The new result of this section is that {\em all} cell modules of Khovanov arc algebras have simple heads (this makes use of the combinatorics developed in \cref{regulatiaasaiofgsaf}).  
This allows us to give a new construction of the cellular basis  of the Khovanov arc algebra; this will provide the backbone of our proof of Theorem A from the introduction. 
    
       \subsection{The original definitions of the (extended) Khovanov arc algebras}

\label{Khovanov  arc algebras}

 We now recall the definition of  the extended Khovanov arc algebras studied in  \cite{MR2600694,MR2781018,MR2955190,MR2881300}. 
 We define $K ^{m}_{ n }$ to be the algebra spanned by diagrams 
$$
\{
\underline{\la}
\mu \overline{\nu}
\mid \la,\mu,\nu \in \mptn \text{ such that }
\mu\overline{\nu},  \underline{\la}
\mu \text{  are oriented}\}
$$
 with the multiplication defined as follows.
First set 
 $$(\underline{\la}
\mu \overline{\nu})(\underline{\alpha}
\beta \overline{\gamma}) = 0 \quad \mbox{unless $\nu = \alpha$}.$$ 
To compute $(\underline{\la}
\mu \overline{\nu})(\underline{\nu}
\beta \overline{\gamma})$ place $(\underline{\la}
\mu \overline{\nu})$ under $(\underline{\nu}
\beta \overline{\gamma})$ then follow the \lq surgery' procedure.
This surgery combines two circles into one or splits one circle into two  using the following rules for re-orientation (where we use the notation
$1=\text{anti-clockwise circle}$, $x=\text{clockwise circle}$, $y=\text{oriented strand}$).  We have the splitting rules 
$$1 \mapsto 1 \otimes x + x \otimes 1,
\quad
 x \mapsto x \otimes x,
 \quad
 y \mapsto x \otimes y. 
 $$ 
 and the merging rules 
\begin{align*}
1 \otimes 1 \mapsto 1, 
\quad
1 \otimes x \mapsto x,
\quad
 x \otimes 1 \mapsto x,
\quad
x \otimes x \mapsto 0,
\quad
1 \otimes y  \mapsto y,\quad
x \otimes y \mapsto 0,
\end{align*}
\begin{align*}
y \otimes y  \mapsto
\left\{
\begin{array}{ll}
y\otimes y&\text{if both strands are propagating,  
one   is
 }\\
&\text{$\up$-oriented and  the other   is $\down$-oriented;}\\
0&\text{otherwise.} \end{array}
\right.
\end{align*}


\begin{eg}\label{brace-surgery}
We have the following product of Khovanov diagrams

$$   \begin{minipage}{3.1cm}
 \begin{tikzpicture}  [scale=0.76]

 \draw(-0.25,0)--++(0:3.75) coordinate (X);

   \draw[thick](0 , 0.09) node {$\scriptstyle\down$};        
   \draw(2,0.09) node {$\scriptstyle\down$};
    \draw[thick](1 ,-0.09) node {$\scriptstyle\up$};
   \draw[thick](3 ,-0.09) node {$\scriptstyle\up$};

    \draw
    (0,0)
      to [out=90,in=180] (0.5,0.5)  to [out=0,in=90] (1,0)
        (1,0)
 to [out=-90,in=180] (1.5, -0.5) to [out=0,in= -90] (2,0)
  to [out=90,in=180] (2.5,0.5)  to [out=0,in=90] (3,0)  
   to [out=-90,in=0] (1.5, -0.75) to [out=180,in= -90] (0,0)
  ;

\draw[gray!70,<->,thick](1.5,-0.8)-- (1.5,-2.5+0.8);

   \draw(-0.25,-2.5)--++(0:3.75) coordinate (X);

   \draw[thick](0 , 0.09-2.5) node {$\scriptstyle\down$};        
   \draw(2,0.09-2.5) node {$\scriptstyle\down$};

   \draw[thick](1 ,-0.09-2.5) node {$\scriptstyle\up$};
   \draw[thick](3 ,-0.09-2.5) node {$\scriptstyle\up$};

    \draw
    (0,-2.5)
      to [out=-90,in=180] (0.5,-2.5-0.5)  to [out=0,in=-90] (1,-2.5-0)
        (1,-2.5-0)
 to [out=90,in=180] (1.5, -2.5+0.5) to [out=0,in= 90] (2,-2.5-0)
  to [out=-90,in=180] (2.5,-2.5-0.5)  to [out=0,in=-90] (3,-2.5-0)  
   to [out=90,in=0] (1.5, -2.5+0.75) to [out=180,in= 90] (0,-2.5-0)
  ;

 \end{tikzpicture} \end{minipage}
 = \; \;
 \begin{minipage}{3.1cm}
 \begin{tikzpicture}  [scale=0.76]

 \draw(-0.25,0)--++(0:3.75) coordinate (X);

   \draw[thick](0 , 0.09) node {$\scriptstyle\down$};        
   \draw(2,0.09) node {$\scriptstyle\down$};
    \draw[thick](1 ,-0.09) node {$\scriptstyle\up$};
   \draw[thick](3 ,-0.09) node {$\scriptstyle\up$};

    \draw
    (0,0)  --++(-90:2.5)  --++( 90:2.5)
      to [out=90,in=180] (0.5,0.5)  to [out=0,in=90] (1,0)
        (1,0)
 to [out=-90,in=180] (1.5, -0.5) to [out=0,in= -90] (2,0)
  to [out=90,in=180] (2.5,0.5)  to [out=0,in=90] (3,0)  
 
   ;

\draw[gray!70,<->,thick](1.5,-0.6)-- (1.5,-2.5+0.6);

   \draw(-0.25,-2.5)--++(0:3.75) coordinate (X);

   \draw[thick](0 , 0.09-2.5) node {$\scriptstyle\down$};        
   \draw(2,0.09-2.5) node {$\scriptstyle\down$};

   \draw[thick](1 ,-0.09-2.5) node {$\scriptstyle\up$};
   \draw[thick](3 ,-0.09-2.5) node {$\scriptstyle\up$};

    \draw
    (0,-2.5)
      to [out=-90,in=180] (0.5,-2.5-0.5)  to [out=0,in=-90] (1,-2.5-0)
        (1,-2.5-0)
 to [out=90,in=180] (1.5, -2.5+0.5) to [out=0,in= 90] (2,-2.5-0)
  to [out=-90,in=180] (2.5,-2.5-0.5)  to [out=0,in=-90] (3,-2.5-0)--++(90:2.5)  
   ;

 \end{tikzpicture} \end{minipage}
 =\;
 \begin{minipage}{3.1cm}
 \begin{tikzpicture}  [scale=0.76]

 \draw(-0.5,0)--++(0:4) coordinate (X);

   \draw[thick](0 , 0.09) node {$\scriptstyle\down$};        
    \draw[thick](1 ,-0.09) node {$\scriptstyle\up$};

      \draw[thick](2 ,-0.09) node {$\scriptstyle\up$};  
   \draw(3,0.09) node {$\scriptstyle\down$};

    \draw
    (0,0)
      to [out=90,in=180] (0.5,0.5)  to [out=0,in=90] (1,0)
     to [out=-90,in=0] (0.5,-0.5)  to [out=180,in=-90] (0,0)
         
    ;

   \draw
    (2+0,0)
      to [out=90,in=180] (2+0.5,0.5)  to [out=0,in=90] (2+1,0)
     to [out=-90,in=0] (2+0.5,-0.5)  to [out=180,in=-90] (2+0,0)
         
    ;

 \end{tikzpicture} \end{minipage} 
 \;+\;
  \begin{minipage}{3.4cm}
 \begin{tikzpicture}  [scale=0.76]

 \draw(-0.5,0)--++(0:4) coordinate (X);

   \draw[thick](1 , 0.09) node {$\scriptstyle\down$};        
    \draw[thick](0 ,-0.09) node {$\scriptstyle\up$};

      \draw[thick](3 ,-0.09) node {$\scriptstyle\up$};  
   \draw(2,0.09) node {$\scriptstyle\down$};

    \draw
    (0,0)
      to [out=90,in=180] (0.5,0.5)  to [out=0,in=90] (1,0)
     to [out=-90,in=0] (0.5,-0.5)  to [out=180,in=-90] (0,0)
         
    ;

   \draw
    (2+0,0)
      to [out=90,in=180] (2+0.5,0.5)  to [out=0,in=90] (2+1,0)
     to [out=-90,in=0] (2+0.5,-0.5)  to [out=180,in=-90] (2+0,0)
         
    ;

 \end{tikzpicture} \end{minipage}  $$ 
 where we highlight with arrows the pair of arcs on which we are about to perform surgery.  The first equality follows from   the merging rule for $1\otimes 1 \mapsto 1$ and the second equality follows from  the merging rule 
$ 1 \mapsto 1 \otimes x + x \otimes 1$. 
 \end{eg}

\begin{eg}\label{brace-surgery2}
We have the following product of Khovanov diagrams

$$   \begin{minipage}{3.1cm}
 \begin{tikzpicture}  [scale=0.76]

 \draw(-0.25,0)--++(0:3.75) coordinate (X);

   \draw[thick](0 , 0.09) node {$\scriptstyle\down$};        
   \draw(2,0.09) node {$\scriptstyle\down$};
    \draw[thick](1 ,-0.09) node {$\scriptstyle\up$};
   \draw[thick](3 ,-0.09) node {$\scriptstyle\up$};

    \draw
    (0,0)
      to [out=-90,in=180] (0.5,-0.5)  to [out=0,in=-90] (1,0)
        (1,0)
 to [out=90,in=180] (1.5, 0.5) to [out=0,in= 90] (2,0)
  to [out=-90,in=180] (2.5,-0.5)  to [out=0,in=-90] (3,0)  
   to [out=90,in=0] (1.5, 0.75) to [out=180,in= 90] (0,0)
  ;

\draw[gray!70,<->,thick](0.5,-0.8)-- (0.5,-2.5+0.8);

   \draw(-0.25,-2.5)--++(0:3.75) coordinate (X);

   \draw[thick](0 , 0.09-2.5) node {$\scriptstyle\down$};        
   \draw(2,0.09-2.5) node {$\scriptstyle\down$};

   \draw[thick](1 ,-0.09-2.5) node {$\scriptstyle\up$};
   \draw[thick](3 ,-0.09-2.5) node {$\scriptstyle\up$};

    \draw
    (0,-2.5)
      to [out=90,in=180] (0.5,-2.5+0.5)  to [out=0,in=90] (1,-2.5-0)
        (1,-2.5-0)
 to [out=-90,in=180] (1.5, -2.5-0.5) to [out=0,in= -90] (2,-2.5-0)
  to [out=90,in=180] (2.5,-2.5+0.5)  to [out=0,in=90] (3,-2.5-0)  
   to [out=-90,in=0] (1.5, -2.5-0.75) to [out=180,in= -90] (0,-2.5-0)
  ;

 \end{tikzpicture} \end{minipage}
 = \; \;
%
%
%
%
%
%
%
%
%
%
%
%
%
%
%
%
%
%
%
%
%
%
%
%
%
%
%
%
%
%
%
%
%
%
%
%
 \begin{minipage}{3.1cm}
 \begin{tikzpicture}  [scale=0.76]

 \draw(-0.25,0)--++(0:3.75) coordinate (X);

   \draw[thick](0 , 0.09) node {$\scriptstyle\down$};        
   \draw(2,0.09) node {$\scriptstyle\down$};
    \draw[thick](1 ,-0.09) node {$\scriptstyle\up$};
   \draw[thick](3 ,-0.09) node {$\scriptstyle\up$};

    \draw
 (1,-2)--(1,0)
        (1,0)
 to [out=90,in=180] (1.5, 0.5) to [out=0,in= 90] (2,0)
  to [out=-90,in=180] (2.5,-0.5)  to [out=0,in=-90] (3,0)  
   to [out=90,in=0] (1.5, 0.75) to [out=180,in= 90] (0,0)
  ;

\draw[gray!70,<->,thick](2.5,-0.8)-- (2.5,-2.5+0.8);

   \draw(-0.25,-2.5)--++(0:3.75) coordinate (X);

   \draw[thick](0 , 0.09-2.5) node {$\scriptstyle\down$};        
   \draw(2,0.09-2.5) node {$\scriptstyle\down$};

   \draw[thick](1 ,-0.09-2.5) node {$\scriptstyle\up$};
   \draw[thick](3 ,-0.09-2.5) node {$\scriptstyle\up$};

    \draw
    (0,0)--(0,-2.5);
   
    \draw
    (1,0)--
        (1,-2.5-0)
 to [out=-90,in=180] (1.5, -2.5-0.5) to [out=0,in= -90] (2,-2.5-0)
  to [out=90,in=180] (2.5,-2.5+0.5)  to [out=0,in=90] (3,-2.5-0)  
   to [out=-90,in=0] (1.5, -2.5-0.75) to [out=180,in= -90] (0,-2.5-0)
  ;

 \end{tikzpicture} \end{minipage}
  =\;
 \begin{minipage}{3.1cm}
 \begin{tikzpicture}  [scale=0.76]

 \draw(-0.5,0)--++(0:4) coordinate (X);

   \draw[thick](1 , 0.09) node {$\scriptstyle\down$};        
    \draw[thick](0 ,-0.09) node {$\scriptstyle\up$};

      \draw[thick](2 ,-0.09) node {$\scriptstyle\up$};  
   \draw(3,0.09) node {$\scriptstyle\down$};

    \draw
    (0+1,0)
      to [out=90,in=180] (0.5+1,0.5)  to [out=0,in=90] (1+1,0)
     to [out=-90,in=0] (0.5+1,-0.5)  to [out=180,in=-90] (0+1,0)
         
    ;

   \draw
    (0 ,0)
      to [out=90,in=180] (0.5+1,0.75)  to [out=0,in=90] (3,0)
     to [out=-90,in=0] (0.5+1,-0.75)  to [out=180,in=-90] (0,0)
         
    ;

%

 \end{tikzpicture} \end{minipage} 
 \;+\;
  \begin{minipage}{3.4cm}
 \begin{tikzpicture}  [scale=0.76]

 \draw(-0.5,0)--++(0:4) coordinate (X);

   \draw[thick](0 , 0.09) node {$\scriptstyle\down$};        
    \draw[thick](1 ,-0.09) node {$\scriptstyle\up$};

      \draw[thick](3 ,-0.09) node {$\scriptstyle\up$};  
   \draw(2,0.09) node {$\scriptstyle\down$};

  \draw
    (0+1,0)
      to [out=90,in=180] (0.5+1,0.5)  to [out=0,in=90] (1+1,0)
     to [out=-90,in=0] (0.5+1,-0.5)  to [out=180,in=-90] (0+1,0)
         
    ;

   \draw
    (0 ,0)
      to [out=90,in=180] (0.5+1,0.75)  to [out=0,in=90] (3,0)
     to [out=-90,in=0] (0.5+1,-0.75)  to [out=180,in=-90] (0,0)
         
    ;
   
%
%
%
%
%
%
%

 \end{tikzpicture} \end{minipage}  $$ 
 where we highlight with arrows the pair of arcs on which we are about to perform surgery. 
This is similar to \cref{brace-surgery}. 
 \end{eg}

    \subsection{Quiver presentations and cellularity of the extended Khovanov arc algebra}
 In   \cite{compan4} we  proved that the {\em extended} Khovanov arc algebras   are isomorphic to the basic algebras of the category algebras of Hecke categories of type $( {S}_{m+n},  {S}_m\times {S}_n)$, see also \cite{Bowstrophazdev}.  This isomorphism was constructed in a similar spirit to that of \cite{cell4us2,MR4381198}.  
 These isomorphisms allowed us to prove an integral presentation of the extended Khovanov arc algebras  
 in   \cite[Theorem B]{compan4}, which we now recall.

\begin{defn}\label{presentation}
The algebra $K^{m}_{ n }$ is the  associative $\Bbbk$-algebra generated by the elements 
\begin{equation}\label{geners}
\{D^\la_\mu,
D_\la^\mu \mid 	
\text{$\la, \mu\in \mptn$ with $\la = \mu - P$ for some $P\in {\rm DRem}(\mu)$} 
	\}\cup\{ {\sf 1}_\mu \mid \mu \in \mptn \}		
	\end{equation}
	subject to the  following relations and their duals. 

	\smallskip\noindent
{\bf The idempotent   
relations:} 
For all $\la,\mu \in \mptn$, we have that 
\begin{equation}\label{rel1}
{\sf 1}_\mu{\sf 1}_\la =\delta_{\la,\mu}{\sf 1}_\la \qquad 
\qquad {\sf 1}_\la D^\la_\mu {\sf 1}_\mu = D^\la_\mu.
\end{equation} 

\smallskip\noindent
	{\bf The 
	self-dual relation: } 
	Let  $P\in {\rm DRem}(\mu)$ and $\la = \mu - P$. Then we   have 
\begin{equation}\label{selfdualrel}
D_\mu^{\la} D_{\la}^\mu
= (-1)^{b(P)-1}\Bigg(
2
\!\! \sum_{   \begin{subarray}{c} Q\in {\rm DRem}(\la) \\ P\prec Q \end{subarray}}
\!\!
(-1) ^{b(Q) } D^{\la}_{\la- Q } D^{\la - Q  }_{\la} + 
\!\!\sum_{  \begin{subarray}{c} Q\in {\rm DRem}(\la) \\ Q \, \text{adj.}\, P \end{subarray}}
\!\!
 (-1)^{b(Q)} D^{\la}_{\la- Q } D^{\la - Q  }_{\la}\Bigg) 
\end{equation}  
where we abbreviate ``adjacent to" simply as ``adj."  

\smallskip\noindent
{\bf The commuting relations:} 
Let $P,Q\in {\rm DRem}(\mu)$ which commute. Then we have 
\begin{equation}\label{commuting}
D^{\mu -P-Q}_{\mu-P}D^{\mu-P}_\mu = D^{\mu-P-Q}_{\mu -Q}D^{\mu -Q}_\mu \qquad
D^{\mu -P}_\mu D^\mu_{\mu - Q} = D^{\mu - P}_{\mu - P - Q}D^{\mu - P - Q}_{\mu - Q}.
\end{equation}

\smallskip\noindent
{\bf The non-commuting relation:}   
Let $P,Q\in {\rm DRem}(\mu)$ with $P\prec Q$ which do not commute. Then  $Q\setminus P = Q^1\sqcup Q^2$ where $Q^1, Q^2 \in {\rm DRem}(\mu - P)$ and we have 
\begin{equation}\label{noncommuting}
D^{\mu -Q}_\mu D^\mu_{\mu-P} = D^{\mu - Q}_{\mu - P - Q^1}D^{\mu - P - Q^1}_{\mu - P} = D^{\mu - Q}_{\mu - P - Q^2}D^{\mu - P - Q^2}_{\mu - P}
\end{equation}

\smallskip\noindent
{\bf The adjacent relation:}  
Let $P\in {\rm DRem}(\mu)$ and $Q\in {\rm DRem}(\mu - P)$ be adjacent. Recall that 
 $\langle P\cup Q \rangle_\mu$, if it exists,  denotes the smallest removable Dyck path of $\mu$ containing $P\cup Q$. Then we have 
\begin{equation}\label{adjacent-OG}
D^{\mu - P - Q}_{\mu - P}D^{\mu - P}_\mu = \left\{ \begin{array}{ll} (-1)^{b(\langle P\cup Q\rangle_\mu) - b(Q)} D^{\mu - P - Q}_{\mu - \langle P\cup Q\rangle_\mu}D^{\mu - \langle P\cup Q\rangle_\mu}_\mu & \mbox{if $\langle P\cup Q\rangle_\mu$ exists} \\ 0 & \mbox{otherwise} \end{array} \right.
\end{equation}
  \end{defn}

 \begin{defn}
 Given a Dyck tiling 
 $\mu\setminus \la = \bigsqcup_{i=1}^k P^i $ 
we define an associated diagram
$$D^\la_\mu = D^\la _{\la +P^1}D^{\la +P^1}_{\la +P^1+P^2}\dots D^{\la +P^1+\dots+P^{k-1}}_{\la +P^1+\dots+P^{k-1}+P^k} $$
which is independent of the ordering in which the Dyck paths are added.
\end{defn}

 The following construction is due to Libedinsky--Williamson \cite{antiLW}, but is couched in our language in \cite[Theorem 5.4]{compan4}.
 
 \begin{thm} [{\cite[Theorem 5.4]{compan4} and \cite{antiLW}}]\label{heere ris the basus}
  The  algebra   
$ K^m_n  $  is a graded cellular (in fact quasi-hereditary) algebra with  graded cellular basis given by
\begin{equation}\label{basis}
\{D_\la^\mu D^\la_\nu \mid  \la,\mu,\nu\in \mptn \,\, \mbox{with}\,\,		(\la,\mu), (\la,\nu) \text{ Dyck pairs}  \}
\end{equation}
with 
$${\rm deg}( D_\la^\mu D^\la_\nu) = {\rm deg}(\la, \mu) + {\rm deg}(\la, \nu),$$
with respect to the involution $*$ and the partial order on $\mptn$ given by inclusion.
\end{thm}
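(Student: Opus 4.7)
The plan is to leverage the identification of $K^m_n$ with the basic algebra of an anti-spherical Hecke category, established in \cite{compan4}, together with the Libedinsky--Williamson light leaves theory, which furnishes the required cellular datum on the Hecke category side. First I would verify that, under the quadratic presentation of \cref{presentation}, the set $\{D_\la^\mu D^\la_\nu\}$ indexed by pairs of Dyck pairs sharing a common bottom spans $K^m_n$. The key observation is that any word in the generators $D^\alpha_\beta, D^\beta_\alpha, {\sf 1}_\mu$ describes a walk in the Hasse graph on $\mptn$ whose edges are labelled by Dyck paths, and the commuting relation \eqref{commuting}, non-commuting relation \eqref{noncommuting}, adjacent relation \eqref{adjacent-OG} and self-dual relation \eqref{selfdualrel} together suffice to push any such walk into the canonical ``down-then-up'' shape $\mu \to \la \to \nu$, producing a linear combination of elements of the claimed form (possibly modulo terms factoring through strictly smaller partitions).

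Next I would establish linear independence and the cellular axioms simultaneously. The cleanest route is dimension counting: the graded dimension of $K^m_n$ is computed in \cite{MR2600694,MR2781018} in terms of oriented cup diagrams $\underline{\mu}\la \overline{\nu}$, and by the correspondence between oriented diagrams of degree $k$ and Dyck pairs of degree $k$ (recorded in Section 2, originating in \cite{MR2918294}), this graded dimension coincides exactly with the size of the set \eqref{basis}, with the correct grading ${\rm deg}(\la,\mu)+{\rm deg}(\la,\nu)$. Combined with the spanning statement, this forces linear independence.

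To upgrade to a \emph{cellular} basis in the sense of Graham--Lehrer, I would check the three Graham--Lehrer axioms directly on the presentation: the anti-involution $*$ fixes each idempotent ${\sf 1}_\mu$ and swaps $D^\la_\mu \leftrightarrow D^\mu_\la$, hence sends $D^\mu_\la D^\la_\nu \mapsto D^\nu_\la D^\la_\mu$, which is the $*$-axiom. The multiplication axiom
\begin{equation*}
g \cdot (D_\la^\mu D^\la_\nu) \equiv \sum_{\mu'} r_g(\mu',\mu)\, D_\la^{\mu'} D^\la_\nu \pmod{K^m_n(>\la)}
\end{equation*}
for each generator $g$ follows by applying the relations to $g \cdot D^\mu_\la$ and observing that any ``error'' term, produced by the self-dual or non-commuting relations, factors through some $D^{\la'}_\bullet D^\bullet_{\la'}$ with $\la'\supsetneq \la$, and therefore lies in the ideal spanned by cellular basis elements with strictly larger ``through partition.'' The quasi-hereditary refinement then follows from the standard observation of K\"onig--Xi that a cellular algebra is quasi-hereditary precisely when each cell module is nonzero of dimension equal to the rank of the corresponding idempotent, which is manifest here since $D^\la_\la = {\sf 1}_\la$ appears in the basis.

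The main obstacle I anticipate is the spanning step: one must argue that no genuinely new reductions are needed beyond those recorded in \cref{presentation}, i.e.\ that the given relations form a confluent rewriting system with normal forms $D^\mu_\la D^\la_\nu$. This is precisely where the Libedinsky--Williamson double leaves argument does the heavy lifting in \cite{antiLW,compan4}, as it gives a global, representation-theoretic explanation (via the diagrammatic Hecke category) for why these normal forms exhaust the algebra; rather than redo this combinatorics from scratch, I would cite \cite[Theorem~5.4]{compan4} and indicate how its statement is repackaged in the present Dyck-combinatorial language by matching $D^\mu_\la$ with the down light leaf associated to the tiling of $\mu\setminus\la$.
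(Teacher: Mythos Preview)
The paper does not prove this theorem at all: it is stated with attribution to \cite[Theorem~5.4]{compan4} and \cite{antiLW}, and the surrounding text simply says ``The following construction is due to Libedinsky--Williamson \cite{antiLW}, but is couched in our language in \cite[Theorem~5.4]{compan4}.'' So there is no proof in the paper to compare your proposal against; your sketch is essentially an outline of what those cited references do, and you yourself acknowledge at the end that the spanning step defers to \cite[Theorem~5.4]{compan4}.

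That said, there is a genuine error in your cellular-axiom check: the direction of the ideal is reversed. In this algebra the cell index of $D^\mu_\la D^\la_\nu$ is the \emph{smallest} partition $\la$ through which the element factors, and the self-dual relation \eqref{selfdualrel} rewrites an ``up-then-down'' product $D^\la_\mu D^\mu_\la$ (with $\la=\mu-P$) as a combination of terms $D^\la_{\la-Q}D^{\la-Q}_\la$ factoring through $\la-Q\subsetneq\la$. Consistently, the paper defines $\Delta_{m,n}(\la)$ by quotienting out the ideal generated by ${\sf 1}_\mu$ with $\mu\subset\la$. So the error terms in your multiplication axiom lie in the span of basis elements with through-partition $\la'\subsetneq\la$, not $\la'\supsetneq\la$ as you wrote. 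Your quasi-heredity remark is also slightly off: the relevant criterion is that each cell contains an idempotent (here ${\sf 1}_\la=D^\la_\la D^\la_\la$), which ensures the cell bilinear form is nondegenerate on a one-dimensional subspace; the phrase ``dimension equal to the rank of the corresponding idempotent'' is not the right formulation.
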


\begin{defn}
For $\la \in  \mptn$, we define the {\sf standard module} $\Delta_{m,n}(\la)$ to be the right $K^m_n$-module
$$
\Delta_{m,n}(\la)= {\sf 1}_\la K^m_n/ (K^m_n(\textstyle \sum _{\mu \subset \la}{\sf 1}_\mu) K^m_n).
$$
This module has basis $\{D^\la_\nu \mid (\la,\nu) \text{ is a Dyck pair}\}$ with action given by right concatenation (modulo  the ideal spanned by diagrams which factor through  $\mu\subset\la$). 
  \end{defn}

\begin{prop}
A complete and irredundant set of non-isomorphic simple $K^m_n$-modules
is given by 
$$
\{L_{m,n}(\la)\mid 
L_{m,n}(\la)= \Delta_{m,n}(\la)/{\rm rad}(\Delta_{m,n}(\la))
 , \text { for }\la \in \mathscr{P}_{m,n} \}.
$$
\end{prop}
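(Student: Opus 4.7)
The plan is to apply the standard representation theory of cellular algebras (Graham--Lehrer), using the graded cellular basis \eqref{basis} established in \cref{heere ris the basus}. For each $\la \in \mptn$, the multiplicative structure of the cellular basis determines a symmetric bilinear form $\phi_\la$ on $\Delta_{m,n}(\la)$: given basis elements $D^\la_\mu, D^\la_\nu \in \Delta_{m,n}(\la)$, one has
\[
D^\mu_\la \cdot D^\la_\nu \equiv \phi_\la(D^\la_\mu, D^\la_\nu)\, D^\mu_\nu \pmod{K^m_n\Big(\textstyle\sum_{\alpha\subsetneq\la} {\sf 1}_\alpha\Big)K^m_n}.
\]
By the standard cellular algebra argument, $\mathrm{rad}(\phi_\la)$ is a $K^m_n$-submodule of $\Delta_{m,n}(\la)$, and when it is a proper submodule it coincides with the Jacobson radical, so $L_{m,n}(\la)$ is either zero or absolutely simple.

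The next step is to show each $L_{m,n}(\la)$ is non-zero. Since ${\sf 1}_\la^2 = {\sf 1}_\la$ from the idempotent relations \eqref{rel1}, and ${\sf 1}_\la = D^\la_\la$ is a non-zero basis element of $\Delta_{m,n}(\la)$, one reads off $\phi_\la({\sf 1}_\la, {\sf 1}_\la) = 1$. Thus $\phi_\la$ is non-degenerate at the cyclic generator, and hence $\mathrm{rad}(\phi_\la) \subsetneq \Delta_{m,n}(\la)$ is the unique maximal submodule, so $L_{m,n}(\la) \neq 0$. This is precisely the statement that $K^m_n$ is quasi-hereditary with standard modules $\Delta_{m,n}(\la)$, which was asserted in \cref{heere ris the basus}.

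To finish, I would verify the two standard facts: pairwise non-isomorphism and completeness. Pairwise non-isomorphism follows because $L_{m,n}(\la)$ occurs as a composition factor of $\Delta_{m,n}(\la)$ with multiplicity one, while every other composition factor of $\Delta_{m,n}(\la)$ is of the form $L_{m,n}(\mu)$ with $\mu \supsetneq \la$ (by the cell-order on the basis \eqref{basis}); an isomorphism $L_{m,n}(\la) \cong L_{m,n}(\mu)$ with, say, $\la \subsetneq \mu$ would contradict this. Completeness follows by a dimension count: the basis \eqref{basis} gives the decomposition
\[
\dim K^m_n = \sum_{\la \in \mptn} \dim \Delta_{m,n}(\la) \cdot \dim \nabla_{m,n}(\la),
\]
and since the left-right symmetry of the cellular datum identifies $\dim \nabla_{m,n}(\la)$ with the multiplicity of $L_{m,n}(\la)$ in the regular representation, all simples of $K^m_n$ are exhausted by the set $\{L_{m,n}(\la)\}_{\la \in \mptn}$.

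There is essentially no obstacle: all of the substantive representation-theoretic content is already encoded in \cref{heere ris the basus}, and the present proposition is the routine translation of that cellular/quasi-hereditary structure into a parametrisation of simples. The only subtle point is identifying $\mathrm{rad}(\Delta_{m,n}(\la))$ with $\mathrm{rad}(\phi_\la)$, which as noted is immediate since $\Delta_{m,n}(\la)$ is cyclic on ${\sf 1}_\la$ and $\phi_\la({\sf 1}_\la,{\sf 1}_\la) \neq 0$.
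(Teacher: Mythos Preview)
Your proposal is correct and matches the paper's approach: the paper states this proposition without proof, treating it as an immediate consequence of the quasi-hereditary cellular structure in \cref{heere ris the basus}, and you have simply spelled out the standard Graham--Lehrer argument that underlies this. One small wrinkle: your dimension-count justification for completeness is not quite right as written (the multiplicity of $L_{m,n}(\la)$ in the regular module is $\dim P(\la)$, not $\dim \nabla_{m,n}(\la)$), but completeness is already part of the Graham--Lehrer package once you know every $\phi_\la$ is non-zero, so no separate argument is needed.
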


  We now describe the full submodule structure of the  standard modules.
  As $K^m_n$ is positively graded, the grading provides a submodule filtration of $\Delta_{m,n}(\la)$. Decompose ${\rm DP}(\la)$ as
 $${\rm DP}(\la) = \bigsqcup_{k \geq 0} {\rm DP}_k(\la) \quad \text{where} \quad  {\rm DP}_k(\la) = \{\mu\in {\rm DP}(\la) \, : \, \deg (\la, \mu)=k\}.$$

 \begin{thm}\label{submodule}
Let $\la \in \mptn$. The Alperin diagram of the standard module $\Delta_{m,n}(\la)$ has vertex set labelled by the set 
 $\{ L_{m,n}(\mu)\, : \,  \mu\in {\rm DP}(\la)\}$  and edges
$$L_{m,n}(\mu) \longrightarrow L_{m,n}(\nu)$$
whenever $\mu \in {\rm DP}_k(\la)$, $\nu \in {\rm DP}_{k+1}(\la)$ for some $k\geq 0$ and $\nu = \mu \pm P$ for some $P\in {\rm DAdd}(\mu)$ or $P\in {\rm DRem}(\mu)$ respectively. 
 \end{thm}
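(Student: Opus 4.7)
The plan is to exploit the compatibility between the positive $\ZZ$-grading on $K^m_n$ from Theorem \ref{heere ris the basus} and its cellular structure. First, observe that since the degree-zero subalgebra $(K^m_n)_0 = \bigoplus_{\mu \in \mptn}\Bbbk\,{\sf 1}_\mu$ is semisimple and $K^m_n$ is generated in degrees $0$ and $1$ (by inspection of the generators in Definition \ref{presentation}), the Jacobson radical $J(K^m_n)$ equals $\bigoplus_{k>0}(K^m_n)_k$ and satisfies $J^k = \bigoplus_{j\geq k}(K^m_n)_j$. Consequently, the grading filtration $F_k := \bigoplus_{j \geq k}\Delta_{m,n}(\la)_j$ coincides with the radical filtration of $\Delta_{m,n}(\la)$, and reading off the cellular basis $\{D_\nu^\la : (\la,\nu) \text{ Dyck pair}\}$ yields
$F_k/F_{k+1} \cong \bigoplus_{\mu \in {\rm DP}_k(\la)} L_{m,n}(\mu)$,
which identifies the vertex set of the Alperin diagram and shows each radical layer is semisimple with the multiplicities claimed.

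Next, I would identify the edges of the Alperin diagram between layer $k$ and layer $k+1$ with the nonzero transitions induced by right multiplication by degree-$1$ generators. Inspection of Definition \ref{presentation} shows these are precisely the elements $D^{\sigma}_{\sigma \pm P}$ where $\sigma \in \mptn$ and $P$ is a single Dyck path addable or removable to $\sigma$. Right multiplying the basis vector $D_\mu^\la \in F_k$ by such a generator produces an element whose class in $F_{k+1}/F_{k+2}$ is either a scalar multiple of $D_\nu^\la$ with $\nu = \mu \pm P$, or zero (when the resulting pair fails to be a Dyck pair). Hence there is an edge $L(\mu) \to L(\nu)$ in the Alperin diagram if and only if $\nu = \mu \pm P$ for a single Dyck path $P$ and $(\la,\nu)$ is a Dyck pair of degree $k+1$.

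The main technical obstacle is the sufficiency direction: given $(\la,\nu)$ a Dyck pair of degree $k+1$ with $\nu = \mu \pm P$, one must verify that $D_\mu^\la \cdot D_\nu^\mu \equiv \pm D_\nu^\la \pmod{F_{k+2}}$ so that the extension is genuinely nontrivial rather than splitting off. I would address this by invoking the canonical Dyck tableau machinery from Section \ref{regulatiaasaiofgsaf} to fix a preferred reduced expression for $D_\nu^\la$, and then use the commuting, non-commuting, and adjacent relations \eqref{commuting}--\eqref{adjacent-OG} to rewrite $D_\mu^\la \cdot D_\nu^\mu$ in this canonical form, with the sign being controlled by the breadths appearing in \eqref{adjacent-OG}. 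Necessity is then straightforward: if $\nu$ differs from $\mu$ by more than a single Dyck move, then any factorisation $D_\nu^\la = D_\mu^\la \cdot x$ with $x \in K^m_n$ of degree $1$ must vanish because of the description of degree-1 generators, so no Alperin edge can exist between non-adjacent layers except through intermediate vertices.
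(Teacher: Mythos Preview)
The paper does not actually supply a proof of Theorem~\ref{submodule}; it only records in the preceding sentence that ``as $K^m_n$ is positively graded, the grading provides a submodule filtration of $\Delta_{m,n}(\la)$'' and then states the result, treating it as essentially immediate from the presentation in Definition~\ref{presentation} and Theorem~\ref{heere ris the basus}. Your argument correctly fills in the details that the paper elides, and the strategy (positive grading with semisimple degree-zero part and generation in degree~$\leq 1$ forces the grading and radical filtrations to coincide; then read off the layers from the cellular basis) is the intended one.

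Two small sharpenings are worth noting. First, your ``$\pmod{F_{k+2}}$'' is harmless but unnecessary: the product $D^\la_\mu \cdot D^\mu_{\mu\pm P}$ is already homogeneous of degree $k+1$ and lies in the single weight space ${\sf 1}_\la \Delta_{m,n}(\la){\sf 1}_{\mu\pm P}$, so in $\Delta_{m,n}(\la)$ it is literally a scalar multiple of $D^\la_{\mu\pm P}$, not merely congruent to one. Second, the ``main technical obstacle'' you flag is lopsided. For $\nu=\mu+P$ with $(\la,\nu)$ a Dyck pair, the equality $D^\la_\mu D^\mu_\nu = D^\la_\nu$ is the very definition of $D^\la_\nu$ (see the paragraph after Definition~\ref{presentation}, which asserts order-independence). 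The genuinely nontrivial direction is $\nu=\mu-P$, where $P$ must split some tile $Q$ of $\mu\setminus\la$ as $Q\setminus P = Q^1\sqcup Q^2$; here a single application of the non-commuting relation~\eqref{noncommuting} gives $D^\la_\mu D^\mu_\nu = D^\la_\nu$ directly, with no sign and without needing the full canonical-tableau rewriting you invoke. Conversely, when $(\la,\mu\pm P)$ fails to be a Dyck pair, the adjacent relation~\eqref{adjacent-OG} (in the case where $\langle P\cup Q\rangle_\mu$ does not exist) kills the product in $\Delta_{m,n}(\la)$. Your necessity argument is also slightly off: the absence of edges between $L(\mu)$ and $L(\nu)$ when $\nu\neq\mu\pm P$ follows immediately from the idempotent decomposition (there is no degree-$1$ generator in ${\sf 1}_\mu K^m_n{\sf 1}_\nu$), not from a factorisation argument.
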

 
 An example is depicted in \cref{submodules}, below.

\begin{figure}[ht!]
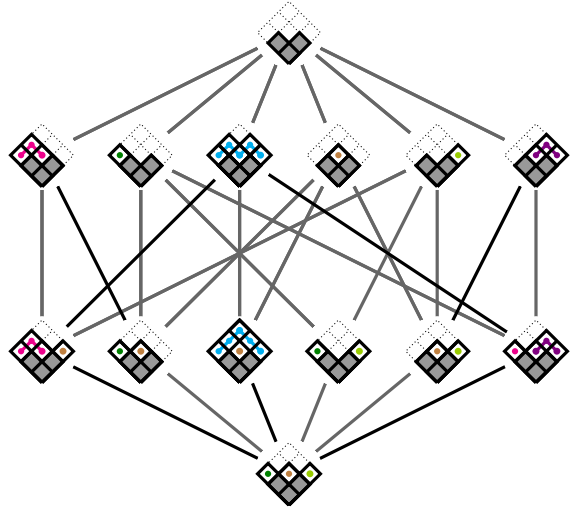


$$

 $$

 \caption{The full submodule lattice of the Verma module $\Delta_{3,3}(2,1)$ for  
 $K_{3,3} $ and $\Bbbk$ any field. 
 We represent each simple module by the corresponding partition (in Russian notation) and highlight the $3\times 3$ rectangle in which the partition exists. 
 This module has simple head $L_{3,3}(2,1)$ and simple socle $L_{3,3}(3,2,1)$. 
  Each edge connects a pair of partitions which differ by adding or removing a single Dyck path. }
 \label{submodules}
 
 \end{figure}

  \subsection{The Schur functor } 
   We set 
    $
    \mathscr{R}_{m,n}  =
\{\la\in  \mathscr{P}_{m,n}\mid \la \text{ has  defect zero}
\}. $
We define the {\sf Schur idempotent} to be the element
$$
e = \sum _{\la \in \mathscr{R}_{m,n}}{\sf 1}_\la \in K^m_n.
$$
The main object of study in this paper will be the subalgebras 
 $$e  K^m_ne\subseteq K^m_n$$
  for $m,n\in \NN$.
We have the following immediate results:

 \begin{cor} \label{heere ris the basus2}
  The  algebra   
$e K^m_n e $  is a graded cellular  algebra with  graded cellular basis given by
\begin{equation}\label{basis}
\{D_\la^\mu D^\la_\nu \mid  \la \in \mptn ,\mu,\nu\in \mathscr{R}_{m,n} \,\, \mbox{with}\,\,		(\la,\mu), (\la,\nu) \text{ Dyck pairs}  \}
\end{equation}
with 
 ${\rm deg}( D_\la^\mu D^\la_\nu) = {\rm deg}(\la, \mu) + {\rm deg}(\la, \nu),$ 
with respect to the involution $*$ and the partial order on $\mptn$ given by inclusion.
\end{cor}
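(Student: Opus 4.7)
The plan is to deduce this from \cref{heere ris the basus} via a standard idempotent truncation argument. First I would verify that the Schur idempotent $e$ is fixed by the cellular involution $*$ of $K^m_n$. Each local idempotent ${\sf 1}_\la$ is represented by the oriented diagram $\underline{\la}\la\overline{\la}$, which is manifestly symmetric under the vertical flip implementing $*$; thus ${\sf 1}_\la^* = {\sf 1}_\la$ for every $\la$, and summing over $\la \in \mathscr{R}_{m,n}$ gives $e^* = e$.

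Next I would identify the claimed set as a basis. Because $D^\mu_\la D^\la_\nu = {\sf 1}_\mu D^\mu_\la D^\la_\nu {\sf 1}_\nu$ by the idempotent relations \eqref{rel1}, we have
\[
e(D^\mu_\la D^\la_\nu)e = \begin{cases} D^\mu_\la D^\la_\nu & \text{if } \mu, \nu \in \mathscr{R}_{m,n}, \\ 0 & \text{otherwise.}\end{cases}
\]
Since $\{D^\mu_\la D^\la_\nu\}$ is a $\Bbbk$-basis of $K^m_n$, the non-zero images $\{eD^\mu_\la D^\la_\nu e\}$ span $eK^m_n e$ and remain linearly independent. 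Note also that for every $\la \in \mptn$ the regularisation ${\sf reg}(\la)$ lies in $\mathscr{R}_{m,n}$ and $(\la, {\sf reg}(\la))$ is a Dyck pair by the construction in \eqref{Step1}, so no cell label $\la$ is killed by the truncation.

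The cellular axioms for $eK^m_n e$ now transfer directly. For $a \in eK^m_n e$, applying the cellular multiplication rule in $K^m_n$ gives
\[
a\cdot D^\mu_\la D^\la_\nu \equiv \sum_{\mu'} r_{\mu',\mu}(a)\, D^{\mu'}_\la D^\la_\nu \pmod{(K^m_n)^{<\la}},
\]
where the sum is over $\mu'$ with $(\la,\mu')$ a Dyck pair. Since $a = eae$, left multiplication by $e$ kills all terms with $\mu' \notin \mathscr{R}_{m,n}$ and sends $(K^m_n)^{<\la}$ into $e(K^m_n)^{<\la}e$, which is the span of the basis elements of $eK^m_n e$ labelled by $\la' \subsetneq \la$. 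The anti-automorphism $*$ restricts to $eK^m_n e$ because $e^* = e$, and it sends $D^\mu_\la D^\la_\nu$ to $D^\nu_\la D^\la_\mu$, satisfying the cellular involution axiom. Degrees are preserved by the truncation, yielding a graded cellular structure with cell poset $(\mptn,\subseteq)$ and tableau sets $\{\mu \in \mathscr{R}_{m,n} : (\la,\mu)\text{ is a Dyck pair}\}$.

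There is no genuine obstacle here: the statement is essentially the König--Xi principle for cellular idempotent truncations applied to the cellular structure of \cref{heere ris the basus}, and the only point worth noting is that every cell label survives the truncation thanks to the existence of the regularisation ${\sf reg}(\la)$ for every $\la$.
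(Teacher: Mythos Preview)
Your proposal is correct and takes essentially the same approach as the paper: the paper's proof is the single sentence ``This follows from \cref{heere ris the basus} and the definition of these algebras by idempotent truncation,'' and you have simply unpacked what that idempotent truncation entails (verifying $e^*=e$, that the basis restricts as claimed, and that every cell label $\la$ survives via ${\sf reg}(\la)$).
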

  
  \begin{proof}
 This follows from \cref{heere ris the basus} and   the definition of these algebras by idempotent truncation.
  \end{proof}

\begin{defn}
For $\la \in  \mptn$ we define the {\sf Specht module} $S_{m,n}(\la)$ to be the right $H^m_n$-module
$$
S_{m,n}(\la)= \Delta_{m,n}(\la) e 
$$
this module has basis $\{D^\la_\nu \mid (\la,\nu) \text{ is a Dyck pair}, \nu \in \mathscr{R}_{m,n}\}$ with action given by right concatenation (modulo  the ideal spanned by diagrams which factor through  $\mu\subset\la$).
\end{defn}

\begin{prop}
A complete and irredundant set of non-isomorphic simple $H^m_n$-modules
is given by 
$$
\{D_{m,n}(\la)\mid D_{m,n}(\la)= L_{m,n}(\la) e , \text { for }\la \in \mathscr{R}_{m,n}\subset \mptn\}.
$$
\end{prop}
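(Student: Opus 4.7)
The plan is to invoke the standard theory of idempotent truncation (sometimes called the Schur functor formalism), which reduces the problem to determining precisely when $L_{m,n}(\la)e \neq 0$. Specifically, for any finite-dimensional algebra $A$ and idempotent $e \in A$, the functor $F \colon \mathrm{mod}\mhyphen A \to \mathrm{mod}\mhyphen eAe$ defined by $M \mapsto Me$ has the property that for each simple right $A$-module $L$, the $eAe$-module $Le$ is either zero or simple, and the assignment $L \mapsto Le$ restricted to those $L$ with $Le \neq 0$ yields a bijection onto the isomorphism classes of simple $eAe$-modules. This is a well-known result; I would cite it rather than reprove it.

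First I would observe that $K^m_n$ is a basic algebra. This is immediate from the quasi-hereditary structure recorded in \cref{heere ris the basus}: the right module ${\sf 1}_\la K^m_n$ surjects onto $\Delta_{m,n}(\la)$, which has simple head $L_{m,n}(\la)$, and hence ${\sf 1}_\la K^m_n$ is the projective cover of $L_{m,n}(\la)$. In particular ${\sf 1}_\la K^m_n$ is indecomposable, so ${\sf 1}_\la$ is a primitive idempotent, and the set $\{{\sf 1}_\la \mid \la \in \mptn\}$ is a complete set of primitive orthogonal idempotents. Consequently, for any simple right $K^m_n$-module $L_{m,n}(\la)$ and any $\mu \in \mptn$, we have
\[
\dim L_{m,n}(\la){\sf 1}_\mu \;=\; \dim \Hom_{K^m_n}({\sf 1}_\mu K^m_n,\, L_{m,n}(\la)) \;=\; \delta_{\la,\mu}.
\]

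Applying this to $e = \sum_{\mu \in \mathscr{R}_{m,n}} {\sf 1}_\mu$ yields
\[
L_{m,n}(\la)\, e \;=\; \bigoplus_{\mu \in \mathscr{R}_{m,n}} L_{m,n}(\la)\,{\sf 1}_\mu,
\]
which is non-zero if and only if $\la \in \mathscr{R}_{m,n}$. Combining this with the bijection from the first paragraph, we conclude that $\{L_{m,n}(\la)e \mid \la \in \mathscr{R}_{m,n}\}$ is a complete and irredundant set of simple $H^m_n = eK^m_ne$-modules, which is exactly the statement once we set $D_{m,n}(\la) := L_{m,n}(\la)e$. The only substantive input to the argument is the basicness of $K^m_n$, and this is a by-product of the quasi-hereditary cellular structure already established, so there is no real obstacle here—everything follows from standard representation-theoretic machinery applied to results recalled earlier in the section.
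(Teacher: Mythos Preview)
Your proposal is correct and follows essentially the same approach as the paper: both reduce to the standard idempotent-truncation bijection and then use basicness of $K^m_n$ together with the definition of $e$ as the sum of weight idempotents over $\mathscr{R}_{m,n}$ to determine exactly when $L_{m,n}(\la)e \neq 0$. You are simply more explicit about justifying basicness and computing $\dim L_{m,n}(\la){\sf 1}_\mu = \delta_{\la,\mu}$, whereas the paper asserts these facts directly.
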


\begin{proof}
By  \cref{heere ris the basus} (which implies that  $ \mathscr{P}_{m,n}$ labels the simple $K^m_n$-modules) 
we need only show that 
$ 
L_{m,n}(\la) e =0
$ if and only if $\la \not \in \mathscr{R}_{m,n}$. 
This claim follows immediately from the fact that the algebra  $K^m_n$ is basic (and so all simple modules are 1-dimensional and generated by weight idempotents) and the definition of $e\in  K^m_n$ as the sum over all weight idempotents from $ \mathscr{R}_{m,n}$.
\end{proof}

 \begin{prop}\label{submodule2}
Let $\la \in \mptn$. The Alperin diagram of the Specht module $S_{m,n}(\la)$ has vertex set labelled by the set 
 $\{ D_{m,n}(\mu)\, : \,  \mu\in {\rm DP}(\la)\cap  \mathscr{R}_{m,n}\}$  and edges
$$D_{m,n}(\mu) \longrightarrow D_{m,n}(\nu)$$
whenever $\mu \in {\rm DP}_k(\la)$, $\nu \in {\rm DP}_{k+1}(\la)$ for some $k\geq 0$ and $\nu = \mu \pm P$ for some $P\in {\rm DAdd}(\mu)$ or $P\in {\rm DRem}(\mu)$ respectively. 
 \end{prop}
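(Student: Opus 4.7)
The plan is to derive this submodule structure from \cref{submodule} by applying the exact Schur functor $F: K^m_n\textup{-mod} \to H^m_n\textup{-mod}$, $M \mapsto Me$, and using the graded cellular structure of \cref{heere ris the basus2}.

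First I would identify the vertex set. Since $F$ is exact (being multiplication by an idempotent), it sends a composition series of $\Delta_{m,n}(\la)$ to a filtration of $S_{m,n}(\la) = F(\Delta_{m,n}(\la))$. Since $F(L_{m,n}(\mu)) = D_{m,n}(\mu)$ for $\mu \in \mathscr{R}_{m,n}$ and $F(L_{m,n}(\mu)) = 0$ otherwise, \cref{submodule} immediately gives that the composition factors of $S_{m,n}(\la)$ are exactly $\{D_{m,n}(\mu) : \mu \in \textup{DP}(\la) \cap \mathscr{R}_{m,n}\}$.

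Next I would pin down the Loewy structure. Both $K^m_n$ and $H^m_n = eK^m_ne$ are positively graded with semisimple degree-zero parts (spanned by the idempotents ${\sf 1}_\mu$ with $\mu$ in $\mptn$, respectively $\mathscr{R}_{m,n}$). The module $S_{m,n}(\la)$ is generated in degree $0$ by $D^\la_\la \cdot e$, so a standard argument (identical to that underlying \cref{submodule}) shows that its graded filtration coincides with its radical filtration. Consequently, the $k$-th Loewy layer is precisely $\bigoplus_{\mu \in \textup{DP}_k(\la) \cap \mathscr{R}_{m,n}} D_{m,n}(\mu)$, and all edges in the Alperin diagram occur between consecutive graded layers.

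Finally, I would read off the edges. An edge $D_{m,n}(\mu) \to D_{m,n}(\nu)$ between layers $k$ and $k+1$ corresponds to a nonzero action of a degree-one element of $H^m_n$ sending $D^\la_\mu$ to a nonzero multiple of $D^\la_\nu$ modulo the cellular ideal. The degree-one part of $H^m_n$ is spanned by $\{D^\alpha_\beta : \alpha,\beta \in \mathscr{R}_{m,n},\ \beta = \alpha \pm P,\ P \text{ a Dyck path}\}$. Using \cref{heere ris the basus2} together with the commuting, adjacent, and non-commuting relations of \cref{presentation}, the product $D^\la_\mu \cdot D^\mu_\nu$ equals $\pm D^\la_\nu$ modulo the cellular ideal, which is nonzero in $S_{m,n}(\la)$ precisely when $(\la,\nu)$ is a Dyck pair of degree $k+1$. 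This recovers exactly the edges in the statement.

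The main obstacle is the Loewy structure step: one must rule out ``extra'' edges in $S_{m,n}(\la)$ that could arise because chains in $\Delta_{m,n}(\la)$ passing through non-regular vertices get contracted when $F$ is applied. The positive grading combined with the semisimple degree-zero part and degree-zero generation of $S_{m,n}(\la)$ is exactly what forces the radical filtration to coincide with the graded filtration, so no such contracted chains can contribute extra edges. Once this coincidence is secured, the remainder is a direct calculation in the cellular basis.
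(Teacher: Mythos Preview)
Your approach is the same as the paper's --- deduce the result from \cref{submodule} via the exact idempotent-truncation functor --- and the paper's own proof is literally the single sentence ``This follows from \cref{submodule} and the definition of these algebras by idempotent truncation.'' Your expansion is mostly sound, but one step is wrong as written. You claim that $S_{m,n}(\la)$ is generated in degree $0$ by $D^\la_\la \cdot e = {\sf 1}_\la e$; however ${\sf 1}_\la e = 0$ whenever $\la \notin \mathscr{R}_{m,n}$, and in that case $S_{m,n}(\la)$ has no degree-zero elements at all (every basis vector $D^\la_\nu$ with $\nu$ regular has degree $\geq |d(\la)| > 0$). The natural repair --- that $S_{m,n}(\la)$ is generated in its minimal degree, equivalently has simple head --- is exactly the proposition the paper proves immediately \emph{after} \cref{submodule2} using \cref{submodule2} and \cref{PURE1}, so invoking it here would be circular.

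This matters because your resolution of the ``no new edges'' obstacle rests on the coincidence of the graded and radical filtrations, and that coincidence needs generation in a single degree. For regular $\la$ your argument goes through as stated. For non-regular $\la$ you need either an independent argument that $S_{m,n}(\la)$ is generated in a single degree, or --- closer to what the paper has in mind --- to argue directly that idempotent truncation of a multiplicity-free module with an Alperin diagram yields the induced subgraph on the surviving composition factors. The paper treats this last step as routine and does not spell it out.
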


   \begin{proof}
 This follows from \cref{submodule} and   the definition of these algebras by idempotent truncation.
  \end{proof}
An example is depicted in \cref{submodules3}, below.

\begin{figure}[ht!]
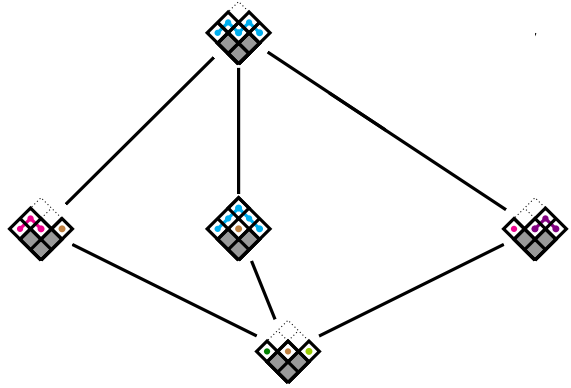

 $$

 $$

 \caption{The full submodule lattice of the Specht module $S_{3,3}(2,1)$ for  
 $eK_{3,3} e$ and $\Bbbk$ any field. 
This is obtained from the diagram in \cref{submodules} by simply deleting all composition factors labelled by 
$\la \not\in \mathscr{R}_{m,n}$ (and edges from these vertices). }
 \label{submodules3}
 
 \end{figure}

Thus we have complete submodule structure of the Specht modules, for free.  In particular, we have the following:
  
\begin{prop}
The Specht module  $S_{m,n}(\alpha)$ has   simple head isomorphic to $D({\sf reg}(\alpha))
\langle d(\alpha) \rangle $.

\end{prop}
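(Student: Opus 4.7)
The plan is to identify $D_{m,n}({\sf reg}(\alpha))$ as the unique simple quotient of $S_{m,n}(\alpha)$ by exploiting the positive grading. The Specht module has homogeneous basis $\{D^\alpha_\nu : \nu \in {\rm DP}(\alpha) \cap \mathscr{R}_{m,n}\}$, with $D^\alpha_\nu$ sitting in degree $\deg(\alpha,\nu)$, and the host algebra $H^m_n$ is positively graded with semisimple degree-zero part (spanned by the orthogonal idempotents $\{{\sf 1}_\mu : \mu \in \mathscr{R}_{m,n}\}$). For such algebras, $\hd(M) = M / (H^m_n)_{>0}\cdot M$ for any graded module $M$, so the head is controlled by the minimum-degree part.

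The first step is to show that the minimum degree in $S_{m,n}(\alpha)$ equals $|d(\alpha)|$, achieved uniquely at $\nu = {\sf reg}(\alpha)$. Existence is immediate from \cref{deg_reg_ht}, which gives the Dyck pair $(\alpha,{\sf reg}(\alpha))$ of degree $|d(\alpha)|$. For the lower bound and uniqueness, the canonical sequence of \cref{PURE1} passes from $\alpha$ to any regular $\nu \in {\rm DP}(\alpha)$ by first adding the $|d(\alpha)|$ Dyck paths of ${\sf reg}(\alpha)\setminus\alpha$, then performing a sequence of splits via \cref{cor_pure_fav} to reach $\alpha\cup (\nu\setminus\alpha)_{\leq 0}$, then finally a sequence of additions via \cref{cor_not_pure_fav} to reach $\nu$. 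Each split strictly increases the number of tiles in the Dyck tiling of the skew partition, and each add contributes further. Hence $\deg(\alpha,\nu) \geq |d(\alpha)|$, with equality forcing no splits and no additions of positive height, i.e.\ $\nu = {\sf reg}(\alpha)$.

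The second step is to show $S_{m,n}(\alpha)$ is cyclic, generated by $D^\alpha_{{\sf reg}(\alpha)}$. For each $\nu \in {\rm DP}(\alpha) \cap \mathscr{R}_{m,n}$, the steps of \cref{PURE1} translate (via the cellular basis of \cref{heere ris the basus2}) into a product of single-Dyck-path generators $x_\nu \in (H^m_n)_{\geq 0}$ with the property that $D^\alpha_{{\sf reg}(\alpha)} \cdot x_\nu$ equals $D^\alpha_\nu$ modulo the cellular ideal spanned by elements factoring through some $\beta \subsetneq \alpha$; this ideal vanishes in the quotient defining $S_{m,n}(\alpha)$ since $\alpha$ is minimal in its own cell. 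For $\nu \neq {\sf reg}(\alpha)$ we have $x_\nu \in (H^m_n)_{>0}$, so every basis element of $S_{m,n}(\alpha)$ other than $D^\alpha_{{\sf reg}(\alpha)}$ lies in $(H^m_n)_{>0}\cdot S_{m,n}(\alpha)$. Combining this with step one, $\hd(S_{m,n}(\alpha))$ is one-dimensional, concentrated in degree $|d(\alpha)|$, and hence isomorphic to $D_{m,n}({\sf reg}(\alpha))\langle d(\alpha)\rangle$.

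The hard part will be step two: verifying that the combinatorial composite of generators provided by \cref{PURE1} actually multiplies on the nose (up to the cellular ideal) to the target basis element $D^\alpha_\nu$, particularly through the split steps, which traverse partitions \emph{strictly smaller} than ${\sf reg}(\alpha)$ and therefore require careful bookkeeping with the non-commuting relation \eqref{noncommuting} rather than straightforward concatenation. A cleaner alternative, avoiding this multiplication check, would be to apply \cref{submodule2} directly and argue combinatorially that ${\sf reg}(\alpha)$ is the unique source of the Alperin diagram: by step one there are no regular $\mu \in {\rm DP}(\alpha)$ of lower Dyck-pair degree, and for any $\mu \neq {\sf reg}(\alpha)$ of equal or higher degree, one constructs an incoming edge from some $\mu' \in {\rm DP}_{\deg(\alpha,\mu)-1}(\alpha)\cap \mathscr{R}_{m,n}$ by reversing the last add/split step leading to $\mu$ in \cref{PURE1}.
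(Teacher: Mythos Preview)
Your proposal is correct, and your ``cleaner alternative'' at the end is precisely the paper's argument---the paper's proof is the single line ``This follows from \cref{submodule2} and \cref{PURE1}.'' Step~1 (that ${\sf reg}(\alpha)$ is the unique regular $\nu$ of minimal Dyck-degree $|d(\alpha)|$) plus the observation that every other regular $\nu$ receives an incoming Alperin-diagram edge from the penultimate partition in \cref{PURE1} is exactly what is being invoked.

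One point worth making explicit, which both you and the paper leave implicit and which underpins both your Step~2 and the alternative: every intermediate partition in the split and add phases of \cref{PURE1} (i.e.\ from ${\sf reg}(\alpha)$ onward) is itself regular. For the add phase this is immediate, since enlarging a regular partition preserves regularity. For the split phase, each $R_i^k$ removed has strictly positive height in the partition ${\sf split}_{k-1}(\mu\setminus\alpha)$ from which it is removed---this is precisely why the paper is able in Section~7 to define the elements $\mathbb R_{\mu\setminus\alpha}^k$ as products of arrows in the regular quiver $Q_{m,n}$, whose arrows by construction connect only regular partitions via ${\rm DRem}_{>0}$ removals. Once this is noted, the last step of \cref{PURE1} into any regular $\nu\neq{\sf reg}(\alpha)$ is always from a regular $\nu'$, supplying the required incoming edge in \cref{submodule2}; and your concern about Step~2 requiring ``careful bookkeeping with the non-commuting relation through partitions strictly smaller than ${\sf reg}(\alpha)$'' dissolves, since the PURE1 path from ${\sf reg}(\alpha)$ to $\nu$ stays inside $\mathscr R_{m,n}$ and the corresponding product of degree-one generators lies in $H^m_n$.
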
 
\begin{proof}
This follows from \cref{submodule2} and \cref{PURE1}.
\end{proof}

\begin{prop}
 The  algebra   
$e K^m_n e $  is a graded cellular  algebra with  graded cellular basis given by
\begin{equation}\label{basis22}
\{D_{{\sf reg}(\alpha)}^\mu  
D^{{\sf reg}(\alpha)}_\alpha D_{{\sf reg}(\alpha)}^\alpha 
D^{{\sf reg}(\alpha)}_\nu \mid  \alpha \in \mptn ,\mu,\nu\in \mathscr{R}_{m,n} \,\, \mbox{with}\,\,		(\alpha,\mu), (\alpha,\nu) \text{ Dyck pairs}  \}
\end{equation}
with 
 ${\rm deg}( D_\alpha^\mu D^\alpha_\nu) = {\rm deg}(\alpha, \mu) + {\rm deg}(\alpha, \nu),$ 
with respect to the involution $*$ and the partial order on $\mptn$ given by inclusion.

\end{prop}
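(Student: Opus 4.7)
The plan is to show that the proposed set is a basis equivalent to the cellular basis of \cref{heere ris the basus2} under a triangular change of basis preserving the anti-involution, grading, and cell structure indexed by $\mptn$ under inclusion. Writing
\[
c^\alpha_{\mu,\nu} := D_{{\sf reg}(\alpha)}^\mu D^{{\sf reg}(\alpha)}_\alpha D_{{\sf reg}(\alpha)}^\alpha D^{{\sf reg}(\alpha)}_\nu
\]
for the new candidate basis elements, the key claim to establish is
\[
c^\alpha_{\mu,\nu} \equiv \xi_\alpha \cdot D_\alpha^\mu D^\alpha_\nu \pmod{(eK^m_ne)^{<\alpha}}
\]
for a nonzero scalar $\xi_\alpha$ depending only on $\alpha$, where $(eK^m_ne)^{<\alpha}$ denotes the span of the cellular basis elements from \cref{heere ris the basus2} whose cell label is strictly contained in $\alpha$.

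I would establish this congruence in two steps. First, analyze the middle factor $D^{{\sf reg}(\alpha)}_\alpha D_{{\sf reg}(\alpha)}^\alpha \in {\sf 1}_{{\sf reg}(\alpha)} K^m_n {\sf 1}_{{\sf reg}(\alpha)}$. This element factors through the idempotent ${\sf 1}_\alpha$, so by \cref{heere ris the basus} it lies in the two-sided cell ideal at $\alpha$ inside $K^m_n$. By \cref{deg_reg_ht} the Dyck tiling of ${\sf reg}(\alpha)\backslash\alpha$ is unique, so the cell-$\alpha$ component of ${\sf 1}_{{\sf reg}(\alpha)} K^m_n {\sf 1}_{{\sf reg}(\alpha)}$ modulo strictly lower cells is one-dimensional, spanned by $D_\alpha^{{\sf reg}(\alpha)} D^\alpha_{{\sf reg}(\alpha)}$. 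Hence there exists a scalar $\xi_\alpha$ with
\[
D^{{\sf reg}(\alpha)}_\alpha D_{{\sf reg}(\alpha)}^\alpha \equiv \xi_\alpha \cdot D_\alpha^{{\sf reg}(\alpha)} D^\alpha_{{\sf reg}(\alpha)} \pmod{(K^m_n)^{<\alpha}}.
\]
Multiplying on the left by $D_{{\sf reg}(\alpha)}^\mu$ and on the right by $D^{{\sf reg}(\alpha)}_\nu$ and using the ideal property of $(K^m_n)^{<\alpha}$ together with the obvious concatenation identities for iterated Dyck-path generators then yields the displayed congruence.

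The main obstacle will be to prove $\xi_\alpha \neq 0$. I plan to compute it explicitly by induction on $|d(\alpha)|$, peeling off the canonical tiling $P^0, P^{-1}, \ldots, P^{d(\alpha)+1}$ of ${\sf reg}(\alpha)\backslash\alpha$ one Dyck path at a time and using the self-dual relation \eqref{selfdualrel} at each stage to reduce the round-trip element modulo strictly lower cells. Because the canonical tiling consists of nested Dyck paths of strictly increasing height, at each step \eqref{selfdualrel} produces one leading nonzero sign $\pm 1$ while the remaining terms fall into strictly lower cells; induction then delivers $\xi_\alpha$ as a nonzero product of such signs. Once the congruence is in hand, the rest is formal: the transformation matrix between $\{c^\alpha_{\mu,\nu}\}$ and $\{D_\alpha^\mu D^\alpha_\nu e\}$ is triangular with nonzero diagonal entries $\xi_\alpha$ on each cell, hence invertible, so the new set is a basis. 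The anti-involution $*$ sends $c^\alpha_{\mu,\nu} \mapsto c^\alpha_{\nu,\mu}$ by applying $(D^\la_\mu)^* = D_\la^\mu$ factor by factor and reversing the product, and the cellular multiplication axiom for the new basis is inherited from the old one by transporting through the triangular change of basis, with the same partial order on $\mptn$ and the same grading as in \cref{heere ris the basus2}.
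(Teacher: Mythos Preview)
Your triangular-change-of-basis strategy against \cref{heere ris the basus2} is sound, but the argument's weight is misplaced. The middle factor $D^{{\sf reg}(\alpha)}_\alpha D_{{\sf reg}(\alpha)}^\alpha$ is \emph{literally} the cellular basis element $D_\alpha^{{\sf reg}(\alpha)} D^\alpha_{{\sf reg}(\alpha)}$ of \cref{heere ris the basus} (same symbols, same element), so $\xi_\alpha = 1$ on the nose and your inductive computation via \eqref{selfdualrel} is idle.

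The genuine gap is hidden in your phrase ``obvious concatenation identities for iterated Dyck-path generators''. The outer factor $D^{{\sf reg}(\alpha)}_\nu$ is \emph{not} a Dyck-pair element: in general ${\sf reg}(\alpha) \not\subseteq \nu$ (take any regular $\nu$ with $(\nu\setminus\alpha)_{>0}\neq\varnothing$ while $(\nu\setminus\alpha)_{\leq 0}$ is a proper refinement of ${\sf reg}(\alpha)\setminus\alpha$; cf.\ \cref{ex:split_seq,yetmore}). The symbol $D^{{\sf reg}(\alpha)}_\nu$ here denotes the product of generators along the canonical path of \cref{PURE1}, which first \emph{removes} Dyck paths (the split step~\eqref{Step2}) and only then adds them (step~\eqref{Step3}). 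Consequently the identity you need, namely $D^\alpha_{{\sf reg}(\alpha)} \cdot D^{{\sf reg}(\alpha)}_\nu \equiv \pm D^\alpha_\nu$ modulo lower cells, is not a concatenation of Dyck tilings at all: it is a statement about how a mixed add/remove word acts on the cell module $S_{m,n}(\alpha)$. This is exactly the content the paper draws from \cref{submodule2}: the Alperin diagram of $S_{m,n}(\alpha)$ is multiplicity-free with edges given by single Dyck-path moves, so each generator along the canonical path carries one basis vector to a nonzero scalar multiple of the next, and \cref{PURE1} guarantees a route from the simple-head vector $D^\alpha_{{\sf reg}(\alpha)}$ to every $D^\alpha_\nu$ with $\nu$ regular. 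Without invoking that structure your ``concatenation'' step does not go through.
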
 
\begin{proof}
This follows from \cref{submodule2} and \cref{PURE1}.
\end{proof}

%
%

 \section{A symmetric algebra defined via  Dyck combinatorics}
 
We now define an abstract algebra, $\mathcal{A}_{m,n}$ via Dyck-combinatorial 
generators and relations.  The main result of this paper will be that this algebra is isomorphic
as a $\ZZ$-graded $\Bbbk$-algebra 
 to $e K^m_ne$.  We will prove this result in two steps over the next two chapters: we will first provide a spanning set of $\mathcal{A}_{m,n}$  in terms of pairs of Dyck paths; we will then construct a surjective $\Bbbk$-algebra homomorphism $\mathcal{A}_{m,n}\to H^m_n$.  Putting these two facts together, we will deduce that the spanning set is in fact a basis and the homomorphism is injective, as required.
 However, before we can get on with the task of defining $\mathcal{A}_{m,n}$, we first require some additional combinatorics.

\begin{defn}

We define the {\sf regular  quiver} $Q_{m,n}$ with    vertex set 
$\{\mathbbm 1_\la \mid \la \in   \mathscr{R}_{m,n} \}$  and 
  arrows   
  \begin{itemize}
\item  $\mathbb D^\la_\mu: \la   \to  \mu$ 
and 
   $\mathbb D^\mu_\la:\mu \to \la$ for every $\la=\mu - P$ with $P\in {\rm DRem}_{>0}(\mu)$; 

\item for $m=n$ we have additional    ``loops" of degree 2, 
$\mathbb L^\la_\la :\la\to \la$ for every $\la\in \mathscr{R}_{m,m} $.
\end{itemize}
\end{defn}

An example is depicted in \cref{loopydyck} below.

\begin{figure}[ht!]

$$\begin{tikzpicture}[scale=0.78]

 \draw[very thick] 
 (0,0)--(-2.5,-2.5)
 (0,0)--(2.5,-2.5)
 (-3.75,-5)--(-2.5,-2.5)
  (-1.25,-5)--(-2.5,-2.5)
    (1.25,-5)--(2.5,-2.5)
        (-3.75,-5)--(2.5,-2.5)
            (2.5+1.25,-5)--(2.5,-2.5)
         (-2.5,-7.5)--(-3.75,-5)          (-2.5,-7.5)--(3.75,-5)

                  (2.5,-7.5)--(-1.25,-5) 
                  (2.5,-7.5)--(1.25,-5)                            (2.5,-7.5)--(3.75,-5) 
         ; 
 \draw(0,0) node 
 {
$ \begin{tikzpicture}[scale=0.3]
 
 \path  (0,0) coordinate (Y);
 \path  (Y)--++(135:1)--++(45:2)coordinate (X);
\fill [white] (X) circle(69pt);
\begin{scope}
 \draw[very thick,fill=gray!40] (Y)--++(135:2)--++(45:4)--++(-45:2) --++(-135:4)--(0,0); 
  \clip (Y)--++(135:2)--++(45:4)--++(-45:2) --++(-135:4)--(0,0); 
--++(135:2)--++(45:4)--++(-45:2) --++(-135:4)--(0,0); 
 
 		\foreach \i in {0,1,2,3,4,5,6,7,8,9,10,11,12}
		{
			\path (Y)--++(45:1*\i) coordinate (c\i); 
			\path (Y)--++(135:1*\i)  coordinate (d\i); 
			\draw[thick, ] (c\i)--++(135:14);
			\draw[thick, ] (d\i)--++(45:14);
		}
	 	\end{scope}

 \end{tikzpicture}$
 };

 \draw(-2.5,-2.5) node 
 {
$ \begin{tikzpicture}[scale=0.3]
 
 \path  (0,0) coordinate (Y);
 \path  (Y)--++(135:1)--++(45:2)coordinate (X);
\fill [white] (X) circle(69pt);
\begin{scope}
 \draw[ thick,fill=white] (Y)--++(135:2)--++(45:4)--++(-45:2) --++(-135:4)--(0,0); 

 \draw[very thick,fill=gray!40] (Y)--++(135:2)--++(45:3)--++(-45:1) --++(45:1) --++(-45:1) --++(-135:4)--(0,0); 
  \clip (Y)--++(135:2)--++(45:4)--++(-45:2) --++(-135:4)--(0,0); 
--++(135:2)--++(45:3)--++(-45:1) --++(45:1) --++(-45:1) --++(-135:4)--(0,0); 
 
 		\foreach \i in {0,1,2,3,4,5,6,7,8,9,10,11,12}
		{
			\path (Y)--++(45:1*\i) coordinate (c\i); 
			\path (Y)--++(135:1*\i)  coordinate (d\i); 
			\draw[thick, ] (c\i)--++(135:14);
			\draw[thick, ] (d\i)--++(45:14);
		}
	 	\end{scope}

 \end{tikzpicture}$
 };

 \draw(2.5,-2.5) node 
 {
$ \begin{tikzpicture}[scale=0.3]
 
 \path  (0,0) coordinate (Y);
 \path  (Y)--++(135:1)--++(45:2)coordinate (X);
\fill [white] (X) circle(69pt);
\begin{scope}
 \draw[ thick,fill=white] (Y)--++(135:2)--++(45:4)--++(-45:2) --++(-135:4)--(0,0); 

 \draw[very thick,fill=gray!40] (Y)--++(135:2)--++(45:2)--++(-45:1) --++(45:1) --++(-45:1) --++(-135:3)--(0,0); 
  \clip (Y)--++(135:2)--++(45:4)--++(-45:2) --++(-135:4)--(0,0); 
--++(135:2)--++(45:4)--++(-45:2) --++(-135:4)--(0,0); 
--++(135:2)--++(45:4)--++(-45:2) --++(-135:4)--(0,0); 
 
 		\foreach \i in {0,1,2,3,4,5,6,7,8,9,10,11,12}
		{
			\path (Y)--++(45:1*\i) coordinate (c\i); 
			\path (Y)--++(135:1*\i)  coordinate (d\i); 
			\draw[thick, ] (c\i)--++(135:14);
			\draw[thick, ] (d\i)--++(45:14);
		}
	 	\end{scope}

 \end{tikzpicture}$
 };

 \draw(-2.5-1.25,-5) node 
 {
$ \begin{tikzpicture}[scale=0.3]
 
 \path  (0,0) coordinate (Y);
 \path  (Y)--++(135:1)--++(45:2)coordinate (X);
\fill [white] (X) circle(69pt);
\begin{scope}\draw[ thick,fill=white] (Y)--++(135:2)--++(45:4)--++(-45:2) --++(-135:4)--(0,0); 

 \draw[very thick,fill=gray!40]  (Y)--++(135:2)--++(45:2)--++(-45:1) --++(45:2) --++(-45:1) --++(-135:4)--(0,0); 
 \clip  (Y)--++(135:2)--++(45:4)--++(-45:2)   --++(-135:4)--(0,0); 
 
 		\foreach \i in {0,1,2,3,4,5,6,7,8,9,10,11,12}
		{
			\path (Y)--++(45:1*\i) coordinate (c\i); 
			\path (Y)--++(135:1*\i)  coordinate (d\i); 
			\draw[thick, ] (c\i)--++(135:14);
			\draw[thick, ] (d\i)--++(45:14);
		}
	 	\end{scope}

 \end{tikzpicture}$
 };

 \draw(-2.5+1.25,-5) node 
 {
$ \begin{tikzpicture}[scale=0.3]
 
 \path  (0,0) coordinate (Y);
 \path  (Y)--++(135:1)--++(45:2)coordinate (X);
\fill [white] (X) circle(69pt);
\begin{scope}
\draw[ thick,fill=white] (Y)--++(135:2)--++(45:4)--++(-45:2) --++(-135:4)--(0,0); 

 \draw[very thick,fill=gray!40] (Y)--++(135:2)--++(45:3)--++(-45:2)    --++(-135:3)--(0,0); 
 \clip  (Y)--++(135:2)--++(45:4)--++(-45:2)    --++(-135:4)--(0,0); 
 
 		\foreach \i in {0,1,2,3,4,5,6,7,8,9,10,11,12}
		{
			\path (Y)--++(45:1*\i) coordinate (c\i); 
			\path (Y)--++(135:1*\i)  coordinate (d\i); 
			\draw[thick, ] (c\i)--++(135:14);
			\draw[thick, ] (d\i)--++(45:14);
		}
	 	\end{scope}

 \end{tikzpicture}$
 };

 \draw(1.25,-5) node 
 {
$ \begin{tikzpicture}[scale=0.3]
 
 \path  (0,0) coordinate (Y);
 \path  (Y)--++(135:1)--++(45:2)coordinate (X);
\fill [white] (X) circle(69pt);
\begin{scope}
\draw[ thick,fill=white] (Y)--++(135:2)--++(45:4)--++(-45:2) --++(-135:4)--(0,0); 

 \draw[very thick,fill=gray!40] (Y)--++(135:2)--++(45:2)--++(-45:2)    --++(-135:2)--(0,0); 
  \clip (Y)--++(135:2)--++(45:4)--++(-45:2) --++(-135:4)--(0,0); 
--++(135:2)--++(45:4)--++(-45:2) --++(-135:4)--(0,0); 
--++(135:2)--++(45:2)--++(-45:2)    --++(-135:2)--(0,0); 
 
 		\foreach \i in {0,1,2,3,4,5,6,7,8,9,10,11,12}
		{
			\path (Y)--++(45:1*\i) coordinate (c\i); 
			\path (Y)--++(135:1*\i)  coordinate (d\i); 
			\draw[thick, ] (c\i)--++(135:14);
			\draw[thick, ] (d\i)--++(45:14);
		}
	 	\end{scope}

 \end{tikzpicture}$
 };

 \draw(2.5+1.25,-5) node 
 {
$ \begin{tikzpicture}[scale=0.3]
 
 \path  (0,0) coordinate (Y);
 \path  (Y)--++(135:1)--++(45:2)coordinate (X);
\fill [white] (X) circle(69pt);
\begin{scope}\draw[ thick,fill=white] (Y)--++(135:2)--++(45:4)--++(-45:2) --++(-135:4)--(0,0); 

 \draw[very thick,fill=gray!40](Y)--++(135:2)--++(45:1)--++(-45:1) --++(45:2) --++(-45:1) --++(-135:3)--(0,0); 
  \clip (Y)--++(135:2)--++(45:4)--++(-45:2) --++(-135:4)--(0,0); 
--++(135:2)--++(45:1)--++(-45:1) --++(45:2) --++(-45:1) --++(-135:3)--(0,0); 
 
 		\foreach \i in {0,1,2,3,4,5,6,7,8,9,10,11,12}
		{
			\path (Y)--++(45:1*\i) coordinate (c\i); 
			\path (Y)--++(135:1*\i)  coordinate (d\i); 
			\draw[thick, ] (c\i)--++(135:14);
			\draw[thick, ] (d\i)--++(45:14);
		}
	 	\end{scope}

 \end{tikzpicture}$
 };

 \draw(-2.5,-5-2.5) node 
 {
$ \begin{tikzpicture}[scale=0.3]
 
 \path  (0,0) coordinate (Y);
 \path  (Y)--++(135:1)--++(45:2)coordinate (X);
\fill [white] (X) circle(69pt);
\begin{scope}
\draw[ thick,fill=white] (Y)--++(135:2)--++(45:4)--++(-45:2) --++(-135:4)--(0,0); 

 \draw[very thick,fill=gray!40] (Y)--++(135:2)--++(45:1)--++(-45:1) --++(45:3) --++(-45:1) --++(-135:4)--(0,0); 
  \clip (Y)--++(135:2)--++(45:4)--++(-45:2) --++(-135:4)--(0,0); 
--++(135:2)--++(45:1)--++(-45:1) --++(45:3) --++(-45:1) --++(-135:4)--(0,0); 
 
 		\foreach \i in {0,1,2,3,4,5,6,7,8,9,10,11,12}
		{
			\path (Y)--++(45:1*\i) coordinate (c\i); 
			\path (Y)--++(135:1*\i)  coordinate (d\i); 
			\draw[thick, ] (c\i)--++(135:14);
			\draw[thick, ] (d\i)--++(45:14);
		}
	 	\end{scope}

 \end{tikzpicture}$
 };

 \draw(2.5,-5-2.5) node 
 {
$ \begin{tikzpicture}[scale=0.3]
 
 \path  (0,0) coordinate (Y);
 \path  (Y)--++(135:1)--++(45:2)coordinate (X);
\fill [white] (X) circle(69pt);
\begin{scope}\draw[ thick,fill=white] (Y)--++(135:2)--++(45:4)--++(-45:2) --++(-135:4)--(0,0); 

 \draw[very thick,fill=gray!40](Y)--++(135:2)--++(45:1)--++(-45:1) --++(45:1) --++(-45:1) --++(-135:2)--(0,0); 
  \clip (Y)--++(135:2)--++(45:4)--++(-45:2) --++(-135:4)--(0,0); 
--++(135:2)--++(45:4)--++(-45:2) --++(-135:4)--(0,0); 
--++(135:2)--++(45:1)--++(-45:1) --++(45:1) --++(-45:1) --++(-135:2)--(0,0); 
 
 		\foreach \i in {0,1,2,3,4,5,6,7,8,9,10,11,12}
		{
			\path (Y)--++(45:1*\i) coordinate (c\i); 
			\path (Y)--++(135:1*\i)  coordinate (d\i); 
			\draw[thick, ] (c\i)--++(135:14);
			\draw[thick, ] (d\i)--++(45:14);
		}
	 	\end{scope}

 \end{tikzpicture}$
 };

 \end{tikzpicture} $$
\caption{The regular  quiver   $ {Q}_{2,4}$ is obtained by ``doubling up" the above graph.  }
\label{loopydyck} 
\end{figure}
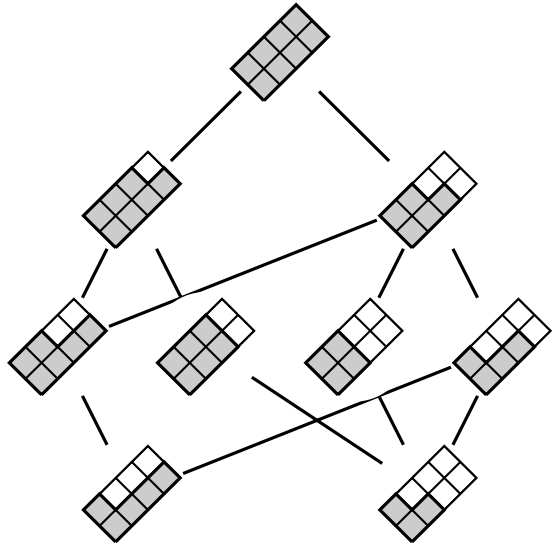

\begin{defn}
Given $P, Q$ two Dyck paths, we say that   $P$ is {\sf (right) dominated by}  $Q$, and write $P\lhd  Q$ if 
 ${\sf last}(P) < {\sf first}(Q)$.  
Given $P \in {\rm Rem}_0(\la)$, we let ${\sf rt}(P)$ denote the element of 
$\{Q \mid P\vartriangleleft Q, Q \in {\rm Add}_1(\la) \}$ which is of maximal breadth.
\end{defn}

We define certain generalised loop elements of the  path algebra $\Bbbk Q_{m,n}$ as follows.
We first associate the loop generator to a canonical Dyck path as follows: 
\begin{defn}\label{L=lincombo}
For $\la \in \mathscr{R}_{m,n}$ and $P \in {\rm DRem}(\la)$, we define an element $\mathbb{L}^\la_\la(-P)$ as a product of the generators of the path algebra as follows
\begin{align*} 
\mathbb{L}^\la_\la(-P)=
\begin{cases}
(-1)^{b(P)}	\mathbb D^\la _{\la-P}\mathbb D_\la ^{\la-P}			&\text{if }\level(P)>0
\\[5pt]
(-1)^{b({\sf rt}(P))+1}	\mathbb D^\la _{\la+{\sf rt}(P)}\mathbb D_\la ^{\la+{\sf rt}(P)}			&\text{if }\level(P)=0 \text { and }m<n
\\[5pt]
- \mathbb L^\la_\la +(-1)^{b({\sf rt}(P))+1}	\mathbb D^\la _{\la+{\sf rt}(P)}\mathbb D_\la ^{\la+{\sf rt}(P)}			 &\text{if }\level(P)=0 \text { and }m=n   
\\
 \mathbb L^\la_\la 		
	 &\text{if } \level(P)=0, \text { $m=n$, ${\sf last}(P)=m-1$   }   
\end{cases} 
\end{align*}
\end{defn}

%
%
%
%
%

\begin{defn}\label{generatortheorem2}
We define the symmetric Dyck algebra, $\mathcal{A}_{m,n}$,  to be the $\ZZ$-graded $\Bbbk$-algebra  given by the path algebra 
$\Bbbk Q_{m,n}$ modulo the following relations and their duals: 

	\smallskip\noindent
{\bf The idempotent   
relations:} 
For all $\la,\mu \in \restr$, we have that 
\begin{equation}\label{rel1}
\mathbbm 1_\mu{\mathbbm 1}_\la =\delta_{\la,\mu}{\mathbbm 1}_\la \qquad 
\qquad {\mathbbm 1}_\la \mathbb  D^\la_\mu {\mathbbm 1}_\mu = \mathbb  D^\la_\mu
\qquad {\mathbbm 1}_\la \mathbb L^\la_\la{\mathbbm 1}_\la=\mathbb L^\la_\la 
\end{equation} 
(where the final relation is only defined for the $m=n$ case).

\smallskip\noindent
	{\bf The 
	self-dual relation: } 
	Let $\la \in \restr$ and  $P\in {\rm DAdd}(\la)$. Then we   have 
\begin{equation}\label{rel2}
\mathbb  D_{\la+P}^{\la} \mathbb  D_{\la}^{\la+P}
= (-1)^{b(P)-1}\Bigg(
2
\!\! \sum_{   \begin{subarray}{c} Q\in {\rm DRem}(\la) \\ P\prec Q \end{subarray}}
\!\!
\mathbb{L}^{\la}_\la  ( - Q ) + 
\!\!\sum_{  \begin{subarray}{c} Q\in {\rm DRem}(\la) \\ Q \, \text{adj.}\, P \end{subarray}}
\!\!
\mathbb{L}^{\la}_\la  ( - Q )\Bigg) 
\end{equation}  
where the elements on the righthand-side are linear combinations of the generators, as in \cref{L=lincombo}.

\smallskip\noindent
{\bf The commuting relations:} 
Let $P,Q\in {\rm DRem}(\la)$ which commute. Then we have 
\begin{equation}\label{rel3}
\mathbb  D^{\la -P-Q}_{\la-P}\mathbb  D^{\la-P}_\la = \mathbb  D^{\la-P-Q}_{\la -Q}\mathbb  D^{\la -Q}_\la \qquad
\mathbb  D^{\la -P}_\la \mathbb  D^\la_{\la - Q} = \mathbb  D^{\la - P}_{\la - P - Q}\mathbb  D^{\la - P - Q}_{\la - Q}.
\end{equation}

\smallskip\noindent
{\bf The non-commuting relation:}   
Let $P,Q\in {\rm DRem}(\mu)$ with $P\prec Q$ which do not commute. Then  $Q\setminus P = Q^1\sqcup Q^2$ where $Q^1, Q^2 \in {\rm DRem}(\mu - P)$ and we have 
\begin{equation}\label{rel4}
\mathbb  D^{\mu -Q}_\mu \mathbb  D^\mu_{\mu-P} = \mathbb  D^{\mu - Q}_{\mu - P - Q^1}\mathbb  D^{\mu - P - Q^1}_{\mu - P} = \mathbb  D^{\mu - Q}_{\mu - P - Q^2}\mathbb  D^{\mu - P - Q^2}_{\mu - P}
\end{equation}

\smallskip\noindent
{\bf The adjacent relation:}  
Let $P\in {\rm DRem}(\mu)$ and $Q\in {\rm DRem}(\mu - P)$ be adjacent. Recall that 
 $\langle P\cup Q \rangle_\mu$, if it exists,  denotes the smallest removable Dyck path of $\mu$ containing $P\cup Q$. Then we have 
\begin{equation}\label{adjacent}
\mathbb   D^{\mu - P - Q}_{\mu - P}\mathbb   D^{\mu - P}_\mu = \left\{ \begin{array}{ll} (-1)^{b(\langle P\cup Q\rangle_\mu) - b(Q)} \mathbb   D^{\mu - P - Q}_{\mu - \langle P\cup Q\rangle_\mu}\mathbb   D^{\mu - \langle P\cup Q\rangle_\mu}_\mu & \mbox{if $\langle P\cup Q\rangle_\mu$ exists} \\ 0 & \mbox{otherwise} \end{array} \right.
\end{equation}

\smallskip\noindent
{\bf The cubic relation:}    
Let $P \in {\rm DAdd}_1(\mu)$ be such ${\sf last}(P)$ is maximal with respect to this property. Then 
\begin{equation}\label{cubic}
\mathbb   D_{\mu}^{\mu+P}
\mathbb  D^{\mu}_{\mu+P}
\mathbb  D^{\mu}_{\mu+P}
=
\begin{cases}(-1)^{b(P)+1}
2 \mathbb{L}^{\mu+P}_{\mu+P}   \mathbb
  D^{\mu+P}_{\mu}			&\text{if $m=n$}\\
0								 	&\text{if $m<n$}.
\end{cases}
\end{equation}

\smallskip\noindent
{\bf The additional $m=n$ relations:}   
We have the loop-nilpotency and loop-commutation relations: namely, for all $\la,\mu \in \mathscr{R}_{m,m}$ the following holds
\begin{equation}\label{loop-relation}
(\mathbb L^\mu_\mu)^2=0 \qquad \mathbb D^\la_\mu \mathbb L^\mu_\mu =  \mathbb L^\la_\la  \mathbb D^\la_\mu .
\end{equation}

\end{defn}

%
%
%

  \section{The isomorphism theorem}\label{Therome}

%
%
%
%
%
%
%
 In this section, we prove the main result of this paper: that the algebras $\mathcal{A}_{m,n}$ and 
 $K^m_n$ are isomorphic as $\ZZ$-graded $\Bbbk$-algebras. 
We first fix some notation as follows:
 $$
 \mathbb D (-Q) := 
 \sum_{
 \begin{subarray}c
  {\la\in  \mathscr{R}_{m,n}}
  \\
 Q \in {\rm DRem}(\la)
 \end{subarray}
 }\!\!\!
 {\mathbbm 1}_\la\mathbb  D^\la_{\la-Q}
 \qquad
 \mathbb D (+Q) := 
\sum_{
 \begin{subarray}c
  {\la\in  \mathscr{R}_{m,n}}
  \\
Q \in {\rm DAdd}(\la)
 \end{subarray}
 }\!\!\!
 {\mathbbm 1}_\la \mathbb D^\la_{\la+Q}
 \qquad  
 \mathbb L (-Q) := 
\sum_{
 \begin{subarray}c
  {\la\in  \mathscr{R}_{m,n}}
  \\
Q \in {\rm DRem}(\la)
 \end{subarray}
 } \!\!\!
  {\mathbbm 1}_\la \mathbb L^\la_{\la}(-Q).
 $$

 \subsection{Implied relations in the symmetric Dyck path algebra}
 In this section we go through the not inconsiderable effort of establishing 
 a number of lemmas and propositions regarding the products of elements from 
 $\mathcal{A}_{m,n}$.
 
 \begin{lem} 
  \label{h0loop_rect}
Let $m<n$. For $Q \in {\rm DRem}_0(\la)$ and $ P \in {\rm DRem}_{>0}(\la)$ we have that 
 either $P$ and $Q$ commute or 
  $Q\setminus P = Q^1\sqcup Q^2$ where $Q^1, Q^2 \in {\rm DRem}(\la - P)$. 
In the former case we have that 
 $$
 \mathbb L^\la_\la(-Q  )
  \mathbb D^\la_{\la-P }
  = 
  \mathbb D^\la_{\la-P }
   \mathbb L^{\la-P }_{\la-P }(-Q  )
 $$
 and in the latter case we have that 
\begin{align}\label{noncomlemmerrob1}
    \mathbb D^\la_{\la-P }
   \mathbb L^{\la-P }_{\la-P }(-Q^1)  = 
   \mathbb L^\la_\la(-Q  )
  \mathbb D^\la_{\la-P }
  = 
  \mathbb D^\la_{\la-P }
   \mathbb L^{\la-P }_{\la-P }(-Q^2).
\end{align}
 \end{lem}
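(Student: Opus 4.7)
I would begin with the combinatorial dichotomy. Since $P$ lives strictly above height zero while $Q$ touches height zero, $P$ cannot cover $Q$, so the only non-commuting possibility is $P \prec Q$. In that case, deleting $P$ from $\la$ splits the arc $Q$ into two pieces $Q^1$ (to the left of $P$) and $Q^2$ (to the right of $P$); both are visibly in ${\rm DRem}(\la-P)$ since the region covered by $P$ is no longer blocking. Otherwise the content intervals of $P$ and $Q$ are disjoint and the two paths manifestly commute as elements of ${\rm DRem}(\la)$.

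Next, I would unpack both sides using \cref{L=lincombo}: because $m<n$ and $\level(Q)=\level(Q^1)=\level(Q^2)=0$, each loop element of interest rewrites as a single sandwich $\pm\mathbb D^{\cdot}_{\cdot+{\sf rt}(\cdot)} \mathbb D_{\cdot}^{\cdot+{\sf rt}(\cdot)}$ involving the maximal-breadth addable height-$1$ Dyck path sitting immediately to the right of the relevant height-zero path. The entire lemma thus reduces to pushing $\mathbb D^\la_{\la-P}$ past such sandwiches using relations \eqref{rel3}--\eqref{adjacent}, keeping track of the breadth signs.

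In the commuting case, I would split into two subcases depending on whether $P$ lies to the left or right of $Q$'s content interval. When $P$ is to the left of $Q$, the height-$1$ path ${\sf rt}(Q)$ lies entirely to the right of $P$, so $P$ commutes with ${\sf rt}(Q)$ via \eqref{rel3} and we conclude immediately that ${\sf rt}(Q)$ is unchanged in $\la-P$. When $P$ is to the right of $Q$, the interaction of $P$ with ${\sf rt}(Q)$ must be analyzed: either they remain disjoint (trivial commuting), or $P$ overlaps the ``right edge'' of ${\sf rt}(Q)$, in which case a single application of \eqref{rel4} or \eqref{adjacent} identifies the two sandwiches with matching breadth signs.

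The non-commuting case $P \prec Q$ is the crux, and where I expect the real work. Here I would first show that ${\sf rt}(Q)$ in $\la$ and ${\sf rt}(Q^2)$ in $\la-P$ can be identified (both being the right-adjacent height-$1$ addable paths governed by the rightward staircase structure, which is unaffected by deleting an interior piece $P$). A direct application of \eqref{rel3} then yields the right-hand equality in \eqref{noncomlemmerrob1}. For the left-hand equality, I would use that $P$, viewed inside $\la+{\sf rt}(Q)$, stands in the non-commuting relationship with a larger height-$1$ path $\widetilde R$ whose splitting yields exactly ${\sf rt}(Q^1)$ and ${\sf rt}(Q)$; relation \eqref{rel4} then converts the product $\mathbb D^\la_{\la-P}\mathbb D^{\la-P}_{\la-P+{\sf rt}(Q^1)}\mathbb D_{\la-P}^{\la-P+{\sf rt}(Q^1)}$ into $\mathbb D^\la_{\la+{\sf rt}(Q)}\mathbb D_\la^{\la+{\sf rt}(Q)}\mathbb D^\la_{\la-P}$, matching \eqref{noncomlemmerrob1} up to a sign that I would verify agrees with the prescribed $(-1)^{b({\sf rt}(-))+1}$ factors using $b({\sf rt}(Q^1))+b({\sf rt}(Q^2)) = b({\sf rt}(Q))+b(P)$ modulo the required parities. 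The hard part is precisely this sign and breadth bookkeeping, together with confirming that the two middle expressions of \eqref{noncomlemmerrob1} truly coincide — a nontrivial consistency check on the defining relations of $\mathcal{A}_{m,n}$.
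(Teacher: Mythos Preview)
Your overall strategy matches the paper's: unpack $\mathbb L^\la_\la(-Q)$ via \cref{L=lincombo} as $(-1)^{b({\sf rt}(Q))+1}\mathbb D(+{\sf rt}(Q))\mathbb D(-{\sf rt}(Q))$ and push $\mathbb D(-P)$ through using \eqref{rel3}--\eqref{adjacent}. The commuting case and the $Q^2$ side of \eqref{noncomlemmerrob1} are essentially as in the paper, with one simplification you miss: when $P$ is to the right of $Q$, the height constraints (${\sf rt}(Q)\in{\rm DAdd}_1(\la)$ while $P\in{\rm DRem}_{>0}(\la)$) force ${\sf first}({\sf rt}(Q))\le{\sf first}(P)-2$ and either ${\sf last}({\sf rt}(Q))\le{\sf first}(P)-2$ or ${\sf last}({\sf rt}(Q))\ge{\sf last}(P)+2$, so $P$ and ${\sf rt}(Q)$ \emph{always} commute. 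Your ``overlap the right edge'' subcase does not occur.

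The genuine gap is in the $Q^1$ side. Your picture of a path $\widetilde R$ whose split by $P$ yields ${\sf rt}(Q^1)$ and ${\sf rt}(Q)$ cannot be right: since ${\sf rt}(Q)={\sf rt}(Q^2)$, the paper shows that ${\sf rt}(Q^1)$ (computed in $\la-P$) is the \emph{smallest} element of ${\rm DAdd}_1(\la-P)$ with both $P\prec{\sf rt}(Q^1)$ and ${\sf rt}(Q)\prec{\sf rt}(Q^1)$; in particular ${\sf rt}(Q)$ is strictly covered by ${\sf rt}(Q^1)$ in content, so the two are nested, not disjoint. The correct breadth identity is $b({\sf rt}(Q^1))=b({\sf rt}(Q))+b(Q^2)+b(P)$, not your formula (which, using ${\sf rt}(Q^2)={\sf rt}(Q)$, would give $b({\sf rt}(Q^1))=b(P)$). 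The paper's manipulation is: starting from $\mathbb D(-P)\mathbb D(+{\sf rt}(Q^1))\mathbb D(-{\sf rt}(Q^1))$, first apply \eqref{adjacent} to rewrite $\mathbb D(-P)\mathbb D(+{\sf rt}(Q^1))$ as $\pm\mathbb D(+{\sf rt}(Q))\mathbb D(+Q^2)$ (since $\la-P+{\sf rt}(Q^1)=\la+{\sf rt}(Q)+Q^2$), then apply \eqref{rel4} to the non-commuting pair $Q^2\prec{\sf rt}(Q^1)$ to rewrite $\mathbb D(+Q^2)\mathbb D(-{\sf rt}(Q^1))$ as $\mathbb D(-{\sf rt}(Q))\mathbb D(-P)$. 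The signs then collapse exactly because of the breadth identity above.
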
 
 \begin{proof}
We recall from \cref{L=lincombo}   that $$\mathbb L^\la_\la(-Q )=(-1)^{b({\sf rt}(Q))}\mathbb D^{\la}_{\la + {\sf rt}(Q)} \mathbb D_{\la}^{\la + {\sf rt}(Q)}.$$
If $P,Q$ commute, then $P, {\sf rt}(Q)$ commute as well. This is straightforward if $P$ is to the left of $Q$. If $P$ is to the right of $Q$, then 
our assumptions $Q \in {\rm DRem}_0(\la)$ and $ P \in {\rm DRem}_{>0}(\la)$   
together imply that 
${\sf first}(P)\geq{\sf last}(Q)+2$. 
By definition  $ {\sf rt}(Q) \in {\rm DAdd}_1(\la)$  and so 
 ${\sf first}({\sf rt}(Q))\leq {\sf first}(P)-2$ 
 and either 
  ${\sf last }({\sf rt}(Q))\leq {\sf first}(P)-2$  or   ${\sf last }({\sf rt}(Q))\geq {\sf last}(P)+2$; in either case, $P$ 
  and ${\sf rt}(Q)$ commute. 
 Examples of these  two cases are  depicted  in  \cref{fig:PQcommute}.
 Therefore we have that 
\begin{align*}
    \mathbb L^\la_\la(-Q ) \mathbb D^\la_{\la-P } 
    &
    = (-1)^{b({\sf rt}(Q))}{\mathbbm 1}_\la 	 \mathbb D ( + {\sf rt}(Q)  )  \mathbb D( -  {\sf rt}(Q) )   \mathbb D(-P )\\
    &= (-1)^{b({\sf rt}(Q))}{\mathbbm 1}_\la 	 \mathbb D ( + {\sf rt}(Q)  )    \mathbb D(-P )	 \mathbb D( -  {\sf rt}(Q) )		\\
    &= (-1)^{b({\sf rt}(Q))}	{\mathbbm 1}_\la   \mathbb D(-P )	 \mathbb D ( + {\sf rt}(Q)  )    \mathbb D( -  {\sf rt}(Q) )		\\
    &= \mathbb D^\la_{\la-P }   \mathbb L^{\la-P }_{\la-P }(-Q ).
\end{align*}
where only the second and third  equalities   follows from applying the commuting relation \eqref{rel3} to the Dyck paths $P$ and ${\sf rt}(Q)$;   the first and fourth  equalities follow by \cref{L=lincombo}.

\begin{figure}[ht!]
%
 
 $$ \begin{tikzpicture}[scale=0.35]
  \path(0,0)--++(135:2) coordinate (hhhh);
    \path(hhhh)--++(45:1) coordinate (hhhh1);
  
 \draw[very thick] (hhhh1)--++(135:5)--++(45:6)--++(-45:5)--++(-135:6);
 \clip(hhhh1)--++(135:5)--++(45:6)--++(-45:5)--++(-135:6);
\path(0,0) coordinate (origin2);
  \path(0,0)--++(135:2) coordinate (origin3);

      \foreach \i in {0,1,2,3,4,5,6,7,8}
{
\path (origin3)--++(45:0.5*\i) coordinate (c\i); 
\path (origin3)--++(135:0.5*\i)  coordinate (d\i); 
  }

   \foreach \i in {0,1,2,3,4,5,6,7,8}
{
\path (origin3)--++(45:1*\i) coordinate (c\i); 
\path (c\i)--++(-45:0.5) coordinate (c\i); 
\path (origin3)--++(135:1*\i)  coordinate (d\i); 
\path (d\i)--++(-135:0.5) coordinate (d\i); 
\draw[thick,densely dotted] (c\i)--++(135:10);
\draw[thick,densely dotted] (d\i)--++(45:10);
  }

\path(0,0)--++(135:2) coordinate (hhhh);

\fill[opacity=0.2](hhhh)--++(135:5)--++(45:1)--++(-45:1) coordinate (JJ)--++(45:1)--++(-45:1)--++(45:2)--++(-45:1) coordinate (JJ1)--++(45:1)--++(-45:1)--++(45:2)--++(-45:1);

\fill[opacity=0.4,magenta] 
  (JJ)
--++(135:1)
 --++(45:2)
--++(-45:2)--++(-135:1)--++(135:1)
;

\fill[opacity=0.4,cyan] 
  (JJ1)
--++(135:1)
 --++(45:2)
--++(-45:2)--++(-135:1)--++(135:1)
;

\path(origin3)--++(45:-0.5)--++(135:7.5) coordinate (X)coordinate (start);

\path (start)--++(135:1)coordinate (start);
 
\path (start)--++(45:1)--++(-45:3) coordinate (X) ; 
 
\path(X)--++(45:1) coordinate (X) ;
 
\path (X)--++(-45:1) coordinate (X) ;
\path(X)--++(45:1) coordinate (X) ;
\path(X)--++(-45:1) coordinate (X) ;

\path(X)--++(45:1) coordinate (X) ;

 \fill[darkgreen](X) circle (4pt);
 

\end{tikzpicture} \qquad
  \begin{tikzpicture}[scale=0.35]
%
\path(0,0)--++(135:2) coordinate (hhhh);
    \path(hhhh)--++(45:1) coordinate (hhhh1);
  
 \draw[very thick] (hhhh1)--++(135:5)--++(45:6)--++(-45:5)--++(-135:6);
 \clip(hhhh1)--++(135:5)--++(45:6)--++(-45:5)--++(-135:6);
\path(0,0) coordinate (origin2);
  \path(0,0)--++(135:2) coordinate (origin3);

      \foreach \i in {0,1,2,3,4,5,6,7,8}
{
\path (origin3)--++(45:0.5*\i) coordinate (c\i); 
\path (origin3)--++(135:0.5*\i)  coordinate (d\i); 
  }

   \foreach \i in {0,1,2,3,4,5,6,7,8}
{
\path (origin3)--++(45:1*\i) coordinate (c\i); 
\path (c\i)--++(-45:0.5) coordinate (c\i); 
\path (origin3)--++(135:1*\i)  coordinate (d\i); 
\path (d\i)--++(-135:0.5) coordinate (d\i); 
\draw[thick,densely dotted] (c\i)--++(135:10);
\draw[thick,densely dotted] (d\i)--++(45:10);
  }

\path(0,0)--++(135:2) coordinate (hhhh);

\fill[opacity=0.2](hhhh)--++(135:5)--++(45:1)--++(-45:1) coordinate (JJ)--++(45:1)--++(-45:1)--++(45:2)--++(-45:1) coordinate (JJ1)--++(45:1)--++(-45:1)--++(45:1)--++(-45:1);

\fill[opacity=0.4,magenta] 
  (JJ)
--++(135:1)
 --++(45:2)
--++(-45:2)--++(-135:1)--++(135:1)
;

\fill[opacity=0.4,cyan] 
  (JJ1)
--++(135:1)
 --++(45:2)
--++(-45:2)--++(-135:1)--++(135:1)
;

\path(origin3)--++(45:-0.5)--++(135:7.5) coordinate (X)coordinate (start);

\path (start)--++(135:1)coordinate (start);
 
\path (start)--++(45:1)--++(-45:3) coordinate (X) ; 
 
\path(X)--++(45:1) coordinate (X) ;
 
\path (X)--++(-45:1) coordinate (X) ;
\path(X)--++(45:1) coordinate (X) ;
\path(X)--++(-45:1) coordinate (X) ;

\path(X)--++(45:1) coordinate (X) ;

 \fill[darkgreen](X) circle (4pt);
  \draw[ thick, darkgreen](X)--++(45:1) coordinate (X) ;
\fill[darkgreen](X) circle (4pt);
\draw[ thick, darkgreen](X)--++(45:1) coordinate (X) ;
\fill[darkgreen](X) circle (4pt);
\draw[ thick, darkgreen](X)--++(45:1) coordinate (X) ;
\fill[darkgreen](X) circle (4pt);
\draw[ thick, darkgreen](X)--++(-45:1) coordinate (X) ;
\fill[darkgreen](X) circle (4pt);
\draw[ thick, darkgreen](X)--++(-45:1) coordinate (X) ;
\fill[darkgreen](X) circle (4pt);
\draw[ thick, darkgreen](X)--++(-45:1) coordinate (X) ;
\fill[darkgreen](X) circle (4pt);

\end{tikzpicture} 
$$
    \caption{Examples of commuting paths  $\color{cyan}P$,  $  \color{magenta}Q$
     and $ \color{darkgreen} {\sf rt}({ Q})$ 
    such that    ${ \color{magenta}Q} \in {\rm DRem}_0(\la)$ and $ {\color{cyan}P} \in {\rm DRem}_{>0}(\la)$ for $\la= (5^2,3^3,1)$ and $\la= (5^2,3^3)$ respectively.
    }
    \label{fig:PQcommute}
\end{figure}

   We first consider the latter equality in \cref{noncomlemmerrob1}.  
We now suppose that  $Q \setminus P = Q^1\sqcup Q^2$ where $Q^1, Q^2 \in {\rm DRem}(\la - P)$ and without loss of generality, we assume that  
$Q^1$ is to the left of $P$ and $Q^2$ is to the right of $P$. 
In which case,   ${\sf rt}(Q)={\sf rt}(Q^2)  $ and this Dyck path commutes with $P$.
 We have that 
\begin{align*}
 \mathbb D^\la_{\la-P }\mathbb L^{\la-P}_{\la-P}(-Q  ) 
     &=  (-1)^{b({\sf rt}(Q ))}
          {\mathbbm 1}_\la  \mathbb D (-P )\mathbb D(+{\sf rt}(Q ) )\mathbb D(-{\sf rt}(Q ) )\\
     &= (-1)^{b({\sf rt}(Q ))}
          {\mathbbm 1}_\la \mathbb D(+{\sf rt}(Q ) )   \mathbb D (-P ) \mathbb D(-{\sf rt}(Q ) )\\
     &=(-1)^{b({\sf rt}(Q ))}
          {\mathbbm 1}_\la \mathbb D(+{\sf rt}(Q ) )     \mathbb D(-{\sf rt}(Q ) )	\mathbb D (-P )	\\
     &= \mathbb L^{\la}_{\la}(-Q ) \mathbb D^\la_{\la-P }.
\end{align*}
where the first and final equalities follow from \cref{L=lincombo} and our observation that  ${\sf rt}(Q)={\sf rt}(Q^2)  $;
  the second and third equalities follow from  applying the commuting relation \eqref{rel3} to the Dyck paths $P$ and ${\sf rt}(Q)$.

\begin{figure}[ht!]

$$ \begin{tikzpicture}[scale=0.35]
  \path(0,0)--++(135:2) coordinate (hhhh);
 \draw[very thick] (hhhh)--++(135:6)--++(45:7)--++(-45:6)--++(-135:7);
 \clip (hhhh)--++(135:6)--++(45:7)--++(-45:6)--++(-135:7);
\path(0,0) coordinate (origin2);
  \path(0,0)--++(135:2) coordinate (origin3);

      \foreach \i in {0,1,2,3,4,5,6,7,8}
{
\path (origin3)--++(45:0.5*\i) coordinate (c\i); 
\path (origin3)--++(135:0.5*\i)  coordinate (d\i); 
  }

   \foreach \i in {0,1,2,3,4,5,6,7,8}
{
\path (origin3)--++(45:1*\i) coordinate (c\i); 
\path (c\i)--++(-45:0.5) coordinate (c\i); 
\path (origin3)--++(135:1*\i)  coordinate (d\i); 
\path (d\i)--++(-135:0.5) coordinate (d\i); 
\draw[thick,densely dotted] (c\i)--++(135:10);
\draw[thick,densely dotted] (d\i)--++(45:10);
  }

\path(0,0)--++(135:2) coordinate (hhhh);

\fill[opacity=0.2](hhhh)
--++(135:5) coordinate (JJ)
--++(45:1)
--++(-45:1)--++(45:2)
--++(-45:2)--++(45:1)
--++(-45:1)--++(45:2)--++(-45:1);

\fill[opacity=0.4,magenta](hhhh)
  (JJ)
--++(135:1)
 --++(45:2)
--++(-45:2)--++(-135:1)--++(135:1)
;

\path(JJ)--++(45:2)--++(-45:1) coordinate (JJ);

\fill[opacity=0.4,cyan](hhhh)
  (JJ)
--++(135:1)
 --++(45:2)
--++(-45:2)--++(-135:1)--++(135:1)
;

\path(JJ)--++(45:1)--++(-45:2) coordinate (JJ);

\fill[opacity=0.4,orange](hhhh)
  (JJ)
--++(135:1)
 --++(45:2)
--++(-45:2)--++(-135:1)--++(135:1)
;

\path(origin3)--++(45:-0.5)--++(135:7.5) coordinate (X)coordinate (start);

\path (start)--++(135:1)coordinate (start);
 
\path (start)--++(45:5)--++(-45:6) coordinate (X) ; 
 
\path(X)--++(45:1) coordinate (X) ;
 
\path (X)--++(-45:1) coordinate (X) ;
 \fill[darkgreen](X) circle (4pt);
\draw[ thick, darkgreen](X)--++(45:1) coordinate (X) ;
\fill[darkgreen](X) circle (4pt);
\draw[ thick, darkgreen](X)--++(-45:1) coordinate (X) ;
\fill[darkgreen](X) circle (4pt);

\end{tikzpicture}
\qquad 
\begin{tikzpicture}[scale=0.35]
  \path(0,0)--++(135:2) coordinate (hhhh);
 \draw[very thick] (hhhh)--++(135:6)--++(45:7)--++(-45:6)--++(-135:7);
 \clip (hhhh)--++(135:6)--++(45:7)--++(-45:6)--++(-135:7);
\path(0,0) coordinate (origin2);
  \path(0,0)--++(135:2) coordinate (origin3);

      \foreach \i in {0,1,2,3,4,5,6,7,8}
{
\path (origin3)--++(45:0.5*\i) coordinate (c\i); 
\path (origin3)--++(135:0.5*\i)  coordinate (d\i); 
  }

   \foreach \i in {0,1,2,3,4,5,6,7,8}
{
\path (origin3)--++(45:1*\i) coordinate (c\i); 
\path (c\i)--++(-45:0.5) coordinate (c\i); 
\path (origin3)--++(135:1*\i)  coordinate (d\i); 
\path (d\i)--++(-135:0.5) coordinate (d\i); 
\draw[thick,densely dotted] (c\i)--++(135:10);
\draw[thick,densely dotted] (d\i)--++(45:10);
  }

\path(0,0)--++(135:2) coordinate (hhhh);

\fill[opacity=0.2](hhhh)
--++(135:5) coordinate (JJ)
--++(45:1)
--++(-45:1)--++(45:2)
--++(-45:2)--++(45:1)
--++(-45:1)--++(45:2)--++(-45:1);

\fill[opacity=0.4,magenta](hhhh)
  (JJ)
--++(135:1)
 --++(45:2)
--++(-45:2)--++(-135:1)--++(135:1)
;

\path(JJ)--++(45:2)--++(-45:1) coordinate (JJ);

\fill[opacity=0.4,cyan](hhhh)
  (JJ)
--++(135:1)
 --++(45:2)
--++(-45:2)--++(-135:1)--++(135:1)
;

\path(JJ)--++(45:1)--++(-45:2) coordinate (JJ);

\fill[opacity=0.4,orange](hhhh)
  (JJ)
--++(135:1)
 --++(45:2)
--++(-45:2)--++(-135:1)--++(135:1)
;

\path(origin3)--++(45:-0.5)--++(135:7.5) coordinate (X)coordinate (start);

\path (start)--++(135:1)coordinate (start);
 
\path (start)--++(45:2)--++(-45:3) coordinate (X) ; 
 
\path(X)--++(45:1) coordinate (X) ;
 
\path (X)--++(-45:1) coordinate (X) ;
 \fill[violet](X) circle (4pt);
\draw[ thick, violet](X)--++(45:1) coordinate (X) ;
\fill[violet](X) circle (4pt);
\draw[ thick, violet](X)--++(-45:1) coordinate (X) ;
\fill[violet](X) circle (4pt);
 
\draw[ thick, violet](X)--++(45:1) coordinate (X) ;
\fill[violet](X) circle (4pt);
\draw[ thick, violet](X)--++(45:1) coordinate (X) ;
\fill[violet](X) circle (4pt);
\draw[ thick, violet](X)--++(-45:1) coordinate (X) ;
\fill[violet](X) circle (4pt);
\draw[ thick, violet](X)--++(-45:1) coordinate (X) ;
\fill[violet](X) circle (4pt);

\draw[ thick, violet](X)--++(45:1) coordinate (X) ;
\fill[violet](X) circle (4pt);
\draw[ thick, violet](X)--++(-45:1) coordinate (X) ;
\fill[violet](X) circle (4pt);

\end{tikzpicture}$$

    \caption{
    An example  of 
        $Q, {\color{cyan} P} \in {\rm DRem}_1(\la)$ for $\la=(6^2,5^2,3,1)$ 
  such that 
    $Q-{\color{cyan} P}={\color{magenta} Q^1} \sqcup {\color{orange} Q^2}$. 
        On the left we depict ${\color{darkgreen}{\sf rt}(Q)}= {\sf rt}( {\color{orange} Q^2})$. 
       On the right we depict ${\color{violet}{\sf rt}(  Q^1 )}$ which we note is the smallest Dyck path
       in ${\rm DAdd}(\la-P)$ 
        containing 
       both 
$        {\sf rt}( {\color{orange} Q^2})$ and ${\color{cyan} P}$.  
       }
    \label{fig:PsplitsQ}
\end{figure}

We now consider the former equality in \cref{noncomlemmerrob1}.  
We have that ${\sf rt}(Q^1)$ is the smallest Dyck path in 
 ${\rm Add}_1(\la-P)$ such that  $P \prec {\sf rt}(Q^1)$ and ${\sf rt}(Q) \prec {\sf rt}(Q^1)$.  
 We further note that $Q^2 \prec {\sf rt}(Q^1)$ and that 
  $b(  {\sf rt}(Q^1))=b(  {\sf rt}(Q)) + b(Q^2)+b(P)$. 
See \cref{fig:PsplitsQ} for   examples. 
 We have that 
 \begin{align*}
     \mathbb D^\la_{\la-P }\mathbb L^{\la-P}_{\la-P}(-Q^1 ) 
     &=
       (-1)^{b({\sf rt}(Q^1))} {\mathbbm 1}_\la \mathbb D(-P) \mathbb D (+{\sf rt}(Q^1) ) 
       \mathbb D(-{\sf rt}(Q^1)  ) \\
      &=  
      (-1)^{2b({\sf rt}(Q^1))-b({\sf rt}(Q^2))}
      {\mathbbm 1}_\la 
      \mathbb D (+{\sf rt}(Q^2)  )
  \mathbb D (+ Q^2   )
  \mathbb D (- {\sf rt}(Q^1   ))
 \\
     &= (-1)^{b({\sf rt}(Q^2))}
     {\mathbbm 1}_\la \mathbb D (+{\sf rt}(Q^2) ) 
     \mathbb D (-{\sf rt}(Q^2) )
     \mathbb D (-P) 
     \\
      &= \mathbb L^{\la}_{\la}(-Q ) \mathbb D^\la_{\la-P }.
\end{align*}
where the second equality follows from the adjacency relation  \eqref{adjacent} 
 applied to 
 ${\sf rt}(Q^2) \in {\rm Add}(\la)$ and 
 $Q^2 \in {\rm Add}(\la+{\sf rt}(Q^1))$; 
  the third equality follows from   relation  \eqref{rel4} applied to 
  the non-commuting pair $Q^2 \prec {\sf rt}(Q^1)$; the first and fourth equalities follow from  \cref{L=lincombo}.  
\end{proof}

 \begin{lem}
  \label{h0loop_sq}
 Let $m=n$. For $Q \in {\rm DRem}_0(\la)$ and $ P \in {\rm DRem}_{>0}(\la)$ we have that 
 either $P$ and $Q$ commute or 
  $Q\setminus P = Q^1\sqcup Q^2$ where $Q^1, Q^2 \in {\rm DRem}(\la - P)$. 
In the former case we have that 
\begin{align}\label{sdagkhdfhsdfjgdfhjkghdjksgfhsajkgfhjdsgfhdegfjdhfjdhf2}
 \mathbb L^\la_\la(-Q  )
  \mathbb D^\la_{\la-P }
  = 
  \mathbb D^\la_{\la-P }
   \mathbb L^{\la-P }_{\la-P }(-Q  )
\end{align}
 and in the latter case we have that 
\begin{align}\label{sdagkhdfhsdfjgdfhjkghdjksgfhsajkgfhjdsgfhdegfjdhfjdhf}
  \mathbb D^\la_{\la-P }
   \mathbb L^{\la-P }_{\la-P }(-Q^1)  = 
 \mathbb L^\la_\la(-Q  )
  \mathbb D^\la_{\la-P }
  = 
  \mathbb D^\la_{\la-P }
   \mathbb L^{\la-P }_{\la-P }(-Q^2).
\end{align} \end{lem}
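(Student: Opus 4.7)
The plan is to mirror the proof of \cref{h0loop_rect} while carefully tracking the extra $\mathbb{L}^\la_\la$ contributions that appear in the square-case formula for $\mathbb{L}^\la_\la(-Q)$ from \cref{L=lincombo}. The key new tool is the loop-commutation relation $\mathbb{D}^\la_{\la-P}\mathbb{L}^{\la-P}_{\la-P} = \mathbb{L}^\la_\la\mathbb{D}^\la_{\la-P}$ from \eqref{loop-relation}, which lets the extra loops be pushed past $\mathbb{D}^\la_{\la-P}$.

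I would split into two cases according to the definition of $\mathbb{L}^\la_\la(-Q)$. \textbf{Case A: $\last(Q)\ne m-1$.} Here $\mathbb{L}^\la_\la(-Q)=-\mathbb{L}^\la_\la+(-1)^{b({\sf rt}(Q))+1}\mathbb{D}^\la_{\la+{\sf rt}(Q)}\mathbb{D}_\la^{\la+{\sf rt}(Q)}$. The argument of \cref{h0loop_rect} applies verbatim to the $\mathbb{D}\mathbb{D}$-summand and produces the rectangular form of $\mathbb{D}^\la_{\la-P}\mathbb{L}^{\la-P}_{\la-P}(-Q')$ for $Q'\in\{Q,Q^1,Q^2\}$. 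The $-\mathbb{L}^\la_\la$-summand moves to $-\mathbb{D}^\la_{\la-P}\mathbb{L}^{\la-P}_{\la-P}$ by \eqref{loop-relation}. One then checks that the relevant $Q'$ still satisfies $\last(Q')\ne m-1$ (straightforward, since $\last(Q^1)<\first(P)<m-1$ and $\last(Q^2)=\last(Q)\ne m-1$), so the two summands reassemble into $\mathbb{D}^\la_{\la-P}\mathbb{L}^{\la-P}_{\la-P}(-Q')$. \textbf{Case B: $\last(Q)=m-1$.} Here $\mathbb{L}^\la_\la(-Q)=\mathbb{L}^\la_\la$ and \eqref{loop-relation} directly gives $\mathbb{L}^\la_\la\mathbb{D}^\la_{\la-P}=\mathbb{D}^\la_{\la-P}\mathbb{L}^{\la-P}_{\la-P}$. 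In the commuting sub-case, and in the non-commuting sub-case with $Q'=Q^2$ (the right part), we still have $\last(Q')=m-1$, so $\mathbb{L}^{\la-P}_{\la-P}(-Q')=\mathbb{L}^{\la-P}_{\la-P}$ and the claim is immediate.

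The substantive sub-case is $Q'=Q^1$ in the non-commuting part of Case B. Since $\last(Q^1)<\first(P)\le m-2$, we are in the two-term branch $\mathbb{L}^{\la-P}_{\la-P}(-Q^1)=-\mathbb{L}^{\la-P}_{\la-P}+(-1)^{b({\sf rt}(Q^1))+1}\mathbb{D}^{\la-P}_{\la-P+{\sf rt}(Q^1)}\mathbb{D}_{\la-P}^{\la-P+{\sf rt}(Q^1)}$. Expanding and using \eqref{loop-relation} on the $-\mathbb{L}^{\la-P}_{\la-P}$ summand reduces the desired equality to
\[
(-1)^{b({\sf rt}(Q^1))+1}\,\mathbb{D}^\la_{\la-P}\,\mathbb{D}^{\la-P}_{\la-P+{\sf rt}(Q^1)}\,\mathbb{D}_{\la-P}^{\la-P+{\sf rt}(Q^1)} \;=\; 2\,\mathbb{L}^\la_\la\,\mathbb{D}^\la_{\la-P}.
\]
Geometrically (as in \cref{fig:PsplitsQ}), ${\sf rt}(Q^1)\in{\rm DAdd}_1(\la-P)$ is the height-$1$ path covering both $P$ and $Q^2$, with $\last({\sf rt}(Q^1))=\last(Q)=m-1$ maximal in $\la-P$. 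One then aims to extract the $2\,\mathbb{L}$ from the cubic relation \eqref{cubic} applied at $\mu=\la-P$ with this maximal-$\last$ addable path; the leftmost $\mathbb{D}^\la_{\la-P}$ is matched to the factor $\mathbb{D}^{\la-P+{\sf rt}(Q^1)}_{\la-P}$ produced by \eqref{cubic} by rewriting $\mathbb{D}^\la_{\la-P}\mathbb{D}^{\la-P}_{\la-P+{\sf rt}(Q^1)}$ via the adjacency relation \eqref{adjacent} (so that the composition factors through the larger partition $\la+({\sf rt}(Q^1)\setminus P)$).

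The main obstacle will be this last identity. Unlike the rectangular lemma, which only invokes the commuting, non-commuting and adjacency relations, Case B genuinely needs the cubic relation \eqref{cubic}, and the sign bookkeeping rests on the breadth identity $b({\sf rt}(Q^1))=b(P)+b(Q^2)+b({\sf rt}(Q^2))$ (though here ${\sf rt}(Q^2)$ does not exist, its role is played by the fact that $Q^2$ already reaches $m-1$). Verifying that the adjacency rewriting produces exactly the sign $(-1)^{b({\sf rt}(Q^1))+1}$ that \eqref{cubic} demands, with no spurious terms from other addable or removable Dyck paths intervening between $\la-P$ and $\la-P+{\sf rt}(Q^1)$, will be the technical heart of the proof.
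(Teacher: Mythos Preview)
Your overall architecture matches the paper's: split according to whether $\last(Q)=m-1$, push the bare loop $\mathbb L^\la_\la$ past $\mathbb D^\la_{\la-P}$ using \eqref{loop-relation}, and reuse the \cref{h0loop_rect} computation for the $\mathbb D(+{\sf rt}(Q))\mathbb D(-{\sf rt}(Q))$ summand. Your Case~A is fine, and so are the commuting and $Q^2$ sub-cases of Case~B.

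The gap is in Case~B with $Q'=Q^1$. Your geometric claim that ${\sf rt}(Q^1)$ is a height-$1$ addable path covering both $P$ and $Q^2$ is wrong. Since $\last(Q)=m-1$ we have $\last(Q^2)=m-1$, so $Q^2$ is the rightmost height-$0$ removable path of $\la-P$ and there is no addable height-$1$ path of $\la-P$ whose content interval extends past $\last(P)$. In other words ${\sf rt}(Q^2)$ does not exist (this is exactly why \cref{L=lincombo} has a separate branch for $\last(P)=m-1$), and the picture in \cref{fig:PsplitsQ} you are invoking simply does not apply. The paper's key observation here is that ${\sf rt}(Q^1)=P$ itself: $P$ is adjacent to $Q^1$ on the right, $P\in{\rm DAdd}_1(\la-P)$ because $P$ was height-$1$ removable from $\la$, and no larger height-$1$ addable path exists for the reason just given.

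With ${\sf rt}(Q^1)=P$ your target identity becomes
\[
(-1)^{b(P)+1}\,\mathbb D^\la_{\la-P}\,\mathbb D^{\la-P}_{\la}\,\mathbb D^{\la}_{\la-P}\;=\;2\,\mathbb L^\la_\la\,\mathbb D^\la_{\la-P},
\]
which is exactly the cubic relation \eqref{cubic} at $\mu=\la-P$ (note $\last(P)$ is maximal in ${\rm DAdd}_1(\la-P)$ since $Q^2$ reaches $m-1$). No adjacency rewriting is needed, and the sign bookkeeping you were worried about disappears. The route you sketched via \eqref{adjacent}, trying to match $\mathbb D^\la_{\la-P}$ to the first factor of a cubic pattern for a \emph{different} path, would not produce the required $\mathbb D(-P')\mathbb D(+P')\mathbb D(-P')$ shape and does not work as stated.
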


\begin{proof} We   first consider the case that  ${\sf last}(Q)=m-1$, 
  that is, the case 
that  $\mathbb L^\la_\la(-Q )=\mathbb L^\la_\la$.  We first consider the subcase in which 
  $P$ and $Q$ commute. 
  We have that  $\mathbb L^{\la-P}_{\la-P}(-Q )=\mathbb L^{\la-P}_{\la-P}$ and by   relation  \eqref{loop-relation} it follows that
$$\mathbb L^\la_\la(-Q  ) \mathbb D^\la_{\la-P }
  = \mathbb L^\la_\la \mathbb D^\la_{\la-P }
  = \mathbb D^\la_{\la-P } \mathbb L^{\la-P}_{\la-P} 
  =\mathbb D^\la_{\la-P }
   \mathbb L^{\la-P }_{\la-P }(-Q  ). 
 $$ 
 Now we consider the subcase in which $P \in {\rm DRem}(\la)$ is such that   $Q \setminus P = Q^1\sqcup Q^2$ where $Q^1, Q^2 \in {\rm DRem}(\la - P)$.
Without loss of generality, we suppose that $Q^1$ is to the left of $Q^2$ and therefore
 ${\sf last}(Q^2)=m-1$. 
Thus, $\mathbb L^{\la-P}_{\la-P}(-Q^2 )=\mathbb L^{\la-P}_{\la-P}$ and by  relation  \eqref{loop-relation} it follows that
$$\mathbb L^\la_\la(-Q  ) \mathbb D^\la_{\la-P }
  = \mathbb L^\la_\la \mathbb D^\la_{\la-P }
  = \mathbb D^\la_{\la-P } \mathbb L^{\la-P}_{\la-P} 
  =\mathbb D^\la_{\la-P }
   \mathbb L^{\la-P }_{\la-P }(-Q ^2).
 $$ Now, we observe that our assumptions on $P$ and $Q$ imply that ${\sf rt}(Q^1)=P$.  
Thus by \cref{L=lincombo} we have that 
$$\mathbb L^{\la-P}_{\la-P}(-Q^1)=-\mathbb L^{\la-P}_{\la-P}-(-1)^{b(P)}\mathbb D_\la^{\la-P }\mathbb D_{\la-P}^{\la },$$ 
which we input    into the lefthand-side of \cref{sdagkhdfhsdfjgdfhjkghdjksgfhsajkgfhjdsgfhdegfjdhfjdhf} and hence obtain 
\begin{align*}
     \mathbb D^\la_{\la-P }\mathbb L^{\la-P}_{\la-P}(-Q^1 ) 
     &= 
     {\mathbbm 1}_\la  \mathbb D (-P )(-\mathbb L^{\la-P}_{\la-P}-(-1)^{b(P)}
     \mathbb D (+P)     \mathbb D (-P)) \\
   &= 
   -  {\mathbbm 1}_\la  \mathbb D (-P ) \mathbb L^{\la-P}_{\la-P}
    -  2(-1)^{2b(P)+1}{\mathbbm 1}_\la
      \mathbb L^\la_\la  \mathbb D (-P) 
    \\
   &= 
   -  {\mathbbm 1}_\la    \mathbb L^{\la }_{\la }\mathbb D (-P )
    +  2 {\mathbbm 1}_\la
      \mathbb L^\la_\la  \mathbb D (-P) 
\\
     &=  \mathbb L^{\la}_{\la}\mathbb D^\la_{\la-P }\\
     &=  \mathbb L^{\la}_{\la}(-Q )\mathbb D^\la_{\la-P }.
\end{align*}
where the first and final equalities follow  from \cref{L=lincombo}; the second equality follows from 
relation \eqref{cubic}; the third follows from applying relation \eqref{loop-relation} to the lefthand term and tidying-up the signs for the righthand term; the fourth equality is trivial.

It remain to consider the case in which   ${\sf last}(Q)<m-1$. 
By   \cref{L=lincombo} we can express these loops in terms of ${\sf rt}(Q)$ and the case already considered above, as follows:
$$\mathbb L^\la_\la(-Q )=-\mathbb L^{\la}_{\la}-(-1)^{b({\sf rt}(Q))}\mathbb D^{\la}_{\la + {\sf rt}(Q)} \mathbb D_{\la}^{\la + {\sf rt}(Q)}.$$
Notice that $ {\sf rt}(Q) \in {\rm DAdd}_1(\la)$ and $ P \in {\rm DRem}_{>0}(\la)$ and so they cannot be adjacent. Thus,
the pair of Dyck path ${\sf rt}(Q)$ and $ P$ commute. We have that 
\begin{align*}
       \mathbb L^\la_\la(-Q )\mathbb D^\la_{\la-P }
       &={\mathbbm 1}_\la 
       \big(-\mathbb L^{\la}_{\la}-(-1)^{b({\sf rt}(Q))}\mathbb D (+ {\sf rt}(Q) ) \mathbb D(- {\sf rt}(Q) )\big ) \mathbb D ( -P  )\\
      &=-{\mathbbm 1}_\la 
        \mathbb D ( -P  )\mathbb L^{\la-P}_{\la-P}
        -(-1)^{b({\sf rt}(Q))}{\mathbbm 1}_\la\mathbb D (+ {\sf rt}(Q) ) \mathbb D(- {\sf rt}(Q) ) \mathbb D ( -P  )    
       \\
   &=-{\mathbbm 1}_\la 
        \mathbb D ( -P  )\mathbb L^{\la-P}_{\la-P}
        -(-1)^{b({\sf rt}(Q))}{\mathbbm 1}_\la\mathbb   D ( -P  )    \mathbb  D (+ {\sf rt}(Q) ) \mathbb D(- {\sf rt}(Q) )
      \\
   &=-{\mathbbm 1}_\la 
        \mathbb D ( -P  )
        \big(\mathbb L^{\la-P}_{\la-P}
        -(-1)^{b({\sf rt}(Q))}  \mathbb  D (+ {\sf rt}(Q) ) \mathbb D(- {\sf rt}(Q) )\big)
 \end{align*}
where the first equality follows from \cref{L=lincombo}; the second follows from the loop-commutation relation \eqref{loop-relation}; the third follows from the commuting relation applied to the Dyck paths ${\sf rt}(Q)$ and $ P$; the fourth equality follows  from re-bracketing.
Finally, we observe that if $P\prec Q$, then  ${\sf rt}(Q^2)={\sf rt} (Q)$ for $Q^2\in {\rm DRem}( \la-P)$; 
whereas if $P$ and $Q$ commute we   have that $Q \in {\rm DRem}( \la-P)$ (and so we do not need to rewrite anything).  
Therefore we conclude that 
  \begin{align*}
       \mathbb L^\la_\la(-Q )\mathbb D^\la_{\la-P }
       &= \begin{cases}
         {\mathbbm 1}_\la  \mathbb D(-P)\mathbb L (-Q ) & \text{if }Q,P \text{ commute;}\\
        {\mathbbm 1}_\la     \mathbb D(-P) \mathbb L (-Q^2) & \text{otherwise} 
       \end{cases}.
\end{align*}
We hence deduce that \cref{sdagkhdfhsdfjgdfhjkghdjksgfhsajkgfhjdsgfhdegfjdhfjdhf2} and the righthand equality of \cref{sdagkhdfhsdfjgdfhjkghdjksgfhsajkgfhjdsgfhdegfjdhfjdhf} both hold.

It  remains to verify the lefthand equality in \cref{sdagkhdfhsdfjgdfhjkghdjksgfhsajkgfhjdsgfhdegfjdhfjdhf}. 
We have that ${\sf rt}(Q^1)$ is the smallest Dyck path in 
 ${\rm Add}_1(\la-P)$ such that  $P \prec {\sf rt}(Q^1)$ and ${\sf rt}(Q) \prec {\sf rt}(Q^1)$.  
 We further note that $Q^2 \prec {\sf rt}(Q^1)$ and that 
  $b(  {\sf rt}(Q^1))=b(  {\sf rt}(Q)) + b(Q^2)+b(P)$. 
See \cref{fig:sq_PQsplit_noright} for   examples. 
We have that 
\begin{align*}
   \mathbb D^\la_{\la-P }\mathbb L^{\la-P}_{\la-P}(-Q^1)
     &= 
    {\mathbbm 1}_\la \mathbb D (-P )
    \big(-\mathbb L^{\la-P}_{\la-P}-(-1)^{b({\sf rt}(Q^1))}
    \mathbb D (+ {\sf rt}(Q^1))	 \mathbb D (- {\sf rt}(Q^1)
    \big)		\\
      &= -\mathbb L^{\la}_{\la}\mathbb D^\la_{\la-P }
     -(-1)^{b({\sf rt}(Q^1))}
      \mathbb D (-P )  \mathbb D (+ {\sf rt}(Q^1))	 \mathbb D (- {\sf rt}(Q^1)%
\\
     &= -\mathbb L^{\la}_{\la}\mathbb D^\la_{\la-P }
 -  (-1)^{2b({\sf rt}(Q^1))-b({\sf rt}(Q ))}
      {\mathbbm 1}_\la 
      \mathbb D (+{\sf rt}(Q )  )
   \mathbb D (+ Q^2   )
   \mathbb D (- {\sf rt}(Q^1   ))
 \\
      &= -\mathbb L^{\la}_{\la}\mathbb D^\la_{\la-P }
- (-1)^{b({\sf rt}(Q ))}
     {\mathbbm 1}_\la \mathbb D (+{\sf rt}(Q ) ) 
     \mathbb D (-{\sf rt}(Q ) )
     \mathbb D (-P) 
     \\
          &= (-\mathbb L^{\la}_{\la}-(-1)^{b({\sf rt}(Q))} \mathbb D (+{\sf rt}(Q ) ) 
     \mathbb D (-{\sf rt}(Q ) )\mathbb D_{\la-P}^{\la} \\
     &=\mathbb L^\la_\la(-Q )\mathbb D^\la_{\la-P }.
\end{align*}
where the 
first and final equalities follow by  \cref{L=lincombo}
and the second and third equalities follows from relations  \eqref{adjacent} and \eqref{rel4} (exactly as in the $m\neq n$ case).
\end{proof}

\begin{figure}[ht!]

$$ \begin{tikzpicture}[scale=0.35]
  \path(0,0)--++(135:2) coordinate (hhhh);
 \draw[very thick] (hhhh)--++(135:6)--++(45:6)--++(-45:6)--++(-135:6);
 \clip (hhhh)--++(135:6)--++(45:6)--++(-45:6)--++(-135:6);
\path(0,0) coordinate (origin2);
  \path(0,0)--++(135:2) coordinate (origin3);

      \foreach \i in {0,1,2,3,4,5,6,7,8}
{
\path (origin3)--++(45:0.5*\i) coordinate (c\i); 
\path (origin3)--++(135:0.5*\i)  coordinate (d\i); 
  }

   \foreach \i in {0,1,2,3,4,5,6,7,8}
{
\path (origin3)--++(45:1*\i) coordinate (c\i); 
\path (c\i)--++(-45:0.5) coordinate (c\i); 
\path (origin3)--++(135:1*\i)  coordinate (d\i); 
\path (d\i)--++(-135:0.5) coordinate (d\i); 
\draw[thick,densely dotted] (c\i)--++(135:10);
\draw[thick,densely dotted] (d\i)--++(45:10);
  }

\path(0,0)--++(135:2) coordinate (hhhh);

\fill[opacity=0.2](hhhh)
--++(135:5) coordinate (JJ)
--++(45:1)
--++(-45:1)--++(45:2)
--++(-45:2)--++(45:1)
--++(-45:1)--++(45:2)--++(-45:1);

\fill[opacity=0.4,magenta](hhhh)
  (JJ)
--++(135:1)
 --++(45:2)
--++(-45:2)--++(-135:1)--++(135:1)
;

\path(JJ)--++(45:2)--++(-45:1) coordinate (JJ);

\fill[opacity=0.4,cyan](hhhh)
  (JJ)
--++(135:1)
 --++(45:2)
--++(-45:2)--++(-135:1)--++(135:1)
;

\path(JJ)--++(45:1)--++(-45:2) coordinate (JJ);

\fill[opacity=0.4,orange](hhhh)
  (JJ)
--++(135:1)
 --++(45:2)
--++(-45:2)--++(-135:1)--++(135:1)
;

\path(origin3)--++(45:-0.5)--++(135:7.5) coordinate (X)coordinate (start);

\path (start)--++(135:1)coordinate (start);
 
\path (start)--++(45:5)--++(-45:6) coordinate (X) ; 
 
\path(X)--++(45:1) coordinate (X) ;
 
\path (X)--++(-45:1) coordinate (X) ;
 \fill[darkgreen](X) circle (4pt);
%

\end{tikzpicture}
\qquad 
\begin{tikzpicture}[scale=0.35]
  \path(0,0)--++(135:2) coordinate (hhhh);
  \draw[very thick] (hhhh)--++(135:6)--++(45:6)--++(-45:6)--++(-135:6);
 \clip (hhhh)--++(135:6)--++(45:6)--++(-45:6)--++(-135:6);\path(0,0) coordinate (origin2);
  \path(0,0)--++(135:2) coordinate (origin3);

      \foreach \i in {0,1,2,3,4,5,6,7,8}
{
\path (origin3)--++(45:0.5*\i) coordinate (c\i); 
\path (origin3)--++(135:0.5*\i)  coordinate (d\i); 
  }

   \foreach \i in {0,1,2,3,4,5,6,7,8}
{
\path (origin3)--++(45:1*\i) coordinate (c\i); 
\path (c\i)--++(-45:0.5) coordinate (c\i); 
\path (origin3)--++(135:1*\i)  coordinate (d\i); 
\path (d\i)--++(-135:0.5) coordinate (d\i); 
\draw[thick,densely dotted] (c\i)--++(135:10);
\draw[thick,densely dotted] (d\i)--++(45:10);
  }

\path(0,0)--++(135:2) coordinate (hhhh);

\fill[opacity=0.2](hhhh)
--++(135:5) coordinate (JJ)
--++(45:1)
--++(-45:1)--++(45:2)
--++(-45:2)--++(45:1)
--++(-45:1)--++(45:2)--++(-45:1);

\fill[opacity=0.4,magenta](hhhh)
  (JJ)
--++(135:1)
 --++(45:2)
--++(-45:2)--++(-135:1)--++(135:1)
;

\path(JJ)--++(45:2)--++(-45:1) coordinate (JJ);

\fill[opacity=0.4,cyan](hhhh)
  (JJ)
--++(135:1)
 --++(45:2)
--++(-45:2)--++(-135:1)--++(135:1)
;

\path(JJ)--++(45:1)--++(-45:2) coordinate (JJ);

\fill[opacity=0.4,orange](hhhh)
  (JJ)
--++(135:1)
 --++(45:2)
--++(-45:2)--++(-135:1)--++(135:1)
;

\path(origin3)--++(45:-0.5)--++(135:7.5) coordinate (X)coordinate (start);

\path (start)--++(135:1)coordinate (start);
 
\path (start)--++(45:2)--++(-45:3) coordinate (X) ; 
 
\path(X)--++(45:1) coordinate (X) ;
 
\path (X)--++(-45:1) coordinate (X) ;
 \fill[violet](X) circle (4pt);
\draw[ thick, violet](X)--++(45:1) coordinate (X) ;
\fill[violet](X) circle (4pt);
\draw[ thick, violet](X)--++(-45:1) coordinate (X) ;
\fill[violet](X) circle (4pt);
 
\draw[ thick, violet](X)--++(45:1) coordinate (X) ;
\fill[violet](X) circle (4pt);
\draw[ thick, violet](X)--++(45:1) coordinate (X) ;
\fill[violet](X) circle (4pt);
\draw[ thick, violet](X)--++(-45:1) coordinate (X) ;
\fill[violet](X) circle (4pt);
\draw[ thick, violet](X)--++(-45:1) coordinate (X) ;
\fill[violet](X) circle (4pt);

%

\end{tikzpicture}$$

    \caption{An example  of 
        $Q, {\color{cyan} P} \in {\rm DRem}_1(\la)$ for $\la=(6^2,5^2,3,1)$ 
  such that 
    $Q-{\color{cyan} P}={\color{magenta} Q^1} \sqcup {\color{orange} Q^2}$. 
        On the left we depict ${\color{darkgreen}{\sf rt}(Q)}= {\sf rt}( {\color{orange} Q^2})$. 
       On the right we depict ${\color{violet}{\sf rt}(  Q^1 )}$ which we note is the smallest Dyck path
       in ${\rm DAdd}(\la-P)$ 
        containing 
       both 
$        {\sf rt}( {\color{orange} Q^2})$ and ${\color{cyan} P}$.  
       }
    \label{fig:sq_PQsplit_noright}
\end{figure}

We observe that the previous two lemmas covered the $m=n$ and $m\neq n$ cases separately.  
Both proofs were very similar, but it was the 
$m=n$ case that was more intricate. In what follows, we prove the results for the $m=n$ case and leave adapting these arguments to
 the easier $m\neq n$  case as an exercise for the reader.

 \begin{prop}
 \label{lemmar3}
Let  $Q \in {\rm DRem}_0(\la)$ and $ P \in {\rm DAdd}_{>0}(\la)$. 
If   $P$ and $Q$ commute,   
  we have that 
\begin{align}\label{uuuuuuuu1}
 \mathbb L^\la_\la(-Q  )
  \mathbb D^\la_{\la+P }
  &= 
  \mathbb D^\la_{\la+P }
   \mathbb L^{\la+P }_{\la+P }(-Q  ).
\intertext{
If   $P$ and $Q$ are adjacent,    we have that
}
\label{uuuuuuuu2}
 \mathbb L^\la_\la(-Q  )
  \mathbb D^\la_{\la+P }
  &= 
  \mathbb D^\la_{\la+P }
   \mathbb L^{\la+P }_{\la+P }(-\langle P\cup Q\rangle _{\la+P})    
\end{align} \end{prop}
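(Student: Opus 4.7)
The plan is to mirror the strategy of Lemmas \ref{h0loop_rect} and \ref{h0loop_sq}, but work ``upward'' by adding the Dyck path $P$ rather than removing it. The fundamental tool is again \cref{L=lincombo}, which encodes the zero-height loop $\mathbb{L}^\la_\la(-Q)$ in terms of $\mathbb{D}(+{\sf rt}(Q))\mathbb{D}(-{\sf rt}(Q))$ (plus a pure-loop summand when $m=n$). Once this is inserted, the proof becomes a question about how $P \in {\rm DAdd}_{>0}(\la)$ interacts with ${\sf rt}(Q) \in {\rm DAdd}_1(\la)$, which can be analysed using the already-established relations \eqref{rel3}, \eqref{rel4}, \eqref{adjacent} and \eqref{loop-relation}.

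For \eqref{uuuuuuuu1} (the commuting case), I would first check that if $P \in {\rm DAdd}_{>0}(\la)$ and $Q \in {\rm DRem}_0(\la)$ commute, then $P$ also commutes with ${\sf rt}(Q)$. Indeed, ${\sf rt}(Q)$ extends $Q$ only to the right by a single step at height $0$, and since $P$ has strictly positive height, $P$ cannot touch the region just past the right endpoint of $Q$ in a non-commuting way (the only potential issue would be adjacency with ${\sf rt}(Q)$, which the height mismatch rules out). One then writes
\begin{align*}
\mathbb L^\la_\la(-Q)\mathbb D^\la_{\la+P}
&= (-1)^{b({\sf rt}(Q))}\,{\mathbbm 1}_\la \mathbb D(+{\sf rt}(Q))\mathbb D(-{\sf rt}(Q))\mathbb D(+P) \\
&= (-1)^{b({\sf rt}(Q))}\,{\mathbbm 1}_\la \mathbb D(+P)\mathbb D(+{\sf rt}(Q))\mathbb D(-{\sf rt}(Q)) = \mathbb D^\la_{\la+P}\mathbb L^{\la+P}_{\la+P}(-Q),
\end{align*}
using the dual commuting relation twice; for $m=n$ with ${\sf last}(Q)=m-1$ the argument is simpler and only uses \eqref{loop-relation}. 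In the $m=n$ subcase where ${\sf last}(Q)<m-1$, the extra pure-loop summand passes through by \eqref{loop-relation}.

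For \eqref{uuuuuuuu2} (the adjacent case), the geometry is the key point. Since $Q$ has height $0$ and $P$ has positive height and they are adjacent, $P$ must sit directly above the content interval of $Q$ (or abutting it on one side), so that in $\la+P$ the union $P\cup Q$ fits together into a single removable Dyck path of height $0$, namely $\langle P\cup Q\rangle_{\la+P}$. Expanding the right-hand side via \cref{L=lincombo} reduces the claim to an identity involving ${\sf rt}(Q)$, $P$, and ${\sf rt}(\langle P\cup Q\rangle_{\la+P})$. Here I would invoke the dual adjacency relation \eqref{adjacent} to merge $P$ with ${\sf rt}(Q)$ (these will indeed be adjacent or one will cover the other, depending on which side of $Q$ the path $P$ lies), yielding the element attached to ${\sf rt}(\langle P\cup Q\rangle_{\la+P})$, up to the correct $(-1)^{b(\cdot)-b(\cdot)}$ sign that matches the $(-1)^{b({\sf rt}(\cdot))}$ prefactors of the two loops.

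The main obstacle is the adjacent case: carefully checking that the ``right-extension'' ${\sf rt}(Q)$ combines with $P$ through the adjacency relation \eqref{adjacent} (not a non-commuting split) to produce precisely ${\sf rt}(\langle P\cup Q\rangle_{\la+P})$, and that all the signs $(-1)^{b(P)}, (-1)^{b({\sf rt}(Q))}, (-1)^{b({\sf rt}(\langle P\cup Q\rangle_{\la+P}))}$ conspire correctly via the breadth identity $b({\sf rt}(\langle P\cup Q\rangle_{\la+P})) = b({\sf rt}(Q)) + b(P)$ (with a further $\pm 1$ correction depending on whether $P$ is left or right of $Q$). The $m=n$ subcase requires an additional check: when ${\sf last}(\langle P\cup Q\rangle_{\la+P}) = m-1$, the pure $\mathbb L^{\la+P}_{\la+P}$ term must be produced, and this will follow from the cubic relation \eqref{cubic} combined with \eqref{loop-relation}, exactly as in the closing computation of \cref{h0loop_sq}.
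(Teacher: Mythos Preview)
Your overall strategy is the same as the paper's, but there is a genuine gap in your treatment of the commuting case \eqref{uuuuuuuu1}. You assert that if $P\in{\rm DAdd}_{>0}(\la)$ and $Q\in{\rm DRem}_0(\la)$ commute then $P$ automatically commutes with ${\sf rt}(Q)$, arguing via a ``height mismatch''. This is false. The path ${\sf rt}(Q)\in{\rm DAdd}_1(\la)$ is by definition the \emph{maximal breadth} addable height-$1$ Dyck path to the right of $Q$; it can be very wide. If $P$ itself lies in ${\rm DAdd}_1(\la)$ (which is allowed, since ${\rm DAdd}_{>0}$ includes height $1$) and sits to the right of $Q$, then one can have $P\prec{\sf rt}(Q)$ with ${\sf last}(P)={\sf last}({\sf rt}(Q))$, a genuinely non-commuting covering situation. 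There is no height mismatch here: both $P$ and ${\sf rt}(Q)$ have height $1$. The paper exhibits this explicitly (the rightmost diagram in \cref{right_comm}) and handles it by finding $S\in{\rm DRem}_2(\la+{\sf rt}(Q))$ with ${\sf split}_S({\sf rt}(Q))=T\sqcup P$, then applying the adjacent relation \eqref{adjacent} to $S,T$ and the non-commuting relation \eqref{rel4} to $S\prec{\sf rt}(Q)$; the crucial observation is that $T={\sf rt}(Q)$ computed in $\la+P$, so the loop on the right-hand side is still $\mathbb L^{\la+P}_{\la+P}(-Q)$.

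Your adjacent case \eqref{uuuuuuuu2} is closer to the mark, and you correctly anticipate the role of the cubic relation \eqref{cubic}. But your dichotomy ``adjacent or one covers the other'' for $P$ versus ${\sf rt}(Q)$ is not the actual trichotomy. The paper distinguishes: (i) $P\prec{\sf rt}(Q)$ non-commuting (when $P$ is adjacent to $Q$ on the right but $P\neq{\sf rt}(Q)$), handled via a split plus \eqref{adjacent} and \eqref{rel4}; (ii) $P$ and ${\sf rt}(Q)$ commute (when $P$ is adjacent to $Q$ on the \emph{left}, so ${\sf rt}(Q)$ is far to the right and unchanged in $\la+P$), handled by \eqref{rel3} alone with ${\sf rt}(\langle P\cup Q\rangle_{\la+P})={\sf rt}(Q)$; and (iii) $P={\sf rt}(Q)$, handled by \eqref{cubic}. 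Your proposed breadth identity $b({\sf rt}(\langle P\cup Q\rangle_{\la+P}))=b({\sf rt}(Q))+b(P)$ is also not correct in general: in subcase (ii) the two ${\sf rt}$ paths coincide, and in subcase (i) the new ${\sf rt}$ is the shorter piece $T$ of the split, not a merge.
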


\begin{proof}
We focus on the $m=n$ case (the $m\neq n$ case is similar, but easier).  
As in the proof of \cref{h0loop_sq}, we must separate out according to the  two distinct cases 
for $ \mathbb L^\la_\la(-Q  )$ in  \cref{L=lincombo}.

\smallskip
\noindent 
\textbf{Case 1. } We first suppose that 
 ${\sf last}(Q)=m-1$   
  where, by  \cref{L=lincombo}, we have that 
 $\mathbb L^\la_\la(-Q  )=\mathbb L^\la_\la.$ 
 We first consider \eqref{uuuuuuuu1}, in which  
 $P$ and $Q$ are commuting Dyck paths.
  In this case  $Q \in {\rm DRem}(\la-P)$. 
  Then
\begin{align*}
    \mathbb L^\la_\la(-Q  ) \mathbb D^{\la}_{\la+P}&=\mathbb L^\la_\la \mathbb D^{\la}_{\la+P}  = \mathbb D^{\la}_{\la+P}\mathbb L^{\la+P}_{\la+P}    = \mathbb D^{\la}_{\la+P}\mathbb L^{\la+P}_{\la+P}(-Q).
\end{align*}
by the loop-commutation relation \eqref{loop-relation} and  \cref{L=lincombo}. 

Now, suppose that $P,Q$ are adjacent. In which case, $\langle P\cup Q\rangle _{\la+P}$ is the rightmost removable Dyck path of $\la+P$ (that is ${\sf last}(\langle P\cup Q\rangle _{\la+P})=m-1$). 
  We hence have that 
\begin{align*}
    \mathbb L^\la_\la(-Q  ) \mathbb D^{\la}_{\la+P}&=\mathbb L^\la_\la \mathbb D^{\la}_{\la+P} = 
    \mathbb D^{\la}_{\la+P}\mathbb L^{\la+P}_{\la+P}  
      = \mathbb D^{\la}_{\la+P}\mathbb L^{\la+P}_{\la+P}(-\langle P\cup Q\rangle _{\la+P})
\end{align*}
by the loop-commutation relation \eqref{loop-relation}.

\smallskip\noindent
\textbf{Case 2. } For the remainder of the proof, we can assume that 
 ${\sf last}(Q)<m-1$ 
  and so  
$$\mathbb L^\la_\la(-Q  )=-\mathbb L^{\la}_{\la}-(-1)^{b({\sf rt}(Q))}
{\mathbbm 1}_\la \mathbb D ( + {\sf rt}(Q) ) \mathbb D ( - {\sf rt}(Q) ) $$ by   \cref{L=lincombo}.
Then
\begin{align}
    \mathbb L^\la_\la(-Q  ) \mathbb D^{\la}_{\la+P}&=
 {\mathbbm 1}_\la   (-\mathbb L^{\la}_{\la}-(-1)^{b({\sf rt}(Q))}
  \mathbb D ( + {\sf rt}(Q) ) \mathbb D ( - {\sf rt}(Q) )) \mathbb D(+P) & \notag\\
    &
    \label{latter term222}
     = -{\mathbbm 1}_\la\mathbb D(+P)\mathbb L^{\la+P}_{\la+P} 
    -(-1)^{b({\sf rt}(Q))}
{\mathbbm 1}_\la \mathbb D ( + {\sf rt}(Q) ) \mathbb D ( - {\sf rt}(Q) )  \mathbb D(+P)
\end{align}
where the second equality follows from the loop-commutation relation  \eqref{loop-relation}. We now need to consider   the commuting and adjacent case separately (and focus  on the latter term on the righthand-side of \eqref{latter term222}). 

\smallskip
\noindent 
{\bf Case 2: Commuting subcase. }
The first case is that in which   $P,Q$ commute. 
 We must refine this further into two subcases: that in which $P$ and ${\sf rt}(Q)$ are a commuting pair of Dyck paths, and that in which $P$ and ${\sf rt}(Q)$ do not commute.

We first suppose that $P$ and $ {\sf rt}(Q)$ commute (as in the leftmost example in  \cref{right_comm}).
In this case the latter term on the righthand-side of \eqref{latter term222}  is as follows
\begin{align}\label{eqq1}
{\mathbbm 1}_\la \mathbb D ( + {\sf rt}(Q) ) \mathbb D ( - {\sf rt}(Q) )  \mathbb D(+P)     &
= 
{\mathbbm 1}_\la  \mathbb D(+P)  
 \mathbb D ( + {\sf rt}(Q) ) \mathbb D ( - {\sf rt}(Q) ) 
\end{align}
by two applications of the commuting relation \eqref{rel3}. 
Substituting  \eqref{eqq1} into  \eqref{latter term222} we obtain 
\begin{align*}
\mathbb L^\la_\la(-Q  )
\mathbb  D^{\la}_{\la+P}&
   = -{\mathbbm 1}_\la\mathbb D(+P)\mathbb L^{\la+P}_{\la+P} 
        -(-1)^{b({\sf rt}(Q))}
{\mathbbm 1}_\la  \mathbb D(+P)\mathbb D ( + {\sf rt}(Q) ) \mathbb D ( - {\sf rt}(Q) )  
\\
&= 
  {\mathbbm 1}_\la\mathbb D(+P)
 (-\mathbb L^{\la+P}_{\la+P} 
        -(-1)^{b({\sf rt}(Q))}
  {\mathbbm 1}_\la\mathbb D ( + {\sf rt}(Q) ) \mathbb D ( - {\sf rt}(Q) )  )
\\
&=  \mathbb D^{\la}_{\la+P} \mathbb L^{\la+P}_{\la+P}(-Q)  
\end{align*}
 as required.

%
%

\begin{figure}[ht!]

$$ \begin{tikzpicture}[scale=0.35]
  \path(0,0)--++(135:2) coordinate (hhhh);
 \draw[very thick] (hhhh)--++(135:6)--++(45:6)--++(-45:6)--++(-135:6);
 \clip (hhhh)--++(135:6)--++(45:6)--++(-45:6)--++(-135:6);
\path(0,0) coordinate (origin2);
  \path(0,0)--++(135:2) coordinate (origin3);

      \foreach \i in {0,1,2,3,4,5,6,7,8}
{
\path (origin3)--++(45:0.5*\i) coordinate (c\i); 
\path (origin3)--++(135:0.5*\i)  coordinate (d\i); 
  }

   \foreach \i in {0,1,2,3,4,5,6,7,8}
{
\path (origin3)--++(45:1*\i) coordinate (c\i); 
\path (c\i)--++(-45:0.5) coordinate (c\i); 
\path (origin3)--++(135:1*\i)  coordinate (d\i); 
\path (d\i)--++(-135:0.5) coordinate (d\i); 
\draw[thick,densely dotted] (c\i)--++(135:10);
\draw[thick,densely dotted] (d\i)--++(45:10);
  }

\path(0,0)--++(135:2) coordinate (hhhh);

\fill[opacity=0.2](hhhh)
--++(135:5) coordinate (JJ)
--++(45:1)
--++(-45:1)--++(45:2)
--++(-45:1)--++(45:1)
--++(-45:1)--++(45:1)--++(-45:1)--++(45:1)--++(-45:1);

\fill[opacity=0.4,magenta](hhhh)
  (JJ)
--++(135:1)
 --++(45:2)
--++(-45:2)--++(-135:1)--++(135:1)
;

\path(JJ)--++(45:2)--++(-45:1) coordinate (JJ);
%
%

\path(JJ)--++(45:1)--++(-45:1) coordinate (JJ);

\fill[opacity=0.4,cyan](hhhh)
  (JJ)
--++(135:1)
 --++(45:2)
--++(-45:2)--++(-135:1)--++(135:1)
;

\path(origin3)--++(45:-0.5)--++(135:7.5) coordinate (X)coordinate (start);

\path (start)--++(135:1)coordinate (start);
 
\path (start)--++(45:5)--++(-45:6) coordinate (X) ; 
 
%

\path (start)--++(45:2)--++(-45:3) coordinate (X) ; 
 
\path(X)--++(45:1) coordinate (X) ;
 
\path (X)--++(-45:1) coordinate (X) ;
 \fill[violet](X) circle (4pt);
\draw[ thick, violet](X)--++(45:1) coordinate (X) ;
\fill[violet](X) circle (4pt);
\draw[ thick, violet](X)--++(-45:1) coordinate (X) ;
\fill[violet](X) circle (4pt);
 
\draw[ thick, violet](X)--++(45:1) coordinate (X) ;
\fill[violet](X) circle (4pt);
\draw[ thick, violet](X)--++(-45:1) coordinate (X) ;
\fill[violet](X) circle (4pt);
\draw[ thick, violet](X)--++(45:1) coordinate (X) ;
\fill[violet](X) circle (4pt);
 \draw[ thick, violet](X)--++(-45:1) coordinate (X) ;
\fill[violet](X) circle (4pt);

\end{tikzpicture}
\qquad \begin{tikzpicture}[scale=0.35]
  \path(0,0)--++(135:2) coordinate (hhhh);
 \draw[very thick] (hhhh)--++(135:6)--++(45:6)--++(-45:6)--++(-135:6);
 \clip (hhhh)--++(135:6)--++(45:6)--++(-45:6)--++(-135:6);
\path(0,0) coordinate (origin2);
  \path(0,0)--++(135:2) coordinate (origin3);

      \foreach \i in {0,1,2,3,4,5,6,7,8}
{
\path (origin3)--++(45:0.5*\i) coordinate (c\i); 
\path (origin3)--++(135:0.5*\i)  coordinate (d\i); 
  }

   \foreach \i in {0,1,2,3,4,5,6,7,8}
{
\path (origin3)--++(45:1*\i) coordinate (c\i); 
\path (c\i)--++(-45:0.5) coordinate (c\i); 
\path (origin3)--++(135:1*\i)  coordinate (d\i); 
\path (d\i)--++(-135:0.5) coordinate (d\i); 
\draw[thick,densely dotted] (c\i)--++(135:10);
\draw[thick,densely dotted] (d\i)--++(45:10);
  }

\path(0,0)--++(135:2) coordinate (hhhh);

\fill[opacity=0.2](hhhh)
--++(135:5) coordinate (JJ)
--++(45:1)
--++(-45:1)--++(45:2)
--++(-45:1)--++(45:1)
--++(-45:1)--++(45:1)--++(-45:1)--++(45:1)--++(-45:1);

\fill[opacity=0.4,magenta](hhhh)
  (JJ)
--++(135:1)
 --++(45:2)
--++(-45:2)--++(-135:1)--++(135:1)
;

\path(JJ)--++(45:2)--++(-45:1) coordinate (JJ);
%
%

\path(JJ)--++(45:2)--++(-45:2) coordinate (JJ);

\fill[opacity=0.4,cyan](hhhh)
  (JJ)
--++(135:1)
 --++(45:2)
--++(-45:2)--++(-135:1)--++(135:1)
;

\path(origin3)--++(45:-0.5)--++(135:7.5) coordinate (X)coordinate (start);

\path (start)--++(135:1)coordinate (start);
 
\path (start)--++(45:5)--++(-45:6) coordinate (X) ; 
 
%

\path (start)--++(45:2)--++(-45:3) coordinate (X) ; 
 
\path(X)--++(45:1) coordinate (X) ;
 
\path (X)--++(-45:1) coordinate (X) ;
 \fill[violet](X) circle (4pt);
\draw[ thick, violet](X)--++(45:1) coordinate (X) ;
\fill[violet](X) circle (4pt);
\draw[ thick, violet](X)--++(-45:1) coordinate (X) ;
\fill[violet](X) circle (4pt);
 
\draw[ thick, violet](X)--++(45:1) coordinate (X) ;
\fill[violet](X) circle (4pt);
\draw[ thick, violet](X)--++(-45:1) coordinate (X) ;
\fill[violet](X) circle (4pt);
\draw[ thick, violet](X)--++(45:1) coordinate (X) ;
\fill[violet](X) circle (4pt);
 \draw[ thick, violet](X)--++(-45:1) coordinate (X) ;
\fill[violet](X) circle (4pt);

\end{tikzpicture}
$$
\caption{Examples of commuting Dyck paths ${\color{magenta}Q}\in {\rm DRem}_0(\la)$ and 
${\color{cyan}P}\in {\rm DAdd}_1(\la)$ for $\la=(6^2,4,3,2,1)$. 
In the former case ${\color{violet}{\sf rt}(Q)}\in {\rm DAdd}_1(\la)$  and ${\color{cyan}P}\in {\rm DAdd}_1(\la)$ commute, in the latter case they do not.
 }
\label{right_comm}
\end{figure}
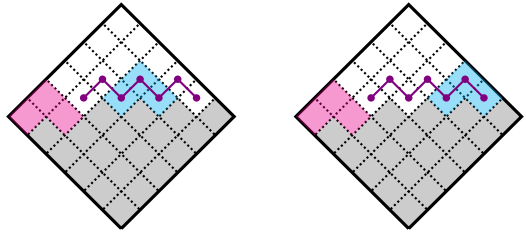

We  continue with our assumption that $P$ and $Q$
  commute, but   now suppose that $P,  {\sf rt}(Q)$ do not commute. 
This implies that ${\sf last}(P)={\sf last}({\sf rt}(Q))$ and such that 
$P\prec {\sf rt}(Q)$ as illustrated in the rightmost diagram in \cref{right_comm}. 
In which case, $P \in {\rm DAdd}_1(\la)$ and   there exists
 $S\in {\rm DRem}_2(\la+{\sf rt}(Q))$  such that
  $${\sf split}_{S}({\sf rt}(Q))=T\sqcup P$$
  for some $T \in {\rm DAdd}_1(\la)$. 
We have that 
\begin{align}\label{eqq2}
       \mathbb D ( + {\sf rt}(Q) ) \mathbb D ( - {\sf rt}(Q) )  \mathbb D(+P) 
    &=
    (-1)^{ b({\sf rt}(Q))-b(T)}
 {\mathbbm 1}_\la \mathbb D ( + {\sf rt}(Q) ) \mathbb D ( - S )  \mathbb D(-T) 
 \notag\\
    &=
      (-1)^{ b({\sf rt}(Q))-b(T)}
 {\mathbbm 1}_\la     \mathbb D(+P)   \mathbb D(+T)   \mathbb D(-T) 
\end{align}
where the first equality follows from applying relation \eqref{adjacent}
to the   pair of adjacent Dyck paths 
$S, T$ 
and the second equality follows by applying relation \eqref{rel4}
to the   pair of non-commuting Dyck paths 
$S\prec {\sf rt}(Q)$. 
Substituting    \eqref{eqq2} into \eqref{latter term222} we obtain the following 
\begin{align*}
\mathbb L^\la_\la(-Q  )   \mathbb D^{\la}_{\la+P}&=  
-\mathbb D^{\la}_{\la+P}\mathbb L^{\la+P}_{\la+P} - 
 (-1)^{ 2b({\sf rt}(Q))-b(T)}
   {\mathbbm 1}_\la    \mathbb D(+P)   \mathbb D(+T)   \mathbb D(-T) 
   \\   
      &=    {\mathbbm 1}_\la    \mathbb D(+P)   (-\mathbb L^{\la+P}_{\la+P} - 
       (-1)^{b(T)}
    \mathbb D(+T)   \mathbb D(-T) )\\
&=  \mathbb D^{\la}_{\la+P} \mathbb L^{\la+P}_{\la+P}(-Q) 
\end{align*}
 where the final equality follows   since $T={\sf rt}(Q)$ in $\la+P$.

\smallskip
\noindent 
{\bf Case 2: Adjacent subcase. }
For the remainder of the proof we suppose that $P,Q$ are a pair of   adjacent Dyck paths (in particular $P \in {\rm DAdd}_1(\la)$). 
There are three subcases to consider:
$(i)$ the Dyck paths $P$ and ${\sf rt}(Q)$ do not commute
$(ii)$ the Dyck paths $P$ and ${\sf rt}(Q)$ do  commute
$(iii)$ the case  $P={\sf rt}(Q)$.  
These   are depicted in \cref{3cases-river}.

Subcase $(i)$. We first assume that  the pair $P$ and $  {\sf rt}(Q)$ do not commute.
Or equivalently,   that ${\sf first}(P)= {\sf last}(Q)+1$ and $b(P)$ is not    maximal with respect to this property  (when $b(P)$  maximal   we are in  case $(iii)$). 
An example of this is   depicted in the leftmost diagram in \cref{3cases-river}. 
In this case, there exists $S\in {\rm DRem}_2(\la+{\sf rt}(Q))$  
 such that $${\sf split}_{S}({\sf rt}(Q))=T\sqcup P $$ 
 for some $T \in {\rm DAdd}_1(\la)$. 
We have that 
\begin{align}\label{eqq3}
  {\mathbbm 1}_\la   \mathbb D ( + {\sf rt}(Q) ) \mathbb D ( - {\sf rt}(Q) )  \mathbb D(+P) 
    &=(-1)^{ b({\sf rt}(Q))-b(T)}  {\mathbbm 1}_\la
    \mathbb D(+ {\sf rt}(Q)) \mathbb D (-S) \mathbb D(-T) 
     \notag\\ 
    &=
    (-1)^{ b({\sf rt}(Q))-b(T)}  {\mathbbm 1}_\la
    \mathbb D(+P) \mathbb D(+T) \mathbb D(-T) 
\end{align}
where the first equality follows by applying  relation  \eqref{adjacent} 
to the adjacent pair $S,T$ 
 and the second follows by  relation \eqref{rel4} to the non-commuting pair $S\prec {\sf rt}(Q)$.
Substituting \eqref{eqq3} into  \eqref{latter term222} we obtain the following 
\begin{align*}
\mathbb L^\la_\la(-Q  ) 
\mathbb 
D^{\la}_{\la+P}
&
=
  - {\mathbbm 1}_\la \mathbb D(+P)  \mathbb L^{\la+P}_{\la+P} -  (-1)^{2  b({\sf rt}(Q))-b(T)  }	{\mathbbm 1}_\la \mathbb D (+ P)  \mathbb D (+T)  \mathbb D (-T) 
  \\
&=  \mathbb D^{\la}_{\la+P}(-\mathbb L^{\la+P}_{\la+P} -  (-1)^{b(T)} \mathbb D (+T)  \mathbb D (-T)  )							\\
&=  \mathbb D^{\la}_{\la+P} \mathbb L^{\la+P}_{\la+P}(-\langle P\sqcup Q\rangle_{\la+P})  
\end{align*}
where the final equality follows since  $T={\sf rt}(\langle P\sqcup Q\rangle_{\la+P})$.

\begin{figure}[ht!]

$$
 \begin{tikzpicture}[scale=0.35]
  \path(0,0)--++(135:2) coordinate (hhhh);
 \draw[very thick] (hhhh)--++(135:6)--++(45:6)--++(-45:6)--++(-135:6);
 \clip (hhhh)--++(135:6)--++(45:6)--++(-45:6)--++(-135:6);
\path(0,0) coordinate (origin2);
  \path(0,0)--++(135:2) coordinate (origin3);

      \foreach \i in {0,1,2,3,4,5,6,7,8}
{
\path (origin3)--++(45:0.5*\i) coordinate (c\i); 
\path (origin3)--++(135:0.5*\i)  coordinate (d\i); 
  }

   \foreach \i in {0,1,2,3,4,5,6,7,8}
{
\path (origin3)--++(45:1*\i) coordinate (c\i); 
\path (c\i)--++(-45:0.5) coordinate (c\i); 
\path (origin3)--++(135:1*\i)  coordinate (d\i); 
\path (d\i)--++(-135:0.5) coordinate (d\i); 
\draw[thick,densely dotted] (c\i)--++(135:10);
\draw[thick,densely dotted] (d\i)--++(45:10);
  }

\path(0,0)--++(135:2) coordinate (hhhh);

\fill[opacity=0.2](hhhh)
--++(135:5)  coordinate (JJ)
--++(45:1)
--++(-45:1)--++(45:2)
--++(-45:1) --++(45:1)
--++(-45:1)--++(45:1)--++(-45:1)--++(45:1)--++(-45:1);

\fill[opacity=0.4,magenta](hhhh)
  (JJ)
--++(135:1)
 --++(45:2)
--++(-45:2)--++(-135:1)--++(135:1)
;


\path(JJ)--++(45:2)--++(-45:1) coordinate (JJ);

\fill[opacity=0.4,cyan](hhhh)
  (JJ)
--++(135:1)
 --++(45:2)
--++(-45:2)--++(-135:1)--++(135:1)
;

\path(origin3)--++(45:-0.5)--++(135:7.5) coordinate (X)coordinate (start);

\path (start)--++(135:1)coordinate (start);
 
\path (start)--++(45:5)--++(-45:6) coordinate (X) ;

\path (start)--++(45:2)--++(-45:3) coordinate (X) ; 
 
\path(X)--++(45:1) coordinate (X) ;
 
\path (X)--++(-45:1) coordinate (X) ;
 \fill[violet](X) circle (4pt);
\draw[ thick, violet](X)--++(45:1) coordinate (X) ;
\fill[violet](X) circle (4pt);
\draw[ thick, violet](X)--++(-45:1) coordinate (X) ;
\fill[violet](X) circle (4pt);
 
\draw[ thick, violet](X)--++(45:1) coordinate (X) ;
\fill[violet](X) circle (4pt);
\draw[ thick, violet](X)--++(-45:1) coordinate (X) ;
\fill[violet](X) circle (4pt);
\draw[ thick, violet](X)--++(45:1) coordinate (X) ;
\fill[violet](X) circle (4pt);
 \draw[ thick, violet](X)--++(-45:1) coordinate (X) ;
\fill[violet](X) circle (4pt);

\end{tikzpicture}
\qquad
 \begin{tikzpicture}[scale=0.35]
  \path(0,0)--++(135:2) coordinate (hhhh);
 \draw[very thick] (hhhh)--++(135:6)--++(45:6)--++(-45:6)--++(-135:6);
 \clip (hhhh)--++(135:6)--++(45:6)--++(-45:6)--++(-135:6);
\path(0,0) coordinate (origin2);
  \path(0,0)--++(135:2) coordinate (origin3);

      \foreach \i in {0,1,2,3,4,5,6,7,8}
{
\path (origin3)--++(45:0.5*\i) coordinate (c\i); 
\path (origin3)--++(135:0.5*\i)  coordinate (d\i); 
  }

   \foreach \i in {0,1,2,3,4,5,6,7,8}
{
\path (origin3)--++(45:1*\i) coordinate (c\i); 
\path (c\i)--++(-45:0.5) coordinate (c\i); 
\path (origin3)--++(135:1*\i)  coordinate (d\i); 
\path (d\i)--++(-135:0.5) coordinate (d\i); 
\draw[thick,densely dotted] (c\i)--++(135:10);
\draw[thick,densely dotted] (d\i)--++(45:10);
  }

\path(0,0)--++(135:2) coordinate (hhhh);

\fill[opacity=0.2](hhhh)
--++(135:6)  
--++(45:1)
--++(-45:1)--++(45:1)
--++(-45:2)coordinate (JJ)--++(45:1)
--++(-45:1)--++(45:2)--++(-45:1)--++(45:1)--++(-45:1);

\fill[opacity=0.4,magenta](hhhh)
  (JJ)
--++(135:1)
 --++(45:2)
--++(-45:2)--++(-135:1)--++(135:1)
;

\path(JJ)--++(135:3)--++(-135:2) coordinate (JJ);

\path(JJ)--++(45:1)--++(-45:1) coordinate (JJ);

\fill[opacity=0.4,cyan](hhhh)
  (JJ)
--++(135:1)
 --++(45:2)
--++(-45:2)--++(-135:1)--++(135:1)
;

\path(origin3)--++(45:-0.5)--++(135:7.5) coordinate (X)coordinate (start);

\path (start)--++(135:1)coordinate (start);
 
\path (start)--++(45:5)--++(-45:6) coordinate (X) ; 
  
\path (start)--++(45:4)--++(-45:5) coordinate (X) ; 
 
\path(X)--++(45:1) coordinate (X) ;
 
\path (X)--++(-45:1) coordinate (X) ;
 \fill[violet](X) circle (4pt);
\draw[ thick, violet](X)--++(45:1) coordinate (X) ;
\fill[violet](X) circle (4pt);
\draw[ thick, violet](X)--++(-45:1) coordinate (X) ;
\fill[violet](X) circle (4pt);

\end{tikzpicture}
\qquad
 \begin{tikzpicture}[scale=0.35]
  \path(0,0)--++(135:2) coordinate (hhhh);
 \draw[very thick] (hhhh)--++(135:6)--++(45:6)--++(-45:6)--++(-135:6);
 \clip (hhhh)--++(135:6)--++(45:6)--++(-45:6)--++(-135:6);
\path(0,0) coordinate (origin2);
  \path(0,0)--++(135:2) coordinate (origin3);

      \foreach \i in {0,1,2,3,4,5,6,7,8}
{
\path (origin3)--++(45:0.5*\i) coordinate (c\i); 
\path (origin3)--++(135:0.5*\i)  coordinate (d\i); 
  }

   \foreach \i in {0,1,2,3,4,5,6,7,8}
{
\path (origin3)--++(45:1*\i) coordinate (c\i); 
\path (c\i)--++(-45:0.5) coordinate (c\i); 
\path (origin3)--++(135:1*\i)  coordinate (d\i); 
\path (d\i)--++(-135:0.5) coordinate (d\i); 
\draw[thick,densely dotted] (c\i)--++(135:10);
\draw[thick,densely dotted] (d\i)--++(45:10);
  }

\path(0,0)--++(135:2) coordinate (hhhh);

\fill[opacity=0.2](hhhh)
--++(135:5)  coordinate (JJ)
--++(45:1)
--++(-45:1)--++(45:2)
--++(-45:1) --++(45:1)
--++(-45:1)--++(45:1)--++(-45:1)--++(45:1)--++(-45:1);

\fill[opacity=0.4,magenta](hhhh)
  (JJ)
--++(135:1)
 --++(45:2)
--++(-45:2)--++(-135:1)--++(135:1)
;


\path(JJ)--++(45:2)--++(-45:1) coordinate (JJ);

\fill[opacity=0.4,cyan](hhhh)
  (JJ)
--++(135:1)
 --++(45:2)
--++(-45:1) --++(45:1)--++(-45:1) --++(45:1)
--++(-45:2)--++(-135:1)--++(135:1)--++(-135:1)--++(135:1)--++(-135:1)--++(135:1)
;

\path(origin3)--++(45:-0.5)--++(135:7.5) coordinate (X)coordinate (start);

\path (start)--++(135:1)coordinate (start);
 
\path (start)--++(45:5)--++(-45:6) coordinate (X) ;

\path (start)--++(45:2)--++(-45:3) coordinate (X) ; 
 
\path(X)--++(45:1) coordinate (X) ;
 
\path (X)--++(-45:1) coordinate (X) ;
 \fill[violet](X) circle (4pt);
\draw[ thick, violet](X)--++(45:1) coordinate (X) ;
\fill[violet](X) circle (4pt);
\draw[ thick, violet](X)--++(-45:1) coordinate (X) ;
\fill[violet](X) circle (4pt);
 
\draw[ thick, violet](X)--++(45:1) coordinate (X) ;
\fill[violet](X) circle (4pt);
\draw[ thick, violet](X)--++(-45:1) coordinate (X) ;
\fill[violet](X) circle (4pt);
\draw[ thick, violet](X)--++(45:1) coordinate (X) ;
\fill[violet](X) circle (4pt);
 \draw[ thick, violet](X)--++(-45:1) coordinate (X) ;
\fill[violet](X) circle (4pt);

\end{tikzpicture}
$$
\caption{Examples of adjacent paths  ${\color{cyan}P}\in {\rm DAdd}_1(\la)$,  ${  \color{magenta}Q}
\in {\rm DRem}_0(\la)$
     and ${ \color{violet} {\sf rt}({ Q})}\in {\rm DAdd}_1(\la)$.
     The three subcases depicted from left-to-right are 
     $(i)$ 
 ${\color{cyan}P}\prec   { \color{violet} {\sf rt}({ Q})}$ are non-commuting 
     $(ii)$ 
 ${\color{cyan}P}$ and $  { \color{violet} {\sf rt}({ Q})}$ are  commuting 
 and 
 $(iii)$
 ${\color{cyan}P} = { \color{violet} {\sf rt}({ Q})}$.
}
\label{3cases-river}
\end{figure}

 Subcase $(ii)$. 
 We now assume  that  $P$ and  $ {\sf rt}(Q)$ commute. Or equivalently,   that $P$ is to the left of $Q$ as in  the central diagram in \cref{3cases-river}. 
We have that 
\begin{align}\label{eqq4}
 {\mathbbm 1}_\la   \mathbb D ( + {\sf rt}(Q) ) \mathbb D ( - {\sf rt}(Q) )  \mathbb D(+P) 
    &={\mathbbm 1}_\la   \mathbb D(+P)  \mathbb D ( + {\sf rt}(Q) )    \mathbb D ( - {\sf rt}(Q) )
\end{align}
by two applications of the commuting relation \eqref{rel3}. 
Substituting  \eqref{eqq4} into  \eqref{latter term222} we obtain the following 
\begin{align*}
\mathbb L^\la_\la(-Q  ) \mathbb D^{\la}_{\la+P}&=  
-
  {\mathbbm 1}_\la   \mathbb D(+P) 
\mathbb L^{\la+P}_{\la+P} 
-
  (-1)^{b({\sf rt}(Q))}		
  {\mathbbm 1}_\la   \mathbb D(+P)  \mathbb D ( +{\sf rt}(Q) )    \mathbb D ( -{\sf rt}(Q) ) \\
&=  \mathbb D^{\la}_{\la+P}(-\mathbb L^{\la+P}_{\la+P} -  (-1)^{b({\sf rt}(Q))}\mathbb D^{\la}_{\la + P} \mathbb D_{\la+ {\sf rt}(Q)+P}^{\la + P} \mathbb D^{\la+ {\sf rt}(Q)+P}_{\la+P})\\
&=  \mathbb D^{\la}_{\la+P} \mathbb L^{\la+P}_{\la+P}(-\langle P\sqcup Q\rangle_{\la+P}).
\end{align*}
where the final equality follows as ${\sf rt}(Q)={\sf rt}(\langle P\sqcup Q\rangle_{\la+P})$. 

Subcase $(iii)$. 
We now assume that  $P={\sf rt}(Q)$, as in the rightmost diagram in  \cref{3cases-river}.
We have that 
\begin{align}\label{eqq5}
  {\mathbbm 1}_\la   \mathbb D ( + {\sf rt}(Q) ) \mathbb D ( - {\sf rt}(Q) )  \mathbb D(+P) 
    &
    =
 2(-1)^{b(P)+1}
  {\mathbbm 1}_\la \mathbb D( + P) \mathbb L_{\la+ P}^{\la+ P} 
\end{align}
by cubic relation \eqref{cubic}. 
Substituting   \eqref{eqq5} into  \eqref{latter term222} we obtain the following 
\begin{align*}
\mathbb L^\la_\la(-Q  ) \mathbb D^{\la}_{\la+P}&= 
 -{\mathbbm 1}_\la\mathbb D(+P)\mathbb L^{\la+P}_{\la+P}
 - 2 (-1)^{2b({\sf rt}(Q))+1}  {\mathbbm 1}_\la 
     \mathbb D( + P) \mathbb L_{\la+ P}^{\la+ P}
   \\ 
&= (-1+2) {\mathbbm 1}_\la \mathbb D (+P)\mathbb L^{\la+P}_{\la+P}\\
&=  \mathbb D^{\la}_{\la+P}\mathbb L^{\la+P}_{\la+P}(-\langle P\sqcup Q\rangle_{\la+P}) 
\end{align*}
where the first two   equalities follow  as $b({\sf rt}(Q))=b(P)$ and so the signs cancel;
 the third equality follows as $\langle P\sqcup Q\rangle_{\la+P}	\in {\rm DRem}_0(\la)$  is the rightmost such removable Dyck path.
\end{proof}

\begin{prop} \label{looploop}
 For $P, Q \in {\rm DRem}(\la)$ we have that 
 \begin{equation}
    \mathbb L^\la_\la(-P)
  \mathbb L^\la_\la(-Q)
  = \mathbb L^\la_\la(-Q) \mathbb L^\la_\la(-P) 
 \end{equation}
 and moreover, if $P \in {\rm DRem}_0(\la)$ then we have that 
$$(  \mathbb L^\la_\la(-P))^2=0.$$
 \end{prop}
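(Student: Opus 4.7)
My plan is to dismantle each loop $\mathbb L^\la_\la(-R)$ according to Definition~\ref{L=lincombo} into a linear combination of ``Dyck--loop'' blocks of the form $\pm \mathbb D^\la_{\mu}\mathbb D^{\mu}_\la$, together with (when $m=n$) a $\pm\mathbb L^\la_\la$ summand. By bilinearity, the commutativity claim reduces to checking pairwise commutativity of these building blocks. The $\mathbb L^\la_\la$-summands commute with any Dyck-loop block by two applications of \eqref{loop-relation} (one going out along $\mathbb D^\la_\mu$, one coming back along $\mathbb D^\mu_\la$). For two Dyck-loop blocks, say $A = \mathbb D^\la_\mu \mathbb D^\mu_\la$ and $\mathbb L^\la_\la(-Q)$, I compute
\[
A\,\mathbb L^\la_\la(-Q)  = \mathbb D^\la_\mu \bigl(\mathbb D^\mu_\la \mathbb L^\la_\la(-Q)\bigr) = \mathbb D^\la_\mu\, \mathbb L^\mu_\mu(-Q^\star)\,\mathbb D^\mu_\la = \mathbb L^\la_\la(-Q)\,A,
\]
where the middle equalities use (the dual of) Lemma~\ref{h0loop_rect} or Lemma~\ref{h0loop_sq} when $\mu$ is a removable-direction neighbour, or Lemma~\ref{lemmar3} when $\mu$ is an addable-direction neighbour. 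The sandwich identities \eqref{noncomlemmerrob1} and \eqref{sdagkhdfhsdfjgdfhjkghdjksgfhsajkgfhjdsgfhdegfjdhfjdhf} are crucial here: they say that whether the intermediate label $Q^\star$ is $Q$, $Q^1$, or $Q^2$, both branches recombine to the same sandwich $\mathbb L^\la_\la(-Q)A$ on the other side.

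For the nilpotency statement, fix $P\in{\rm DRem}_0(\la)$. When ${\sf last}(P)=m-1$ (only possible when $m=n$) we have $\mathbb L^\la_\la(-P) = \mathbb L^\la_\la$, and its square vanishes directly by \eqref{loop-relation}. Otherwise, Definition~\ref{L=lincombo} gives
\[
\mathbb L^\la_\la(-P) = \varepsilon\,\mathbb L^\la_\la + \eta\,\mathbb D^\la_{\la+{\sf rt}(P)}\mathbb D^{\la+{\sf rt}(P)}_\la,
\]
with $\varepsilon\in\{0,-1\}$ according to whether $m<n$ or $m=n$, and $\eta = (-1)^{b({\sf rt}(P))+1}$. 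Squaring and using $(\mathbb L^\la_\la)^2 = 0$ together with the fact that $\mathbb L^\la_\la$ commutes with the Dyck-loop block (by \eqref{loop-relation}, or by the commutativity just established), the square reduces to
\[
\eta^2\,\mathbb D^\la_{\la+{\sf rt}(P)}\bigl(\mathbb D^{\la+{\sf rt}(P)}_\la \mathbb D^\la_{\la+{\sf rt}(P)}\bigr)\mathbb D^{\la+{\sf rt}(P)}_\la \;+\; 2\varepsilon\eta\,\mathbb L^\la_\la\,\mathbb D^\la_{\la+{\sf rt}(P)}\mathbb D^{\la+{\sf rt}(P)}_\la.
\]
The central bracketed expression is a loop at $\la+{\sf rt}(P)$ over the removable ${\sf rt}(P)\in{\rm DRem}_1(\la+{\sf rt}(P))$, and the dual of the self-dual relation~\eqref{rel2} rewrites it as a linear combination of loops $\mathbb L^{\la+{\sf rt}(P)}_{\la+{\sf rt}(P)}(-Q)$ indexed by the $Q\in{\rm DRem}(\la+{\sf rt}(P))$ that are either adjacent to or cover ${\sf rt}(P)$.

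The hard part --- which I expect to be the main obstacle --- is the ensuing combinatorial cancellation. After pushing each such loop $\mathbb L^{\la+{\sf rt}(P)}_{\la+{\sf rt}(P)}(-Q)$ back through $\mathbb D^{\la+{\sf rt}(P)}_\la$ via Lemmas~\ref{h0loop_rect}, \ref{h0loop_sq} and~\ref{lemmar3}, the whole sandwich becomes a sum of products $\mathbb L^\la_\la(-Q')\,\mathbb D^\la_{\la+{\sf rt}(P)}\mathbb D^{\la+{\sf rt}(P)}_\la$ indexed by certain $Q'\in{\rm DRem}(\la)$. The distinguished term $Q=P$ --- which appears in the ``adjacent'' sum of \eqref{rel2}, since $P$ is adjacent to ${\sf rt}(P)$ in $\la+{\sf rt}(P)$ --- produces a summand proportional to $(\mathbb L^\la_\la(-P))^2$ itself, so one obtains a self-referential equation of the form $(\mathbb L^\la_\la(-P))^2 = c\,(\mathbb L^\la_\la(-P))^2 + (\text{tail})$. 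The remaining $Q$'s (those covering ${\sf rt}(P)$, or adjacent on the other side) must be paired up, via the ${\sf rt}^{-1}$ correspondence and commutativity, with the cross-term $2\varepsilon\eta\,\mathbb L^\la_\la\mathbb D^\la_{\la+{\sf rt}(P)}\mathbb D^{\la+{\sf rt}(P)}_\la$ and with the $\mathbb L^\la_\la$-summands produced by Definition~\ref{L=lincombo} (in the $m=n$ case), and verified to cancel. Once these book-keeping identities are in place, $c\neq 1$ forces the remaining coefficient of $(\mathbb L^\la_\la(-P))^2$ to annihilate, yielding the required nilpotency.
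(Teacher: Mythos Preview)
Your commutativity sandwich $\mathbb D^\la_\mu\,\mathbb L^\mu_\mu(-Q^\star)\,\mathbb D^\mu_\la$ works only when the surviving loop $\mathbb L^\la_\la(-Q)$ has $Q\in{\rm DRem}_0(\la)$, since that is a standing hypothesis in each of Lemmas~\ref{h0loop_rect}, \ref{h0loop_sq} and Proposition~\ref{lemmar3}. By choosing which of $P,Q$ to decompose you can cover the cases where at least one has height~$0$ (the paper's Cases~1 and~3), but the case $P,Q\in{\rm DRem}_{>0}(\la)$ is left entirely untreated. The paper handles this separately (its Case~2), working directly with the factorisations $\mathbb L(-P)=\pm\mathbb D(-P)\mathbb D(+P)$ and $\mathbb L(-Q)=\pm\mathbb D(-Q)\mathbb D(+Q)$ and applying \eqref{rel3} when $P,Q$ commute, or \eqref{rel4} together with \eqref{adjacent} when $P\prec Q$ do not.

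Your nilpotency argument has a more serious gap. The ``central bracketed expression'' $\mathbb D^{\la+{\sf rt}(P)}_\la\mathbb D^\la_{\la+{\sf rt}(P)}$ is, by Definition~\ref{L=lincombo}, simply $(-1)^{b({\sf rt}(P))}\mathbb L^{\la+{\sf rt}(P)}_{\la+{\sf rt}(P)}(-{\sf rt}(P))$, and there is \emph{no} relation in the presentation that rewrites it as a combination of other loops: the self-dual relation~\eqref{rel2} (and its $*$-dual, which coincides with it) only rewrites an \emph{up-then-down} product $\mathbb D(+R)\mathbb D(-R)$ based at the smaller partition, never a \emph{down-then-up} product based at the larger one. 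The recursive cancellation you sketch is therefore built on a non-existent identity. The missing ingredient is the cubic relation~\eqref{cubic}, which you do not invoke: since ${\sf rt}(P)$ has maximal ${\sf last}$ in ${\rm DAdd}_1(\la)$, it applies to the triple $\mathbb D(-{\sf rt}(P))\mathbb D(+{\sf rt}(P))\mathbb D(-{\sf rt}(P))$ inside $A^2$ and yields $2(-1)^{b({\sf rt}(P))+1}\mathbb L^{\la+{\sf rt}(P)}_{\la+{\sf rt}(P)}\mathbb D(-{\sf rt}(P))$ when $m=n$ (and $0$ when $m<n$); after one use of \eqref{loop-relation} this cancels on the nose against your $2\varepsilon\eta$ cross-term, with no further bookkeeping required.
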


\begin{proof}
As in previous proofs, we assume that  $m=n$ (the   $m\neq n$ case is similar, but easier). 
We first suppose that   ${\sf last}(Q)=m-1$. In which case, we have that 
$ \mathbb L^\la_\la  (-Q)=  \mathbb L^\la_\la  $ by \cref{L=lincombo}. 
For any $Q\neq P\in {\rm DRem}(\la)$ one can check that 
$$\mathbb L^\la_\la  (-P)\mathbb L^\la_\la =\mathbb L^\la_\la \mathbb L^\la_\la  (-P)$$
by expanding out $\mathbb L^\la_\la  (-P)$ as prescribed by \cref{L=lincombo} and then noticing that $\mathbb L^\la_\la$ commutes past every single   term  in the expansion, by  relation \eqref{loop-relation}.
On the other hand if $P=Q$ with ${\sf last}(Q)=m-1$, then $ \mathbb L^\la_\la(-Q)^2=0$  by relation \eqref{loop-relation}.  
Therefore we can assume for the remainder of the proof that ${\sf last}(P)\leq {\sf last}(Q)<m$. 

 \smallskip
\noindent     \textbf{Case 1.} For $P, Q \in {\rm DRem}_0(\la)$, we claim  that 
\begin{equation}\label{looploop0}
    \mathbb L^\la_\la(-P)
  \mathbb L^\la_\la(-Q)
  = (1-\delta_{PQ})\mathbb L^\la_\la(-Q) \mathbb L^\la_\la(-P).
\end{equation}
We first consider the case  that  $P=Q\in {\rm DRem}_0(\la) $.  
 We  have that 
\begin{align*}
     &\mathbb L^\la_\la(-P)^2		
     \\=& 
    (- \mathbb L^\la_\la  -(-1)^{b({\sf rt}(P))}	\mathbb D(+{\sf rt}(P))\mathbb D(-{\sf rt}(P)))^2\\
     =& (\mathbb L^\la_\la) ^2 + 2(-1)^{b({\sf rt}(P))} 
      \mathbb L^\la_\la
      \mathbb D(+{\sf rt}(P))\mathbb D(-{\sf rt}(P))+ \mathbb D(+{\sf rt}(P))\mathbb D(-{\sf rt}(P)) \mathbb D(+{\sf rt}(P))\mathbb D(-{\sf rt}(P))
      \\
     =&   2(-1)^{b({\sf rt}(P))} 
      \mathbb L^\la_\la
      \mathbb D(+{\sf rt}(P))\mathbb D(-{\sf rt}(P))+ \mathbb D(+{\sf rt}(P))\mathbb D(-{\sf rt}(P)) \mathbb D(+{\sf rt}(P))\mathbb D(-{\sf rt}(P)).
   \\
     =&   2(-1)^{b({\sf rt}(P))} 
      \mathbb L^\la_\la
      \mathbb D(+{\sf rt}(P))\mathbb D(-{\sf rt}(P)) 
  + 2(-1)^{b({\sf rt}(P))+1} 
      \mathbb D(+{\sf rt}(P))
            \mathbb L^{\la+{\sf rt}(P)}_{\la+{\sf rt}(P)} \mathbb D(-{\sf rt}(P))
      \\    =&   2(-1)^{b({\sf rt}(P))} 
      \mathbb L^\la_\la
      \mathbb D(+{\sf rt}(P))\mathbb D(-{\sf rt}(P)) 
  + 2(-1)^{b({\sf rt}(P))+1} 
            \mathbb L^{\la  }_{\la  }   \mathbb D(+{\sf rt}(P))\mathbb D(-{\sf rt}(P))
      \\
        =&0
\end{align*}
where the first equality follows from \cref{L=lincombo};
 the second is expanding out the brackets;
the third and fifth equalities follow from 
    \eqref{loop-relation}  and rearranging terms;
  the fourth follows from the cubic relation \eqref{cubic}.

 We can now assume that  $P\neq Q$ and so   ${\sf last}(P)\leq{\sf first}(Q)-2$.
 We can expand out the product $\mathbb L^\la_\la(-P)  \mathbb L^\la_\la(-Q)$ as follows 
\begin{align*} 
     (- \mathbb L^\la_\la  -(-1)^{b({\sf rt}(P))}	  \mathbb D^\la_{\la+{\sf rt}(P) }\mathbb D_\la^{\la+{\sf rt}(P) } 		)(- \mathbb L^\la_\la -(-1)^{b({\sf rt}(Q))}	  \mathbb D^\la_{\la+{\sf rt}(Q) }\mathbb D_\la^{\la+{\sf rt}(Q) } 		)
     \end{align*}
 and one can expand out $   \mathbb L^\la_\la(-Q)\mathbb L^\la_\la(-P)$ similarly; 
all of the products involving  $  \mathbb L^\la_\la$ can be easily rearranged using  relation \eqref{loop-relation}; thus,  in order to deduce \cref{looploop0} it will suffice  to verify the following claim:
%
%
$$(\mathbb D^\la_{\la+{\sf rt}(P) }\mathbb D_\la^{\la+{\sf rt}(P) } 	)(  \mathbb D^\la_{\la+{\sf rt}(Q) }\mathbb D_\la^{\la+{\sf rt}(Q) } 	)=
 (  \mathbb D^\la_{\la+{\sf rt}(Q) }\mathbb D_\la^{\la+{\sf rt}(Q) } 	)(\mathbb D^\la_{\la+{\sf rt}(P) }\mathbb D_\la^{\la+{\sf rt}(P) } 	). $$
We now set about proving this claim. 
Recall that  $P,Q \in {\rm DRem}_0(\la)$ and  $P$ is to the left of $Q$; therefore there
 exists a Dyck path $T \in {\rm DRem}_1(\la+{\sf rt}(P))$ 
   such that $${\sf split}_T({\sf rt}(P))=S \sqcup {\sf rt}(Q)$$
 for some Dyck path $S \in {\rm DRem}(\la+{\sf rt}(P)-T)$ which is 
  adjacent to $T$.   In which case $\langle T\cup S\rangle_{\la +{\sf rt}(P)}={\sf rt}(P)$.
  This is pictured in \cref{alksghsalgdfghjdfghjd}.
 We have that 
\begin{align*}
&
 \mathbb D(+{\sf rt}(P)  )\mathbb D(-{\sf rt}(P))
  \mathbb D(+{\sf rt}(Q) ) \mathbb D(-{\sf rt}(Q))  
 \\
 =&
(-1)^{  b({\sf rt}(P))+b(S)}
 \mathbb D(+{\sf rt}(P)  )\mathbb D(-T) 
  \mathbb D(-S ) \mathbb D(-{\sf rt}(Q))  
\\
 =&
(-1)^{  b({\sf rt}(P))+b(S)}
\Big ( \mathbb D(+{\sf rt}(P)  )\mathbb D(-T)\Big)
\Big(  \mathbb D(-S ) \mathbb D(-{\sf rt}(Q))  \Big)
\\ =&
(-1)^{  b({\sf rt}(P))+b(S)}
\Big(   \mathbb D(+{\sf rt}(Q) \mathbb D(+S ))  \Big)
\Big (  \mathbb D(+T)\mathbb D(-{\sf rt}(P)  )\Big)
\\
=& 
  \mathbb D(+{\sf rt}(Q) ) \mathbb D(-{\sf rt}(Q))  
  \mathbb D(+{\sf rt}(P)  )\mathbb D(-{\sf rt}(P))
 \end{align*}
 where the  first and final equations  follow  from
  applying relation \eqref{adjacent} to the   pair of adjacent Dyck paths 
 $S, T $; the second equation is merely re-bracketing; 
  the third equation follows by applying 
  relation  \eqref{rel4}  for 
  the non-commuting pair 
  $T\prec {\sf right}(P)$ 
for both the left and right  bracketed term.

\begin{figure}[ht!]

$$
 \begin{tikzpicture}[scale=0.35]
  \path(0,0)--++(135:2) coordinate (hhhh);
 \draw[very thick] (hhhh)--++(135:7)--++(45:7)--++(-45:7)--++(-135:7);
 \clip (hhhh)--++(135:7)--++(45:7)--++(-45:7)--++(-135:7);
\path(0,0) coordinate (origin2);
  \path(0,0)--++(135:2) coordinate (origin3);

      \foreach \i in {0,1,2,3,4,5,6,7,8}
{
\path (origin3)--++(45:0.5*\i) coordinate (c\i); 
\path (origin3)--++(135:0.5*\i)  coordinate (d\i); 
  }

   \foreach \i in {0,1,2,3,4,5,6,7,8}
{
\path (origin3)--++(45:1*\i) coordinate (c\i); 
\path (c\i)--++(-45:0.5) coordinate (c\i); 
\path (origin3)--++(135:1*\i)  coordinate (d\i); 
\path (d\i)--++(-135:0.5) coordinate (d\i); 
\draw[thick,densely dotted] (c\i)--++(135:10);
\draw[thick,densely dotted] (d\i)--++(45:10);
  }

\path(0,0)--++(135:2) coordinate (hhhh);

\fill[opacity=0.2](hhhh)
--++(135:6)  coordinate (JJ)
--++(45:1)
--++(-45:1)--++(45:2)
--++(-45:2)  coordinate (JJ1) --++(45:1)
--++(-45:1)--++(45:2)--++(-45:1)--++(45:1)--++(-45:1);

\fill[opacity=0.4,magenta](hhhh)
  (JJ)
--++(135:1)
 --++(45:2)
--++(-45:2)--++(-135:1)--++(135:1)
;



\fill[opacity=0.4,cyan](hhhh)
  (JJ1)
--++(135:1)
 --++(45:2)
--++(-45:2)--++(-135:1)--++(135:1)
;

\path(origin3)--++(45:-0.5)--++(135:7.5) coordinate (X)coordinate (start);

\path (start)--++(135:1)coordinate (start);
 
\path (start)--++(45:5)--++(-45:6) coordinate (X) ;

\path (start)--++(45:2)--++(-45:3) coordinate (X) ; 
 
\path(X)--++(45:1)--++(135:1) coordinate (X) ;
 
\path (X)--++(-45:1) coordinate (X) ;
 \fill[violet](X) circle (4pt);
\draw[ thick, violet](X)--++(45:1) coordinate (X) ;
\fill[violet](X) circle (4pt);
\draw[ thick, violet](X)--++(-45:1) coordinate (X) ;
\fill[violet](X) circle (4pt);
\path(X)--++(45:1) coordinate (X) ;
\fill[orange](X) circle (4pt);
\draw[ thick, orange](X)--++( 45:1) coordinate (X) ;
\fill[orange](X) circle (4pt);
\draw[ thick, orange](X)--++(-45:1) coordinate (X) ;
\fill[orange](X) circle (4pt);
 \path(X)--++(-45:1) coordinate (X) ;
\fill[darkgreen](X) circle (4pt);
\draw[ thick, darkgreen](X)--++( 45:1) coordinate (X) ;
\fill[darkgreen](X) circle (4pt);
   \draw[ thick, darkgreen](X)--++(- 45:1) coordinate (X) ;
\fill[darkgreen](X) circle (4pt);

\end{tikzpicture}$$
\caption{We picture ${\color{magenta}P}, {\color{cyan}Q} \in {\rm DRem}(\la)$ for $\la=(7^2,5,4^2,2,1)$. 
We have  also pictured ${\color{darkgreen}{\sf rt}(Q)}, 
{\color{violet}S} \in {\rm DAdd}_1(\la)$ and ${\color{orange}T}\in {\rm DRem}
(\la+{\sf rt}(P))$.  We note that ${\sf rt}(P)= 
{\color{darkgreen}{\sf rt}(Q)}\sqcup
{\color{violet}S}\sqcup {\color{orange}T}$.
}
\label{alksghsalgdfghjdfghjd}
\end{figure}

\smallskip
\noindent     \textbf{Case 2.} For $P, Q \in {\rm DRem}_{>0}(\la) $, we claim  that 
\begin{equation*}
   \mathbb L^\la_\la(-P)
  \mathbb L^\la_\la(-Q)
  :=
(   \mathbb D^\la_{\la-P}   \mathbb D_\la^{\la-P})
(   \mathbb D^\la_{\la-Q}   \mathbb D_\la^{\la-Q})
=
(   \mathbb D^\la_{\la-Q}   \mathbb D_\la^{\la-Q})(   \mathbb D^\la_{\la-P}   \mathbb D_\la^{\la-P})  
  =:
   \mathbb L^\la_\la(-Q) \mathbb L^\la_\la(-P). 
\end{equation*}
 If $P$ and $ Q$ commute, then the claim follows by applying the commuting relations \eqref{rel3} and if $P=Q$ then the statement is obvious. Then without loss of generality, we can assume that ${\sf ht}(P)={\sf ht}(Q)+1$ and $Q \setminus P =Q^1 \sqcup Q^2$ where $Q^1, Q^2 \in {\rm DRem}(\la-P)$. In which case we have that 
\begin{align*}
    \mathbb L^\la_\la(-P)\mathbb L^\la_\la(-Q)
     &
     =
 {\mathbbm 1}_\la \mathbb D (-P)
 \mathbb D (+P)
  \mathbb D (-Q)
 \mathbb D (+Q)
\\     &
     =
  {\mathbbm 1}_\la \mathbb D (-P)
 \mathbb D (-Q^1)
  \mathbb D (-Q^2)
 \mathbb D (+Q)
\\       &
       =
  {\mathbbm 1}_\la \Big(\mathbb D (-P)
 \mathbb D (-Q^1)\Big)\Big(
  \mathbb D (-Q^2)
 \mathbb D (+Q)\Big)
\\       &
       =
  {\mathbbm 1}_\la \Big(\mathbb D (-Q)
 \mathbb D (+Q^2)\Big)\Big(
  \mathbb D (+Q^1)
 \mathbb D (+P)\Big)
  \\       &
       =
  {\mathbbm 1}_\la  \mathbb D (-Q)
   \mathbb D (+Q ) 
  \mathbb D (-P)
 \mathbb D (+P) 
 \\
 &=    \mathbb L^\la_\la(-P)\mathbb L^\la_\la(-Q)
 \end{align*}
where the first and final equalities are by \cref{L=lincombo};
the second and penultimate inequalities follow  by applying relation \eqref{rel4}
to the non-commuting pair $P\prec Q$;
the third equality is merely suggestive rebracketing;
the fourth equality follows by applying relation \eqref{adjacent} for the adjacent Dyck paths 
$Q^1$ and $P$ to {\em both} bracketed pairs.

%
%
%
\smallskip
\noindent     \textbf{Case 3.} For $P \in {\rm DRem}(\la)_0$, $Q \in {\rm DRem}(\la)_{>0}$, we claim  that 
\begin{equation*}
   \mathbb L^\la_\la(-P)
  \mathbb L^\la_\la(-Q)
  :=
  \mathbb L^\la_\la(-P)( \mathbb D^\la_{\la-Q}   \mathbb D_\la^{\la-Q})
  =
  (   \mathbb D^\la_{\la-Q}   \mathbb D_\la^{\la-Q})  \mathbb  L^\la_\la(-P)
  =:
   \mathbb L^\la_\la(-Q) \mathbb L^\la_\la(-P). 
\end{equation*}If $P$ and $Q$ commute, then the claim  follows from  \cref{h0loop_sq,lemmar3}.
So we can assume that $P,Q$ are adjacent. In particular, we have that $P \setminus  Q=P^1 \sqcup P^2$ where $P^1, P^2 \in {\rm DRem}(\la-Q)$. Under this assumption we have that 
\begin{align*}
    \mathbb L^\la_\la(-P)\mathbb L^\la_\la(-Q) 
    &
    =
     (-1)^{b(Q)} 
     {\mathbbm 1}_\la
     \mathbb L (-P) 
     \mathbb D (-Q)     \mathbb D (+Q)\\
    &= (-1)^{b(Q)}
     \mathbb D (-Q)
    \mathbb L (-P^2)    \mathbb D (+Q)    
    \\
    &=(-1)^{b(Q)}
    \mathbb D (-Q) \mathbb D (+Q) 
    \mathbb L (-\langle P^2 \sqcup Q\rangle_\la)
     \\  
    &=\mathbb L^\la_\la(-Q)\mathbb L^\la_\la(-P).
\end{align*}
where the second equality follows from   \cref{h0loop_sq}
and the third equality follows from  \cref{lemmar3}.
\end{proof}

Notice that every product we have considered thus far has been a product of 
a pair of element of degree at most 2 (each element corresponding to adding/removing a Dyck path of degree 1, or a loop of degree 2). 
We are now ready to consider more complicated products --- in particular, products of arbitrarily high degree.  In particular we will consider loops of arbitrarily high degree as follows. 
 Given $\la\setminus \alpha= (\la \setminus \alpha)_{\leq0} = \sqcup_{1\leq i \leq k} P^i$ a Dyck pair, we define 
 \begin{equation}\label{def: Lalpha}
     \mathbb L^\la_\la(\alpha)= \prod  _{1\leq i \leq k} \mathbb L^\la_\la(-P^i)
 \end{equation}
 which we observe is independent of the ordering on the Dyck paths in the tiling, by \cref{looploop}.
 With this notation in place, we can now look at the effect of multiplying loop tilings together.

 \begin{prop} \label{looploop-big}
Let 
 $(\la\setminus \alpha)= (\la \setminus \alpha)_{\leq0} = \sqcup_{i\in I} P^i$, and $(\la\setminus \beta)= (\la \setminus \beta)_{\leq0} = \sqcup_{j\in J} Q^j $.
We have that 
 $$
 \mathbb L^\la_\la(\alpha)
  \mathbb L^\la_\la(\beta)
  =
  \begin{cases}
   \mathbb L^\la_\la(\alpha\cap \beta)		&\text{if } P^i\neq Q^j \text { for } i \in I,j\in J	\\
   0		&\text{otherwise.}
   \end{cases}
 $$
  \end{prop}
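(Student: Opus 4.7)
The strategy rests on two ingredients already in hand from Proposition~\ref{looploop}: the loop generators at a fixed vertex $\la$ pairwise commute, and the square of any $\mathbb L^\la_\la(-P)$ with $P\in{\rm DRem}_0(\la)$ vanishes. With these in place, the proof is essentially a matter of re-organising the product and reading off the combinatorics.

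First I would invoke Proposition~\ref{looploop} to conclude that every pair of generators appearing in
\[
\mathbb L^\la_\la(\alpha)\,\mathbb L^\la_\la(\beta) \;=\; \Bigl(\prod_{i\in I}\mathbb L^\la_\la(-P^i)\Bigr)\Bigl(\prod_{j\in J}\mathbb L^\la_\la(-Q^j)\Bigr)
\]
commutes. This allows us to reorder the factors arbitrarily and to collect, for each Dyck path $R$ that appears both among the $P^i$ and among the $Q^j$, a factor of $(\mathbb L^\la_\la(-R))^2$.

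Next I would handle the two cases. If some $P^{i_0}=Q^{j_0}=R$, then the combinatorial description of a canonical tiling at heights $\leq 0$ forces each such $R$ to lie in ${\rm DRem}_0(\la)$ (the height-$0$ Dyck paths in the canonical tiling of $(\la\setminus\alpha)_{\leq 0}$ are precisely those directly removable from $\la$ at height $0$, and similarly for $\beta$). Hence Proposition~\ref{looploop} yields $(\mathbb L^\la_\la(-R))^2=0$, which kills the whole product, establishing the first branch of the claim.

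For the branch where all $P^i$ differ from all $Q^j$, I would identify the disjoint union $\{P^i\}_{i\in I}\sqcup\{Q^j\}_{j\in J}$ with the canonical tiling of $(\la\setminus(\alpha\cap\beta))_{\leq 0}$. The skew partition $\la\setminus(\alpha\cap\beta)$ coincides with $(\la\setminus\alpha)\cup(\la\setminus\beta)$, and since all constituent paths lie in ${\rm DRem}_0(\la)$ and are pairwise distinct, they are pairwise commuting (distant or adjacent) Dyck paths whose disjoint union is exactly the canonical tiling of this skew shape. The product of the corresponding $\mathbb L^\la_\la(-\cdot)$ factors is then, by the very definition~\eqref{def: Lalpha}, equal to $\mathbb L^\la_\la(\alpha\cap\beta)$.

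The main obstacle is the combinatorial identification in the final paragraph: verifying that when the tilings share no common path, the disjoint union is indeed the canonical height-$\leq 0$ tiling of $\la\setminus(\alpha\cap\beta)$ and that each individual $P^i,Q^j$ remains in ${\rm DRem}_0(\la)$ after the union is formed. This is a direct consequence of the structure of canonical Dyck tilings developed in Sections~3 and~4, together with the uniqueness of such tilings for a fixed skew shape noted in Definition~\ref{Dyckpair}.
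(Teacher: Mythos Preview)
Your overall strategy---reorder using Proposition~\ref{looploop}, then square a common height-zero loop to zero, or else recognise the product as $\mathbb L^\la_\la(\alpha\cap\beta)$---is exactly the paper's approach. However, one assertion you rely on is false and a second is under-justified.

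The false claim is that ``all constituent paths lie in ${\rm DRem}_0(\la)$''. The hypothesis $(\la\setminus\alpha)=(\la\setminus\alpha)_{\leq 0}$ constrains the \emph{tiling} heights ${\sf ht}^\la_\alpha(P^i)$, not the removable heights ${\sf ht}^\la(P^i)$. For instance, with $m=n=2$, $\la=(2,2)$ and $\alpha=\varnothing$, the canonical tiling of $\la\setminus\alpha$ consists of $P_{\rm outer}\in{\rm DRem}_0(\la)$ and $P_{\rm inner}\in{\rm DRem}_1(\la)$; both have tiling-height $\leq 0$, yet $P_{\rm inner}\notin{\rm DRem}_0(\la)$. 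So you cannot deduce that the paths are ``pairwise commuting (distant or adjacent)'' on these grounds; Dyck tilings permit nested (covering) pairs. For the non-vanishing branch the paper instead works under the assumption that $(\la\setminus\alpha)\cap(\la\setminus\beta)=\varnothing$ as skew shapes, whence $\la\setminus(\alpha\cap\beta)=(\la\setminus\alpha)\sqcup(\la\setminus\beta)$ is manifestly a Dyck tiling and the identification with $\mathbb L^\la_\la(\alpha\cap\beta)$ is immediate from~\eqref{def: Lalpha}.

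For the vanishing branch, your justification that a common path $R$ lies in ${\rm DRem}_0(\la)$ is incomplete: knowing which paths have tiling-height~$0$ does not by itself force a common path to be among them. The paper's argument is to choose a common path $T=P^{i_0}=Q^{j_0}$ with ${\sf ht}^\la(T)=h$ \emph{minimal}. If $h>0$ then, since ${\sf ht}^\la_\alpha(T)\leq 0$, there exists $P^{i'}\in{\rm DRem}_0(\la)$ in the $\alpha$-tiling with $T\prec P^{i'}$, and similarly $Q^{j'}\in{\rm DRem}_0(\la)$ in the $\beta$-tiling with $T\prec Q^{j'}$. Distinct paths in ${\rm DRem}_0(\la)$ have disjoint content intervals, so $P^{i'}=Q^{j'}$ is a common path of strictly smaller height, contradicting minimality. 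Hence $h=0$, and Proposition~\ref{looploop} gives $(\mathbb L^\la_\la(-T))^2=0$ as you wanted.
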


\begin{proof}
 We first consider the easy case in which $ (\la \setminus \alpha)\cap (\la \setminus \beta)=\emptyset$.    In this case   $\la \setminus (\alpha\cap\beta)$ is a Dyck pair with tiling
$$\la \setminus(\alpha\cap\beta)= \bigsqcup_{i\in I} P^i \sqcup \bigsqcup_{j\in J} Q^j = (\la \setminus\alpha) \sqcup (\la \setminus\beta).$$
In the notation of \eqref{def: Lalpha}
 we have that 
 $ \mathbb L^\la_\la(\alpha)  \mathbb L^\la_\la(\beta)=:\mathbb L^\la_\la(\alpha\cap \beta)\neq 0 $
(and we remind the reader that this product
  is entirely independent of any ordering on Dyck paths, by  \cref{looploop}).

Now suppose  that there exists $T=P^{i_0 } \in {\rm DRem}_h(\la)$ and $T=Q^{j_0} \in {\rm DRem}_h (\la)$ for $i_0\in I , j_0\in J$ and we can assume that $h$ is minimal with respect to this property. 
We claim that $h=0$.  
Suppose not, 
then our assumption that 
 $(\la\setminus \alpha)= (\la \setminus \alpha)_{\leq0} $ implies that $T=P^{i_0} \prec P^{i' } $ for some $i' \in I$ such that $P^{i' } \in {\rm DRem}_0(\la)$. Arguing similarly for $T=Q^{j_0}$ we deduce the claim. 
 Therefore we can rewrite the product  as follows
\begin{equation}\label{reordering}
  \mathbb L^\la_\la(\alpha)  \mathbb L^\la_\la(\beta)= \mathbb L^\la_\la(-T)  \mathbb L^\la_\la(-T) \prod _{i \in I \setminus \{i_0\}} \mathbb L^\la_\la(-P^i)\prod _{j \in J\setminus\{j_0\}} \mathbb L^\la_\la(-Q^j)  =0 
\end{equation}
  where the latter equality follows by  \eqref{looploop} and the fact that $T \in {\rm  DRem}_0(\la)$.
 \end{proof}

 \begin{prop}
 \label{loopremove-big1}
Let 
 $(\la\setminus \alpha)= (\la \setminus \alpha)_{\leq0} = \sqcup_{i=0}^{k} T^i$ such that $T^k \prec T^{k-1} \prec \dots \prec T^1\prec T^0$ with ${\sf ht}^{\la}_{\alpha}(T^i)=-i \in \ZZ_{\leq 0}$. 
 Then
 $$
\Big( \prod_{i=0}^k \mathbb L^\la_\la(-T^i) \Big) \mathbb D^\la_{\la-T^k}=0.
 $$
 \end{prop}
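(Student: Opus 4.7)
The plan is to reduce the left-most loop product to a product on $\la-T^k$, trigger via the self-dual relation~\eqref{rel2} the appearance of a duplicate loop, and conclude by \cref{looploop-big}.

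Since $T^k\prec T^0$ with $T^0\in {\rm DRem}_0(\la)$ at physical height zero, the innermost path $T^k$ sits at strictly positive physical height in $\la$, so by \cref{L=lincombo} we have $\mathbb L^\la_\la(-T^k) = (-1)^{b(T^k)}\mathbb D^\la_{\la-T^k}\mathbb D^{\la-T^k}_\la$. Hence
\[
\mathbb L^\la_\la(-T^k)\mathbb D^\la_{\la-T^k} = (-1)^{b(T^k)}\mathbb D^\la_{\la-T^k}\bigl(\mathbb D^{\la-T^k}_\la\mathbb D^\la_{\la-T^k}\bigr),
\]
and I would apply the self-dual relation~\eqref{rel2} (viewing $T^k$ as an addable Dyck path to $\la-T^k$) to rewrite the bracketed product as a signed combination of loops $\mathbb L^{\la-T^k}_{\la-T^k}(-Q)$, indexed by $Q\in {\rm DRem}(\la-T^k)$ that either cover or are adjacent to $T^k$.

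Next, I would push each remaining loop $\mathbb L^\la_\la(-T^i)$ for $0\leq i<k$ past the outer $\mathbb D^\la_{\la-T^k}$ using \cref{h0loop_sq} (or \cref{h0loop_rect} for $m<n$). Since $T^k\prec T^i$, the pair does not commute: write $T^i\setminus T^k = (T^i)^1\sqcup (T^i)^2$; the lemma then provides
\[
\mathbb L^\la_\la(-T^i)\mathbb D^\la_{\la-T^k} = \mathbb D^\la_{\la-T^k}\mathbb L^{\la-T^k}_{\la-T^k}(-(T^i)^{j_i}) \quad \text{for \emph{either} } j_i\in\{1,2\}.
\]
Combining the two transformations, the full product becomes a sum over the $Q$'s of terms of shape $\mathbb D^\la_{\la-T^k}\cdot\prod_{i<k}\mathbb L^{\la-T^k}_{\la-T^k}(-(T^i)^{j_i})\cdot \mathbb L^{\la-T^k}_{\la-T^k}(-Q)$. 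The crucial combinatorial claim is that every such $Q$ is a split-piece $(T^i)^j$ of some nested ancestor of $T^k$: the adjacent paths to $T^k$ in $\la-T^k$ are exactly $(T^{k-1})^1,(T^{k-1})^2$, and any $Q\succ T^k$ in $\la-T^k$ arises from the split of a further-outer $T^i$, $i<k-1$, by the nested tiling structure of $(\la\setminus\alpha)_{\leq 0}$. Exploiting the freedom in choosing each $j_i$, we arrange $Q=(T^{i_0})^{j_{i_0}}$ for some $i_0<k$ in every summand, injecting a duplicate into the loop product; by \cref{looploop-big}, every such term then vanishes.

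The base case $k=0$ is a separate short computation: expand $\mathbb L^\la_\la(-T^0)$ via the ${\sf rt}(T^0)$-formula of \cref{L=lincombo}, apply the dual of the adjacency relation~\eqref{adjacent} to the adjacent addable pair $T^0,{\sf rt}(T^0)$, then invoke the cubic relation~\eqref{cubic} (together with loop-nilpotency~\eqref{loop-relation} when $m=n$) to conclude. The hard part is the combinatorial matching in the inductive step: verifying that no extraneous Dyck paths $Q$ appear in the self-dual expansion---that is, that every $Q\in {\rm DRem}(\la-T^k)$ above or adjacent to $T^k$ really is some split-piece $(T^i)^{j}$ of a nested ancestor---so that \cref{looploop-big} applies uniformly across all summands.
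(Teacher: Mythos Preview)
Your strategy mirrors the paper's: expand $\mathbb L(-T^k)\,\mathbb D(-T^k)$ via the self-dual relation and then exhibit a duplicate loop in every summand. The paper pulls the resulting loops back from $\la-T^k$ to $\la$ rather than pushing the outer loops forward as you do, but this is cosmetic. Two of your details are wrong though easily repaired. First, the base case $k=0$ makes no sense: $T^0$ has physical height $0$, so $\la-T^0\notin\mathscr R_{m,n}$ and $\mathbb D^\la_{\la-T^0}$ is not even a generator of $\mathcal A_{m,n}$; the paper starts the induction at $k=1$. Second, in a consecutive-height nested chain the pair $T^i,T^k$ \emph{commute} for $i\leq k-2$, so only $T^{k-1}$ splits when pushed through $\mathbb D(-T^k)$; moreover \cref{h0loop_rect,h0loop_sq} only treat loop paths of height zero, so for $i>0$ you must use \eqref{rel3} (or the positive-height case of \cref{looploop}) directly. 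With these corrections the matching still works: the $Q$'s appearing in the self-dual expansion are exactly $T^0,\dots,T^{k-2}$ (covering) and $(T^{k-1})^1,(T^{k-1})^2$ (adjacent), each duplicating a loop already present.

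The genuine gap is your final appeal to \cref{looploop-big}. That proposition requires \emph{both} loop products to arise from $\leq 0$ tilings, and the single-loop factor $\{Q\}$ has canonical height equal to its physical height ${\sf ht}^{\la-T^k}(Q)>0$ whenever $Q\neq T^0$. So \cref{looploop-big} as stated only kills the $Q=T^0$ summand. For the remaining summands you are left with a repeated factor $\mathbb L(-T^l)^2$ with $l>0$, and this is \emph{not} zero on its own: one must use the surrounding loops $\mathbb L(-T^0)\cdots\mathbb L(-T^{l-1})$ to kill it, via a secondary descent---expand $\mathbb L(-T^l)^2$ once more by self-dual to produce a duplicate at strictly smaller index, iterating down to $\mathbb L(-T^0)^2=0$ where \cref{looploop} finally applies. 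The paper's justification at this step is equally terse, but its induction on $k\geq 1$ at least supplies the right framework; you should replace the appeal to \cref{looploop-big} with this inner argument.
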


 \begin{proof} For ease of notation, we set $b_i=b(T^{i})$ for all $i\geq 0$.
We proceed by induction on $k\geq1$.
 We first consider the base case in which   $k=1$. Then we have $T^0 \setminus T^1=P\sqcup Q$ for some $P,Q \in {\rm DRem}_0(\la-T^1)$. 
 Since $T^1 \in {\rm DRem}_1(\la)$, we have that 
\begin{equation*}
    {\mathbbm 1}_{\la}\mathbb L(-T^0) \mathbb L(-T^1)  \mathbb D(-T^1) {\mathbbm 1}_{\la-T^1} = 
   (-1)^{b_{1}}
      {\mathbbm 1}_{\la}\mathbb L(-T^0) 
   \mathbb D(-T^1)  \mathbb D(+T^1)\mathbb D(-T^1) {\mathbbm 1}_{\la-T^1}
\end{equation*}
simply by \cref{L=lincombo}. 
We now apply  to the  
subproduct $ \mathbb D(+T^1)\mathbb D(-T^1) {\mathbbm 1}_{\la-T^1}$
 the self-dual relation \eqref{rel2} and hence we obtain 
\begin{equation*}
    (-1)^{b_{1}}  \mathbbm 1_{\la}
 \mathbb L(-T^0)   
      \mathbb D(-T^1)
      \bigg((-1)^{b(P)-1}  \mathbb L(-P) + (-1)^{b(Q)-1}  \mathbb L(-Q) \bigg)
\end{equation*}
and, applying Lemma \ref{h0loop_rect} and \ref{h0loop_sq}, we obtain  
\begin{equation*}
    (-1)^{b_{1}}  \mathbbm 1_{\la}
 \mathbb L(-T^0)   
      \bigg((-1)^{b(P)-1}  \mathbb L(-T^0)       + (-1)^{b(Q)-1}  \mathbb L(-T^0) \bigg)
       \mathbb D(-T^1)
\end{equation*}
and hence, applying \cref{looploop} to the subproduct 
$ \mathbb L(-T^0)    \mathbb L(-T^0)   $ we obtain that the overall product is zero, thus completing the base case of our induction.

We now turn to the inductive step. We suppose that 
\begin{equation}\label{inductive_step}
  \Big( \prod_{i=0}^{j-1} \mathbb L^\la_\la(-T^i) \Big)\Big(
  \mathbbm 1_\la \mathbb L (-T^j) \mathbb D(-T^j)\Big)=0  
\end{equation}
 for all $j<k$. We want to show that \eqref{inductive_step} holds for $j=k$ (using essentially the same argument as in the base case). Then we have $T^{k-1} \setminus T^{k}=P\sqcup Q$ for some $P,Q \in {\rm DRem}_{k-1}(\la-T^k)$. Within  \eqref{inductive_step} with $j=k$, we consider   the   subproduct  of the form
\begin{equation*} 
    {\mathbbm 1}_{\la} \mathbb L(-T^k)  \mathbb D(-T^k)  
    =
      (-1)^{b _k } \mathbb D(-T^k)  \mathbb D(+T^k)\mathbb D(-T^k)  
\end{equation*}
 simply by \cref{L=lincombo}. 
We now apply to the 
subproduct $ \mathbb D(+T^k)\mathbb D(-T^k) {\mathbbm 1}_{\la-T^k}$
 the self-dual relation \eqref{rel2}  and we hence obtain 
\begin{equation*}
-\mathbbm 1_{\la}\mathbb D(-T^k)\Big((-1)^{b(P)}\mathbb L(-P)+ (-1)^{b(Q)}\mathbb L(-Q) 
    + 2(-1)^{b_{k-2}}\mathbb L(-T^{k-2}) + \cdots + 2(-1)^{b_{0}}\mathbb L(-T^{0})\Big). 
\end{equation*}
Applying  Lemma \ref{h0loop_rect} and \ref{h0loop_sq}, we get
\begin{equation*}
-\mathbbm 1_{\la}\Big((-1)^{b(P)}\mathbb L(-T^{k-1})+ (-1)^{b(Q)}\mathbb L(-T^{k-1}) 
    + 2(-1)^{b_{k-2}}\mathbb L(-T^{k-2}) + \cdots + 2(-1)^{b_{0} }\mathbb L(-T^{0})\Big)\mathbb D(-T^k)
\end{equation*}
and  we observe that every summand is a scalar multiplied by some $\mathbbm 1_\la \mathbb L(-T^i) \mathbb D(-T^k)$ for $0\leq i<k$; therefore when we substitute these terms into 
the rightmost bracketed term of  \eqref{inductive_step} the resulting product is zero by the loop-nilpotency relation \eqref{loop-relation} and by \cref{looploop}. The result follows.
\end{proof}

 \begin{lem} \label{loopadd-big1}
  Let $T^k \prec T^{k-1} \prec \dots \prec T^1\prec T^0$ be elements of ${\rm DRem}(\la)$ with ${\sf ht}(T^i)=i \in \ZZ_{\geq 0}$ and set $\alpha=\la-T^0-T^1-\dots - T^k$. 
Let $S \in {\rm DAdd}(\la)$  be adjacent to $T^k$.  We have that 
 $$
\mathbbm1_\la \mathbb L(\alpha) 
\mathbb D(+S)=
\begin{cases}  
\mathbbm1_\la 
\mathbb D(+S) \mathbb L(\alpha+S+T^k)  
\mathbb L   (-\langle T^k\cup S\rangle _{\la+S })   
     &\text{ if $ \langle T^k\cup S\rangle _{\la+S  }$  		exists;		} 			\\
0. &\text{ otherwise}
\end{cases}
 $$
 \end{lem}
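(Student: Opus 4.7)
I proceed by induction on the tower height $k$.

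\textbf{Base case $k=0$.} Here $\alpha = \la - T^0$ and $\alpha + S + T^0 = \la + S$, so $\mathbb{L}^{\la+S}_{\la+S}(\alpha+S+T^0) = \mathbbm{1}$ is an empty product. The claim reduces to $\mathbbm{1}_\la\mathbb{L}^\la_\la(-T^0)\mathbb{D}^\la_{\la+S} = \mathbbm{1}_\la\mathbb{D}^\la_{\la+S}\mathbb{L}^{\la+S}_{\la+S}(-\langle T^0 \cup S\rangle_{\la+S})$ (or $0$ if the bracketed path fails to exist), which is precisely the adjacent case of \cref{lemmar3}; the vanishing case follows by expanding $\mathbb{L}^\la_\la(-T^0)$ via \cref{L=lincombo} and invoking the null clause of the adjacency relation \eqref{adjacent}.

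\textbf{Inductive step $k\geq 1$.} I split the loop product as $\mathbb{L}(\alpha) = \mathbb{L}(\beta)\,\mathbb{L}^\la_\la(-T^k)$ with $\beta := \la-T^0-\cdots-T^{k-1}$ (factors commute by \cref{looploop-big}); since $\mathsf{ht}(T^k)=k>0$, \cref{L=lincombo} gives $\mathbb{L}^\la_\la(-T^k) = (-1)^{b(T^k)}\mathbb{D}^\la_{\la-T^k}\mathbb{D}^{\la-T^k}_\la$. I then apply the adjacency relation \eqref{adjacent} to the rightmost pair $\mathbb{D}^{\la-T^k}_\la\mathbb{D}^\la_{\la+S}$, viewed as a two-step addition of $(T^k, S)$ inside $\la+S$: the vanishing clause of \eqref{adjacent} disposes of the non-existence case; otherwise, this pair rewrites with a sign $(-1)^{b(\langle T^k\cup S\rangle_{\la+S})-b(T^k)}$, introducing the bridging Dyck path $R := \langle T^k\cup S\rangle_{\la+S}\setminus(T^k\cup S)$.

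The next step is to rebracket the resulting expression. The composite that removes $T^k$ then $R$ from $\la$ factors, via the dual of \eqref{adjacent} applied to the adjacent pair $(T^k, R)$ in $\la$ together with the non-commuting relation \eqref{rel4}, into $\mathbb{D}^\la_{\la+S}\,\mathbb{L}^{\la+S}_{\la+S}(-\langle T^k\cup S\rangle_{\la+S})$, with the accumulated signs matching \cref{L=lincombo} for the positive-height loop in $\la+S$. Finally I push the factor $\mathbb{D}^\la_{\la+S}$ leftward through $\mathbb{L}(\beta) = \prod_{i=0}^{k-1}\mathbb{L}^\la_\la(-T^i)$. Because $S$ is adjacent to the innermost $T^k$ and each $T^i$ for $i<k$ strictly contains $T^k$, the geometry forces $S$ to commute with every $T^i$; hence \cref{lemmar3} (for $i=0$) and, for $1\leq i\leq k-1$, two applications of the commuting relation \eqref{rel3} inside the expansion of \cref{L=lincombo} rewrite each $\mathbb{L}^\la_\la(-T^i)\mathbb{D}^\la_{\la+S}$ as $\mathbb{D}^\la_{\la+S}\mathbb{L}^{\la+S}_{\la+S}(-T^i)$. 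Collecting these yields the claimed identity.

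\textbf{Main obstacle.} The delicate core is the middle rebracketing: one must verify that $R$ is genuinely adjacent to $T^k$ in $\la$ (so the dual of \eqref{adjacent} applies), and that the combined signs from \eqref{adjacent} and \eqref{rel4} collapse to precisely $(-1)^{b(\langle T^k\cup S\rangle_{\la+S})}$, matching \cref{L=lincombo} for the new loop. A secondary geometric check is required to confirm that $\langle T^k\cup S\rangle_{\la+S}$ has strictly positive height in $\la+S$ (so the positive-height branch of \cref{L=lincombo} applies consistently), and that every $T^i$ with $i<k$ remains at its original positive height in $\la+S$, which follows from the nested tower structure since $S$ sits geometrically above $T^k$.
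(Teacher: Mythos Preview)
Your approach coincides with the paper's: factor $\mathbb L(\alpha)=\mathbb L(\alpha+T^k)\,\mathbb L(-T^k)$ by commutativity of loops, swap $\mathbb L(-T^k)$ past $\mathbb D(+S)$ via the adjacency interaction, then commute $\mathbb D(+S)$ past each remaining $\mathbb L(-T^j)$ for $j<k$ since $S$ commutes with those $T^j$. The paper does this in three lines by citing \eqref{uuuuuuuu2} and \eqref{uuuuuuuu1} of \cref{lemmar3}; since that proposition is only stated for height-zero loops, your explicit unfolding of the $k\geq 1$ step via \cref{L=lincombo}, \eqref{adjacent} and \eqref{rel4} supplies a detail the paper leaves implicit --- but note that your induction hypothesis is never actually invoked (each layer is handled directly), and the rebracketing you flag as the ``main obstacle'' is a single application of the dual of \eqref{rel4} to the non-commuting pair $S\prec M$ in $\la+S$, giving $\mathbb D^\la_{\la-T^k}\mathbb D^{\la-T^k}_{\la+S-M}=\mathbb D^\la_{\la+S}\mathbb D^{\la+S}_{\la+S-M}$ in one move, with no need to introduce the auxiliary path $R$.
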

An example of the  Dyck tilings involved in \cref{loopadd-big1} is given in \cref{alksghsalgdfghjdfghjd2345}.

\begin{proof}
Recall that the loop generators commute and so we can rewrite the product as follows
\begin{align*}
\mathbbm1_\la \mathbb L(\alpha) 
\mathbb D(+S)
&=
\mathbbm1_\la \mathbb L(\alpha+T^k) 
\mathbb L(-T^k)
\mathbb D(+S)
\\&=
\begin{cases}
\mathbbm1_\la \mathbb L(\alpha+T^k) 
\mathbb D(+S) \mathbb L(-\langle T^k\cup S\rangle_{\la+S})   &\text{ if $ \langle T^k\cup S\rangle _{\la+S  }$  		exists;		} 			\\
0. &\text{ otherwise}
\end{cases}
\end{align*}
by \cref{uuuuuuuu2}. 
Now $S$ necessarily commutes with the $T^j$ for $0\leq j<k$ 
and so the result follows by \cref{uuuuuuuu1}.
\end{proof}

\begin{figure}[ht!]

$$
 \begin{tikzpicture}[scale=0.35]
  \path(0,0)--++(135:2) coordinate (hhhh);
 \draw[very thick] (hhhh)--++(135:6)--++(45:6)--++(-45:6)--++(-135:6);
 \clip (hhhh)--++(135:6)--++(45:6)--++(-45:6)--++(-135:6);
\path(0,0) coordinate (origin2);
  \path(0,0)--++(135:2) coordinate (origin3);

      \foreach \i in {0,1,2,3,4,5,6,7,8}
{
\path (origin3)--++(45:0.5*\i) coordinate (c\i); 
\path (origin3)--++(135:0.5*\i)  coordinate (d\i); 
  }

   \foreach \i in {0,1,2,3,4,5,6,7,8}
{
\path (origin3)--++(45:1*\i) coordinate (c\i); 
\path (c\i)--++(-45:0.5) coordinate (c\i); 
\path (origin3)--++(135:1*\i)  coordinate (d\i); 
\path (d\i)--++(-135:0.5) coordinate (d\i); 
\draw[thick,densely dotted] (c\i)--++(135:10);
\draw[thick,densely dotted] (d\i)--++(45:10);
  }

\path(0,0)--++(135:2) coordinate (hhhh);

\fill[opacity=0.2](hhhh)
--++(135:4)  coordinate (JJ)
--++(45:1)
--++(-45:1)--++(45:2)
--++(-45:2)  coordinate (JJ1) --++(45:1)
--++(-45:1);

\fill[opacity=0.2](hhhh)
  (JJ)
--++(135:1)coordinate (JJ)
 --++(45:2)
--++(-45:2)--++(-135:1)--++(135:1)
;

\fill[opacity=0.4,darkgreen](hhhh)
  (JJ)
--++(135:1) 
 --++(45:3)
--++(-45:2) 
 --++(45:1)
--++(-45:1)  --++(45:2)
--++(-45:3) 
 --++(-135:1)
--++(135:2) 
 --++(-135:2)
--++(135:1)  --++(-135:1)
--++(135:2) 
;



\fill[opacity=0.4,cyan](hhhh)
  (JJ1)
--++(135:1)
 --++(45:2)
--++(-45:2)--++(-135:1)--++(135:1)
;

\path(origin3)--++(45:-0.5)--++(135:7.5) coordinate (X)coordinate (start);

\path (start)--++(135:1)coordinate (start);
 
\path (start)--++(45:5)--++(-45:6) coordinate (X) ;

\path (start)--++(45:2)--++(-45:5) coordinate (X) ; 
 
\path(X)--++(45:2)--++(135:2) coordinate (X) ;
 
\path (X)--++(-45:1) coordinate (X) ;
 \fill[magenta](X) circle (4pt);
\draw[ thick, magenta](X)--++(45:1) coordinate (X) ;
\fill[magenta](X) circle (4pt);
\draw[ thick, magenta](X)--++(-45:1) coordinate (X) ;
\fill[magenta](X) circle (4pt);
%
%

\end{tikzpicture}
\qquad
\begin{tikzpicture}[scale=0.35]
  \path(0,0)--++(135:2) coordinate (hhhh);
 \draw[very thick] (hhhh)--++(135:6)--++(45:6)--++(-45:6)--++(-135:6);
 \clip (hhhh)--++(135:6)--++(45:6)--++(-45:6)--++(-135:6);
\path(0,0) coordinate (origin2);
  \path(0,0)--++(135:2) coordinate (origin3);

      \foreach \i in {0,1,2,3,4,5,6,7,8}
{
\path (origin3)--++(45:0.5*\i) coordinate (c\i); 
\path (origin3)--++(135:0.5*\i)  coordinate (d\i); 
  }

   \foreach \i in {0,1,2,3,4,5,6,7,8}
{
\path (origin3)--++(45:1*\i) coordinate (c\i); 
\path (c\i)--++(-45:0.5) coordinate (c\i); 
\path (origin3)--++(135:1*\i)  coordinate (d\i); 
\path (d\i)--++(-135:0.5) coordinate (d\i); 
\draw[thick,densely dotted] (c\i)--++(135:10);
\draw[thick,densely dotted] (d\i)--++(45:10);
  }

\path(0,0)--++(135:2) coordinate (hhhh);

\fill[opacity=0.2](hhhh)
--++(135:4)  coordinate (JJ)
--++(45:1)
--++(-45:1)--++(45:1)coordinate (JJX)--++(45:1)
--++(-45:2)  coordinate (JJ1) --++(45:1)
--++(-45:1);

\fill[opacity=0.4,violet](hhhh)
  (JJ)
--++(135:1)coordinate (JJ)
 --++(45:2)
--++(-45:2)--++(-135:1)--++(135:1)
;

\fill[opacity=0.4,darkgreen](hhhh)
  (JJ)
--++(135:1) 
 --++(45:3)
--++(-45:1)  
 --++(45:2)
--++(-45:2)  --++(45:1)
--++(-45:3) 
 --++(-135:1)
--++(135:2) 
 --++(-135:1)
--++(135:2)  --++(-135:2)
--++(135:1) 
;

\fill[opacity=0.4,violet](hhhh)
  (JJX)
--++(135:1)
 --++(45:2)
--++(-45:2)--++(-135:1)--++(135:1)
;



\fill[opacity=0.4,violet](hhhh)
  (JJ1)
--++(135:1)
 --++(45:2)
--++(-45:2)--++(-135:1)--++(135:1)
;

\path(origin3)--++(45:-0.5)--++(135:7.5) coordinate (X)coordinate (start);

\path (start)--++(135:1)coordinate (start);
 
\path (start)--++(45:5)--++(-45:6) coordinate (X) ;

\path (start)--++(45:2)--++(-45:5) coordinate (X) ; 
 
\path(X)--++(45:2)--++(135:2) coordinate (X) ;
%
%

\end{tikzpicture}$$
\caption{ On the left we depict a pair 
${\color{cyan}T^{1}}\prec \color{darkgreen}T^0$ in ${\rm DRem}(\la)$
and ${\color{magenta}S } \in {\rm DAdd}(\la)$
 for $\la=(6^3,4,3^3)$ as in \cref{loopadd-big1}. 
 We note that  ${\color{magenta}S }$ and ${\color{cyan}T^{1}}$ are adjacent.  
 On the right we depict the paths 
 $ \color{darkgreen}T^0$ and $ \color{violet}\langle T^1\cup S\rangle_{\la+S} $.
}
\label{alksghsalgdfghjdfghjd2345}
\end{figure}

 \begin{prop}
 \label{chris-lem}
 Let $Q^1\prec Q^2 \prec Q^3$ be removable Dyck paths of $\la \in \mathscr{R}_{m,n}$.
  Suppose that $Q^3 \in {\rm DRem}_0(\la)$ and 
  that $Q^1$ does not commute with $Q^2$ and that $Q^2$ does not commute with $Q^3$.  
 We have that 
 $$
 \mathbb L^\la_\la(-Q^3)
  \mathbb D^\la_{\la-Q^2} 
  \mathbb L^{\la-Q^2}_{\la-Q^2}(-Q^1)
  =
 \mathbb L^\la_\la(-Q^3)
   \mathbb L^{\la}_{\la}(-Q^1)
  \mathbb D^\la_{\la-Q^2} .
 $$
 \end{prop}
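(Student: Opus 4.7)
The plan is to transport the outermost loop $\mathbb L^\la_\la(-Q^3)$ past $\mathbb D^\la_{\la-Q^2}$ on both sides, reducing the identity to a statement about loop products in $\la-Q^2$ that can be handled by the loop-loop commutation result \cref{looploop} together with the non-commuting relation \eqref{rel4}.

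First I would handle the LHS. Since $Q^3 \in {\rm DRem}_0(\la)$ and $Q^2 \in {\rm DRem}_{>0}(\la)$ with $Q^2 \prec Q^3$ non-commuting, we have $Q^3 \setminus Q^2 = (Q^3)^1 \sqcup (Q^3)^2 \in {\rm DRem}(\la-Q^2)$. Applying \cref{h0loop_sq} (or \cref{h0loop_rect} in the rectangular case) to $\mathbb L^\la_\la(-Q^3) \mathbb D^\la_{\la-Q^2}$ yields $\mathbb D^\la_{\la-Q^2} \mathbb L^{\la-Q^2}_{\la-Q^2}(-(Q^3)^j)$ for a fixed choice of $j\in\{1,2\}$. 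Thus the LHS becomes
\[
 \mathbb D^\la_{\la-Q^2}\,\mathbb L^{\la-Q^2}_{\la-Q^2}(-(Q^3)^j)\,\mathbb L^{\la-Q^2}_{\la-Q^2}(-Q^1),
\]
and the two loops in $\la-Q^2$ commute by \cref{looploop}, giving $\mathbb D^\la_{\la-Q^2}\,\mathbb L^{\la-Q^2}_{\la-Q^2}(-Q^1)\,\mathbb L^{\la-Q^2}_{\la-Q^2}(-(Q^3)^j)$.

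For the RHS, I first commute $\mathbb L^\la_\la(-Q^3)$ past $\mathbb L^\la_\la(-Q^1)$ using \cref{looploop}, and then apply \cref{h0loop_sq} exactly as before to the subword $\mathbb L^\la_\la(-Q^3)\mathbb D^\la_{\la-Q^2}$ (with the same split $(Q^3)^j$). This rewrites the RHS as $\mathbb L^\la_\la(-Q^1)\,\mathbb D^\la_{\la-Q^2}\,\mathbb L^{\la-Q^2}_{\la-Q^2}(-(Q^3)^j)$.

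After these reductions, the proposition reduces to showing
\[
 \bigl(\mathbb D^\la_{\la-Q^2}\,\mathbb L^{\la-Q^2}_{\la-Q^2}(-Q^1) \;-\; \mathbb L^\la_\la(-Q^1)\,\mathbb D^\la_{\la-Q^2}\bigr)\,\mathbb L^{\la-Q^2}_{\la-Q^2}(-(Q^3)^j) \;=\; 0.
\]
The hard part is this final cancellation: because $Q^1 \prec Q^2$ are non-commuting, the loop $\mathbb L(-Q^1)$ does \emph{not} commute with $\mathbb D(-Q^2)$ in general. Expanding both positive-height loops via \cref{L=lincombo} and rewriting the product $\mathbb D^\la_{\la-Q^2}\mathbb D^{\la-Q^2}_{\la-Q^2-Q^1}$ (respectively $\mathbb D^\la_{\la-Q^1}\mathbb D^{\la-Q^1}_\la\mathbb D^\la_{\la-Q^2}$) via \eqref{rel4} applied to the non-commuting pair $Q^1\prec Q^2$ produces a residual of the form $\mathbb D^\la_{\la-Q^2}\cdot(\text{products of }\mathbb D\text{'s passing through }\la-Q^1-(Q^2)^i)$, which---after combining with $\mathbb L^{\la-Q^2}(-(Q^3)^j)$---lies in a nested tower of loops whose vanishing follows from \cref{loopremove-big1} (since $(Q^3)^j$ dominates the lower-height pieces). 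Carefully matching the heights and breadths in this final identification, with attention to the signs provided by \cref{L=lincombo} and \eqref{adjacent}, will be the only non-mechanical step.
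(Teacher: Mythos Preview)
Your reduction step---pushing $\mathbb L^\la_\la(-Q^3)$ past $\mathbb D^\la_{\la-Q^2}$ on both sides via \cref{h0loop_sq}---is legitimate, but it does not actually simplify the problem: the residual identity
\[
\bigl(\mathbb D^\la_{\la-Q^2}\,\mathbb L^{\la-Q^2}_{\la-Q^2}(-Q^1)\;-\;\mathbb L^{\la}_{\la}(-Q^1)\,\mathbb D^\la_{\la-Q^2}\bigr)\,\mathbb L^{\la-Q^2}_{\la-Q^2}\bigl(-(Q^3)^j\bigr)=0
\]
carries all of the difficulty, and your sketch of the final step has a real gap. The invocation of \cref{loopremove-big1} is misplaced: that result requires a nested tower $T^k\prec\cdots\prec T^0$ at heights $0,-1,\ldots,-k$ followed by a $\mathbb D(-T^k)$, whereas here both $Q^1$ and $(Q^3)^j$ sit at height~$0$ in $\la-Q^2$ and there is no such descending structure. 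The mechanism that actually kills the residual is the nilpotency $\mathbb L^\la_\la(-Q^3)^2=0$ from \cref{looploop}, not \cref{loopremove-big1}.

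Concretely, the paper computes $\mathbb D(-Q^2)\mathbb L^{\la-Q^2}(-Q^1)$ directly by expanding the \emph{height-zero} loop $\mathbb L^{\la-Q^2}(-Q^1)$ via \cref{L=lincombo} (note $Q^1\in{\rm DRem}_0(\la-Q^2)$, so this loop is \emph{not} the positive-height form $(-1)^{b(Q^1)}\mathbb D(-Q^1)\mathbb D(+Q^1)$ you suggest), and after applying \eqref{loop-relation}, \eqref{adjacent}, and \eqref{rel4} obtains
\[
\mathbb D(-Q^2)\,\mathbb L^{\la-Q^2}(-Q^1)\;=\;-\,\mathbb L^\la_\la(-Q^3)\,\mathbb D(-Q^2)\;+\;\mathbb L^\la_\la(-Q^1)\,\mathbb D(-Q^2).
\]
In your language the bracketed difference is then $-\mathbb L^\la_\la(-Q^3)\mathbb D(-Q^2)$, and right-multiplying by $\mathbb L^{\la-Q^2}(-(Q^3)^j)$ and converting back via \cref{h0loop_sq} produces $-\mathbb L^\la_\la(-Q^3)^2\mathbb D(-Q^2)=0$. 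This is the case ${\sf last}(Q^3)=m-1$; the paper treats the case ${\sf last}(Q^3)<m-1$ separately, working instead from the RHS and using the self-dual relation \eqref{rel2} to produce an error term of the form $\mathbb L(-Q^3)\mathbb D(-Q^2)\mathbb L(-P_2)$, which is again shown to vanish via $\mathbb L(-Q^3)^2=0$. Your proposal does not distinguish these cases and misses the nilpotency argument in both.
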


 \begin{proof}
 We focus on the $m = n$ case (the $m \neq n$ case is similar, but easier). 
 By our assumptions  
  $Q^2 \in {\rm DRem}_{  1}(\la)$, $Q^1 \in {\rm DRem}_{  2}(\la)$
   and 
    $Q^1\in {\rm DRem}_{  0}(\la-Q^2)$ is 
    such that ${\sf last}(Q^2)<m-1$. 
     By \cref{L=lincombo} we have that
 \begin{align*}
 \mathbb L^{\la-Q^2}_{\la-Q^2}(-Q^1) 
 &= 
 - \mathbb L^{\la-Q^2}_{\la-Q^2} - (-1)^{b({\sf rt}(Q^1))} \mathbbm 1_{\la-Q^2}\mathbb D(+{\sf rt}(Q^1))\mathbb D(-{\sf rt}(Q^1))
 \\
    \mathbb L^{\la}_{\la}(-Q^1) &=  (-1)^{b(Q^1)} \mathbbm 1_{\la}\mathbb D(-Q^1)\mathbb D(+Q^1).
 \end{align*}
 Moreover, since $Q^1,Q^2$ do not commute and $Q^1 \prec Q^2$, we have that 
 $ Q^2 \setminus Q^1=P \sqcup T$ for some $P , T \in {\rm DRem}_1(\la-Q^1)$ and we can assume that $P$ is to the left of $T$.
 
 We first consider the case that $Q^3 \in {\rm DRem}_0(\la)$ is the rightmost removable Dyck path, in which case 
 $\mathbb L^{\la}_{\la}(-Q^3)=\mathbb L^{\la}_{\la}$. 
Now,  since  $Q^1,Q^2$ do not commute, our assumption on $Q^3$ implies  that $T = {\sf rt}(Q^1)$.
 We have that 
\begin{align}
  \mathbbm 1_\la
    \mathbb D(-Q^2)
    \mathbb L (-Q^1)&=
 {\mathbbm 1}_\la   
 \mathbb D(-Q^2)
  (- \mathbb L^{\la-Q^2}_{\la-Q^2}
   -
    (-1)^{b({\sf rt}(Q^1))} \mathbb D(+{\sf rt}(Q^1))\mathbb D(-{\sf rt}(Q^1)))  & \notag\\
&     = - \mathbb L^{\la}_{\la} \mathbb D(-Q^2)
      -(-1)^{b({\sf rt}(Q^1))}
{\mathbbm 1}_\la \mathbb D(-Q^2) \mathbb D ( + {\sf rt}(Q^1) ) \mathbb D ( - {\sf rt}(Q^1) )  & \notag\\
&
     = -{\mathbbm 1}_\la\mathbb L(-Q^3) \mathbb D(-Q^2)
        -(-1)^{b({\sf rt}(Q^1))}
{\mathbbm 1}_\la \mathbb D(-Q^2) \mathbb D ( + {\sf rt}(Q^1) ) \mathbb D ( - {\sf rt}(Q^1) )  
\notag
\\
  &  =  -{\mathbbm 1}_\la\mathbb L(-Q^3) \mathbb D(-Q^2)
   -(-1)^{b({\sf rt}(Q^1))+b(Q^2)-b(P)}
   {\mathbbm 1}_\la \mathbb D(-Q^1) \mathbb D ( -P) \mathbb D ( - {\sf rt}(Q^1) )  & \notag\\
     & 
   =   -{\mathbbm 1}_\la\mathbb L(-Q^3) \mathbb D(-Q^2)
   -(-1)^{b({\sf rt}(Q^1))+b(Q^2)-b(P)}
{\mathbbm 1}_\la \mathbb D(-Q^1) \mathbb D ( + Q^1 ) \mathbb D ( -Q^2 )  \notag
\end{align}
where the second equality follows from relation  \eqref{loop-relation};
 the third from \cref{L=lincombo}; the fourth equality follows by applying the relation \eqref{adjacent} to the adjacent pair $P,Q^1$;
 the fifth   equality follows by applying the non-commuting relation to the pair $Q^1 \prec Q^2$. 
We therefore have that
  \begin{align}
 &\mathbbm 1_\la 
\mathbb L(-Q^3)
  \mathbb D (-Q^2)
  \mathbb L (-Q^1)
 \notag \\
= &   -{\mathbbm 1}_\la\mathbb L(-Q^3)^2  \mathbb D(-Q^2)
    -(-1)^{b({\sf rt}(Q^1))+b(Q^2)-b(P)}
{\mathbbm 1}_\la \mathbb L(-Q^3)\mathbb D(-Q^1) \mathbb D ( + Q^1 ) \mathbb D ( -Q^2 ) & \notag \\
 = & 0  -(-1)^{b({\sf rt}(Q^1))+b(Q^2)-b(P)}
{\mathbbm 1}_\la \mathbb L(-Q^3) \mathbb D(-Q^1) \mathbb D ( + Q^1 ) \mathbb D ( -Q^2 ) & \notag \\
 =&  {\mathbbm 1}_\la \mathbb L(-Q^3) \mathbb L(-Q^1) \mathbb D ( -Q^2 )
 \notag
  \end{align}
where the 
first equality follows from substituting our expansion of  
$\mathbb D (-Q^2)
  \mathbb L (-Q^1)
$ from  above; 
the second   follows from  \cref{looploop};
 the third  by   \cref{L=lincombo} and the fact that 
$b(Q^2)= b(Q^1) + b(P) + b(T)=  b(Q^1)+b(P) + b({\sf rt}(Q^1))$.

It remains to consider the case in which ${\sf last}(Q^3)<m-1$. 
 By   \cref{L=lincombo} we can express these loops in terms of ${\sf rt}(Q^3)$ and the case already considered above, as follows
$$\mathbb L^{\la}_{\la}(-Q^3) = - \mathbb L^{\la}_{\la} - (-1)^{b({\sf rt}(Q^3))} \mathbbm 1_{\la}\mathbb D(+{\sf rt}(Q^3))\mathbb D(-{\sf rt}(Q^3)).$$
Our assumptions allow us to fix the following notation 
$$Q^3 \setminus Q^2 = P_1 \sqcup P_2, \quad Q^2 \setminus Q^1 = S_1 \sqcup S_2$$
where $P_1,P_2 \in {\rm DRem}(\la-Q^2)$ and $S_1,S_2 \in {\rm DRem}(\la-Q^1)$. 
We assume that $P_1$ and $S_1$ are to the left of $P_2$ and $S_2$ respectively. 
Moreover, notice that ${\sf rt}(Q^3)={\sf rt}(P_2).$  
We have that 
\begin{align*}
\mathbbm 1_\la 
\mathbb L(-Q^3)
\mathbb L(-Q^1)
\mathbb D(-Q^2)
&
=
(-1)^{b(Q^1)}
\mathbbm 1_\la 
\mathbb L(-Q^3)
\mathbb D(-Q^1)\mathbb D(+Q^1)
\mathbb D(-Q^2)
\\
&
=
(-1)^{b(Q^1)}
\mathbbm 1_\la 
\mathbb L(-Q^3)
\mathbb D(-Q^1)
\mathbb D(-S_1)
\mathbb D(-S_2)
\\
&
=
(-1)^{2b(Q^1)	+b(S_2)			}
\mathbbm 1_\la 
\mathbb L(-Q^3)
\mathbb D(-Q^2)
\mathbb D(+S_2)
\mathbb D(-S_2)
\\
&
=
\mathbbm 1_\la 
\mathbb L(-Q^3)
\mathbb D(-Q^2)
\big( 
\mathbb L(-Q^1)+
\mathbb L(-P_2)\big)
\end{align*}
where the first equality follows from \cref{L=lincombo};
the second from \eqref{rel4} for the non-commuting pair 
$Q^1\prec Q^2$; 
the third  from \eqref{adjacent} for the adjacent  pair 
$Q^1,S_1$ and the fact that 
$b(Q^1)+b(Q^2)-b(S_1)=2b(Q^1)+b(S_2)$; the fourth from the self-dual relation for 
$S_2$ and the fact that $S_2$ is adjacent to 
$P_2,Q^1 \in {\rm DRem}(\la-Q^2)$.  
Thus it suffices to prove 
$\mathbb L(-Q^3)
\mathbb D(-Q^2)
\mathbb L(-P_2)=0$ and the result will follow.  
We have that 
\begin{align*}
\mathbbm 1_\la 
\mathbb L(-Q^3)
\mathbb D(-Q^2)
\mathbb L(-P_2)
&
=
\mathbbm 1_\la 
\mathbb L(-Q^3)
\mathbb D(-Q^2)
 (
- \mathbb L^{\la-Q^2}
_{\la-Q^2}
-
(-1)^{b({\sf rt}(Q^3))}
\mathbb D(+{\sf rt}(Q^3))
\mathbb D(-{\sf rt}(Q^3))
 )
\end{align*}
by  \cref{L=lincombo} and the fact that ${\sf rt}(P_2)={\sf rt}(Q^3)$. Thus it will suffice to show that 
$$ (-1)^{b({\sf rt}(Q^3))}
 \mathbb L(-Q^3)
\mathbb D(-Q^2)
\mathbb D(+{\sf rt}(Q^3))
\mathbb D(-{\sf rt}(Q^3))
 )=
 - 
 \mathbbm 1_\la 
\mathbb L(-Q^3)
\mathbb L^{\la}
_{\la}
\mathbb D(-Q^2).
$$
We have that 
\begin{align*}
&	(-1)^{b({\sf rt}(Q^3))}
\mathbb L(-Q^3)
\mathbb D(-Q^2)
\mathbb D(+{\sf rt}(Q^3))
\mathbb D(-{\sf rt}(Q^3))
\\
 =&
\mathbb L(-Q^3)
 \mathbb D(+{\sf rt}(Q^3))
\mathbb D(-{\sf rt}(Q^3))
\mathbb D(-Q^2)
\\
=&
\mathbb L(-Q^3)
(  -
\mathbb   L^{\la}_{\la}	
-
	\mathbb  L(-Q^3)	)
  \mathbb D(-Q^2)
\\
=&
-\mathbb L(-Q^3)
\mathbb   L^{\la}_{\la} \mathbb D(-Q^2)	
-
	\mathbb  L(-Q^3)^2
  \mathbb D(-Q^2)
  \\
=&
-\mathbb L(-Q^3)
\mathbb   L^{\la}_{\la} \mathbb D(-Q^2)	
\end{align*}as required; 
where the first equality follows from the commutativity relation \eqref{rel3}; 
the second follows from the self-dual relation for $Q^3$; 
the third equality expands out the brackets;
the fourth equality follows from relation \eqref{loop-relation}.
The result follows. 
\end{proof}

 \begin{lem}
 \label{chris-lem2}
 Let $Q^1\prec Q^2 \prec Q^3$ be removable Dyck paths of $\la \in \mathscr{R}_{m,n}$.
  Suppose that $Q^3 \in {\rm DRem}_{>0}(\la)$ and 
  that $Q^1$ does not commute with $Q^2$ and that $Q^2$ does not commute with $Q^3$.  
  We set $Q^3\setminus Q^2= P^1 \sqcup P^2$.  
 We have that 
 $$
 \mathbb D(-Q^3) \mathbb D(+Q^3)
  \mathbb D(-Q^2)
  \mathbb D(-Q^1)
  =
    \mathbb D(-Q^1)
 \mathbb D(-Q^3) \mathbb D(+P^1)
  \mathbb D(+P^2).
 $$
 \end{lem}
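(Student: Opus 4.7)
The plan is to manipulate the LHS into the RHS through a sequence of applications of the relations from Definition \ref{generatortheorem2}, following the template established in the proof of Proposition \ref{chris-lem}. The expression $\mathbb D(-Q^3)\mathbb D(+Q^3)$ plays a role analogous to the loop $\mathbb L(-Q^3)$ in that earlier proof, with the crucial distinction that here $Q^3 \in {\rm DRem}_{>0}(\la)$ so we work directly with Dyck-path generators rather than with loop elements.

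First I would apply the non-commuting relation \eqref{rel4} to the pair $Q^2 \prec Q^3$, exploiting the splitting $Q^3\setminus Q^2 = P^1 \sqcup P^2$. This yields, at the appropriate idempotent, the identity
\[
\mathbb D(-Q^3)\mathbb D(+Q^2) \;=\; \mathbb D(-P^1)\mathbb D(-P^2),
\]
together with its dual (obtained via the anti-involution $*$)
\[
\mathbb D(-Q^2)\mathbb D(+Q^3) \;=\; \mathbb D(+P^1)\mathbb D(+P^2).
\]
Similarly, the non-commuting pair $Q^1 \prec Q^2$ with $Q^2\setminus Q^1 = S^1\sqcup S^2$ provides analogous splitting identities which will be needed when we pass $\mathbb D(-Q^1)$ through $\mathbb D(-Q^2)$.

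Next I would combine these identities with the commuting relation \eqref{rel3} and the adjacent relation \eqref{adjacent} in order to reorganise the LHS. Roughly, the subword $\mathbb D(+Q^3)\mathbb D(-Q^2)$ in the LHS is replaced (via insertion of a compensating $\mathbb D(-Q^3)\mathbb D(+Q^3)$ and application of the dual non-commuting relation) by $\mathbb D(+P^1)\mathbb D(+P^2)$, while the leftmost factor $\mathbb D(-Q^1)$ gets pushed past the remaining Dyck-path operators using the splitting induced by $Q^2\setminus Q^1 = S^1\sqcup S^2$ and \eqref{rel4}. After collecting terms and verifying that the signs conspire to give coefficient $+1$, the resulting expression coincides with the RHS $\mathbb D(-Q^1)\mathbb D(-Q^3)\mathbb D(+P^1)\mathbb D(+P^2)$.

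The main obstacle will be the bookkeeping of idempotents throughout this manipulation, because the three paths $Q^1 \prec Q^2 \prec Q^3$ interact via two consecutive non-commuting relations, producing a nested splitting structure. As in the proof of Proposition \ref{chris-lem}, one must verify at each step that the relation being invoked is applied at the correct intermediate weight, and that the naive term-by-term compositions (several of which are zero in isolation) reassemble via the relations into the asserted non-trivial identity. The sign tracking mirrors that already carried out in the proofs of \cref{h0loop_sq} and \cref{chris-lem}, and no genuinely new ingredient beyond \eqref{rel3}, \eqref{rel4}, and \eqref{adjacent} is required.
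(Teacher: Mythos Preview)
Your approach is aligned with the paper's, which simply says this ``follows simply using the `old relations' and is left as an exercise.'' However, you are overcomplicating the argument. The proof needs only one application of \eqref{rel4} and a few of \eqref{rel3}; the adjacent relation \eqref{adjacent}, the splitting $Q^2\setminus Q^1=S^1\sqcup S^2$, and any sign bookkeeping are all unnecessary.

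Concretely: applying \eqref{rel4} to the non-commuting pair $Q^2\prec Q^3$ rewrites the middle factor as
\[
\mathbb D(+Q^3)\,\mathbb D(-Q^2)\;=\;\mathbb D(+P^2)\,\mathbb D(+P^1),
\]
so the left-hand side becomes $\mathbbm 1_\la\,\mathbb D(-Q^3)\,\mathbb D(+P^2)\,\mathbb D(+P^1)\,\mathbb D(-Q^1)$. Now observe that $Q^1$ has content interval strictly inside that of $Q^2$, hence $Q^1$ is \emph{distant} from both $P^1$ and $P^2$; and because $Q^2$ separates $Q^1$ from $Q^3$ in height, the pair $Q^1,Q^3$ \emph{commutes}. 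Three applications of \eqref{rel3} then move $\mathbb D(-Q^1)$ to the front, giving the right-hand side directly with coefficient $+1$. (Your displayed identity $\mathbb D(-Q^3)\mathbb D(+Q^2)=\mathbb D(-P^1)\mathbb D(-P^2)$ has the factors reversed; the correct version of the dual identity is $\mathbb D(+Q^2)\mathbb D(-Q^3)=\mathbb D(-P^1)\mathbb D(-P^2)$, but in any case neither it nor the $S^i$ are needed.)
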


 \begin{proof}
This follows simply using the ``old relations" and is left as an exercise for the reader. 
 \end{proof}


 \subsection{A spanning set of the symmetric Dyck path algebra}
Let $\mu\setminus \alpha$ be a Dyck tiling.
We recall our favourite path from $\alpha$   to $\mu$ is given as follows:
we  first regularise 
\begin{align*} 
 \alpha={\sf reg}_d(\alpha) \subseteq {\sf reg}_{d+1}(\alpha)
\dots  \subseteq {\sf reg}_{-1}(\alpha)
  \subseteq {\sf reg}_{0}(\alpha)=   {\sf reg} (\alpha)
\end{align*} for  $d(\alpha)=d$ and $P^k = ({\sf reg}_{k}(\alpha) \setminus {\sf reg}_{k-1}(\alpha))$ the maximal breadth addable Dyck path  of height $k$. 
After regularising, we   then split
\begin{align*} 
 {\sf reg}(\alpha)
 ={\sf split}_d(\mu\setminus\alpha) \supseteq {\sf split}_{1-d}(\mu\setminus \alpha)
\dots  \supseteq {\sf  split}_{-1}(\mu\setminus \alpha)
  \supseteq {\sf  split}_{0}(\mu\setminus \alpha)=   
 \alpha\sqcup  ( \mu\setminus \alpha)_{\leq 0}
\end{align*}
with 
$
({\sf split}_{k}(\mu\setminus\alpha) )\setminus ( {\sf split}_
{k+1}(\mu\setminus \alpha))
=
 R_1^k\sqcup  R_2^k \sqcup\dots \sqcup  R_K^k
$
a disjoint union of commuting Dyck paths such that 
\begin{align}\label{The-Rs}
P^k - R_1^k- R_2^k- \dots -R_K^k= (\mu\setminus\alpha)_k.
\end{align}
After splitting, we  then add  as follows
\begin{align*} 
 \alpha\sqcup( \mu\setminus \alpha)_{\leq 0}={\sf add}_0(\mu\setminus \alpha) \subseteq {\sf add}_{1}(\mu\setminus \alpha)
\dots  \subseteq {\sf add}_{a-1}(\mu\setminus \alpha)  
  \subseteq {\sf add}_{a}(\mu\setminus \alpha)=   \mu
\end{align*}
   where 
\begin{align}\label{The-As}{\sf add}_k(\mu\setminus \alpha)\setminus 
   {\sf add}_{k-1}(\mu\setminus \alpha)=
    A_1^k\sqcup  A_2^k \sqcup\dots \sqcup  A_K^k
\end{align} is a union of addable Dyck paths of height 
$1\leq k\leq a$.
    We now lift these paths to  elements of $\mathcal{A}_{m,n}$. 
For $d(\alpha)\leq k \leq 0$ we lift the $k$th step in \cref{Step1} to the element 
 $$
 \mathbb L ^k(\alpha)=
 \mathbb L^{{\sf reg}(\alpha)}_{{\sf reg}(\alpha)}( -P^k) $$
and for $d(\alpha)\leq k \leq 0$ we lift  the $k$th step in \cref{Step2} to the element 
\begin{align}\label{sdasdfasdfasdfdfdf2e23232322}
\mathbb R_{\mu\setminus\alpha}^k= 	\mathbb D^{{\sf  split}_{k-1}(\mu\setminus \alpha)}
_{{\sf  split}_{k }(\mu\setminus \alpha)}
=
	\mathbb D^{{\sf  split}_{k-1}(\mu\setminus \alpha)}
_{{\sf  split}_{k -1}(\mu\setminus \alpha)-R_1^k}
	\mathbb D^{{\sf  split}_{k-1}(\mu\setminus \alpha)-R_1^k}
_{{\sf  split}_{k -1}(\mu\setminus \alpha)-R_1^k-R_2^k}\dots 
\end{align}
and for $1\leq k\leq a $ we lift  the $k$th step in \cref{Step3} to the element 
\begin{align}\label{sdasdfasdfasdfdfdf2e2323232}
\mathbb A_{\mu\setminus\alpha}^k = 	\mathbb D^{{\sf  add}_{k-1}(\mu\setminus \alpha)}
_{{\sf  add}_{k }(\mu\setminus \alpha)}
=
	\mathbb D^{{\sf  add}_{k-1}(\mu\setminus \alpha)}
_{{\sf  add}_{k -1}(\mu\setminus \alpha)+A_1^k}
	\mathbb D^{{\sf  add}_{k-1}(\mu\setminus \alpha)+A_1^k}
_{{\sf  add}_{k -1}(\mu\setminus \alpha)+A_1^k+A_2^k}\dots 
\end{align}
We note that the ordering in \eqref{sdasdfasdfasdfdfdf2e23232322} and \eqref{sdasdfasdfasdfdfdf2e2323232} does not matter, as all these paths commute. 
Finally, we put all this together 
$$
\mathbb  L (\alpha) = \prod _{d(\alpha)\leq k \leq 0 }
 \mathbb  L ^k(\alpha)
 \qquad
\mathbb  R_{\mu\setminus \alpha} = \prod _{d(\alpha)\leq k \leq 0 }
\mathbb  R ^k_{\mu\setminus \alpha}
 \qquad
\mathbb  A _{\mu\setminus \alpha} = \prod _{  k \geq 1 }
\mathbb  A ^k_{\mu\setminus \alpha} $$
(and we extend this to duals in the usual fashion). 
We define the elements
$$
\mathbb  R^{< h} (\mu\setminus \alpha) 
=   \prod_{ d(\alpha) \leq k<h	}	\mathbb R ^k_{\mu\setminus \alpha}    
\qquad
\mathbb  R^{> h} (\mu\setminus \alpha) 
=   \prod_{ h < k\leq 0	}	\mathbb R ^k_{\mu\setminus \alpha}    
$$
and extend this to weak inequalities and similar subproducts of $\mathbb  A (\mu\setminus \alpha)$ in the obvious fashion.
With this notation in place, we are able to state the main result of this subsection.

 \begin{thm}\label{the spanning set}
 The symmetric  Dyck algebra $\mathcal A_{m,n}$ has spanning set 
 \begin{align}\label{bassis2}
 \{ \mathbb A_{\la\setminus \alpha}^\ast
 \mathbb R_{\la\setminus \alpha}^\ast
  \mathbb L (\alpha)
  \mathbb R_{\mu\setminus \alpha} 
\mathbb A_{\mu\setminus \alpha} 
 \mid \alpha \in \mathscr{P}_{m,n} , 
\la,\mu \in \mathscr{R}_{m,n} 
\text { and }   \mu\setminus \alpha ,  \la\setminus \alpha  \text{ are Dyck pairs} \}.
\end{align}

 \end{thm}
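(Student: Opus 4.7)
\medskip

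The plan is to show that an arbitrary product of generators in $\mathcal{A}_{m,n}$ can be rewritten in the canonical form stipulated by the theorem. The elements of the theorem's spanning set are, morally speaking, the lifts of the algorithm PURE1 of \cref{PURE1}: start at a regular partition $\la$, travel ``down'' to a partition $\alpha$ via a sequence of co-adds (dual of adds) and co-splits (dual of splits), execute a loop-product $\mathbb L(\alpha)$ at $\alpha$, then travel ``up'' via splits and adds to a regular partition $\mu$. We want to show every monomial in the generators reduces to a sum of such canonical elements.

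First I would reduce to monomials: by the idempotent relations \eqref{rel1}, a general element of $\mathcal{A}_{m,n}$ is a linear combination of monomials in the generators $\mathbb D^\la_\mu$, $\mathbb D^\mu_\la$, $\mathbb L^\la_\la$ and idempotents $\mathbbm 1_\la$, which correspond to walks in the quiver $Q_{m,n}$ between regular partitions. The next step is to identify, for any such walk, an appropriate ``trough'' partition $\alpha$ with $\alpha \subseteq \la$ and $\alpha \subseteq \mu$: the candidate $\alpha$ is the smallest partition visited after possible rearrangement, and the target is to factor the walk as a co-add/co-split sequence into $\alpha$, followed by loop terms at $\alpha$, followed by a split/add sequence out of $\alpha$.

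The rewriting breaks naturally into three tasks. \emph{Task A: rearranging pure $\mathbb D$-walks.} Using the commuting relation \eqref{rel3}, the non-commuting relation \eqref{rel4}, and the adjacent relation \eqref{adjacent}, any word of the form $\mathbb D(-P_1) \mathbb D(-P_2) \cdots$ at a fixed partition can be sorted into the canonical co-split (by \eqref{sdasdfasdfasdfdfdf2e23232322}) descending order; the dual argument sorts pure up-walks into canonical add order \eqref{sdasdfasdfasdfdfdf2e2323232}. This is essentially the content of \cref{cor_pure_fav} and \cref{cor_not_pure_fav} translated into $\mathcal A_{m,n}$, together with \cref{chris-lem2} which handles the three-generator case. \emph{Task B: eliminating ``up-then-down'' digressions.} Whenever a walk contains a subword $\mathbb D_\la^{\la+P} \mathbb D_{\la+P}^\la$, the self-dual relation \eqref{rel2} expresses it as a linear combination of loop terms $\mathbb L^\la_\la(-Q)$. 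This is the crucial move that creates the loops appearing in $\mathbb L(\alpha)$. \emph{Task C: commuting loops to the trough.} The loop-manipulation lemmas \cref{h0loop_rect}, \cref{h0loop_sq}, \cref{lemmar3}, \cref{chris-lem} allow us to move any loop generator past adjacent $\mathbb D$-terms toward $\alpha$, while \cref{looploop} and \cref{looploop-big} allow loops at $\alpha$ to be combined into a single normalised product $\mathbb L(\alpha)$, and \cref{loopremove-big1}, \cref{loopadd-big1} dispose of the degenerate cases where the loop would interact incompatibly with the up/down sequences (producing zero, exactly as the theorem's basis requires --- only Dyck-pair shaped skew shapes contribute).

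I would then assemble these into a single induction on the length of the walk. The base case is trivial. For the inductive step, locate a subword that either violates the canonical ordering (apply Task A), or contains an up-then-down local maximum (apply Task B to replace by a shorter walk plus loops), or contains a loop that is not at the minimum position (apply Task C). Each such rewrite either strictly reduces length, or strictly decreases the number of ``bad'' positions in a suitably chosen well-founded statistic (e.g. lexicographically, length of the walk together with the distance of each loop from the canonical trough). Termination then yields the canonical form, and the resulting shape is forced: the partition $\alpha$ at the trough together with the Dyck tilings $\la\setminus\alpha$ and $\mu\setminus\alpha$ constitute Dyck pairs by the cellular structure of $K^m_n$ from \cref{heere ris the basus2}, which forces any non-Dyck-pair shape to yield zero via \cref{loopremove-big1}.

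The main obstacle will be Task C, especially in the $m=n$ case where the loop generator $\mathbb L^\la_\la$ interacts delicately with the Dyck path generators via the cubic relation \eqref{cubic} and the nilpotency relation \eqref{loop-relation}. The proof will need to carefully track the sign conventions inherited from \cref{L=lincombo} and verify that every rewriting step preserves the linear combination being computed (as many of the loop lemmas involve cancellations between multiple terms in the expansion of $\mathbb L^\la_\la(-Q)$). Once this bookkeeping is completed, however, the bijective correspondence between canonical forms and Dyck pairs $(\la,\alpha), (\mu,\alpha)$ exactly produces the indexing set in \eqref{bassis2}.
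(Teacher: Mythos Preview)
Your proposal identifies the correct lemmas and the right local moves, but the global induction scheme has a genuine gap. You induct on ``length of the walk together with the distance of each loop from the canonical trough,'' and claim that each rewrite decreases this. It does not. Task~B (the self-dual relation \eqref{rel2}) replaces $\mathbb D^\la_{\la+P}\mathbb D_\la^{\la+P}$ by a sum of loop terms $\mathbb L^\la_\la(-Q)$; but by \cref{L=lincombo}, for $Q\in{\rm DRem}_0(\la)$ each such loop is itself $\pm\mathbb D^\la_{\la+{\sf rt}(Q)}\mathbb D_\la^{\la+{\sf rt}(Q)}$ (plus possibly a free loop), so the walk length does not drop and a \emph{new} local maximum at $\la+{\sf rt}(Q)$ is created. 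More seriously, your implicit ``trough'' $\alpha$ is not stable under rewriting: applying \eqref{rel2} or the loop-commutation lemmas can change which partition is minimal. So neither the length nor the trough position gives a well-founded statistic.

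The paper's proof avoids this by organising the induction differently. Rather than rewriting arbitrary monomials, it fixes $\alpha$ in the Bruhat order and shows that the two-sided ideal $\mathcal A_{m,n}^{\le\alpha}=\langle\mathbbm 1_\sigma,\mathbb L^\sigma_\sigma(\tau)\mid\tau\le\sigma\le\alpha\rangle$ is spanned by ${\rm Dyck}_{\le\alpha}$, by induction on $\alpha$ (refined by degree). The inductive step is: for any canonical element $\mathbb L(\alpha)\mathbb R_{\mu\setminus\alpha}\mathbb A_{\mu\setminus\alpha}$ and any generator $X$, the product is either another canonical element with the \emph{same} $\alpha$, or lies in $\mathcal A_{m,n}^{<\alpha}$ (not zero --- this is where your ``non-Dyck-pair shapes yield zero'' is wrong; they yield lower-cell elements). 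The case analysis is exactly your Tasks A--C, applied to a single right multiplication. The well-foundedness comes from the Bruhat order on $\alpha$, not from any walk statistic. Your ingredients are correct; what you are missing is the cellular filtration as the backbone of the induction.
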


  \begin{proof} We will prove that the 2-sided ideal 
$\mathcal A_{m,n}^{\leq \alpha}=
\langle {\mathbbm 1}_\sigma , \mathbb L^{\sigma}_\sigma(\tau)	\mid	\tau\leq  \sigma \leq 	\alpha	\rangle$ has spanning set given by 
$$
{\rm Dyck}_{\leq \alpha}=\{
 \mathbb A_{\la\setminus \beta}^\ast
 \mathbb R_{\la\setminus \beta}^\ast
  \mathbb L (\beta)
  \mathbb R_{\mu\setminus \beta} 
\mathbb A_{\mu\setminus \beta} 
\mid 
\la,\mu \in \mathscr{R}_{m,n} 
\text { and }\mu\setminus \beta, \la\setminus \beta  \text{ are Dyck pairs}
\text { for }
\beta \leq \alpha 
 \}.
$$
We proceed by induction on the Bruhat ordering on $\alpha$, refining the induction by the degree 
of the element 
$$
 \mathbb A_{\la\setminus \beta}^\ast
 \mathbb R_{\la\setminus \beta}^\ast
  \mathbb L (\beta)
  \mathbb R_{\mu\setminus \beta} 
\mathbb A_{\mu\setminus \beta} 
$$
for all $\beta\leq \alpha$.
  Our aim is to show the following:
\begin{align*}
(
  \mathbb L (\alpha)
  \mathbb R_{\mu\setminus \alpha} 
\mathbb A_{\mu\setminus \alpha} )
  X
  =
  \begin{cases}  
  \mathbb L (\alpha)
  \mathbb R_{\mu\setminus \alpha} 
\mathbb A_{\mu\setminus \alpha} 
&\text {if $X=\mathbbm 1_\mu$}
\\
 \mathbb L (\alpha)
  \mathbb R_{\nu\setminus \alpha} 
\mathbb A_{\nu\setminus \alpha}
 &\text {if $X= \mathbb D(+Q)$ and $(\mu+Q) \setminus \alpha  $ a Dyck pair}
\\
  \pm \mathbb L (\alpha)
  \mathbb R_{\nu\setminus \alpha} 
\mathbb A_{\nu\setminus \alpha}
 &\text {if $X= \mathbb D(-Q)$ for $  Q\not \in \mu\setminus \alpha$ 
 and $(\mu-Q) \setminus \alpha $ a Dyck pair}
\\
 Y  \in \mathcal{A}_{m,n}^{< \alpha} &\text{otherwise }
   \end{cases}
   \end{align*}
for   any $\mu\setminus\alpha$ such that 
$
{\rm deg}(\mu\setminus\alpha)
=
2|d(\alpha)|+k $ and will refine our   induction by the degree $k \geq0$.  
In the $\alpha=\varnothing $ case,  
we first observe that ${\sf reg}(\varnothing)=(m^m)\in \mathcal{P}_{m,n}$. 
Now, we note that the unique non-zero element of \eqref{bassis2} is given by
$$\mathbb L (\varnothing)
 = \textstyle \prod _{P \in {\rm DRem}(m^m)		}\mathbb L^{(m^m)}_{(m^m)}   (-P)
 $$ and is of degree $2 m$ (note that this varies over the $m$ distinct removable Dyck paths, of breadth 
$b(P)=p \in \{1,3,\dots, 2m-1\}$). 
 Thus  it will suffice to show that 
 $$
\mathbb L (\varnothing) X =
 \begin{cases}
 \mathbb L (\varnothing)&\text{ for }	X=	\mathbbm 1_{(m^m)}\\
  0						&\text{ otherwise. }
  \end{cases}
  	$$
Let $Q \in {\rm DRem}(m^m)$. For  $X= \mathbb L (- Q)$  the  
desired equality follows from \cref{looploop-big}.
For  $X=\mathbb D(-Q)$    the desired equality follows from
\cref{loopremove-big1}. 
For  $X=\mathbb D (+ Q)$    the desired equality follows from
 \cref{loopadd-big1}.
 The  case that $X$ is an idempotent is trivial.

Thus we can assume the result holds for all $\alpha' <\alpha$ and proceed to the $\alpha$ case.
 We   consider each type of generator $ X\in K^m_n$ in turn: loops, adding generators, and removing generators (idempotent generators are trivial).

\bigskip
\noindent{\bf Loops. }
Consider the product $(
  \mathbb L (\alpha)
  \mathbb R_{\mu\setminus \alpha} 
\mathbb A_{\mu\setminus \alpha} )
  X$ with $X=\mathbb L^\mu_\mu(-Q)$ with
$Q \in {\rm DRem}(\mu)$. 
 There are four cases to consider: $(i)$
$Q =H \in \mu\setminus \alpha$ 
or $(ii)$  
$Q\not \in \mu\setminus \alpha$ and $Q$ commutes with   $ \mu\setminus \alpha$
  $(iii)$ 
$Q\not \in \mu\setminus \alpha$ but 
$Q\prec H \in \mu\setminus \alpha$ for some $H$ which does not commute with $Q$
{  $(iv)$ 
$Q\not \in \mu\setminus \alpha$ but 
$Q\succ H \in \mu\setminus \alpha$ for some $H$ with ${\sf ht}^{\mu}_{\alpha}(H)>0$ which does not commute with $Q$.}

Case $(ii)$ is not as simple as one might first think. 
 By assumption   $Q$  commutes with all of $\mu\setminus \alpha$.  
Thus   $Q$  commutes with   the   $A_i^k$ 
for $i\geq 1$ and $1\leq k \leq a$ 
 as in \eqref{The-As} for the Dyck pair $\mu\setminus \alpha$ 
 and we claim that 
  $\mathbb L^\mu_\mu({-Q})$ commutes with   $\mathbb A_{\mu\setminus\alpha}$.  
The claim follows   by applying \cref{h0loop_rect,h0loop_sq} in the case that $ {\sf ht}^\mu(Q)=0$ or 
 by using the factorisation $\mathbb L^\mu_\mu({-Q})= (-1)^{b(Q)}	\mathbb D^\la _{\la-Q}\mathbb D_\la ^{\la-Q}	$ and \eqref{rel3} in the  case that $ {\sf ht}^\mu(Q)>0$. 
 We thus obtain 
 \begin{align*}
  (\mathbb L (\alpha)
  \mathbb R_{\mu\setminus \alpha} 
\mathbb A_{\mu\setminus \alpha}  \mathbbm 1_\mu)
\mathbb  L^\mu_\mu(-Q)
=
  \mathbb L (\alpha)
  \mathbb R_{\mu\setminus \alpha} 
  \mathbb  L 
  (-Q) 
\mathbb A_{\mu\setminus \alpha}   \mathbbm 1_\mu
\end{align*}
If $Q$  commutes with  all the Dyck paths   $R_i^k$ for all $i\geq 1$ and $d(\alpha)\leq k\leq 0$ in \cref{The-Rs} then 
(again, applying the commutation relations of \cref{h0loop_rect,h0loop_sq} or \eqref{rel3} as above) we obtain 
\[
  \mathbb L (\alpha)
  \mathbb R_{\mu\setminus \alpha} 
  \mathbb  L 
  (-Q) 
  \mathbb A_{\mu\setminus \alpha}   \mathbbm 1_\mu
  = 
  \mathbb L (\alpha)
  \mathbb  L 
  (-Q) 
    \mathbb R_{\mu\setminus \alpha} 
    \mathbb A_{\mu\setminus \alpha}   \mathbbm 1_\mu
    \in \mathcal{A}_{m,n}^{\leq \alpha-Q}
\]
 by \cref{looploop-big}.  
Now assume that   $Q$ does not commute with  all of the Dyck paths $R_i^k$ in \cref{The-Rs} (but recall   our ongoing assumption that $Q$ commutes with all Dyck paths in $\mu \setminus\alpha$).  
In which case,    $Q$ commutes with a pair
 of Dyck paths 
  $H \in\mu\setminus \alpha $ (to the left of $Q$) and $H' \in \mu\setminus \alpha$ (to the right of $Q$)   
  such that ${\sf ht}^{\mu}_{\alpha}(H)={\sf ht}^{\mu}_{\alpha}(H')=h\in \ZZ_{\leq0}$ 
 and such that  
 \begin{align}\label{HandHprime}
 H \sqcup H' = {\rm split}_{R^h_i}(H'')
 \end{align}
 for some $i\geq 1$ and some  (necessarily unique) $R^h_i$ which does not commute with $Q$. We remark that in such a case  the Dyck path  $R^h_i$,
  in turn, does not commute with $H''$.  
An  example   of how this can happen is given in \cref{figloopcase2}. 

\begin{figure}[ht!]

$$
 \begin{tikzpicture}[scale=0.35]
  \path(0,0)--++(135:2) coordinate (hhhh);
 \draw[very thick] (hhhh)--++(135:6)--++(45:6)--++(-45:6)--++(-135:6);
 \clip(hhhh)--++(135:6)--++(45:6)--++(-45:6)--++(-135:6);
\path(0,0) coordinate (origin2);
  \path(0,0)--++(135:2) coordinate (origin3);

      \foreach \i in {0,1,2,3,4,5,6,7,8}
{
\path (origin3)--++(45:0.5*\i) coordinate (c\i); 
\path (origin3)--++(135:0.5*\i)  coordinate (d\i); 
  }

   \foreach \i in {0,1,2,3,4,5,6,7,8}
{
\path (origin3)--++(45:1*\i) coordinate (c\i); 
\path (c\i)--++(-45:0.5) coordinate (c\i); 
\path (origin3)--++(135:1*\i)  coordinate (d\i); 
\path (d\i)--++(-135:0.5) coordinate (d\i); 
\draw[thick,densely dotted] (c\i)--++(135:10);
\draw[thick,densely dotted] (d\i)--++(45:10);
  }

\path(0,0)--++(135:2) coordinate (hhhh);

\fill[opacity=0.2](hhhh)
--++(135:5)  coordinate (JJ)
--++(45:1)
--++(-45:1)--++(45:3)
--++(-45:3)  coordinate (JJ1) --++(45:1)
--++(-45:1)--++(45:2)--++(-45:1)--++(45:1)--++(-45:1);

\fill[opacity=0.4,magenta](hhhh)
  (JJ)
--++(135:1)
 --++(45:2)
--++(-45:2)--++(-135:1)--++(135:1)
;

\fill[opacity=0.4,cyan](hhhh)
  (JJ1)
--++(135:1)
 --++(45:2)
--++(-45:2)--++(-135:1)--++(135:1)
;

\path(origin3)--++(45:-0.5)--++(135:7.5) coordinate (X)coordinate (start);

%
%
%
%
%
%
%

\end{tikzpicture}\qquad
\begin{tikzpicture}[scale=0.35]
  \path(0,0)--++(135:2) coordinate (hhhh);
 \draw[very thick] (hhhh)--++(135:6)--++(45:6)--++(-45:6)--++(-135:6);
 \clip(hhhh)--++(135:6)--++(45:6)--++(-45:6)--++(-135:6);
\path(0,0) coordinate (origin2);
  \path(0,0)--++(135:2) coordinate (origin3);

      \foreach \i in {0,1,2,3,4,5,6,7,8}
{
\path (origin3)--++(45:0.5*\i) coordinate (c\i); 
\path (origin3)--++(135:0.5*\i)  coordinate (d\i); 
  }

   \foreach \i in {0,1,2,3,4,5,6,7,8}
{
\path (origin3)--++(45:1*\i) coordinate (c\i); 
\path (c\i)--++(-45:0.5) coordinate (c\i); 
\path (origin3)--++(135:1*\i)  coordinate (d\i); 
\path (d\i)--++(-135:0.5) coordinate (d\i); 
\draw[thick,densely dotted] (c\i)--++(135:10);
\draw[thick,densely dotted] (d\i)--++(45:10);
  }

\path(0,0)--++(135:2) coordinate (hhhh);

\fill[opacity=0.2](hhhh)
--++(135:5)  coordinate (JJ)
--++(45:1)
--++(-45:1)--++(45:1) coordinate (JJ2)--++(45:2)
--++(-45:3)  coordinate (JJ1) --++(45:1)
--++(-45:1)--++(45:2)--++(-45:1)--++(45:1)--++(-45:1);

\fill[opacity=0.4,magenta](hhhh)
  (JJ)
--++(135:1)
 --++(45:2)
--++(-45:2)--++(-135:1)--++(135:1)
;

\fill[opacity=0.4,cyan](hhhh)
  (JJ1)
--++(135:1)
 --++(45:2)
--++(-45:2)--++(-135:1)--++(135:1)
;

\fill[opacity=0.4,darkgreen](hhhh)
  (JJ2)
--++(135:1)
 --++(45:3)
--++(-45:3)--++(-135:1)--++(135:2)
;

\path(origin3)--++(45:-0.5)--++(135:7.5) coordinate (X)coordinate (start);

%
%
%
%
%
%
%

\end{tikzpicture}
\qquad
 \begin{tikzpicture}[scale=0.35]
  \path(0,0)--++(135:2) coordinate (hhhh);
 \draw[very thick] (hhhh)--++(135:6)--++(45:6)--++(-45:6)--++(-135:6);
 \clip(hhhh)--++(135:6)--++(45:6)--++(-45:6)--++(-135:6);
\path(0,0) coordinate (origin2);
  \path(0,0)--++(135:2) coordinate (origin3);

      \foreach \i in {0,1,2,3,4,5,6,7,8}
{
\path (origin3)--++(45:0.5*\i) coordinate (c\i); 
\path (origin3)--++(135:0.5*\i)  coordinate (d\i); 
  }

   \foreach \i in {0,1,2,3,4,5,6,7,8}
{
\path (origin3)--++(45:1*\i) coordinate (c\i); 
\path (c\i)--++(-45:0.5) coordinate (c\i); 
\path (origin3)--++(135:1*\i)  coordinate (d\i); 
\path (d\i)--++(-135:0.5) coordinate (d\i); 
\draw[thick,densely dotted] (c\i)--++(135:10);
\draw[thick,densely dotted] (d\i)--++(45:10);
  }

\path(0,0)--++(135:2) coordinate (hhhh);

\fill[opacity=0.2](hhhh)
--++(135:5)  coordinate (JJ)
--++(45:1)
--++(-45:1)
--++(45:1)
--++(-45:1)  --++(45:1)
--++(-45:1)  --++(45:1)
--++(-45:1)  
coordinate (JJ1) --++(45:1)
--++(-45:1)--++(45:2)--++(-45:1)--++(45:1)--++(-45:1);

\fill[opacity=0.4,magenta](hhhh)
  (JJ)
--++(135:1)
 --++(45:2)
--++(-45:2)--++(-135:1)--++(135:1)
;

\fill[opacity=0.4,cyan](hhhh)
  (JJ1)
--++(135:1)
 --++(45:2)
--++(-45:2)--++(-135:1)--++(135:1)
;

\path(origin3)--++(45:-0.5)--++(135:5.5) coordinate (X)coordinate (start);

\path (start)--++(135:1)coordinate (start);
 
\path (start)--++(45:5)--++(-45:6) coordinate (X) ;

\path (start)--++(45:2)--++(-45:3) coordinate (X) ; 
 
\path(X)--++(45:1)--++(135:1) coordinate (X) ;
 
\path (X)--++(-45:1) coordinate (X) ;
 \fill[violet](X) circle (4pt);
\draw[ thick, violet](X)--++(45:1) coordinate (X) ;
\fill[violet](X) circle (4pt);
\draw[ thick, violet](X)--++(-45:1) coordinate (X) ;
\fill[violet](X) circle (4pt);

\end{tikzpicture} 
$$
  \caption{ An example of case $(ii)$  for loop generators. 	On the left we depict $\mu\setminus \alpha= {\color{magenta}H}\sqcup {\color{cyan}H'}$. 
In the middle we depict ${\sf reg}(\alpha)\setminus \alpha$ and we highlight  the Dyck path 
$   \color{darkgreen}R^h_1 $. On the right we depict $\nu \setminus \alpha$ where $\nu=\mu- \color{violet}Q$.  Notice that $\color{violet}Q$ commutes with the tiling $\mu\setminus \alpha= {\color{magenta}H}\sqcup {\color{cyan}H'}$ but does not commute with $   \color{darkgreen}R^h_1 $.
  	}
    \label{figloopcase2}

\end{figure}

For Dyck paths as in \cref{HandHprime} 
  we have that 
 \begin{align*}
 \textstyle   \mathbb L (\alpha)
  \mathbb R_{\mu\setminus \alpha}
  \mathbb  L (-Q) 
    \mathbb A_{\mu\setminus \alpha}   \mathbbm 1_\mu
   = & \textstyle  
 ( \prod_{
k\neq h 
} \mathbb L (-P^k) )
 	\mathbb R ^{<h}_{\mu\setminus \alpha}   
\mathbb L (-P^h)
\mathbb  R ^h_{\mu\setminus \alpha}
  \mathbb  L (-Q) 
   	\mathbb R ^{>h}_{\mu\setminus \alpha}   
 \mathbb A_{\mu\setminus \alpha}   \mathbbm 1_\mu
  \\
  = & \textstyle  
 ( \prod_{
 k\neq h 
} \mathbb L (-P^k) )
 	\mathbb R ^{<h}_{\mu\setminus \alpha}   
\mathbb L (-P^h)  \mathbb  L (-Q) 
\mathbb  R ^h_{\mu\setminus \alpha}
  	\mathbb R ^{>h}_{\mu\setminus \alpha}    
 \mathbb A_{\mu\setminus \alpha}   \mathbbm 1_\mu
 \\
 = & \textstyle   
 ( \prod_{
 k\neq h 
} \mathbb L (-P^k) )\mathbb L (-P^h)  \mathbb  L (-Q) 
  	\mathbb R ^{<h}_{\mu\setminus \alpha}   
\mathbb  R ^h_{\mu\setminus \alpha}
  	\mathbb R ^{>h}_{\mu\setminus \alpha}    
 \mathbb A_{\mu\setminus \alpha}   \mathbbm 1_\mu
 \\
 =& \textstyle   \mathbb L (\alpha)  \mathbb  L (-Q) 
  \mathbb R_{\mu\setminus \alpha}
 \mathbb A_{\mu\setminus \alpha}   \mathbbm 1_\mu
\end{align*}
 where   
 first  and third equalities follow  from the commuting relations or  \cref{h0loop_rect,h0loop_sq};   the second equalities  from applying 
\cref{chris-lem} for $h=0$  (or   \cref{chris-lem2} if $h>0$)  to the pair $R^h_i$ and $Q$
and  the commutation relations \eqref{rel3} to the pairs  $R^h_j$   and  $Q$ for $j\neq i$ ;
the fourth equality follow from the definitions of the products. 
We have that $\mathbb L (\alpha)  \mathbb  L (-Q)\in \mathcal{A}_{m,n}^{\leq \alpha-Q} $ 
by \cref{looploop-big}.

Case $(iii)$. We now suppose that 
$Q\not \in \mu\setminus \alpha$, but  $Q\prec H  \in \mu\setminus \alpha $ for some $H$ which does not commute with $Q$.  
By assumption ${\sf ht}^{\mu }(Q)>0$ and so $\mathbb L^\mu_\mu({-Q})= (-1)^{b(Q)}	\mathbb D^\mu _{\mu-Q}\mathbb D_\mu ^{\mu-Q}	$. 
First, we suppose that    ${\sf ht}^{\mu}_{\alpha}(H)\leq 0$. 
In which case $Q$ commutes with every Dyck path $A^j_k$, $R^i_l$ in \eqref{The-Rs} and  \eqref{The-As} (see \cref{figloopcase3} for an example). 
By the commutation relations \eqref{rel3}, we have that 
 \begin{align*}
( \mathbb L (\alpha)
  \mathbb R_{\mu\setminus \alpha} 
\mathbb A_{\mu\setminus \alpha} \mathbbm 1_\mu)
\mathbb  L^\mu_\mu(-Q)
=
  \mathbb L (\alpha)				\mathbb  L(-Q)
  \mathbb R_{\mu\setminus \alpha} 
\mathbb A_{\mu\setminus \alpha}  \mathbbm 1_\mu \in \mathcal{A}_{m,n}^{\leq \alpha-Q} 
\end{align*}
by  \cref{looploop-big}.

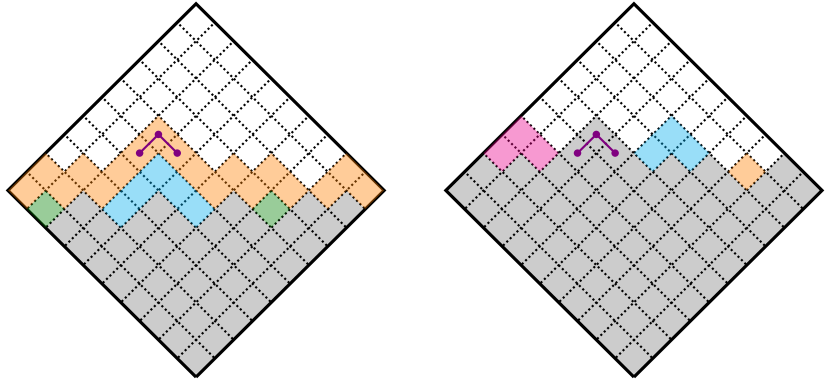
\begin{figure}[ht!]

$$
 \begin{tikzpicture}[scale=0.35]
  \path(0,0)--++(135:2) coordinate (hhhh);
 \draw[very thick] (hhhh)--++(135:10)--++(45:10)--++(-45:10)--++(-135:10);
 \clip (hhhh)--++(135:10)--++(45:10)--++(-45:10)--++(-135:10);
\path(0,0) coordinate (origin2);
  \path(0,0)--++(135:2) coordinate (origin3);

      \foreach \i in {0,1,2,3,4,5,6,7,8,9,10}
{
\path (origin3)--++(45:0.5*\i) coordinate (c\i); 
\path (origin3)--++(135:0.5*\i)  coordinate (d\i); 
  }

   \foreach \i in {0,1,2,3,4,5,6,7,8,9,10}
{
\path (origin3)--++(45:1*\i) coordinate (c\i); 
\path (c\i)--++(-45:0.5) coordinate (c\i); 
\path (origin3)--++(135:1*\i)  coordinate (d\i); 
\path (d\i)--++(-135:0.5) coordinate (d\i); 
\draw[thick,densely dotted] (c\i)--++(135:12);
\draw[thick,densely dotted] (d\i)--++(45:12);
  }

\path(0,0)--++(135:2) coordinate (hhhh);

\fill[opacity=0.2](hhhh)
--++(135:8)  coordinate (JJ)
--++(45:2)
--++(-45:2)	coordinate (JJ7)		--++(45:2)
--++(-45:2)   --++(45:2)
--++(-45:2)coordinate (JJ1)  --++(45:2)--++(-45:1)coordinate (JJ23)--++(45:1)--++(-45:1);

\fill[opacity=0.4,orange](hhhh)
  (JJ23)
--++(135:1) 
 --++(45:2)
--++(-45:2)--++(-135:1)--++(135:1)
;

\fill[opacity=0.4,darkgreen](hhhh)
  (JJ)
--++(135:1)coordinate (JJ)
 --++(45:1)
--++(-45:1)--++(-135:1)--++(135:1)
;

\fill[opacity=0.4,cyan](hhhh)
  (JJ7)
--++(135:1)
 --++(45:3)
--++(-45:3)--++(-135:1)--++(135:2)
;

\fill[opacity=0.4,orange](hhhh)
  (JJ)
--++(135:1)
 --++(45:2)
--++(-45:1)
 --++(45:1)
 --++(-45:1)
 --++(45:3)
--++(-45:3)
 --++(45:1)
  --++(-45:1) --++(45:1)
--++(-45:2)
--++(-135:1)--++(135:1)--++(-135:1)--++(135:1)
--++(-135:1)--++(135:3)
--++(-135:3)--++(135:1)--++(-135:1)--++(135:1)
;


%

\fill[opacity=0.4,darkgreen](hhhh)
  (JJ1)
--++(135:1)
 --++(45:1)
--++(-45:1)--++(-135:1)--++(135:1)
;

\path(origin3)--++(45:-0.5)--++(135:7.5) coordinate (X)coordinate (start);

\path (start)--++(135:1)coordinate (start);
 
\path (start)--++(45:6)--++(-45:5) coordinate (X) ;

\path (start)--++(45:2)--++(-45:3) coordinate (X) ; 
 
\path(X)--++(45:3)--++(135:3) coordinate (X) ;
 
\path (X)--++(-45:1) coordinate (X) ;
 \fill[violet](X) circle (4pt);
\draw[ thick, violet](X)--++(45:1) coordinate (X) ;
\fill[violet](X) circle (4pt);
\draw[ thick, violet](X)--++(-45:1) coordinate (X) ;
\fill[violet](X) circle (4pt);

\end{tikzpicture}
\qquad
 \begin{tikzpicture}[scale=0.35]
  \path(0,0)--++(135:2) coordinate (hhhh);
 \draw[very thick] (hhhh)--++(135:10)--++(45:10)--++(-45:10)--++(-135:10);
 \clip (hhhh)--++(135:10)--++(45:10)--++(-45:10)--++(-135:10);
\path(0,0) coordinate (origin2);
  \path(0,0)--++(135:2) coordinate (origin3);

      \foreach \i in {0,1,2,3,4,5,6,7,8,9,10}
{
\path (origin3)--++(45:0.5*\i) coordinate (c\i); 
\path (origin3)--++(135:0.5*\i)  coordinate (d\i); 
  }

   \foreach \i in {0,1,2,3,4,5,6,7,8,9,10}
{
\path (origin3)--++(45:1*\i) coordinate (c\i); 
\path (c\i)--++(-45:0.5) coordinate (c\i); 
\path (origin3)--++(135:1*\i)  coordinate (d\i); 
\path (d\i)--++(-135:0.5) coordinate (d\i); 
\draw[thick,densely dotted] (c\i)--++(135:12);
\draw[thick,densely dotted] (d\i)--++(45:12);
  }

\path(0,0)--++(135:2) coordinate (hhhh);

\fill[opacity=0.2](hhhh)
--++(135:8)  coordinate (JJ)
--++(45:2)
--++(-45:2)	coordinate (JJ7)		--++(45:2)
--++(-45:2)   --++(45:2)
--++(-45:2)coordinate (JJ1)  --++(45:2)--++(-45:1)coordinate (JJ23)--++(45:1)--++(-45:1);

\fill[opacity=0.2](hhhh)
  (JJ23)
--++(135:1) 
 --++(45:2)
--++(-45:2)--++(-135:1)--++(135:1)
;

\fill[opacity=0.2](hhhh)
  (JJ)
--++(135:1)coordinate (JJ)
 --++(45:1)
--++(-45:1)--++(-135:1)--++(135:1)
;

\fill[opacity=0.2](hhhh)
  (JJ7)
--++(135:1)
 --++(45:3)
--++(-45:3)--++(-135:1)--++(135:2)
;

\fill[opacity=0.2](hhhh)
  (JJ)
--++(135:1)
 --++(45:2)
--++(-45:1) coordinate (XY)
 --++(45:1)
 --++(-45:1)
 --++(45:3)
--++(-45:3)
 --++(45:1)
  --++(-45:1) --++(45:1)
--++(-45:2)
--++(-135:1)--++(135:1)--++(-135:1)--++(135:1)
--++(-135:1)--++(135:3)
--++(-135:3)--++(135:1)--++(-135:1)--++(135:1)
;


%

\fill[opacity=0.4,magenta] 
  (XY)
--++(135:1)
 --++(45:2)
--++(-45:2)--++(-135:1)--++(135:1)
;

\path(XY)--++(45:4)--++(-45:4) coordinate (XY);

\fill[opacity=0.4,cyan] 
  (XY)
--++(135:1)
 --++(45:2)
--++(-45:2)--++(-135:1)--++(135:1)
;

\path(XY)--++(45:2)--++(-45:3) coordinate (XY);

\fill[opacity=0.4,orange] 
  (XY)
--++(135:1)
 --++(45:1)
--++(-45:1)--++(-135:1)--++(135:1)
;

\fill[opacity=0.2] 
  (JJ1)
--++(135:1)
 --++(45:1)
--++(-45:1)--++(-135:1)--++(135:1)
;

\path(origin3)--++(45:-0.5)--++(135:7.5) coordinate (X)coordinate (start);

\path (start)--++(135:1)coordinate (start);
 
\path (start)--++(45:6)--++(-45:5) coordinate (X) ;

\path (start)--++(45:2)--++(-45:3) coordinate (X) ; 
 
\path(X)--++(45:3)--++(135:3) coordinate (X) ;
 
\path (X)--++(-45:1) coordinate (X) ;
 \fill[violet](X) circle (4pt);
\draw[ thick, violet](X)--++(45:1) coordinate (X) ;
\fill[violet](X) circle (4pt);
\draw[ thick, violet](X)--++(-45:1) coordinate (X) ;
\fill[violet](X) circle (4pt);

\end{tikzpicture}
$$
  \caption{ An example of case $(iii)$  for loop generators. On the left we depict the Dyck tiling of 
 $\mu\setminus\alpha$ and the Dyck path $\color{violet}X$ which commutes with all of 
  $\mu\setminus\alpha$ 
   except for ${\color{violet}X}\prec \color{cyan}H$ which is of height $-1$.
     On the right we depict the Dyck tiling of  $ {\sf reg}(\alpha)\setminus \mu $ (consisting of the Dyck paths 
      $\color{magenta}R^{-1}_1 $, $\color{cyan}R^{-1}_2  $, and $\color{orange}R^{0}_1 $)  
   and the Dyck path  $\color{violet}X$ which commutes with all of 
$ {\sf reg}(\alpha)\setminus \mu $. 		}
    \label{figloopcase3}
\end{figure}

 We now consider the case that ${\sf ht}^{\mu}_{\alpha}(H)=h>0$ and $Q$ does not commute with some $Q\prec H=A^h_1$ in  \eqref{The-As}. We note that  $H\setminus Q=H^1\sqcup H^2$.
  We have that 
  \begin{align*}
 \textstyle  ( \mathbb L (\alpha)
  \mathbb R_{\mu\setminus \alpha} 
\mathbb A_{\mu\setminus \alpha} \mathbbm 1_\mu )
\mathbb  L^\mu_\mu(-Q)
=& \textstyle 
\mathbb L (\alpha)
  \mathbb R_{\mu\setminus \alpha} 
   	\mathbb A ^{<h}_{\mu\setminus \alpha}    
 \mathbb D(+H)
\mathbb  L (-Q)
  ( \prod_{ j\neq 1	}	\mathbb D(+ A ^h_ j)  )  
   	\mathbb A ^{>h}_{\mu\setminus \alpha}   
\mathbbm 1_\mu 
\\
=& \textstyle 
\mathbb L (\alpha)
  \mathbb R_{\mu\setminus \alpha} 
   	\mathbb A ^{<h}_{\mu\setminus \alpha}    
  \mathbb D(+H^1) 
   \mathbb D(+H^2)
 \mathbb D(+Q)
  ( \prod_{ j\neq 1	}	\mathbb D(+ A ^h_ j)  )  
   	\mathbb A ^{>h}_{\mu\setminus \alpha}   
\mathbbm 1_\mu 
\\
=& \textstyle 
\mathbb L (\alpha)
  \mathbb R_{\mu\setminus \alpha} 
   	\mathbb A ^{<h}_{\mu\setminus \alpha}    
\mathbb D(+H^1) 
   \mathbb D(-H^1)
 \mathbb D(+H)
  ( \prod_{ j\neq 1	}	\mathbb D(+ A ^h_ j)  )  
   	\mathbb A ^{>h}_{\mu\setminus \alpha}   
\mathbbm 1_\mu 
\\
=& \textstyle 
\mathbb L (\alpha)
  \mathbb R_{\mu\setminus \alpha} 
   	\mathbb A ^{<h}_{\mu\setminus \alpha}    
\mathbb D(+H^1) 
   \mathbb D(-H^1)
   	\mathbb A ^{\geq h}_{\mu\setminus \alpha}    
\mathbbm 1_\mu 
\\
 =&    \sum_{P \in {\rm DRem}(\nu) }c_P
\textstyle\mathbb L (\alpha)
  \mathbb R_{\mu\setminus \alpha} 
   	\mathbb A ^{<h}_{\mu\setminus \alpha}    
\mathbb L(-P) 
   	\mathbb A ^{\geq h}_{\mu\setminus \alpha}    
\mathbbm 1_\mu 
\end{align*}
for $\nu=\alpha \cup (\mu\setminus\alpha)_{<h}$ and $c_P \in \Bbbk $ some coefficients which can be calculated explicitly using the self-dual relation \eqref{rel2};
 the second and third  equalities follow 
 by the non-commuting relation \eqref{rel4} and adjacency relations  \eqref{adjacent}. 
Now, we observe that 
$$\textstyle\mathbb L (\alpha)
  \mathbb R_{\mu\setminus \alpha} 
 	\mathbb A ^{<h}_{\mu\setminus \alpha} 
\mathbb L(-P) 
=
\mathbb L (\alpha)
  \mathbb R_{\nu\setminus \alpha} 
 \mathbb A  _{\nu\setminus \alpha} 
\mathbb    L  (-P) 
\in 
\mathcal{A}_{m,n} ^{< \alpha } $$
by our inductive assumption on the degree; 
therefore 
   $ ( \mathbb L (\alpha)
  \mathbb R_{\mu\setminus \alpha} 
\mathbb A_{\mu\setminus \alpha} \mathbbm 1_\mu )
\mathbb  L^\mu_\mu(-Q)\in \mathcal{A}_{m,n}^{<\alpha}$.

Case $(i)$.  We now suppose that $Q =H \in \mu\setminus \alpha$. 
 We first  assume that   ${\sf ht}^{\mu}_{\alpha}(H)=h\leq 0$.
There are two distinct subcases to consider. First suppose that 
$H$ is the unique element of $(\mu\setminus\alpha)_h$.
In this case, $Q$ commutes with all the Dyck paths $A^j_k$, $R^i_l$ in \eqref{The-Rs} and  \eqref{The-As} and so  we have that 
\[
( \mathbb L (\alpha)
  \mathbb R_{\mu\setminus \alpha} 
\mathbb A_{\mu\setminus \alpha}  \mathbbm 1_\mu)
\mathbb  L (-Q)
=
 \mathbb L (\alpha)\mathbb  L (-Q)
  \mathbb R_{\mu\setminus \alpha} 
\mathbb A_{\mu\setminus \alpha}  \mathbbm 1_\mu \in \mathcal{A}_{m,n}^{\leq \alpha-Q}
\]
  by the commutation relations \eqref{rel3} and  \cref{looploop-big}. 
We must now consider the case that 
 $ H  $ is not the unique Dyck path in $(\mu\setminus\alpha)_h$ for $h\leq 0$. 
Our assumptions    imply that   $Q$ is adjacent to $H':=R^h_1$ and  possibly $H'':=R^h_2$ (if the latter exists)  removed in the $h$ step  of \eqref{The-Rs}; moreover,  
 $Q$ commutes with   every  other Dyck path $A^j_k$, $R^i_l$ in \eqref{The-Rs} and  \eqref{The-As} not equal to $H'$ or $H''$.
 Our assumptions further imply that 
  ${\sf ht}^\mu(Q)>0$ and so $ \mathbb L(-Q)= 
(-1)^{b(Q)}	\mathbbm1_\mu \mathbb D(-Q)\mathbb D(+Q)$ by 
\cref{L=lincombo}.

\begin{figure}[ht!]

  $$
 \begin{tikzpicture}[scale=0.35]
  \path(0,0)--++(135:2) coordinate (hhhh);
 \draw[very thick] (hhhh)--++(135:10)--++(45:10)--++(-45:10)--++(-135:10);
 \clip (hhhh)--++(135:10)--++(45:10)--++(-45:10)--++(-135:10);
\path(0,0) coordinate (origin2);
  \path(0,0)--++(135:2) coordinate (origin3);

      \foreach \i in {0,1,2,3,4,5,6,7,8,9,10}
{
\path (origin3)--++(45:0.5*\i) coordinate (c\i); 
\path (origin3)--++(135:0.5*\i)  coordinate (d\i); 
  }

   \foreach \i in {0,1,2,3,4,5,6,7,8,9,10}
{
\path (origin3)--++(45:1*\i) coordinate (c\i); 
\path (c\i)--++(-45:0.5) coordinate (c\i); 
\path (origin3)--++(135:1*\i)  coordinate (d\i); 
\path (d\i)--++(-135:0.5) coordinate (d\i); 
\draw[thick,densely dotted] (c\i)--++(135:12);
\draw[thick,densely dotted] (d\i)--++(45:12);
  }

\path(0,0)--++(135:2) coordinate (hhhh);

\fill[opacity=0.2](hhhh)
--++(135:8)  coordinate (JJ)
--++(45:2)
--++(-45:2)	coordinate (JJ7)		--++(45:2)
--++(-45:2)   --++(45:2)
--++(-45:2)coordinate (JJ1)  --++(45:2)--++(-45:1)coordinate (JJ23)--++(45:1)--++(-45:1);

\fill[opacity=0.4,violet](hhhh)
  (JJ)
--++(135:1) 
 --++(45:3)
--++(-45:2)
 --++(45:2)
--++(-45:2)
 --++(45:2)
--++(-45:3)
 --++(-135:1)--++(135:2)
  --++(-135:2)--++(135:2)
   --++(-135:2)--++(135:2)
;

\path(JJ)--++(135:1) coordinate(JJ);

\fill[opacity=0.4,orange](hhhh)
  (JJ)
--++(135:1) 
 --++(45:4)
--++(-45:2)
 --++(45:2)
--++(-45:2)
 --++(45:2)
--++(-45:3)
--++(45:1)
--++(-45:1)
--++(45:1)
--++(-45:2)
 --++(-135:1)--++(135:1)
  --++(-135:1)--++(135:1)
 --++(-135:1)--++(135:3)
  --++(-135:2)--++(135:2)
   --++(-135:2)--++(135:2)
;

\end{tikzpicture}\qquad
 \begin{tikzpicture}[scale=0.35]
  \path(0,0)--++(135:2) coordinate (hhhh);
 \draw[very thick] (hhhh)--++(135:10)--++(45:10)--++(-45:10)--++(-135:10);
 \clip (hhhh)--++(135:10)--++(45:10)--++(-45:10)--++(-135:10);
\path(0,0) coordinate (origin2);
  \path(0,0)--++(135:2) coordinate (origin3);

      \foreach \i in {0,1,2,3,4,5,6,7,8,9,10}
{
\path (origin3)--++(45:0.5*\i) coordinate (c\i); 
\path (origin3)--++(135:0.5*\i)  coordinate (d\i); 
  }

   \foreach \i in {0,1,2,3,4,5,6,7,8,9,10}
{
\path (origin3)--++(45:1*\i) coordinate (c\i); 
\path (c\i)--++(-45:0.5) coordinate (c\i); 
\path (origin3)--++(135:1*\i)  coordinate (d\i); 
\path (d\i)--++(-135:0.5) coordinate (d\i); 
\draw[thick,densely dotted] (c\i)--++(135:12);
\draw[thick,densely dotted] (d\i)--++(45:12);
  }

\path(0,0)--++(135:2) coordinate (hhhh);

\fill[opacity=0.2](hhhh)
--++(135:8)  coordinate (JJ)
--++(45:2)
--++(-45:2)	coordinate (JJ7)		--++(45:2)
--++(-45:2)   --++(45:2)
--++(-45:2)coordinate (JJ1)  --++(45:2)--++(-45:1)coordinate (JJ23)--++(45:1)--++(-45:1);

\fill[opacity=0.4,orange](hhhh)
  (JJ23)
--++(135:1) 
 --++(45:2)
--++(-45:2)--++(-135:1)--++(135:1)
;

\fill[opacity=0.4,violet](hhhh)
  (JJ)
--++(135:1)coordinate (JJ)
 --++(45:1)
--++(-45:1)--++(-135:1)--++(135:1)
;

\fill[opacity=0.4,violet](hhhh)
  (JJ7)
--++(135:1)
 --++(45:3)
--++(-45:3)--++(-135:1)--++(135:2)
;

\fill[opacity=0.4,orange](hhhh)
  (JJ)
--++(135:1)
 --++(45:2)
--++(-45:1)
 --++(45:1)
 --++(-45:1)
 --++(45:3)
--++(-45:3)
 --++(45:1)
  --++(-45:1) --++(45:1)
--++(-45:2)
--++(-135:1)--++(135:1)--++(-135:1)--++(135:1)
--++(-135:1)--++(135:3)
--++(-135:3)--++(135:1)--++(-135:1)--++(135:1)
;


%

\fill[opacity=0.4,violet](hhhh)
  (JJ1)
--++(135:1)
 --++(45:1)
--++(-45:1)--++(-135:1)--++(135:1)
;

\path(origin3)--++(45:-0.5)--++(135:7.5) coordinate (X)coordinate (start);

\path (start)--++(135:1)coordinate (start);
 
\path (start)--++(45:6)--++(-45:5) coordinate (X) ;

\path (start)--++(45:2)--++(-45:3) coordinate (X) ; 
 
\path(X)--++(45:2)--++(135:3) coordinate (X) ;
 
\path (X)--++(-45:1) coordinate (X) ;
 \fill[cyan](X) circle (4pt);
\draw[ thick, cyan](X)--++(45:1) coordinate (X) ;
\fill[cyan](X) circle (4pt);
\draw[ thick, cyan](X)--++(45:1) coordinate (X) ;
\fill[cyan](X) circle (4pt);
\draw[ thick, cyan](X)--++(-45:1) coordinate (X) ;
\fill[cyan](X) circle (4pt);\draw[ thick, cyan](X)--++(-45:1) coordinate (X) ;
\fill[cyan](X) circle (4pt);

\end{tikzpicture}
\qquad
 \begin{tikzpicture}[scale=0.35]
  \path(0,0)--++(135:2) coordinate (hhhh);
 \draw[very thick] (hhhh)--++(135:10)--++(45:10)--++(-45:10)--++(-135:10);
 \clip (hhhh)--++(135:10)--++(45:10)--++(-45:10)--++(-135:10);
\path(0,0) coordinate (origin2);
  \path(0,0)--++(135:2) coordinate (origin3);

      \foreach \i in {0,1,2,3,4,5,6,7,8,9,10}
{
\path (origin3)--++(45:0.5*\i) coordinate (c\i); 
\path (origin3)--++(135:0.5*\i)  coordinate (d\i); 
  }

   \foreach \i in {0,1,2,3,4,5,6,7,8,9,10}
{
\path (origin3)--++(45:1*\i) coordinate (c\i); 
\path (c\i)--++(-45:0.5) coordinate (c\i); 
\path (origin3)--++(135:1*\i)  coordinate (d\i); 
\path (d\i)--++(-135:0.5) coordinate (d\i); 
\draw[thick,densely dotted] (c\i)--++(135:12);
\draw[thick,densely dotted] (d\i)--++(45:12);
  }

\path(0,0)--++(135:2) coordinate (hhhh);

\fill[opacity=0.2](hhhh)
--++(135:8)  coordinate (JJ)
--++(45:2)
--++(-45:2)	coordinate (JJ7)		--++(45:2)
--++(-45:2)   --++(45:2)
--++(-45:2)coordinate (JJ1)  --++(45:2)--++(-45:1)coordinate (JJ23)--++(45:1)--++(-45:1);

\fill[opacity=0.2](hhhh)
  (JJ23)
--++(135:1) 
 --++(45:2)
--++(-45:2)--++(-135:1)--++(135:1)
;

\fill[opacity=0.2](hhhh)
  (JJ)
--++(135:1)coordinate (JJ)
 --++(45:1)
--++(-45:1)--++(-135:1)--++(135:1)
;

\fill[opacity=0.2](hhhh)
  (JJ7)
--++(135:1)
 --++(45:3)
--++(-45:3)--++(-135:1)--++(135:2)
;

\fill[opacity=0.2](hhhh)
  (JJ)
--++(135:1)
 --++(45:2)
--++(-45:1) coordinate (XY)
 --++(45:1)
 --++(-45:1)
 --++(45:3)
--++(-45:3)
 --++(45:1)
  --++(-45:1) --++(45:1)
--++(-45:2)
--++(-135:1)--++(135:1)--++(-135:1)--++(135:1)
--++(-135:1)--++(135:3)
--++(-135:3)--++(135:1)--++(-135:1)--++(135:1)
;


%

\fill[opacity=0.4,magenta] 
  (XY)
--++(135:1)
 --++(45:2)
--++(-45:2)--++(-135:1)--++(135:1)
;

\path(XY)--++(45:4)--++(-45:4) coordinate (XY);

\fill[opacity=0.4,magenta] 
  (XY)
--++(135:1)
 --++(45:2)
--++(-45:2)--++(-135:1)--++(135:1)
;

\path(XY)--++(45:2)--++(-45:3) coordinate (XY);

\fill[opacity=0.4,orange] 
  (XY)
--++(135:1)
 --++(45:1)
--++(-45:1)--++(-135:1)--++(135:1)
;

\fill[opacity=0.2] 
  (JJ1)
--++(135:1)
 --++(45:1)
--++(-45:1)--++(-135:1)--++(135:1)
;

\path(origin3)--++(45:-0.5)--++(135:7.5) coordinate (X)coordinate (start);

\path (start)--++(135:1)coordinate (start);
 
\path (start)--++(45:6)--++(-45:5) coordinate (X) ; 
  
\path (start)--++(45:2)--++(-45:3) coordinate (X) ; 
 
\path(X)--++(45:2)--++(135:3) coordinate (X) ;
 
\path (X)--++(-45:1) coordinate (X) ;
 \fill[cyan](X) circle (4pt);
\draw[ thick, cyan](X)--++(45:1) coordinate (X) ;
\fill[cyan](X) circle (4pt);
\draw[ thick, cyan](X)--++(45:1) coordinate (X) ;
\fill[cyan](X) circle (4pt);
\draw[ thick, cyan](X)--++(-45:1) coordinate (X) ;
\fill[cyan](X) circle (4pt);\draw[ thick, cyan](X)--++(-45:1) coordinate (X) ;
\fill[cyan](X) circle (4pt);
\end{tikzpicture}
$$
    \caption{ An example of case $(i)$  for loop generators. 	
We depict the Dyck tilings of 
${\sf reg}(\alpha)\setminus \alpha$,  
 $\mu\setminus\alpha$,
   and  $ {\sf reg}(\alpha)\setminus \mu $  respectively. 
In the two rightmost pictures, we also depict the  Dyck path 
$\color{cyan}X$ which is equal to a
 ${ \color{violet}H} \in \mu\setminus\alpha$ which is of height $-1$.
	}
    \label{figloopcase1}
\end{figure}
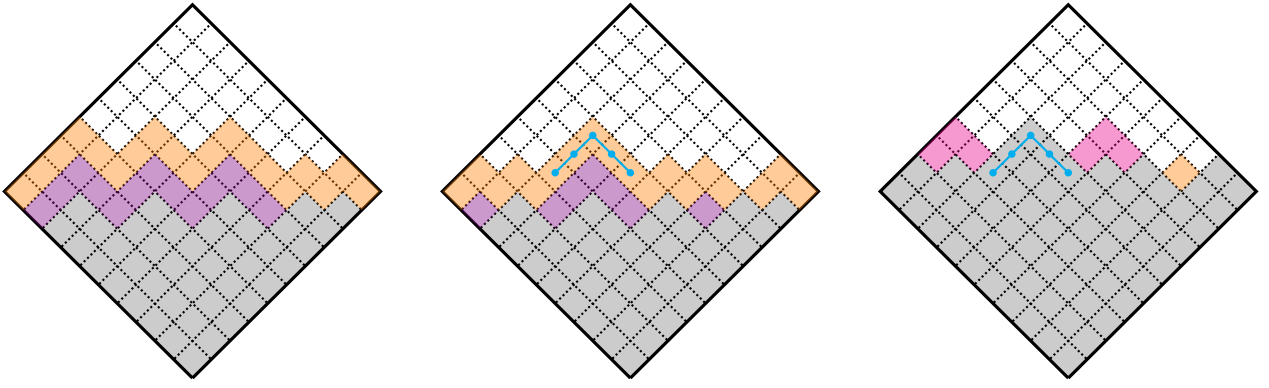

We first consider  the case that $H'=R^h_1$ is the unique  Dyck path in \eqref{The-Rs} which is adjacent  to $Q$. 
We have that 
 \begin{align*}
\textstyle  ( \mathbb L (\alpha)
  \mathbb R_{\mu\setminus \alpha} 
\mathbb A_{\mu\setminus \alpha} \mathbbm 1_\mu )
\mathbb  L^\mu_\mu(-Q)
=& \textstyle 
\mathbb L (\alpha)
   	\mathbb R ^{< h}_{\mu\setminus \alpha}   
 \mathbb D(-H')  
\mathbb  L (-Q)
  ( \prod_{ j\neq 1 
 	}	\mathbb D(-R ^h_ j)  )  
   	\mathbb R ^{> h}_{\mu\setminus \alpha}    
  \mathbb A_{\mu\setminus \alpha} 
\mathbbm 1_\mu 
 \\
=&\pm \textstyle 
\mathbb L (\alpha)
   	\mathbb R ^{< h}_{\mu\setminus \alpha}   
 \mathbb  L 
  (- \langle H' \cup Q\rangle_{\nu }  ) \mathbb D(-H') 
  ( \prod_{ j\neq 1 
 	}	\mathbb D(-R ^h_ j)  )  
   	\mathbb R ^{> h}_{\mu\setminus \alpha}    
  \mathbb A_{\mu\setminus \alpha} 
\mathbbm 1_\mu 
 \end{align*}
for $\nu=\alpha \cup (\mu\setminus\alpha)_{<h}$;   
the first equality follows from the commutativity relations \eqref{rel3}; 
 the second    equality follows  
from the adjacent relation   \eqref{adjacent}.
Therefore 
   $  ( \mathbb L (\alpha)
  \mathbb R_{\mu\setminus \alpha} 
\mathbb A_{\mu\setminus \alpha} \mathbbm 1_\mu )
\mathbb  L^\mu_\mu(-Q)\in \mathcal{A}_{m,n}^{<\alpha}$ by induction on degree.

We now consider  the case that $H'=R^h_1$ and $H''=R^h_2$ are the two  Dyck paths in \eqref{The-Rs} which are adjacent  to $Q$.  
 We have that 
 \begin{align*}
& 
\textstyle  ( \mathbb L (\alpha)
  \mathbb R_{\mu\setminus \alpha} 
\mathbb A_{\mu\setminus \alpha} \mathbbm 1_\mu )
\mathbb  L (-Q)
 \\
=& \textstyle 
\mathbb L (\alpha)
   	\mathbb R ^{< h}_{\mu\setminus \alpha}   
   \mathbb D(-H'') \mathbb D(-H') 
\mathbb  L^\mu_\mu(-Q)
  ( \prod_{ j\neq 1  ,2
 	}	\mathbb D(-R ^h_ j)  )  
   	\mathbb R ^{> h}_{\mu\setminus \alpha}    
  \mathbb A_{\mu\setminus \alpha} 
\mathbbm 1_\mu 
 \\
=& \textstyle 
\pm\mathbb L (\alpha)
   	\mathbb R ^{< h}_{\mu\setminus \alpha}   
{\mathbbm 1}_\nu 
\mathbb D(-H'') 
 \mathbb  L 
  (- \langle H' \cup Q\rangle_{\nu }  ) \mathbb D(-H') 
  ( \prod_{ j\neq 1  ,2
 	}	\mathbb D(-R ^h_ j)  )  
   	\mathbb R ^{> h}_{\mu\setminus \alpha}    
  \mathbb A_{\mu\setminus \alpha} 
\mathbbm 1_\mu 
 \\
=& \textstyle 
\pm\mathbb L (\alpha)
   	\mathbb R ^{< h}_{\mu\setminus \alpha}   
{\mathbbm 1}_\nu 
\mathbb  L 
  (- \langle H' \cup H'' \cup Q\rangle_{\nu }  ) 
 \mathbb D(-H'')  \mathbb D(-H') 
  ( \prod_{ j\neq 1  ,2
 	}	\mathbb D(-R ^h_ j)  )  
	\mathbb R ^{> h}_{\mu\setminus \alpha}    
	  \mathbb A_{\mu\setminus \alpha} 
\mathbbm 1_\mu 
 \end{align*}
for $\nu=\alpha \cup (\mu\setminus\alpha)_{<h}$;   
the first equality follows from the commutativity relations \eqref{rel3}; 
 the second  and third   equalities follow 
from the adjacent relation   \eqref{adjacent}.
Therefore 
   $  ( \mathbb L (\alpha)
  \mathbb R_{\mu\setminus \alpha} 
\mathbb A_{\mu\setminus \alpha} \mathbbm 1_\mu )
\mathbb  L^\mu_\mu(-Q)\in \mathcal{A}_{m,n}^{<\alpha}$ by induction on degree.

Now  assume
      that ${\sf ht}^{\mu}_{\alpha}(H)>0$. 
      In this case $Q$ is equal to 
precisely one       Dyck path, $Q=H=A^h_1$ say, in \eqref{The-As} and $Q$ 
commutes with all other paths in 
$A^j_k$, $R^i_l$ in \eqref{The-As} and \eqref{The-Rs} not equal to $H$.
We have that 
\begin{align*}( \mathbb L (\alpha)
  \mathbb R_{\mu\setminus \alpha} 
\mathbb A_{\mu\setminus \alpha}  \mathbbm 1_\mu)
\mathbb  L (-Q)
& \textstyle  =
  \mathbb L (\alpha)
  \mathbb R_{\mu\setminus \alpha} 
  	\mathbb A ^{< h}_{\mu\setminus \alpha}   
 \mathbb D (+Q)  
 \mathbb L (-Q)  
 (\prod_{j\neq 1} \mathbb  D(+A ^h_j))
   	\mathbb A ^{> h}_{\mu\setminus \alpha}     \mathbbm 1_\mu
\\
& \textstyle  =
\mathbb L (\alpha)
  \mathbb R_{\mu\setminus \alpha} 
  	\mathbb A ^{< h}_{\mu\setminus \alpha}   
    \mathbb D (+Q)      \mathbb D (-Q)  
    \mathbb D (+Q)  
 (\prod_{j\neq 1} \mathbb  D(+A ^h_j))
   	\mathbb A ^{> h}_{\mu\setminus \alpha}     \mathbbm 1_\mu
 \\
& \textstyle  =
 \sum_{
\begin{subarray}c
 P  \in {\rm DRem}(\nu)
\end{subarray}
} 
c_P 
\mathbb L (\alpha)
  \mathbb R_{\mu\setminus \alpha} 
  	\mathbb A ^{< h}_{\mu\setminus \alpha}   
\mathbb    L  (-P) 
    	\mathbb A ^{\geq h}_{\mu\setminus \alpha}    \mathbbm 1_\mu
\end{align*}
for $\nu=\alpha \cup (\mu\setminus\alpha)_{<h}$ and $c_P \in \Bbbk $ some coefficients which can be calculated explicitly using the self-dual relation \eqref{rel2}; 
   the second equality follows  
by \cref{L=lincombo}.
 Therefore 
 $( \mathbb L (\alpha)
  \mathbb R_{\mu\setminus \alpha} 
\mathbb A_{\mu\setminus \alpha}  \mathbbm 1_\mu)
\mathbb  L (-Q)\in 
\mathcal{A}_{m,n} ^{< \alpha } 
 $ by induction on degree. 

 \begin{figure}[ht!]
 
 $$ \begin{tikzpicture}[scale=0.35]
  \path(0,0)--++(135:2) coordinate (hhhh);
 \draw[very thick] (hhhh)--++(135:9)--++(45:9)--++(-45:9)--++(-135:9);
 \clip (hhhh)--++(135:9)--++(45:9)--++(-45:9)--++(-135:9);
\path(0,0) coordinate (origin2);
  \path(0,0)--++(135:2) coordinate (origin3);

      \foreach \i in {0,1,2,3,4,5,6,7,8,9,10}
{
\path (origin3)--++(45:0.5*\i) coordinate (c\i); 
\path (origin3)--++(135:0.5*\i)  coordinate (d\i); 
  }

   \foreach \i in {0,1,2,3,4,5,6,7,8,9,10}
{
\path (origin3)--++(45:1*\i) coordinate (c\i); 
\path (c\i)--++(-45:0.5) coordinate (c\i); 
\path (origin3)--++(135:1*\i)  coordinate (d\i); 
\path (d\i)--++(-135:0.5) coordinate (d\i); 
\draw[thick,densely dotted] (c\i)--++(135:12);
\draw[thick,densely dotted] (d\i)--++(45:12);
  }

\path(0,0)--++(135:2) coordinate (hhhh);

\fill[opacity=0.2](hhhh)
--++(135:7)  coordinate (JJ)
--++(45:2)
--++(-45:3)	coordinate (JJ7)		--++(45:3)
--++(-45:1)   --++(45:1)
--++(-45:1)coordinate (JJ1)  --++(45:1)--++(-45:2);

\path   (JJ7)--++(45:2)--++(135:2) coordinate (JJ7);

%
%

\fill[opacity=0.4,violet](hhhh)
  (JJ7)
--++(135:1)coordinate   (JJ7)
 --++(45:1)
--++(-45:1); 
;

\fill[opacity=0.4,orange](hhhh)
  (JJ7)
--++(135:1) coordinate   (JJ7)
 --++(45:2)
--++(-45:2)--++(-135:1)--++(135:1)
;

\fill[opacity=0.4,darkgreen](hhhh)
  (JJ7)
--++(135:1) coordinate   (JJ7)
 --++(45:3)
--++(-45:2)--++(45:1)--++(-45:2)
--++(-135:1)--++(135:1)--++(-135:1)--++(135:2)
;

\fill[opacity=0.35,magenta](hhhh)
  (JJ)
--++(135:1)
 --++(45:1) coordinate (JJ)--++(45:2)
--++(-45:3)
 --++(45:3)
 --++(-45:1) --++(45:1) --++(-45:2)
 coordinate (X) 
    --++(-135:1)--++(135:1)--++(-135:1)--++(135:1)
 --++(-135:3)--++(135:3)
;

\fill[opacity=0.35,magenta](hhhh)
  (X)--++(135:1)--++(45:1)--++(-45:3)--++(-135:1);

\fill[opacity=0.35,cyan](hhhh)
  (JJ)
  --++(-135:1)
--++(135:1)--++(45:1)
 --++(45:1) coordinate (JJ)--++(45:2)
--++(-45:3)
 --++(45:3)
 --++(-45:1) --++(45:1) --++(-45:2)
 coordinate (X) 
    --++(-135:1)--++(135:1)--++(-135:1)--++(135:1)
 --++(-135:3)--++(135:3)
 ;

\fill[opacity=0.35,cyan](hhhh)
  (X)--++(135:1)--++(45:1)--++(-45:4)--++(-135:1);

%
%
%
%
%
%
%

%
%
%
%
%
%
%
%
%
%

\path(origin3)--++(45:-0.5)--++(135:7.5) coordinate (X)coordinate (start);

\path (start)--++(135:2)coordinate (start);
 
\path (start)--++(45:6)--++(-45:5) coordinate (X) ;

\path (start)--++(45:2)--++(-45:3) coordinate (X) ; 
 
\path(X)--++(45:2)--++(135:3) coordinate (X) ;
 
\path (X)--++(-45:1) coordinate (X) ;
 \fill[violet](X) circle (4pt);
\draw[ thick, violet](X)--++(45:1) coordinate (X) ;
\fill[violet](X) circle (4pt);
\draw[ thick, violet](X)--++(45:1) coordinate (X) ;
\fill[violet](X) circle (4pt);
\draw[ thick, violet](X)--++(45:1) coordinate (X) ;
\fill[violet](X) circle (4pt);
\draw[ thick, violet](X)--++(-45:1) coordinate (X) ;
\fill[violet](X) circle (4pt);\draw[ thick, violet](X)--++(-45:1) coordinate (X) ;
\fill[violet](X) circle (4pt);
\draw[ thick, violet](X)--++(45:1) coordinate (X) ;
\fill[violet](X) circle (4pt);
\draw[ thick, violet](X)--++(-45:1) coordinate (X) ;
\fill[violet](X) circle (4pt);\draw[ thick, violet](X)--++(-45:1) coordinate (X) ;
\fill[violet](X) circle (4pt);

\end{tikzpicture}$$
\caption{Case $(iv)$ of the loop generator. Notice that $\color{violet}Q$ commutes with   the big pink and cyan Dyck paths (as we are not in case $(iii)$ by assumption).	The Dyck paths 
${\color{violet}Q}\setminus {\color{darkgreen}H} = H^1 \sqcup H^2$ commute with the rest of the Dyck tiling 
(again as we are not in case $(iii)$).		}
\label{Case 4}
\end{figure}
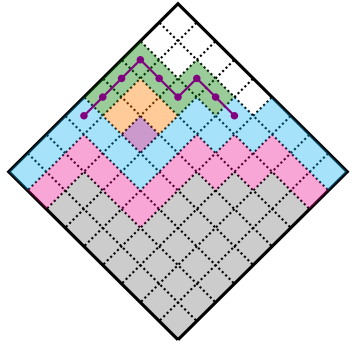

Finally, suppose we are in case $(iv)$ and we assume that 
$Q$ is not as in case $(ii)$  or  $(iii)$, an example is depicted in \cref{Case 4}. 
 Since 
${\sf ht}^\mu(Q)\in \ZZ_{\geq 0}$, this implies that 
${\sf ht}^{\mu}_{\alpha}(H)=h\in \ZZ_{>0}$. 
Our assumptions imply that 
 $Q\setminus H=H^1 \sqcup H^2$.
  and  that 
$H^1, H^2$ commute with $\nu\setminus\alpha$ for $\nu = \mu - H$.
We have that 
\begin{align*}
 \mathbb A_{\mu\setminus \alpha} 
 \mathbbm 1_\mu 
 \mathbb  L(-Q)
= 
 \mathbb A_{\nu\setminus \alpha} 
 \mathbb  D(+H) 
 \mathbb  L (-Q) \mathbbm 1_\mu 
= 
\mathbb A_{ \nu \setminus \alpha} 
 \mathbb  L (-H^1) 
 \mathbb  D(+H) 
  \mathbbm 1_\mu  
\end{align*}
by either \cref{lemmar3} if ${\sf ht}^\mu(Q)=0$
 or application of the adjacent relation \eqref{adjacent} if ${\sf ht}^\mu(Q)>0$ (using the factorisation of \cref{L=lincombo}). In either case, the result again follows by induction on the degree.

\color{black}
\bigskip
\noindent{\bf Adding a Dyck Path. }
We now consider product where $X=\mathbb D(+Q)$ corresponding to adding a Dyck path $Q\in {\rm DAdd}(\mu)$ (to obtain $\nu=\mu\cup Q$).  There are three cases to consider:
$Q$ is adjacent to $i=0, 1,$ or 2 Dyck paths in $\mu\setminus \alpha$.

{\em The $i=0$ case. }
Assume that  $Q\in {\rm DAdd}(\mu)$  is adjacent to zero Dyck paths  in the Dyck tiling of  $\mu\setminus \alpha$.
In which case,    $(\mu\setminus \alpha) \cup Q$ is itself a Dyck tiling and 
\[
( \mathbb L (\alpha)
  \mathbb R_{\mu\setminus \alpha} 
\mathbb A_{\mu\setminus \alpha} \mathbbm  1_\mu)  \mathbb  D(+Q)
=
  \mathbb L (\alpha)
  \mathbb R_{\mu\setminus \alpha} 
(\mathbb A_{\mu\setminus \alpha}    \mathbb  D(+Q))\mathbbm  1_\nu
=   
\mathbb L (\alpha)
  \mathbb R_{\mu\setminus \alpha} 
\mathbb A_{\nu\setminus \alpha}  \mathbbm  1_\nu
=   
\mathbb L (\alpha)
  \mathbb R_{\nu\setminus \alpha} 
\mathbb A_{\nu\setminus \alpha}  \mathbbm  1_\nu
\]
is an element of the claimed spanning set (the   third equality follows immediately from  the definition of 
the elements, the second equality might also involve some application of the commutativity relations \eqref{rel3}). 

{\em The $i=1$ case. }
We now assume that  $Q$ is adjacent to one Dyck path  $H\in\mu\setminus \alpha$ 
 with ${\sf ht}^{\mu}_{\alpha}(H)=h\in \ZZ$. 
We first suppose that   $\langle H\cup Q  	\rangle_{\mu+Q}$ does not exist, in which case
  \begin{align*}
( \mathbb L (\alpha) 
  \mathbb R_{\mu\setminus \alpha} 
\mathbb A_{\mu\setminus \alpha}\mathbbm  1_\mu )
\mathbb  D(+Q)
=0
\end{align*}
using many applications of the commuting relation \eqref{rel3} and one 
application of either the adjacency relation \ref{adjacent} or \cref{loopadd-big1} 		for $h>0$ or $h\leq 0$ respectively.  We can now assume that    $\langle H\cup Q  	\rangle_{\mu+Q}$ does exist.
Our assumption that $H$ is the {\em unique} Dyck path in $\mu\setminus\alpha$ that is adjacent to  $Q$  
implies that 
  $\langle H\cup Q  	\rangle_{\mu+Q}   - Q  - H =H'  \in {\rm DRem}(\alpha)$. 
We   consider the case that $h\leq 0$ (the case that $h>0$ is almost identical) 
where  we have that   
 \begin{align*}
( \mathbb L (\alpha)
  \mathbb R_{\mu\setminus \alpha} 
\mathbb A_{\mu\setminus \alpha} {\mathbbm1}_\mu )
\mathbb  D(+Q)
 =& \textstyle   \mathbb L (\alpha)
   	\mathbb R ^{< h}_{\mu\setminus \alpha}  
 \mathbb D (-H) \mathbb D (+Q)
 (\prod _{j\neq 1} \mathbb D(-R^j_h))   	\mathbb R ^{> h}_{\mu\setminus \alpha}   
 \mathbb A_{\mu\setminus \alpha} \mathbbm 1_{\mu+Q}
\\
 =& \textstyle   \mathbb L (\alpha)
   	\mathbb R ^{< h}_{\mu\setminus \alpha}  
 \mathbb D (-H') \mathbb D (+(H\cup H' \cup Q))
 (\prod _{j\neq 1} \mathbb D(-R^j_h))   	\mathbb R ^{> h}_{\mu\setminus \alpha}   
 \mathbb A_{\mu\setminus \alpha} \mathbbm 1_{\mu+Q}
\\
=& \textstyle   \mathbb L (\alpha)
\mathbb D (-H') 
   	\mathbb R ^{< h}_{\mu\setminus \alpha}  
\mathbb D (+(H\cup H' \cup Q))
 (\prod _{j\neq 1} \mathbb D(-R^j_h))   	\mathbb R ^{> h}_{\mu\setminus \alpha}   
 \mathbb A_{\mu\setminus \alpha} \mathbbm 1_{\mu+Q}
\\
=& \textstyle \mathbb D (-H')   \mathbb L (\alpha-H')
   	\mathbb R ^{< h}_{\mu\setminus \alpha}  
\mathbb D (+(H\cup H' \cup Q))
 (\prod _{j\neq 1} \mathbb D(-R^j_h))   	\mathbb R ^{> h}_{\mu\setminus \alpha}   
 \mathbb A_{\mu\setminus \alpha} \mathbbm 1_{\mu+Q}
 \end{align*}
where the second equality follows from the
 adjacency relation \eqref{adjacent}  for $h\neq0$
 and   \cref{loopadd-big1} for $h= 0$; 
the first and third equalities follow
  by the commuting relations \eqref{rel3}; 
  the final equality follows by  \cref{h0loop_rect,h0loop_sq}.  
 Thus we conclude that 
 $
 ( \mathbb L (\alpha)
  \mathbb R_{\mu\setminus \alpha} 
\mathbb A_{\mu\setminus \alpha} {\mathbbm1}_\mu )
\mathbb  D(+Q) \in 
   \mathcal A_{m,n}^{\leq \alpha- H'}
$.

{\em The $i=2$ case. }
 It remains to consider the case that there are precisely
  two   Dyck paths adjacent to $Q$; these 
  must be of the same height $h\in \ZZ$ and we label them by $H'=A_1^h$ and $H''=A_2^h$.
If $h\in \ZZ_{>0}$, we  apply  the commutativity relation \eqref{rel3} and the adjacent relation \eqref{adjacent} and hence obtain the following  
\begin{align*} 
 \mathbb A_{\mu\setminus \alpha}  \mathbbm 1_\mu 
\mathbb  D (+Q)
=&\textstyle 
   	\mathbb A ^{< h}_{\mu\setminus \alpha}   
\mathbb  A ^h_{\mu\setminus \alpha}
  \mathbb D (+Q) 
   	\mathbb A ^{> h}_{\mu\setminus \alpha}     \mathbbm 1_{\mu+Q}
\\
=&\textstyle 
    	\mathbb A ^{< h}_{\mu\setminus \alpha}   
  \mathbb D (+H')  \mathbb D (+H'')    \mathbb D (+Q) 
  (\prod _{j\neq 1,2}\mathbb D(+A^h_j))
   	\mathbb A ^{> h}_{\mu\setminus \alpha}     \mathbbm 1_{\mu+Q}
\\
=&\textstyle 
     	\mathbb A ^{< h}_{\mu\setminus \alpha}   
  \mathbb D (+H')  \mathbb D (-H')    \mathbb D (+Q\cup H \cup H') 
  (\prod _{j\neq 1,2}\mathbb D(+A^h_j))
   	\mathbb A ^{> h}_{\mu\setminus \alpha}     \mathbbm 1_{\mu+Q}
\\
 =&\textstyle 
 \sum_{
\begin{subarray}c
 P  \in {\rm DRem}(\nu)
\end{subarray}
} 
c_P 
   	\mathbb A ^{< h}_{\mu\setminus \alpha}   
\mathbb    L  (-P) 
    \mathbb D (+Q\cup H \cup H') 
  (\prod _{j\neq 1,2}\mathbb D(+A^h_j))
   	\mathbb A ^{> h}_{\mu\setminus \alpha}     \mathbbm 1_{\mu+Q}
 \end{align*}
 for $\nu = \alpha \cup (\mu\setminus\alpha)_{<h}$  and $c_P \in \Bbbk $ some coefficients which can be calculated explicitly using the self-dual relation \eqref{rel2}.
Substituting  the above into the overall product, we deduce that 
$\textstyle ( \mathbb L (\alpha) 
  \mathbb R_{\mu\setminus \alpha} 
\mathbb A_{\mu\setminus \alpha}\mathbbm  1_\mu )
\mathbb  D(+Q)
 \in  \mathcal{A}_{m,n}^{<\alpha} $ by  induction on degree.

We now assume that $h \in \ZZ_{\leq 0}$ (and continue with out assumption that $Q$ is adjacent to two Dyck paths in $\mu\setminus\alpha$).
We note that $Q$ commutes past all Dyck paths 
 $A^j_k$ in   \eqref{The-As} 
 but is actually {\em equal to} some path $Q=R^h_1$   in \eqref{The-Rs}. 
This is because $Q$ is adjacent to two Dyck paths $H'$ and $H''$ of height $h\in \ZZ_{\leq 0}$
 and these paths were formed by first doing a loop and then splitting with the path $ R^h_1$.  
We hence   obtain the following
\begin{align*}
\textstyle ( \mathbb L (\alpha)
  \mathbb R_{\mu\setminus \alpha} 
\mathbb A_{\mu\setminus \alpha} \mathbbm 1_\mu) 
\mathbb  D(+Q)
=& \textstyle   \mathbb L (\alpha)
   	\mathbb R ^{< h}_{\mu\setminus \alpha}  
 \mathbb R ^h_{\mu\setminus \alpha} \mathbb D (+Q)
   	\mathbb R ^{> h}_{\mu\setminus \alpha}   
 \mathbb A_{\mu\setminus \alpha} \mathbbm 1_{\mu+Q}
\\
 =& \textstyle   \mathbb L (\alpha)
   	\mathbb R ^{< h}_{\mu\setminus \alpha}  
    \mathbb D(-Q) \mathbb D(+Q) \prod _{j\neq 1} \mathbb D(-R^j_h)
   	\mathbb R ^{> h}_{\mu\setminus \alpha}   
 \mathbb A_{\mu\setminus \alpha} \mathbbm 1_{\mu+Q}
\\
=& \textstyle   \mathbb L (\alpha)
   	\mathbb R ^{< h}_{\mu\setminus \alpha}  
    \mathbb L(-Q)  \prod _{j\neq 1} \mathbb D(-R^j_h)
   	\mathbb R ^{> h}_{\mu\setminus \alpha}   
 \mathbb A_{\mu\setminus \alpha} \mathbbm 1_{\mu+Q}
\end{align*}
where the first and second  equalities follow  from the commutation relations \eqref{rel3};
and the third equality  follows from \cref{L=lincombo}. 
Again, we deduce that $( \mathbb L (\alpha) 
  \mathbb R_{\mu\setminus \alpha} 
\mathbb A_{\mu\setminus \alpha}\mathbbm  1_\mu )
\mathbb  D(+Q)
 \in  \mathcal{A}_{m,n}^{<\alpha} $ by induction on degree.

\bigskip
\noindent{\bf Removing a Dyck Path. }
We now consider product where $X=\mathbb D(-Q)$ corresponding to removing a Dyck path $Q$. 
 There are three cases to consider: $(i)$
$Q =H \in \mu\setminus \alpha$ 
or $(ii)$  
$Q\not \in \mu\setminus \alpha$ and $Q$ commutes with   $ \mu\setminus \alpha$
 or $(iii)$ 
$Q\not \in \mu\setminus \alpha$ and 
$Q\prec H \in \mu\setminus \alpha$ for some $H$ which does not commute with $Q$ 
    $(iv)$ 
$Q\not \in \mu\setminus \alpha$ but 
$Q\succ H \in \mu\setminus \alpha$ 
 which does not commute with $Q$.

Case $(ii)$ is (again, as in the loop case)  not as simple as one might first think. 
 By assumption   $Q$  commutes with all of $\mu\setminus \alpha$.  
Thus   $Q$  commutes with   the
  $A^j_l$ in  \cref{The-As}  
and (as in the loop case) 
  $\mathbb D (-Q)$ commutes with   $\mathbb A_{\mu\setminus\alpha}$.  
Now, either $\mathbb D (-Q)$ commutes with all the   of the Dyck paths $R^i_k$  in \cref{The-Rs}
 or there exists a pair $H$ and $H'$  as in \cref{HandHprime}.  
 We have that 
 $$ 
 ( \mathbb L (\alpha)
  \mathbb R_{\mu\setminus \alpha} 
\mathbb A_{\mu\setminus \alpha} \mathbbm1_{\mu})
\mathbb  D (-Q) 
=
 \mathbb L (\alpha)  \mathbb  D (-Q) 
  \mathbb R_{\mu\setminus \alpha}
  \mathbb A_{\mu\setminus \alpha}   \mathbbm 1_\mu
=
 \mathbb  D (-Q)  \mathbb L (\alpha-Q)  
  \mathbb R_{\mu\setminus \alpha}
  \mathbb A_{\mu\setminus \alpha}   \mathbbm 1_\mu \in \mathcal{A}_{m,n}^{<\alpha}
 $$
 where the first equality follows from an identical argument to that used 
in case $(i)$ of the loop case (with  ${\sf ht}^{\mu}_{\alpha}(H)=h>0$); the second equality follows from  
  \cref{h0loop_rect,h0loop_sq} as $Q$ commutes with ${\sf reg}(\alpha)\setminus \alpha$.

Case $(iii)$. Suppose that 
$Q \not \in \mu\setminus\alpha$ but 
$Q \prec H \in \mu\setminus \alpha$ of height $h \in \ZZ$. 
In this case, we have that  $H\setminus Q= Q^1 \sqcup Q^2$ for some $Q^1$ and $Q^2$ and moreover that 
$ \nu \setminus \alpha$ for $\nu=\mu-Q$ is a Dyck tiling.  If ${\sf ht}^{\mu}_{\alpha}(H) > 0$ then we have that 
 \[
(   \mathbb L  (\alpha)
  \mathbb R_{\mu\setminus \alpha}  
    \mathbb A_{\mu\setminus \alpha}  \mathbbm 1_\mu)
\mathbb  D (-Q)
=
   \mathbb L  (\alpha)
  \mathbb R_{\mu\setminus \alpha}  
   \mathbb A_{(\mu-Q)\setminus \alpha}   \mathbbm 1_{\mu-Q}
=
   \mathbb L  (\alpha)
  \mathbb R_{(\mu-Q)\setminus \alpha}  
   \mathbb A_{(\mu-Q)\setminus \alpha}   
 \]
 where the first equality follows by the non-commuting relation \eqref{rel4} for the Dyck paths $H$ and $Q$  (together with the usual 
 commutation relations  \eqref{rel3}); the second equality follows as $  \mathbb R_{\mu\setminus \alpha}  =  \mathbb R_{(\mu-Q)\setminus \alpha}  $. 
If 
   ${\sf ht}^{\mu}_{\alpha}(H)\leq 0$ we have that 
$$
(   \mathbb L  (\alpha)
  \mathbb R_{\mu\setminus \alpha}  
    \mathbb A_{\mu\setminus \alpha}   \mathbbm 1_\mu)
\mathbb  D (-Q)
 =
    \mathbb L  (\alpha)
(  \mathbb R_{\mu\setminus \alpha}  \mathbb  D (-Q))
    \mathbb A_{\mu\setminus \alpha}   \mathbbm 1_{\mu-Q}
 =
   \mathbb L  (\alpha)
    \mathbb R_{(\mu-Q)\setminus \alpha}  
        \mathbb A_{\mu\setminus \alpha}    \mathbbm 1_{\mu-Q}
$$
 where the first equality follows by the  
 commutation relations  \eqref{rel3}; the second equality follows as 
 $ \mathbb R_{(\mu-Q)\setminus \alpha}  =  \mathbb R_{\mu\setminus \alpha}  \mathbb  D (-Q)$ (and  the  
 commutation relations  \eqref{rel3}) and 
 $  \mathbb A_{\mu\setminus \alpha}  =  \mathbb A_{(\mu-Q)\setminus \alpha}  $.

%
%

Case $(i)$. 
We suppose that $Q =H \in \mu\setminus \alpha$ for some ${\sf ht}^{\mu}_{\alpha}(H)=h\in\ZZ$.  
    We first  assume that   ${\sf ht}^{\mu}_{\alpha}(H)\leq 0$.
 First suppose that 
$H$ is the unique element of $(\mu\setminus\alpha)_h$.
In this case, $Q$ commutes with all the Dyck paths $A^j_k$, $R^i_l$ in \eqref{The-Rs} and  \eqref{The-As} and so  we have that 
 $$( \mathbb L (\alpha)
  \mathbb R_{\mu\setminus \alpha} 
\mathbb A_{\mu\setminus \alpha}  \mathbbm 1_\mu)
\mathbb  D (-Q)
=
 \mathbb L (\alpha)\mathbb  D (-Q)
  \mathbb R_{\mu\setminus \alpha} 
\mathbb A_{\mu\setminus \alpha}  \mathbbm 1_{\mu-Q} =0
 $$
 where the final equality follows by \cref{loopremove-big1} and 
 the fact that  $Q \in \mu \setminus\alpha$.
 We must now consider the case that 
 $ H  $ is not the unique Dyck path in $(\mu\setminus\alpha)_h$. 
 In which case, there exist Dyck path(s) 
$H':=R^h_1$ and possibly $H'':=R^h_2$ removed in the $h$ step (an example is depicted in \cref{figloopcase1}). 
Our assumptions  further  imply that   
 $Q$ is adjacent to $H'$ and  $H''$ (if the latter exists) and that $Q$ commutes with   every  other Dyck path $A^j_k$, $R^i_l$ in \eqref{The-Rs} and  \eqref{The-As}. 
 We set $P^h$ to be the unique Dyck path in ${\sf reg}(\alpha)\setminus \alpha$ of height $h$.
 We note that $P^h$ commutes with all Dyck paths 
 $R^i_k$ for $i<h$.  
 
 We first consider  the case that $H'  =R^h_1$ is the unique  Dyck path in \eqref{The-Rs} which is adjacent  to $Q$.  
We note that $H'\prec P^h$ is a non-commuting pair with 
 $P^h\setminus H' = Q \sqcup Q'$ for some $Q'$ of height $h$. 
 We have that 
\begin{align*} 
( \mathbb L (\alpha)
  \mathbb R_{\mu\setminus \alpha} 
\mathbb A_{\mu\setminus \alpha} \mathbbm 1_\mu)
\mathbb  D (-Q)
  =&\textstyle
  \mathbb L (\alpha)   	\mathbb R ^{< h}_{\mu\setminus \alpha}   
   \mathbb D (+H')
   \mathbb D (-Q)
   ( \prod_{j\neq 1}     \mathbb D(+    R ^h_j)  )
   	\mathbb R ^{> h}_{\mu\setminus \alpha}   
 \mathbb A_{\mu\setminus \alpha} \mathbbm 1_{\mu-Q}
\\
  =&\textstyle
    \pm  \mathbb L (\alpha)
     	\mathbb R ^{<h}_{\mu\setminus \alpha}   
  \mathbb D (-P^h ) 
       \mathbb D (+Q')
   ( \prod_{j\neq 1}     \mathbb D(+    R ^h_j)  )
   	\mathbb R ^{> h}_{\mu\setminus \alpha}   
 \mathbb A_{\mu\setminus \alpha} \mathbbm 1_{\mu-Q}
\\
    =&\textstyle
  \pm  \mathbb L (\alpha)  \mathbb D (-P^h ) 
     	\mathbb R ^{<h}_{\mu\setminus \alpha}   
       \mathbb D (+Q')
   ( \prod_{j\neq 1}     \mathbb D(+    R ^h_j)  )
   	\mathbb R ^{> h}_{\mu\setminus \alpha}   
 \mathbb A_{\mu\setminus \alpha} \mathbbm 1_{\mu-Q}
 \\
 =&0 
  \end{align*}
where the second equality follows from  applying relation \eqref{adjacent} to the pair $H'$, $Q$ of adjacent Dyck paths;
 the final equality follows from \cref{loopremove-big1} (as $P^h \in {\sf reg}(\alpha)\setminus \alpha$); all other equalities  follow from the commuting relations \eqref{rel3}.

  Continuing with the case  that $h\in \ZZ_{\leq 0}$, 
we now   suppose that  $H'=R^h_1$ and $H''=R^h_2$ are the two  Dyck paths in \eqref{The-Rs} which are adjacent  to $Q$. 
We note that $H', H''\prec P^h$ are non-commuting pairs with 
 $P^h\setminus H' = Q \sqcup Q'$ and 
 $P^h\setminus H'' = Q' \sqcup Q''$
  for some $Q', Q''$ of height $h$. 
 We have that 
\begin{align*} 
 ( \mathbb L (\alpha)
  \mathbb R_{\mu\setminus \alpha} 
\mathbb A_{\mu\setminus \alpha} \mathbbm 1_\mu)
\mathbb  D (-Q)
  =& \textstyle
  \mathbb L (\alpha)   	\mathbb R ^{< h}_{\mu\setminus \alpha}   
   \mathbb D(+H')   \mathbb D(+H'')
   \mathbb D (-Q)
    ( \prod_{j\neq 1,2}     \mathbb D(+    R ^h_j)  )
   	\mathbb R ^{> h}_{\mu\setminus \alpha}   
 \mathbb A_{\mu\setminus \alpha} \mathbbm 1_{\mu-Q}
\\
  =&\textstyle
  \pm 
   \mathbb L (\alpha)
     	\mathbb R ^{<h}_{\mu\setminus \alpha}   
  \mathbb D (-P^h ) 
        \mathbb D (+Q')       \mathbb D (+Q'')
   ( \prod_{j\neq 1,2}     \mathbb D(+    R ^h_j)  )
   	\mathbb R ^{> h}_{\mu\setminus \alpha}   
 \mathbb A_{\mu\setminus \alpha} \mathbbm 1_{\mu-Q}
\\
    =&\textstyle
  \pm    \mathbb L (\alpha)  \mathbb D (-P^h ) 
     	\mathbb R ^{<h}_{\mu\setminus \alpha}   
       \mathbb D (+Q')       \mathbb D (+Q'')
   ( \prod_{j\neq 1,2}     \mathbb D(+    R ^h_j)  )
   	\mathbb R ^{> h}_{\mu\setminus \alpha}   
 \mathbb A_{\mu\setminus \alpha} \mathbbm 1_{\mu-Q}
 \\
  =&0 
  \end{align*}
where the  second equality follows from two applications of the  adjacent relation \eqref{adjacent};
 the final equation follows from \cref{loopremove-big1}; all others follow from the commuting relations \eqref{rel3}.

If   $h>0$ then $H=A^1_h$  for some $A^1_h$ as in  \cref{The-As}.  
Therefore we have that 
\begin{align*} 
\phantom{  = }
(\mathbb L (\alpha)
   \mathbb R_{\mu\setminus \alpha} 
 \mathbb A _{\mu\setminus \alpha}
 \mathbbm 1_\mu)
  \mathbb D(-Q)
   &\textstyle
  =
  \mathbb L (\alpha)
   \mathbb R_{\mu\setminus \alpha} 
     	\mathbb A ^{< h}_{\mu\setminus \alpha}   
 \mathbb D (+Q)   \mathbb D (-Q) 
 (  \prod _{j\neq 1} \mathbb D(+A^j_h))
   	\mathbb A ^{> h}_{\mu\setminus \alpha}     \mathbbm 1_{\mu-Q}
\\   &\textstyle
  =
  \sum_{P \in {\rm DRem}(\nu)} c_P
  \mathbb L (\alpha)
   \mathbb R_{\mu\setminus \alpha} 
     	\mathbb A ^{< h}_{\mu\setminus \alpha}   
 \mathbb L(-P)
 (  \prod _{j\neq 1} \mathbb D(+A^j_h))
   	\mathbb A ^{> h}_{\mu\setminus \alpha}     \mathbbm 1_{\mu-Q}\end{align*}
for 
$\nu=\alpha\cup (\mu\setminus\alpha)_{<h}$ and  $c_P \in 
\Bbbk$  coefficients which can be calculated explicitly using the self-dual relation \eqref{rel2}.   Thus $(\mathbb L (\alpha)
   \mathbb R_{\mu\setminus \alpha} 
 \mathbb A _{\mu\setminus \alpha}
 \mathbbm 1_\mu)  \mathbb D(-Q)\in \mathcal{A}_{m,n}^{<\alpha}
$, by induction on degree.

Finally, suppose we are in case $(iv)$ and we assume that 
$Q$ is not as in case $(ii)$  or  $(iii)$, an example is depicted in \cref{Case 4}. 
Our assumptions imply that 
 $Q\setminus H=H^1 \sqcup H^2$.
Recall our assumption that $H \prec Q \not \preceq H' \in \nu \setminus \alpha$ unless 
  $H'$ commutes with $Q$;
  this assumption  further implies that 
 $H^1$ and $H^2$ commute with all 
  of the Dyck paths $R^i_k$, $A^j_l$ in \eqref{The-Rs} and  \eqref{The-As} for the pair $\nu \setminus \alpha$. 
  Therefore   
$$
\mathbb L (\alpha)
\mathbb R_{ \mu \setminus \alpha} \mathbb A_{ \nu \setminus \alpha} 
 \mathbb D(-H^1)
 \mathbb D(-H^2)\mathbbm 1_\nu 
   =
  \mathbb D(-H^1)
  \mathbb D(-H^2)
\mathbb L (\alpha-H^1-H^2)
\mathbb R_{ \nu \setminus \alpha} \mathbb A_{ \nu \setminus \alpha} \in \mathcal{A}_{m,n}^{<\alpha}
$$
by applying the  non-commuting relation \eqref{rel4} to $\mathbb D(+H) \mathbb D(-Q)$ in
 the relevant place and then the  commuting relations \eqref{rel3} and \cref{h0loop_rect,h0loop_sq}  for all other products.
   \end{proof}

%
%
%
%
%
%

\subsection{The isomorphism theorem}

Having established the spanning set of the algebra $\mathcal{A}_{m,n} $ in the previous section, 
we are now ready to prove the main result of the paper.
 It will be convenient  to set 
 $$
   D (-Q) := 
 \sum_{
 \begin{subarray}c
  {\la\in  \mathscr{R}_{m,n}}
  \\
 Q \in {\rm DRem}(\la)
 \end{subarray}
 }\!\!\!
 { \sf  1}_\la   D^\la_{\la-Q}
 \qquad
   D (+Q) := 
\sum_{
 \begin{subarray}c
  {\la\in  \mathscr{R}_{m,n}}
  \\
Q \in {\rm DAdd}(\la)
 \end{subarray}
 }\!\!\!
 { \sf  1}_\la   D^\la_{\la+Q} 
 $$

\begin{thm}
For $m\leq n$ the map 
 $
\varphi : \mathcal{A}_{m,n} \to H^m_n
 $
 given by
 $$
  \varphi(\mathbbm 1_\mu) = {\sf 1}_\mu 
  \qquad
 \varphi(\mathbb D^\la_\mu)=D^\la_\mu
 \qquad
  \varphi(\mathbb L^\la_\la(-P))=
  (-1)^{b(P)} 
  {\sf 1}_\la D(-P)D(+P)
 $$
 for $\la,\mu \in \mathcal{R}_{m,n}$ 
 is a $\ZZ$-graded $\Bbbk$-algebra isomorphism. 
\end{thm}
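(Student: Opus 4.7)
The plan is to establish the isomorphism in three stages: well-definedness of $\varphi$, surjectivity, and finally injectivity via a dimension count against the cellular basis of \cref{heere ris the basus2}.

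First, I would verify that $\varphi$ respects all defining relations of $\mathcal{A}_{m,n}$, hence gives a well-defined $\ZZ$-graded algebra homomorphism. The idempotent relations \eqref{rel1} are immediate, and the relations \eqref{rel2}, \eqref{rel3}, \eqref{rel4}, \eqref{adjacent} are precisely the relations of \cite[Theorem B]{compan4} holding already in $K^m_n$; since $H^m_n = eK^m_n e$ with $e$ a sum of idempotents indexed by $\mathscr{R}_{m,n}$, these relations transport to $H^m_n$ without any alteration (noting that $\mathbb{L}^\la_\la(-P)$ was \emph{defined} in \cref{L=lincombo} exactly so that its image $(-1)^{b(P)}{\sf 1}_\la D(-P)D(+P)$ matches the corresponding quadratic expression in \eqref{selfdualrel}). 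What remains is to verify the genuinely new relations: the cubic relation \eqref{cubic} and, in the $m=n$ case, the loop-nilpotency and loop-commutation relations \eqref{loop-relation}.

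The cubic and loop relations are the main obstacle. To check them, I would compute the corresponding products of diagrams directly in $H^m_n$ using the surgery procedure of Section 5.1, modelled on \cref{brace-surgery,brace-surgery2}. For the cubic relation one performs two consecutive surgeries on the composition $D_\mu^{\mu+P}D^\mu_{\mu+P}D^\mu_{\mu+P}$ corresponding to the rightmost addable Dyck path $P \in {\rm DAdd}_1(\mu)$: the first surgery produces a sum of two terms via a merging rule applied to a $1\otimes 1 \mapsto 1$ circle, while the second applies a splitting rule; careful bookkeeping of orientations shows the sum vanishes when $m<n$ (because one of the resulting circles carries an unmatched propagating strand to the right) and produces exactly $(-1)^{b(P)+1}2\mathbb{L}^{\mu+P}_{\mu+P}D^{\mu+P}_\mu$ when $m=n$ (where the rightmost ``loop'' has nowhere to escape). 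The loop-commutation relation follows from the fact that after this surgery analysis the loops $\mathbb{L}^\la_\la$ commute past arbitrary generators up to the signs already built into \cref{L=lincombo}, and loop-nilpotency $(\mathbb L^\mu_\mu)^2=0$ is a single direct surgery computation producing $x\otimes x \mapsto 0$.

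For surjectivity, I would invoke \cref{heere ris the basus2}: the algebra $H^m_n$ has cellular basis $\{D^\mu_\la D^\la_\nu\}$ indexed by Dyck pairs with $\la\in\mptn$, $\mu,\nu\in\mathscr{R}_{m,n}$. Given such a triple, the decomposition algorithm of \cref{PURE1} (regularise--split--add) factors $D^\mu_\la D^\la_\nu$ into a product of generators $D^\alpha_\beta$ and expressions $\varphi(\mathbb L^\la_\la(\alpha))$, thereby exhibiting every cellular basis element as lying in the image of $\varphi$. Concretely, the image of the element $\mathbb A^\ast_{\la\setminus\alpha}\mathbb R^\ast_{\la\setminus\alpha}\mathbb L(\alpha)\mathbb R_{\mu\setminus\alpha}\mathbb A_{\mu\setminus\alpha}$ from the spanning set of \cref{the spanning set} is (up to sign) $D^\mu_\alpha D^\alpha_\la$ times a cellular basis element indexed by the pair $(\la,\mu)$ over $\alpha$; this identification is the analogue of the factorisation underlying \cref{basis22}.

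For injectivity, I would combine \cref{the spanning set} with the above. \cref{the spanning set} shows $\dim_\Bbbk \mathcal{A}_{m,n} \leq |\mathcal{B}|$, where $\mathcal{B}$ is the indexing set of \eqref{bassis2}, namely pairs of Dyck tilings sharing a common lower partition $\alpha\in\mptn$. But this indexing set is in bijection with the cellular basis of \cref{heere ris the basus2}, so $|\mathcal{B}| = \dim_\Bbbk H^m_n$. Since $\varphi$ is surjective we get $\dim_\Bbbk \mathcal{A}_{m,n} \geq \dim_\Bbbk H^m_n = |\mathcal{B}|$, forcing equality throughout: the spanning set \eqref{bassis2} is a basis of $\mathcal{A}_{m,n}$, and $\varphi$ sends it bijectively to the cellular basis of $H^m_n$, hence is an isomorphism. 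The grading is preserved by construction, since each generator $\mathbb D^\la_\mu$ has degree equal to that of $D^\la_\mu$ and the loops $\mathbb L^\la_\la$ are of degree $2$ in both algebras.
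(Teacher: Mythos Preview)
Your overall architecture matches the paper's: verify that $\varphi$ respects the relations, show it is surjective via the cellular basis, and conclude injectivity by the dimension count against the spanning set of \cref{the spanning set}. The surjectivity and injectivity steps are essentially identical to the paper's, including the appeal to \cref{basis22}.

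The substantive difference is in how you propose to verify the cubic relation \eqref{cubic} and the loop relations \eqref{loop-relation}. You suggest computing directly with the surgery procedure of Section~5.1. The paper instead works entirely inside the algebraic presentation of $K^m_n$ from \cref{presentation}: for the cubic relation it applies the self-dual relation \eqref{selfdualrel} to the inner pair $D(+P)D(-P){\sf 1}_\mu$, obtaining one or two terms according to whether ${\sf last}(P)=m$ or ${\sf last}(P)\le m-2$, and then simplifies using the adjacent relation \eqref{adjacent-OG} and the non-commuting relation \eqref{noncommuting}; the dichotomy $m<n$ versus $m=n$ arises because ${\sf last}(P)$ maximal in ${\rm DAdd}_1(\mu)$ forces ${\sf last}(P)=m$ exactly when $m<n$. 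Loop nilpotency is again a one-line consequence of \eqref{selfdualrel}, since $P\in{\rm DAdd}_0(\mu-P)$ and $\mu\in\mathscr{R}_{m,m}$ leave no removable $Q$ to contribute. Loop commutation is checked case by case using \eqref{noncommuting} and \eqref{adjacent-OG}.

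Your surgery route is in principle workable, but the sketch you give is imprecise: the product in \eqref{cubic} involves more than two surgeries in general (the number depends on $b(P)$), the rule $1\otimes 1\mapsto 1$ produces a single term rather than two, and the informal claim about an ``unmatched propagating strand'' when $m<n$ does not straightforwardly identify which surgery output vanishes. The paper's approach has the advantage of never leaving the Dyck-combinatorial language already developed, and each step is pinned to a named relation, so you may find it both shorter and easier to make rigorous.
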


\begin{proof} 
We first verify that the map is a $\Bbbk$-algebra homomorphism. 
Clearly  \eqref{rel1}, \eqref{rel2}, \eqref{rel3}, \eqref{rel4}, \eqref{adjacent} hold in $ K^m_n$ as they are verbatim the same (just change the font!). 
We must check that the cubic relation \eqref{cubic} and loop relations \eqref{loop-relation} hold. 

We first check the cubic relation. 
Let $P \in {\rm DAdd}_1(\mu)$ be such ${\sf last}(P)$ is maximal with respect to this property. 
Since $P \in {\rm DAdd}_1(\mu)$, we have that 
   $$\{Q \mid Q \in {\rm DRem}(\mu) \text { and } P\prec Q \}=\emptyset.$$If ${\sf last}(P)\leq m-2$, then we let $Q\in {\rm DRem}_0(\mu+P)$ be the unique element such that $P\prec Q$
 and we set $Q\setminus P=Q^1\sqcup Q^2$. 
Otherwise, 
${\sf last}(P)= m$ and there does not exist any $P\prec Q$ and we set $Q^1\in {\rm DRem}_0(\mu)$ to be the unique element that is adjacent to $P$. 
We have that 
\begin{align}\notag
&\phantom{=}\varphi(
\mathbb   D_{\mu}^{\mu+P}
\mathbb  D^{\mu}_{\mu+P}
\mathbb   D_{\mu}^{\mu+P}
)
\\
\notag
&=
 D(-P)
  D(+P)
  D(-P)
{\sf  1}_\mu 
\\
\label{label1}
&=\begin{cases} 
(-1)^{b(P)+1}
 D(-P)
( D(-Q^1) D(+Q^1)
+D(-Q^2) D(+Q^2)){\sf  1}_\mu  &\text{if ${\sf last}(P) \leq m-2$}\\
(-1)^{b(P)+1}
 D(-P)
  D(-Q^1) D(+Q^1){\sf  1}_\mu 
&\text{if ${\sf last}(P)=m $}
\end{cases}
 \end{align}
 where the second equality follows from the self-dual relation \eqref{selfdualrel}.  
Note that $\mu \in \mathscr{R}_{m,n}$,
   $P \in {\rm DAdd}_1(\mu)$, and ${\sf last}(P)$ being 
    maximal with respect to this property 
 together imply that 
$(i)$
 ${\sf last}(P)\leq m-2$ if and only if $m=n$
$(ii)$  
 ${\sf last}(P)= m $ if and only if $m<n$ as stated in the cubic relation.

We consider the first case of \eqref{label1} where (by the above) $m=n$. 
 For $m=n$ we observe that $ \langle P \cup Q^1\rangle_{\mu+P} =\langle P \cup Q^2\rangle_{\mu+P}$ 
 and we denote this Dyck path by $Q$. 
We have that 
 $$
 D(-P)
( D(-Q^1) D(+Q^1)
+D(-Q^2) D(+Q^2)){\sf  1}_\mu =
\pm 2 
 D(-Q)D(+Q)
 D(-P)
{\sf  1}_\mu 
 $$by applying the adjacent relation \eqref{adjacent-OG} first to the pairs $(P,Q^1)$ and $(P,Q^2)$ followed by   the commuting relation to $Q^1, Q^2$. We note that ${\sf last}(Q)=m-1$ and so  
relation  \eqref{adjacent} is preserved by $\varphi$ for $m=n$.
 
 We consider the second case of \eqref{label1} where (by the above) $m<n$. 
  For $m<n$ we have that $\langle P \cup Q^1\rangle_{\mu+P}$ does not exist and so 
  $  D(-P) D(-Q^1) D(+Q^1){\sf  1}_\mu =0 $ and so  
relation  \eqref{adjacent} is preserved by $\varphi$ for $m<n$.

It remains to verify that the loop relations \eqref{loop-relation} are preserved by $\varphi$ for $m=n$.  
Let $P \in {\rm DRem}_0(\mu)$ with ${\sf last}(P)=m-1$. 
We first check the loop nilpotency relation.
We have that
 \begin{align*}
\varphi(\mathbb   L_{\mu}^{\mu}(-P)
 \mathbb   L_{\mu}^{\mu}(-P))
&=
{\sf 1}_\mu
D(-P)
D(+P)
D(-P)
D(+P)
=0
\end{align*}by applying the self-dual relation \eqref{selfdualrel} to the innermost pair and observing that
since  $P\in {\rm Add}_0(\mu-P)$ and $\mu \in \mathcal{R}_{m,m}$ this implies that there are no 
$Q\in  {\rm DRem}_{<0}(\mu-P)$ in order to provide non-zero terms on the righthand-side of the
 self-dual relation. Thus the loop nilpotency relation holds. 

Finally, we must verify the loop-commutation relation.  We continue to let $P \in {\rm DRem}_0(\mu)$ with ${\sf last}(P)=m-1$. We note that if $\la=\mu\pm Q$ where $Q$ commutes with $P$ then 
$\varphi$ preserves the loop relation trivially. 
 Thus it remains to consider the cases $(i)$ $\la=\mu-Q$ with $Q \prec P$ a non-commuting pair and $(ii)$ $\la= \mu +Q$ with $P,Q$ an adjacent pair. 
In case $(i)$ we note that 
$P\setminus Q=Q^1\sqcup Q^2$ where ${\sf last}(Q^2)=m-1$. Therefore we have that 
\begin{align*}
\varphi(\mathbb D^\la_\mu \mathbb L_\mu^\mu)&=
\varphi(\mathbb D^\la_\mu \mathbb L_\mu^\mu(-P))
\\
&=
  (-1)^{b(P)} 
{\sf 1}_\la  D (+Q) D (-P)  D (+P)
\\
&=
  (-1)^{b(P)} 
  {\sf 1}_\la  D (-Q^2) D (-Q^1)  D (+P)
\\
&=
(-1)^{b(Q^2)}
{\sf 1}_\la  D (-Q^2) D (+Q^2)  D (+Q)
 \\
&=
\varphi(\mathbb   L_\la^\la(-Q^2) \mathbb D^\la_\mu )
\\
&=\varphi(\mathbb   L_\la^\la \mathbb D^\la_\mu )
\end{align*}
where the third  equality follows from the non-commuting
 relation \eqref{noncommuting} applied to $Q,P$, 
the fourth equality follows from the adjacent relation \eqref{adjacent-OG} applied to $Q^2, Q$. 
Case $(ii)$ follows from a similar argument.

We must now check that the homomorphism is surjective.  
This will immediately imply that the spanning set of \cref{the spanning set}
is actually a basis (as the spanning set of $\mathcal{A}_{m,n}$ has the same size
 as the basis of $K^m_n$)
 and we  hence will deduce that  $\varphi$ is an isomorphism.  
We need only show that the algebra $K^m_n$ is generated by the 
elements $  {\sf 1}_\mu $, 
$D^\la_\mu$ and 
$ D^\la_{\la-P}D_\la^{\la-P}$ for $\la,\mu \in \mathcal{R}_{m,n}$ and $P \in {\rm DRem}(\la)$. 
Thus it suffices to write the cellular basis in terms of these claimed generators which we have already done in \cref{basis22}. The result follows.
\end{proof}

  \section{The $\rm Ext$-quiver and relations of $e K^m_ne$ \\ and the Kleshchev--Martin conjecture.}

We have shown that the algebra $H^m_n=e K^m_ne$ is a basic algebra
 generated in degrees 0, 1, and 2.  Thus for $\la \subseteq \mu$,  we have that 
\begin{align}
\label{radical}
\dim_\Bbbk({\rm Ext}^1_{H^m_n}(D(\la), D(\mu))) = 
\dim_\Bbbk({\rm Hom}_{H^m_n}
(\rad_1(P(\la)e), D(\mu)\langle k \rangle )) =
 0
 \intertext{unless $k=1,2$. 
 By the cellular self-duality, we have that }
  \dim_\Bbbk({\rm Ext}^1_{H^m_n}(D(\la), D(\mu)\langle k \rangle )) = 
\dim_\Bbbk({\rm Ext}^1_{H^m_n}(D(\mu), D(\la)\langle k \rangle )) 
\end{align}
and so we will be able to focus solely on the first and second grading 
layers of the projective  $  {H^m_n}$-module $P(\la)e$ for each  $\la  \in  \mathscr{R}_{m,n}$.

\begin{lem}\label{asbefore}
For $\la\neq \mu \in  \mathscr{R}_{m,n}$, we have that 
$$\dim_\Bbbk({\rm Ext}^1_{H^m_n}(D(\la), D(\mu))) = \begin{cases}
1 &\text{if $\la = \mu \pm P$ for $P$ a Dyck path};
\\
0	&\text{otherwise.}
\end{cases} $$
\end{lem}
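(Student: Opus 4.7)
The plan is to read the Ext-quiver directly off the algebra presentation established in Section~\ref{Therome}. I will invoke the standard fact that for a basic graded $\Bbbk$-algebra $A$ with semisimple $A_0$ and Jacobson radical $J = A_{>0}$,
\[
\dim_\Bbbk {\rm Ext}^1_A(S_\la, S_\mu\langle k\rangle) = \dim_\Bbbk {\sf 1}_\mu (J/J^2)_k {\sf 1}_\la
\]
for all simple modules. The algebra $H^m_n$ is basic: by \cref{heere ris the basus2}, each $D(\nu) = {\sf 1}_\nu H^m_n / {\sf 1}_\nu \rad H^m_n$ is one-dimensional. Combined with \eqref{radical}, this reduces the lemma to computing ${\sf 1}_\mu J_k {\sf 1}_\la$ modulo ${\sf 1}_\mu J^2 {\sf 1}_\la$ in degrees $k \in \{1,2\}$.

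In degree 1, the spanning set of \cref{the spanning set} (combined with the isomorphism $\mathcal{A}_{m,n} \cong H^m_n$ of Section~\ref{Therome}) gives ${\sf 1}_\mu (H^m_n)_1 {\sf 1}_\la = \Bbbk \cdot D^\la_\mu$ when $\la = \mu \pm P$ for a (necessarily unique) Dyck path $P$, and is zero otherwise. Since ${\sf 1}_\mu J^2 {\sf 1}_\la$ lies in degrees $\geq 2$, the degree-1 generator descends injectively to ${\sf 1}_\mu (J/J^2) {\sf 1}_\la$, contributing the claimed $0$ or $1$ to the Ext dimension.

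The main obstacle, and the technical heart of the argument, is the degree-2 vanishing: I must show that ${\sf 1}_\mu J_2 {\sf 1}_\la \subseteq {\sf 1}_\mu J^2 {\sf 1}_\la$ whenever $\la \neq \mu$. Unpacking the spanning set of \cref{the spanning set}, each degree-2 basis element of ${\sf 1}_\mu H^m_n {\sf 1}_\la$ is either a product $D^\la_\nu D^\nu_\mu$ of two degree-1 generators (which visibly lies in $J^2$) or---in the case $m = n$---involves a loop $\mathbb{L}^\nu_\nu$. The idempotent relations~\eqref{rel1} force each $\mathbb{L}^\nu_\nu$ to lie in ${\sf 1}_\nu H^m_n {\sf 1}_\nu$, so no loop contributes to ${\sf 1}_\mu H^m_n {\sf 1}_\la$ when $\la \neq \mu$. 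This requires careful bookkeeping through the spanning set, since the loop combinatorial elements $\mathbb{L}^\nu_\nu(-P)$ can appear as products of $D$'s in one of the cases of \cref{L=lincombo}, but in each such case the underlying loop still respects the single-vertex constraint imposed by~\eqref{rel1}.

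Combining the two degree calculations, $\dim_\Bbbk {\sf 1}_\mu (J/J^2) {\sf 1}_\la$ equals the number of Dyck paths $P$ with $\la = \mu \pm P$, which is $1$ in the favourable case and $0$ otherwise. The cellular self-duality of \eqref{radical} covers the two orientations $\mu \supseteq \la$ and $\mu \subseteq \la$ symmetrically, completing the proof.
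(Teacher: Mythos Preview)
Your proof is correct and follows essentially the same route as the paper. Both arguments compute $J/J^2$ by observing that the only possible contributions live in degrees $1$ and $2$; in degree $1$ the arrows $\mathbb D^\la_\mu$ give exactly the Dyck-path edges, and in degree $2$ the task is to show that every element between distinct vertices already lies in $J^2$. The paper does this by a short combinatorial case split on the intermediate partition $\nu$ (regular versus non-regular, the latter forcing $\la=\mu$), while you reach the same conclusion by inspecting the spanning set \eqref{bassis2}: any degree-$2$ basis element with $\alpha$ regular is visibly a product of two degree-$1$ arrows, and any element with $\alpha$ non-regular is a pure $\mathbb L(\alpha)$ loop at ${\sf reg}(\alpha)$, hence requires $\la=\mu$.
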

\begin{proof}
If $\la =\mu \pm P$, then 
$\dim_\Bbbk({\rm Hom}_{H^m_n}
(\rad_1(P(\la)e), D(\mu)\langle 1 \rangle )) =1$
 and by the parity on the grading, we have that 
$\dim_\Bbbk({\rm Hom}_{H^m_n}
(\rad_1(P(\la)e), D(\mu)\langle 2 \rangle )) =0$. 
Now suppose that $\mu\neq \la\pm P$. 
By \cref{radical}, we can assume that $\la \subseteq \mu$ is such that 
$(i)$ $\mu=\nu+ P $ for $\nu=\la-Q$ and $\nu \in  \mathscr{P}_{m,n}$
or $(ii)$ $\mu=\nu+ P $ for $\nu=\la+Q$ and $\nu \in  \mathscr{R}_{m,n}$.
Case $(ii)$ is trivial. 
  Case $(i)$ is non-trivial only when $\nu \not\in  \mathscr{R}_{m,n}$ 
  (equivalently $Q \in {\rm DRem}_0(\la)$)
   in which case our assumption that $\mu \in  \mathscr{R}_{m,n}$ implies that 
   $P=Q$ and so $\la=\mu$, as required.
\end{proof}

 This is already enough to deduce  the ${\rm Ext}$-quiver for $m\neq n$.

\begin{thm}
Let $m \neq n$.  
The $\rm Ext$-quiver  of $H^m_n$ has       vertex set 
$\{\mathbbm 1_\la \mid \la \in   \mathscr{R}_{m,n} \}$  and 
  arrows   
    $\mathbb D^\la_\mu: \la   \to  \mu$ 
and 
   $\mathbb D^\mu_\la:\mu \to \la$ for every $\la=\mu - P$ with $P\in {\rm DRem}_{>0}(\mu)$.  
   The symmetric algebra  $H^m_n$ is  the path algebra of this quiver modulo relations \eqref{rel1}, \eqref{rel2}, \eqref{rel3}, \eqref{rel4}, \eqref{adjacent} and \eqref{cubic}.
\end{thm}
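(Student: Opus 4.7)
The plan is to leverage the isomorphism $\varphi: \mathcal{A}_{m,n} \to H^m_n$ established earlier in this section, and to match the abstract presentation of $\mathcal{A}_{m,n}$ (with the loops removed, since $m\neq n$) to the $\mathrm{Ext}$-quiver presentation of the basic algebra $H^m_n$. The strategy reduces to two independent tasks: verifying that the vertex set and edge set of the quiver $Q_{m,n}$ coincide with those of the $\mathrm{Ext}$-quiver of $H^m_n$; and verifying that no relations beyond those defining $\mathcal{A}_{m,n}$ are needed.

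For the vertex set, recall that the simple $H^m_n$-modules are indexed by $\mathscr{R}_{m,n}$. For the edges, \cref{asbefore} tells us that
\(\dim_\Bbbk \mathrm{Ext}^1_{H^m_n}(D(\la),D(\mu)) = 1\) precisely when $\la = \mu \pm P$ for some Dyck path $P$, and is zero otherwise. The key combinatorial lemma is then the following: if $m \neq n$ and $\la, \mu \in \mathscr{R}_{m,n}$ with $\la = \mu - P$ for a single removable Dyck path $P$, then $P \in \mathrm{DRem}_{>0}(\mu)$. I would prove this by a direct staircase argument: since $\mu$ and $\la$ both contain the maximal staircase $(m,m-1,\ldots,1)$ (by defect zero), the skew shape $\mu/\la = P$ must lie strictly above the anti-diagonal $r+c=m+1$; otherwise, $P$ contains a tile $[r,m-r+1]$ for some $r\leq m$, and $\la = \mu - P$ fails to contain the staircase, contradicting $d(\la)=0$. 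Thus the height of $P$, being $\min\{r_i+c_i-1\}-m$, is strictly positive. This exactly matches the edge set of $Q_{m,n}$ in the $m\neq n$ case.

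For the relations, I would invoke the general fact that a positively-graded basic algebra generated in degrees $\leq k$ is the quotient of the path algebra of its $\mathrm{Ext}$-quiver by relations of degree $\leq k+1$. By the grading discussion preceding \cref{asbefore}, $H^m_n$ is generated by the images of $\{{\sf 1}_\la, D^\la_\mu\}$ (for $m\neq n$ there are no loop generators), all of which correspond to generators of $\mathcal{A}_{m,n}$ under $\varphi$. The isomorphism $\varphi$ then forces the defining ideal of $H^m_n$ to coincide with the two-sided ideal generated by \eqref{rel1}, \eqref{rel2}, \eqref{rel3}, \eqref{rel4}, \eqref{adjacent}, and \eqref{cubic}; the $m=n$-only relations \eqref{loop-relation} are vacuous here.

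The main obstacle is the combinatorial claim that removing a single Dyck path between two regular partitions forces the path to have positive height when $m\neq n$. The statement itself is elementary, but one should be careful that it genuinely fails when $m=n$ (as the loops in Theorems A and B witness), so any cleanly-written argument must use $m\neq n$ at exactly the right step: namely, in a strict rectangle with $m<n$ the staircase has no ``slack'' on its bottom edge to accommodate a removal along the anti-diagonal, whereas in the square case the regular partition $(m^a,(m-a)^{m-a})$ does afford such a removal (producing precisely the additional loops that Theorem A records for $m=n$).
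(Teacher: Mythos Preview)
Your overall approach matches the paper's, but your final paragraph misidentifies where the hypothesis $m \neq n$ is used, and this leaves a small gap in the argument.

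The combinatorial lemma you prove---if $\lambda,\mu \in \mathscr{R}_{m,n}$ with $\lambda = \mu - P$ then $P \in \mathrm{DRem}_{>0}(\mu)$---holds for \emph{all} $m,n$, not just $m \neq n$. Your staircase argument shows exactly this: removing any tile with $r+c \leq m+1$ destroys containment of $(m,m-1,\ldots,1)$, independently of the rectangle's aspect ratio. Your claimed counterexample is therefore wrong: the partition $(m^a,(m-a)^{m-a})$ does have height-zero removable Dyck paths, but removing either one yields a non-regular partition. The loops at these partitions recorded in Theorem~A for $m=n$ are \emph{not} degree-$1$ arrows to another regular partition; they are the degree-$2$ generators $\mathbb{L}^\lambda_\lambda$ of $Q_{m,m}$, which under $\varphi$ map to elements of $e_\lambda H^m_m e_\lambda$ factoring (in $K^m_m$) through non-regular idempotents.

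The hypothesis $m \neq n$ actually enters only through the \emph{definition} of $Q_{m,n}$: there are no loop generators, so $\mathcal{A}_{m,n} \cong H^m_n$ is generated in degrees $0$ and $1$. Consequently $\mathrm{rad}/\mathrm{rad}^2$ is concentrated in degree $1$, and since $Q_{m,n}$ has no degree-$1$ loops one gets $\mathrm{Ext}^1(D(\lambda),D(\lambda))=0$. You should make this step explicit, because \cref{asbefore} is stated only for $\lambda \neq \mu$ and does not by itself rule out self-extensions; as written, your edge computation silently assumes the Ext-quiver is loop-free.
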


%
%
%

    For the proofs of the last two theorems, we will first need to enumerate the removable Dyck paths of height zero from left to right as follows
\[
P^1, P^2, \dots ,P^r \in {\rm DRem}_0(\la).
\]
For each pair $1\leq j<k\leq r$ we let 
$Q_{j,k}\in {\rm DAdd}_1(\la)$ denote the addable Dyck path which is adjacent to 
$P^j$ and $P^k$.  Examples are depicted in \cref{dyckeg!}.

\begin{figure}[ht!]
$$  \begin{tikzpicture}[scale=0.5]
  \path(0,0)--++(135:2) coordinate (hhhh);
 \draw[very thick] (hhhh)--++(135:5)--++(45:5)--++(-45:5)--++(-135:5);
 \clip (hhhh)--++(135:5)--++(45:5)--++(-45:5)--++(-135:5);
\path(0,0) coordinate (origin2);
  \path(0,0)--++(135:2) coordinate (origin3);

      \foreach \i in {0,1,2,3,4,5,6,7,8,9,10}
{
\path (origin3)--++(45:0.5*\i) coordinate (c\i); 
\path (origin3)--++(135:0.5*\i)  coordinate (d\i); 
  }

\path(0,0)--++(135:2) coordinate (hhhh);

\fill[opacity=0.2](hhhh)
--++(135:4)   coordinate (JJ1)
--++(45:1)
--++(-45:1)	 coordinate (JJ)	--++(45:1)
--++(-45:1)    --++(45:1)
--++(-45:1)coordinate (JJ2) --++(45:1)
--++(-45:1);

\fill[opacity=0.4,magenta] 
  (JJ1)
--++(135:1)
 --++(45:1)
--++(-45:1) 
;

\fill[opacity=0.7,yellow!80!orange] 
  (JJ)
--++(135:1)
 --++(45:2)
--++(-45:2)--++(-135:1)--++(135:1)
;

\fill[opacity=0.4,cyan] 
  (JJ2)
--++(135:1)
 --++(45:2)
--++(-45:2)--++(-135:1)--++(135:1)
;

 \path(JJ1)--++(135:0.5)--++(45:1.5) coordinate (orange);

 \fill[orange] (orange) circle (7pt);

   \foreach \i in {0,1,2,3,4,5,6,7,8,9,10}
{
\path (origin3)--++(45:1*\i) coordinate (c\i); 
\path (c\i)--++(-45:0.5) coordinate (c\i); 
\path (origin3)--++(135:1*\i)  coordinate (d\i); 
\path (d\i)--++(-135:0.5) coordinate (d\i); 
\draw[thick,densely dotted] (c\i)--++(135:12);
\draw[thick,densely dotted] (d\i)--++(45:12);
  }

\end{tikzpicture}
\qquad
  \begin{tikzpicture}[scale=0.5]
  \path(0,0)--++(135:2) coordinate (hhhh);
 \draw[very thick] (hhhh)--++(135:5)--++(45:5)--++(-45:5)--++(-135:5);
 \clip (hhhh)--++(135:5)--++(45:5)--++(-45:5)--++(-135:5);
\path(0,0) coordinate (origin2);
  \path(0,0)--++(135:2) coordinate (origin3);

      \foreach \i in {0,1,2,3,4,5,6,7,8,9,10}
{
\path (origin3)--++(45:0.5*\i) coordinate (c\i); 
\path (origin3)--++(135:0.5*\i)  coordinate (d\i); 
  }

\path(0,0)--++(135:2) coordinate (hhhh);

\fill[opacity=0.2](hhhh)
--++(135:4)   coordinate (JJ1)
--++(45:1)
--++(-45:1)	 coordinate (JJ)	--++(45:1)
--++(-45:1)    --++(45:1)
--++(-45:1)coordinate (JJ2) --++(45:1)
--++(-45:1);

\fill[opacity=0.4,magenta] 
  (JJ1)
--++(135:1)
 --++(45:1)
--++(-45:1) 
;

\fill[opacity=0.7,yellow!80!orange] 
  (JJ)
--++(135:1)
 --++(45:2)
--++(-45:2)--++(-135:1)--++(135:1)
;

\fill[opacity=0.4,cyan] 
  (JJ2)
--++(135:1)
 --++(45:2)
--++(-45:2)--++(-135:1)--++(135:1)
;

 \path(JJ1)--++(135:0.5)--++(45:1.5) coordinate (orange);

 \fill[violet] (orange) circle (7pt);
\draw[very thick , violet] (orange)--++(45:1) coordinate (orange) ;
  \fill[violet] (orange) circle (7pt);
  \draw[very thick , violet] (orange)--++(45:1) coordinate (orange) ;
  \fill[violet] (orange) circle (7pt);
  \draw[very thick , violet] (orange)--++(-45:1) coordinate (orange) ;
  \fill[violet] (orange) circle (7pt);
  \draw[very thick , violet] (orange)--++(-45:1) coordinate (orange) ;
  \fill[violet] (orange) circle (7pt);

   \foreach \i in {0,1,2,3,4,5,6,7,8,9,10}
{
\path (origin3)--++(45:1*\i) coordinate (c\i); 
\path (c\i)--++(-45:0.5) coordinate (c\i); 
\path (origin3)--++(135:1*\i)  coordinate (d\i); 
\path (d\i)--++(-135:0.5) coordinate (d\i); 
\draw[thick,densely dotted] (c\i)--++(135:12);
\draw[thick,densely dotted] (d\i)--++(45:12);
  }

\end{tikzpicture}\qquad
  \begin{tikzpicture}[scale=0.5]
  \path(0,0)--++(135:2) coordinate (hhhh);
 \draw[very thick] (hhhh)--++(135:5)--++(45:5)--++(-45:5)--++(-135:5);
 \clip (hhhh)--++(135:5)--++(45:5)--++(-45:5)--++(-135:5);
\path(0,0) coordinate (origin2);
  \path(0,0)--++(135:2) coordinate (origin3);

      \foreach \i in {0,1,2,3,4,5,6,7,8,9,10}
{
\path (origin3)--++(45:0.5*\i) coordinate (c\i); 
\path (origin3)--++(135:0.5*\i)  coordinate (d\i); 
  }

\path(0,0)--++(135:2) coordinate (hhhh);

\fill[opacity=0.2](hhhh)
--++(135:4)   coordinate (JJ1)
--++(45:1)
--++(-45:1)	 coordinate (JJ)	--++(45:1)
--++(-45:1)    --++(45:1)
--++(-45:1)coordinate (JJ2) --++(45:1)
--++(-45:1);

\fill[opacity=0.4,magenta] 
  (JJ1)
--++(135:1)
 --++(45:1)
--++(-45:1) 
;

\fill[opacity=0.7,yellow!80!orange] 
  (JJ)
--++(135:1)
 --++(45:2)
--++(-45:2)--++(-135:1)--++(135:1)
;

\fill[opacity=0.4,cyan] 
  (JJ2)
--++(135:1)
 --++(45:2)
--++(-45:2)--++(-135:1)--++(135:1)
;

 \path(JJ2)--++(45:0.5)--++(135:1.5) coordinate (orange);

 \fill[darkgreen] (orange) circle (7pt);
        
   \foreach \i in {0,1,2,3,4,5,6,7,8,9,10}
{
\path (origin3)--++(45:1*\i) coordinate (c\i); 
\path (c\i)--++(-45:0.5) coordinate (c\i); 
\path (origin3)--++(135:1*\i)  coordinate (d\i); 
\path (d\i)--++(-135:0.5) coordinate (d\i); 
\draw[thick,densely dotted] (c\i)--++(135:12);
\draw[thick,densely dotted] (d\i)--++(45:12);
  }

\end{tikzpicture}
$$
\caption{We picture the three removable Dyck paths ${\color{magenta}P^1}, 
{\color{yellow!60!orange}P^2}, {\color{cyan}P^3} \in {\rm DRem}_0(\la).$ 
The corresponding $3 \choose 2$ addable Dyck paths  
$\color{orange!70!brown}Q_{1,2}$, 
$\color{violet}Q_{1,3}$, 
and 
$\color{darkgreen}Q_{1,2}$
in ${\rm DAdd}_1(\la)$ are also pictured. 
}
\label{dyckeg!}
\end{figure}

With this notation in place, we can rewrite the self-dual relation for these Dyck paths as follows:
\begin{equation}\label{oohcolours}
\mathbb  D_{{\la+\color{violet}Q_{j,k}}}^{\la} \mathbb  D_{\la}^{{\la+\color{violet}Q_{j,k}}}
=
\mathbb  L^\la_\la (- {\color{magenta}P^j}) 
+
\mathbb  L^\la_\la (- {\color{cyan}P^k}) 
\end{equation}
We notice that the righthand-side consists of degree 2 terms which factor through an idempotent 
 labelled by a non-regular partition.  
 If we can rewrite the  terms on the righthand-side using solely products  of the form on  the lefthand-side, then  these loops will not appear in our ${\rm Ext}$-quiver.  This is best illustrated via  examples.

\begin{eg}
Let $p>2$ and $\la= (5,4^2,2^3)$ as in \cref{dyckeg!}.
We have that 
\begin{align*} 
\mathbb  D_{{\la+\color{orange!70!brown}Q_{1,2}}}^{\la} \mathbb  D_{\la}^{{\la+\color{orange!70!brown}Q_{1,2}}}
=
\mathbb  L^\la_\la (- {\color{magenta}P^1}) 
+
\mathbb  L^\la_\la (- {\color{yellow!60!orange}P^2}) 
\\
\mathbb  D_{{\la+\color{violet}Q_{1,3}}}^{\la} \mathbb  D_{\la}^{{\la+\color{violet}Q_{1,3}}}
=
\mathbb  L^\la_\la (- {\color{magenta}P^1}) 
+
\mathbb  L^\la_\la (- {\color{cyan}P^3}) 
\\
\mathbb  D_{{\la+\color{darkgreen}Q_{2,3}}}^{\la} \mathbb  D_{\la}^{{\la+\color{darkgreen}Q_{2,3}}}
=
\mathbb  L^\la_\la (- {\color{yellow!60!orange}P^2}) 
+
\mathbb  L^\la_\la (- {\color{cyan}P^3}) 
\end{align*}
and inverting this we obtain 
\begin{align*} 
\mathbb  L^\la_\la (- {\color{magenta}P^1})  
&=
\tfrac{1}{2}\left(\mathbb  D_{{\la+\color{orange!70!brown}Q_{1,2}}}^{\la} \mathbb  D_{\la}^{{\la+\color{orange!70!brown}Q_{1,2}}}+
\mathbb  D_{{\la+\color{violet}Q_{1,3}}}^{\la} \mathbb  D_{\la}^{{\la+\color{violet}Q_{1,3}}}-
\mathbb  D_{{\la+\color{darkgreen}Q_{2,3}}}^{\la} \mathbb  D_{\la}^{{\la+\color{darkgreen}Q_{2,3}}}\right)
\\ 
\mathbb  L^\la_\la (- {\color{yellow!60!orange}P^2})  
&=
\tfrac{1}{2}\left(\mathbb  D_{{\la+\color{orange!70!brown}Q_{1,2}}}^{\la} \mathbb  D_{\la}^{{\la+\color{orange!70!brown}Q_{1,2}}}-
\mathbb  D_{{\la+\color{violet}Q_{1,3}}}^{\la} \mathbb  D_{\la}^{{\la+\color{violet}Q_{1,3}}}+
\mathbb  D_{{\la+\color{darkgreen}Q_{2,3}}}^{\la} \mathbb  D_{\la}^{{\la+\color{darkgreen}Q_{2,3}}}\right)
\\ 
\mathbb  L^\la_\la (- {\color{cyan}P^3})  
&=
\tfrac{1}{2}\left(-\mathbb  D_{{\la+\color{orange!70!brown}Q_{1,2}}}^{\la} \mathbb  D_{\la}^{{\la+\color{orange!70!brown}Q_{1,2}}}+
\mathbb  D_{{\la+\color{violet}Q_{1,3}}}^{\la} \mathbb  D_{\la}^{{\la+\color{violet}Q_{1,3}}}+
\mathbb  D_{{\la+\color{darkgreen}Q_{2,3}}}^{\la} \mathbb  D_{\la}^{{\la+\color{darkgreen}Q_{2,3}}}\right).
  \end{align*}
Therefore we deduce that none of the loops labelled by 
 ${\color{magenta}P^1}, 
{\color{yellow!60!orange}P^2}, {\color{cyan}P^3}$ are required in the ${\rm Ext}$-quiver 
of $e K^m_ne$, as they can be written as  linear combinations of other paths.

\end{eg}

 In fact, this is the most complicated thing we have to deal with in constructing these ${\rm Ext}$-quivers.  Therefore the following lemma will be used repeatedly during the upcoming proofs.
 
\begin{lem}\label{kjsahgljkdsfhgjlkdfshgjfdshgljkdfshg}
Let $\Bbbk$ be a field of characteristic $p>2$.  Then 
$$
\left(\begin{array}{ccc}1 & 1 & 0 \\1 & 0 & 1 \\0 & 1 & 1\end{array}\right)^{-1}
=\frac{1}{2}
 \left(\begin{array}{ccc} 1 & 1 & -1 \\1 & -1 & 1 \\-1 & 1 & 1\end{array}\right)
$$and otherwise the matrix on the left is not invertible.
\end{lem}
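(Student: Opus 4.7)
The plan is to reduce this to a one-line determinant computation, together with a direct matrix multiplication check. Let me write $A$ for the matrix on the left.

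First I would compute $\det(A)$ by cofactor expansion along the top row, obtaining $\det(A) = 1\cdot(0-1) - 1\cdot(1-0) + 0 = -2$. This immediately gives both halves of the claim at once: the matrix is invertible over $\Bbbk$ if and only if $-2 \in \Bbbk^\times$, which (since $\Bbbk$ is a field) is equivalent to $p \neq 2$. In particular, in characteristic $2$ the rows $(1,1,0)$, $(1,0,1)$, $(0,1,1)$ sum to zero, so $A$ has nontrivial kernel.

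For the inverse formula, assuming $p > 2$, I would simply verify the product
\[
\left(\begin{array}{ccc}1 & 1 & 0 \\ 1 & 0 & 1 \\ 0 & 1 & 1\end{array}\right)\cdot \frac{1}{2}\left(\begin{array}{ccc}1 & 1 & -1 \\ 1 & -1 & 1 \\ -1 & 1 & 1\end{array}\right) = \left(\begin{array}{ccc}1 & 0 & 0 \\ 0 & 1 & 0 \\ 0 & 0 & 1\end{array}\right)
\]
entry by entry, which is a routine check (each diagonal entry comes out to $\tfrac{1}{2}(1+1) = 1$ and each off-diagonal entry to $\tfrac{1}{2}(1-1+0) = 0$, or a similar cancellation).

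There is no real obstacle here: the statement is a purely arithmetic fact about a fixed small matrix, and its relevance is only to the inversion step in the preceding lemma, where one solves for the three loop elements $\mathbb L^\la_\la(-P^j)$ in terms of the three degree-$2$ products $\mathbb D^\la_{\la+Q_{j,k}}\mathbb D^{\la+Q_{j,k}}_\la$ via the self-dual relation \eqref{oohcolours}. The characteristic-$2$ failure explains precisely why extra loop generators are needed at these vertices when $p=2$, matching the statement of Theorem~A.
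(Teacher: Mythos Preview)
Your proof is correct. The paper itself states this lemma without proof, treating it as an elementary linear algebra fact, so your determinant computation and direct verification of the matrix product are exactly what is needed to fill in the details.
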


%
%
%
%

\begin{thm}\label{pnot2}
Let   $p\neq 2$.  
The $\rm Ext$-quiver  of $H^m_m$ has      vertex set 
$\{\mathbbm 1_\la \mid \la \in   \mathscr{R}_{m,m} \}$  and 
  arrows   
    $\mathbb D^\la_\mu: \la   \to  \mu$  
and 
   $\mathbb D^\mu_\la:\mu \to \la$ for every $\la=\mu - P$ with $P\in {\rm DRem}_{>0}(\mu)$ 
   together with the loops 
      $\mathbb L^\la_\la:\la \to \la$ for any $\la=(m^a, (m-a)^{m-a}) $ for $1\leq a \leq m$. 
   The symmetric algebra  $H^m_m$ is given by the path algebra of this quiver modulo relations \eqref{rel1}, \eqref{rel2}, \eqref{rel3}, \eqref{rel4}, \eqref{adjacent}, \eqref{cubic}, and \eqref{loop-relation}.
\end{thm}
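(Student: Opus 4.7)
The plan is to leverage the explicit isomorphism $\mathcal{A}_{m,m} \cong H^m_m$ established in \cref{Therome}, together with the standard fact that a basic algebra is determined by its minimal generators and relations. The edge data will follow from \cref{asbefore} (for degree~$1$) combined with a detailed analysis of which loop generators $\mathbb{L}^\la_\la$ can be written as $\Bbbk$-linear combinations of compositions of $\mathbb{D}$-generators. Since $H^m_m$ is concentrated in non-negative degrees, by \cref{radical}, all degree~$1$ $\operatorname{Ext}^1$ contributions are given by $\mathbb{D}^\la_\mu$ with $\la=\mu\pm P$, yielding the non-loop part of the quiver exactly as in the $m<n$ theorem. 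What remains is the degree-$2$ loop analysis, unique to the $m=n$ case.

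For each $\la\in\mathscr{R}_{m,m}$, the loop $\mathbb{L}^\la_\la$ contributes to $\operatorname{Ext}^1$ if and only if it is \emph{inessential}, i.e.\ expressible using products of $\mathbb{D}$-generators. The key tool is relation \eqref{rel2}: for each $P\in{\rm DAdd}(\la)$,
\[
\mathbb  D_{\la+P}^{\la} \mathbb  D_{\la}^{\la+P}
= (-1)^{b(P)-1}\!\Bigl(2\!\!\sum_{   \begin{subarray}{c}  P\prec Q\\ Q\in {\rm DRem}(\la)   \end{subarray}}\!\!
\mathbb{L}^{\la}_\la  ( - Q ) +
\!\!\!\sum_{  \begin{subarray}{c} Q\,\text{adj.}\,P \\ Q\in {\rm DRem}(\la)  \end{subarray}}\!\!\!
\mathbb{L}^{\la}_\la  ( - Q )\Bigr).
\]
By \cref{L=lincombo}, each $\mathbb{L}^\la_\la(-Q)$ in the right-hand side is either a $\mathbb{D}$-product (when ${\sf ht}(Q)>0$), or of the form $-\mathbb{L}^\la_\la+(\mathbb{D}\text{-product})$ (height $0$, ${\sf last}(Q)<m-1$), or equals $\mathbb{L}^\la_\la$ itself (height $0$, ${\sf last}(Q)=m-1$). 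Collecting terms, \eqref{rel2} therefore reduces to a scalar equation $c_{P,\la}\,\mathbb{L}^\la_\la=(\mathbb{D}\text{-product})$, with an explicit integer coefficient $c_{P,\la}$ determined by which removable Dyck paths are adjacent to, or dominated by, $P$. The loop $\mathbb{L}^\la_\la$ is inessential precisely when some $c_{P,\la}$ is a unit in $\Bbbk$.

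The combinatorial core of the argument is then to classify the $\la\in\mathscr{R}_{m,m}$ for which no $P\in{\rm DAdd}(\la)$ yields an invertible $c_{P,\la}$. When $\la$ admits $r\geq 3$ height-$0$ removable Dyck paths $P^1,\dots,P^r$ (ordered left to right), the single-tile addable paths $Q_{j,k}\in{\rm DAdd}_1(\la)$ adjacent to the pairs $(P^j,P^k)$ yield the $\binom{r}{2}$ equations of the type \eqref{oohcolours}. Choosing any three $P^j,P^k,P^l$ and the three equations indexed by $\{(j,k),(k,l),(j,l)\}$ produces the matrix of \cref{kjsahgljkdsfhgjlkdfshgjfdshgljkdfshg}; since $p\neq 2$ this matrix is invertible, so each $\mathbb{L}^\la_\la(-P^j)$, and hence $\mathbb{L}^\la_\la$ itself, is expressible via the $\mathbb{D}$-products $\mathbb{D}^\la_{\la+Q_{j,k}}\mathbb{D}_\la^{\la+Q_{j,k}}$. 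When $r<3$, the surviving case is handled by instead applying \eqref{rel2} to an addable $P$ which is \emph{dominated} by a removable $Q$ with ${\sf last}(Q)=m-1$: as illustrated by the computation for $(3,3,2)$ in the discussion preceding this theorem, this produces a relation of the form $2\mathbb{L}^\la_\la=(\mathbb{D}\text{-product})$, again solvable for $p\neq 2$.

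The main obstacle will be the final combinatorial identification: proving that the set of $\la\in\mathscr{R}_{m,m}$ admitting \emph{neither} three height-$0$ removable Dyck paths nor a suitable dominating addable path is \emph{exactly} $\{(m^a,(m-a)^{m-a})\mid 1\leq a\leq m\}$. For $a=m$ this is immediate ($\la=(m^m)$ has no addable path at all); for $1\leq a<m$, the rectangular-staircase shape forces the unique non-trivial addable path $P$ to have every adjacent/dominating removable $Q$ with ${\sf last}(Q)<m-1$, so that the $\mathbb{L}^\la_\la$-contributions cancel in pairs $\mathbb{L}^\la_\la(-Q_{\text{adj}})+\mathbb{L}^\la_\la(-Q_{\text{dom}})$. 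Conversely, one must argue that any other $\la\in\mathscr{R}_{m,m}$ admits the required configuration, either by an explicit construction of the three height-$0$ paths or by exhibiting a dominating addable path. Once this classification is established, the relations presentation is inherited from \cref{Therome} by substitution: every $\mathbb{L}^\la_\la$ for $\la$ not of rectangular-staircase form is replaced by its explicit expression in the $\mathbb{D}$-generators, yielding precisely the relations \eqref{rel1}--\eqref{loop-relation} on the reduced generating set.
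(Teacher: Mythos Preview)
Your strategy matches the paper's: invoke the $3\times 3$ matrix of \cref{kjsahgljkdsfhgjlkdfshgjfdshgljkdfshg} when $|{\rm DRem}_0(\la)|\geq 3$, and otherwise use an addable Dyck path of height ${>}1$ to extract a $\pm 2$ coefficient on $\mathbb{L}^\la_\la$. The combinatorial classification of the exceptional set is also correct. However, your stated mechanism for the special forms $\la=(m^a,(m-a)^{m-a})$ with $1\leq a<m$ is wrong: you claim the unique addable $P$ has ``every adjacent/dominating removable $Q$ with ${\sf last}(Q)<m-1$'', whence cancellation via ``$\mathbb{L}^\la_\la(-Q_{\text{adj}})+\mathbb{L}^\la_\la(-Q_{\text{dom}})$'' pairs. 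In fact that $P$ has height~$1$, so there is \emph{no} dominating removable at all; rather $P$ is adjacent to exactly the two height-$0$ removables $P^1,P^2$, and the cancellation occurs because ${\sf last}(P^1)<m-1$ (contributing $-\mathbb{L}^\la_\la$ via \cref{L=lincombo}) while ${\sf last}(P^2)=m-1$ (contributing $+\mathbb{L}^\la_\la$). There is also no ``$(3,3,2)$ computation'' in the paper to cite.

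A second step is missing in concluding essentiality of the surviving loops. Showing that every self-dual relation has $\mathbb{L}^\la_\la$-coefficient zero only proves the loop is not expressible \emph{via those relations}; you must also argue that these are the \emph{only} degree-$2$ relations at $\mathbbm{1}_\la$ in the presentation of $\mathcal{A}_{m,m}$ (true by inspection of \cref{generatortheorem2}: all other relations are either off-diagonal or of degree ${\geq}3$), and that $\mathbb{L}^\la_\la\neq 0$ in $H^m_m$ (which follows from the cellular basis of \cref{heere ris the basus2}). The paper accomplishes this instead by an explicit dimension count of the degree-$2$ piece of $\mathbbm{1}_\la H^m_m\mathbbm{1}_\la$, working in the larger path algebra with vertex set all of $\mathscr{P}_{m,m}$.
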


\begin{proof}
We first suppose that  $\la \in \mathscr{R}_{m,m}$ is such that 
$\la\neq (m^a, (m-a)^{m-a}) $ for some $1\leq a \leq m$.  
There are two subcases to consider, either $|{\rm DAdd}_{>1}(\la)|>0$ 
or $|{\rm DAdd}_{1}(\la)|>1$.
We first suppose that there exists $Q\in {\rm DAdd}_{>1}(\la)$, in which case there exists a unique 
$Q\prec P \in {\rm DRem}_0 (\la)$ and 
\[
 \mathbb  L^\la_\la (-  P) 
  =
\tfrac{1}{2}\mathbb  D^{\la}_{\la+Q} \mathbb D_{\la}^{\la+Q} 
+
\sum_{Q\prec R \prec P} \alpha_R D^{\la}_{\la-R} \mathbb D_{\la}^{\la-R} 
\]where the $ \alpha_R \in \Bbbk$ can be determined explicitly with the self-dual relation.
If $P$   is the unique element of $ {\rm DRem}_0(\la)$ then we are done.
Otherwise, let 
$P^1,P^2,\dots, P^r$ denote all removable Dyck paths of height zero with $P^j=P$ for some $1\leq j \leq r$. 
For $i< j <k$, we have that 
$$
 \mathbb  L^\la_\la (- {\color{magenta} P^i}) =
\mathbb  D^{\la}_{\la+{\color{orange}Q_{i,j}}}  \mathbb D_{\la}^{\la+{\color{orange}Q_{i,j}}} 
-\mathbb  L^\la_\la (-  {\color{yellow!80!orange}P^j}) 
\qquad 
 \mathbb  L^\la_\la (-  {\color{cyan}P^k}) =
\mathbb  D^{\la}_{\la+{\color{darkgreen}Q_{j,k}}}  \mathbb D_{\la}^{\la+{\color{darkgreen}Q_{j,k}}} 
-\mathbb  L^\la_\la (-   {\color{yellow!80!orange}P^j}) 
$$and so we can rewrite every loop as a linear combination of other paths in the quiver (hence the loop at $\la$ can be deleted from the quiver).  

We now suppose that $|{\rm DAdd}_{1}(\la)|>1$. Therefore 
  $|{\rm DRem}_0(\la)|>2$ and we let 
$P^1,P^2,\dots, P^r$ denote these removable Dyck paths of height zero. 
For each $1\leq i\leq   j \leq l\leq r$ we have that 
have that 
\begin{align*} 
\mathbb  D_{{\la+\color{orange!70!brown}Q_{i,j}}}^{\la} \mathbb  D_{\la}^{{\la+\color{orange!70!brown}Q_{i,j}}}
&=
\mathbb  L^\la_\la (- {\color{magenta}P^i}) 
+
\mathbb  L^\la_\la (- {\color{yellow!60!orange}P^j}) 
\\
\mathbb  D_{{\la+\color{violet}Q_{i,k}}}^{\la} \mathbb  D_{\la}^{{\la+\color{violet}Q_{i,k}}}
&=
\mathbb  L^\la_\la (- {\color{magenta}P^i}) 
+
\mathbb  L^\la_\la (- {\color{cyan}P^k}) 
\\
\mathbb  D_{{\la+\color{darkgreen}Q_{j,k}}}^{\la} \mathbb  D_{\la}^{{\la+\color{darkgreen}Q_{j,k}}}
&=
\mathbb  L^\la_\la (- {\color{yellow!60!orange}P^j}) 
+
\mathbb  L^\la_\la (- {\color{cyan}P^k}) .
\end{align*}
 We can invert this this system of equations using \cref{kjsahgljkdsfhgjlkdfshgjfdshgljkdfshg} and hence rewrite this loop as a linear combination of other paths in the quiver (hence this loop can be deleted from the quiver).  

For  the remainder of the proof, we let 
$  Q_{m,m}$ denote the  quiver with    vertex set 
$\{\mathbbm 1_\la \mid \la \in   \mathscr{P}_{m,m} \}$  and 
  arrows   
  \begin{itemize}
\item  $\mathbb D^\la_\mu: \la   \to  \mu$ 
and 
   $\mathbb D^\mu_\la:\mu \to \la$ for every $\la=\mu - P$ with $P\in {\rm DRem}(\mu)$; 
\item for $m=n$ we have additional    ``loops" of degree 2, 
$\mathbb L^\la_\la :\la\to \la$ for every $\la\in \mathscr{R}_{m,m} $.
\end{itemize}
The algebra $K^m_m$ is the quotient of the path algebra $\Bbbk Q_{m,m}$ by the  relations in \cref{L=lincombo,generatortheorem2}.
We will  detail bases of   $\Bbbk Q_{m,m}$-modules and determine the linear dependencies
arising from the relations in \cref{L=lincombo,generatortheorem2} and hence determine the 
generators needed for the ${\rm Ext}$-quiver of $H^m_n$.

We first suppose that  $\la=(m^m)$.  The degree 2 subspace  of 
$\mathbbm 1_{(m^m)}	\Bbbk Q_{m,m}	\mathbbm 1_{(m^m)}	$ 
is $m$-dimensional with basis 
\[ \{\mathbb D^{(m^m)}_{(m^m)-P^j}\mathbb D_{(m^m)}^{(m^m)-P^j}
\mid \text{$P^j \in {\rm DRem}_j(m^m)$ for $0\leq j \leq m$}\} \cup \{ \mathbb L^\la_\la \}.\]   
     When we project onto the quotient 
     $\mathbbm 1_{(m^m)}	 H^m_n	\mathbbm 1_{(m^m)}	$ 
     the unique relation we apply is 
     $$\mathbb D^{(m^m)}_{(m^m)-P^j}\mathbb D_{(m^m)}^{(m^m)-P^j}=
     \mathbb L^{(m^m)}_{(m^m)}$$where the lefthand-side cannot be factorised as a product of elements in $ H^m_n$ and so we cannot delete the loop from the regular quiver.

We now suppose that  $\la=(m^a, (m-a)^{m-a}) $ for $1\leq a \leq m$, 
then we set ${\color{magenta}P^1}, {\color{yellow!60!orange}P^2}\in {\rm DRem}_0(\la)$.
We suppose that  the sets 
$\{ Q \mid Q \prec {\color{magenta}P^1}\}$ and 
$\{ Q \mid Q \prec {\color{yellow!60!orange}P^2}\}$ have size $p_1,p_2 \in \ZZ_{>0}$ respectively. 
In which case  the degree 2 subspace  of 
$\mathbbm 1_{(m^m)} \Bbbk Q_{m,m}\mathbbm 1_{(m^m)}	$ 
is $(p_1 + p_2+2)$-dimensional with  basis 
\[ \{\mathbb D^{\la }_{\la -Q}\mathbb D_{\la }^{\la -Q}
\mid \text{$Q \preceq P^1$ or $Q\preceq P^2$}\}
\cup \{\mathbb  D_{{\la+\color{orange!70!brown}Q_{1,2}}}^{\la} \mathbb  D_{\la}^{{\la+\color{orange!70!brown}Q_{1,2}}}
\} \cup \{ \mathbb L^\la_\la \}.\]   
     When we project onto the quotient 
     $\mathbbm 1_{(m^m)}	 H^m_n	\mathbbm 1_{(m^m)}	$ 
     the two relations we apply are 
$$
\mathbb  D_{{\la+\color{orange!70!brown}Q_{1,2}}}^{\la} \mathbb  D_{\la}^{{\la+\color{orange!70!brown}Q_{1,2}}}
=\mathbb D^{\la }_{\la -\color{magenta}P^1}\mathbb D_{\la }^{{\la -\color{magenta}P^1}}+
 \mathbb D^{\la }_{\la -\color{yellow!60!orange}P^2}\mathbb D_{\la }
^{{\la -\color{yellow!60!orange}P^2}}, \quad 
\qquad  \mathbb L^\la_\la=
 \mathbb D^{\la }_{\la -\color{yellow!60!orange}P^2}\mathbb D_{\la }
^{{\la -\color{yellow!60!orange}P^2}}, \quad 
$$thus projecting onto a $2$-dimensional space. 
We have that  $\la-{ \color{magenta}P^1}, {\la -\color{yellow!60!orange}P^2}\not \in \mathcal{R}_{m,m}$ and so  
 we cannot delete   the loop generator in this case, as above.\end{proof}

\begin{thm}
Let   $p = 2$.  
The $\rm Ext$-quiver  of $H^m_m $ has      vertex set 
$\{\mathbbm 1_\la \mid \la \in   \mathscr{R}_{m,m} \}$  and 
  arrows   
    $\mathbb D^\la_\mu: \la   \to  \mu$  
and 
   $\mathbb D^\mu_\la:\mu \to \la$ for every $\la=\mu - P$ with $P\in {\rm DRem}_{>0}(\mu)$ 
   together with all possible  loops 
      $\mathbb L^\la_\la:\la \to \la$ for every $\la \in   \mathscr{R}_{m,m}$. 
   The symmetric algebra  $H^m_m $ is given by the path algebra of this quiver modulo relations \eqref{rel1}, \eqref{rel2}, \eqref{rel3}, \eqref{rel4}, \eqref{adjacent}, \eqref{cubic}, and \eqref{loop-relation}.
\end{thm}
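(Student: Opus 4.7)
The plan is to adapt the proof of \cref{pnot2} almost verbatim, observing that the two elimination shortcuts employed there both fail in characteristic 2, so a loop persists at every vertex $\la \in \mathcal{R}_{m,m}$. I would first verify that all of \eqref{rel1}--\eqref{loop-relation} hold in $H^m_m$; this is automatic since these are exactly the relations imposed in \cref{generatortheorem2}. Coupled with the presence of loop generators in $\mathcal{A}_{m,m}$, this produces a well-defined surjective algebra homomorphism from the claimed presentation to $H^m_m$.

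For injectivity, I would repeat the local degree-2 analysis at each $\la \in \mathcal{R}_{m,m}$. The case $\la = (m^a, (m-a)^{m-a})$ is already treated in \cref{pnot2} in a characteristic-free manner, so the loop survives there. For the remaining $\la$, the proof of \cref{pnot2} used either the coefficient $\tfrac{1}{2}$ in the self-dual relation (in the subcase $|{\rm DAdd}_{>1}(\la)| > 0$) or the invertibility of the $3 \times 3$ matrix of \cref{kjsahgljkdsfhgjlkdfshgjfdshgljkdfshg} (in the subcase $|{\rm DAdd}_1(\la)| > 1$). Both are obstructed in characteristic 2: the first because $\tfrac{1}{2}$ does not exist, and the second because the matrix has determinant $2$, hence is singular over $\mathbb{F}_2$. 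So neither of the \cref{pnot2} shortcuts produces a loop-elimination identity in this setting.

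The main obstacle is confirming that \emph{no} alternative combination of the relations succeeds in eliminating the loop. I would argue this by a dimension count along the lines of the end of \cref{pnot2}. Enumerate $P^1, \dots, P^r \in {\rm DRem}_0(\la)$; the self-dual relations \eqref{rel2} applied to the addable paths $Q_{j,k}$ reduce modulo 2 to pairwise sums $\mathbb{L}^\la_\la(-P^j) + \mathbb{L}^\la_\la(-P^k)$, and the remaining relations (from $Q \in {\rm DAdd}_{>1}(\la)$) contribute only terms involving loops $\mathbb{L}^\la_\la(-R)$ for $R \in {\rm DRem}_{>0}(\la)$ or products factoring through non-regular partitions. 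The pairwise sums span only a rank-$(r-1)$ subspace of the $r$-dimensional space of height-0 loops (since each $\mathbb{L}^\la_\la(-P^j)$ appears in an even number of such sums, so the total sum vanishes over $\mathbb{F}_2$), whence exactly one loop linear combination survives as an independent generator. Unpacking \cref{L=lincombo} over $\mathbb{F}_2$ identifies this surviving combination with $\mathbb{L}^\la_\la$ itself.

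The final step is to match the resulting $\mathbb{F}_2$-dimension of the claimed presentation in degree 2 against $\dim \mathbbm{1}_\la H^m_m \mathbbm{1}_\la$ read off from the cellular basis of \cref{heere ris the basus2}; equality forces injectivity, yielding the isomorphism. The technically subtle part is the rank computation of the self-dual relations over $\mathbb{F}_2$ when $|{\rm DAdd}_{>1}(\la)| > 0$, where one must check that the extra relations coming from higher addable paths do not conspire with the $\binom{r}{2}$ pairwise-sum relations to cut down the loop space by more than $r - 1$; I expect this to follow from a careful inductive tracking of heights via \cref{L=lincombo}, keeping the dependence of each loop identity on the ambient diagrammatic structure.
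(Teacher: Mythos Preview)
Your approach matches the paper's: both show that the loop at each $\la$ survives because, over $\mathbb{F}_2$, the self-dual relations cannot express $\mathbb{L}^\la_\la$ in terms of degree-1 products, with the obstruction located in the singularity of the matrix in \cref{kjsahgljkdsfhgjlkdfshgjfdshgljkdfshg}. The paper's proof is terser (``the linear dependencies listed \ldots\ are exhaustive by inspection''), and your rank-$(r-1)$ computation is a legitimate way to make that precise.

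Two clarifications. First, you frame the task as proving ``injectivity'' of the presentation and matching dimensions with the cellular basis, but the isomorphism $\mathcal{A}_{m,m} \cong H^m_m$ is already established in \cref{Therome} over any field; the only new content is that the quiver is the \emph{Ext-quiver}, i.e., that each $\mathbb{L}^\la_\la$ lies in $\rad \setminus \rad^2$. Your rank argument does address exactly this, but the surrounding scaffolding about surjectivity and injectivity is unnecessary. Second, your worry about relations from $Q \in {\rm DAdd}_{>1}(\la)$ conspiring is unfounded: over $\mathbb{F}_2$ the coefficient $2$ on the covering terms in \eqref{rel2} vanishes, and the surviving adjacent terms involve only $\mathbb{L}(-R)$ with $R$ of positive height, which by \cref{L=lincombo} are already products of degree-1 generators and carry no $\mathbb{L}^\la_\la$ contribution. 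Indeed, since every $\mathbb{L}(-P^j)$ for $P^j \in {\rm DRem}_0(\la)$ contains $\mathbb{L}^\la_\la$ with coefficient $1$ over $\mathbb{F}_2$, every pairwise sum $\mathbb{L}(-P^j)+\mathbb{L}(-P^k)$ has $\mathbb{L}^\la_\la$-coefficient $0$, which settles the matter directly.
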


\begin{proof}
The case  $|{\rm DRem}_0(\la)|=1,2$ are identical to the proof for the $p\neq 2$ case.  
For $|{\rm DRem}_0(\la)|>2$ the argument for $p\neq 2$ cannot be passed through because the matrix of \cref{kjsahgljkdsfhgjlkdfshgjfdshgljkdfshg} is no longer invertible.  In fact, the linear dependencies listed between paths in  the proof of \cref{pnot2} are exhuastive (by inspection of the relations in the presentation) and  it is impossible to rewrite the loops as linear combinations of paths, and so we cannot delete any of these loops from the quiver. 
\end{proof}

We now wish to discuss the existence of self-extensions of simple modules (in other words, the existence of loops in the ${\rm Ext}$-quiver)
in the context of faithfulness of quasi-hereditary covers. 
This necessitates us recalling the main result of \cite{BDDHMS}.
\begin{thm}\label{citeme}
The extended arc algebras   $K^m_n $ are $(|m-n|-1)$-faithful covers of the 
Khovanov arc algebras $H^m_n $ for $m,n \in \NN$.  In other words,
$$
{\rm Ext}^i_{K^m_n}	(M,N) \cong
{\rm Ext}^i_{H^m_n}	(eM ,eN)  
$$for $M,N$ a pair of standard-filtered modules and $0\leq i < |m-n|$.  

\end{thm}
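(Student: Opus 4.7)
The proof strategy will be to apply Rouquier's characterisation of $i$-faithful covers \cite{rouquier}. For the highest weight cover $(K^m_n, K^m_n e)$ of $H^m_n = eK^m_ne$, with Schur functor $F = e(-)$, the cover is $i$-faithful if and only if for every standard module $\Delta_{m,n}(\lambda)$ and every module $N$ with $eN = 0$, the groups ${\rm Ext}^j_{K^m_n}(\Delta_{m,n}(\lambda), N)$ vanish for $0 \leq j \leq i$. Since modules killed by $e$ are precisely those whose composition factors are all of the form $L_{m,n}(\nu)$ with $\nu \in \mathscr{P}_{m,n} \setminus \mathscr{R}_{m,n}$, a long exact sequence argument reduces the problem to proving
\[
{\rm Ext}^j_{K^m_n}(\Delta_{m,n}(\lambda), L_{m,n}(\nu)) = 0 \qquad \text{for all } \lambda \in \mathscr{P}_{m,n},\ \nu \notin \mathscr{R}_{m,n},\ 0 \leq j < |m-n|.
\]

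The plan is to compute these Ext groups via the grading. Since $K^m_n$ has a graded quadratic presentation (Theorem \ref{heere ris the basus} and \cite{compan4}), the minimal projective resolution of each standard $\Delta_{m,n}(\lambda)$ is linear, so contributions to ${\rm Ext}^j_{K^m_n}(\Delta_{m,n}(\lambda), L_{m,n}(\nu))$ sit in degree $j$. Combined with the explicit submodule description of standards in Theorem \ref{submodule}, each such contribution arises from a Dyck pair $(\nu, \mu)$ with $\nu \in {\rm DP}_j(\mu)$ for some intermediate $\mu$, and the composition $\lambda \to \mu \to \nu$ of Dyck moves forces the total degree $\deg(\lambda, \nu)$ to be at most $j$.

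The crucial combinatorial step will then be to establish the lower bound
\[
\deg(\lambda, \nu) \geq |m-n| \qquad \text{for every Dyck pair } (\nu, \lambda) \text{ with } \lambda \in \mathscr{P}_{m,n}\text{ and }\nu \notin \mathscr{R}_{m,n}.
\]
The naive estimate from the regularisation procedure of Section \ref{regulatiaasaiofgsaf} yields only $\deg(\lambda, \nu) \geq |d(\nu)|$, which is in general strictly smaller than $|m-n|$. The improvement should follow by combining the positive-height contribution from the canonical tiling of ${\sf reg}(\nu) \setminus \nu$ with the observation that any further Dyck tiling connecting ${\sf reg}(\nu)$ to a regular $\lambda \subseteq (n^m)$ must traverse the ``rectangular gap'' between the non-regular locus and the defect-zero stratum $\mathscr{R}_{m,n}$. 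Tracking the supports $\supp^\mu_\la(P)$ of clockwise arcs and the heights $\operatorname{ht}^\mu_\la$ introduced in Section \ref{dyckgens} should force at least $|m-n|$ distinct Dyck paths in the tiling.

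The main obstacle is precisely this tight lower bound: one needs to exclude ``short'' Dyck pairs that witness a non-regular $\nu$ sitting at low degree inside a regular standard. I expect this to reduce to a pigeon-hole-style argument on arc diagrams: the $|m-n|$ unmatched strands in $\underline{\nu}\nu$ (which cause $\nu$ to be non-regular when $m \neq n$) must each be ``cupped off'' in $\underline{\lambda}\nu$ through distinct clockwise arcs, each contributing independently to $\deg(\lambda, \nu)$. Once this bound is in hand, the Ext vanishing, and hence Rouquier's criterion, follow immediately from the linearity of the projective resolution.
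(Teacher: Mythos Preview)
The paper does not prove this statement at all: it is introduced with the words ``This necessitates us recalling the main result of \cite{BDDHMS}'' and is simply quoted as a result from that companion paper. So there is nothing to compare your proposal against here.

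That said, your proposed argument has a genuine gap. The combinatorial bound you identify as ``crucial'', namely $\deg(\lambda,\nu)\geq |m-n|$ whenever $(\nu,\lambda)$ is a Dyck pair with $\nu\notin\mathscr{R}_{m,n}$, is false. For instance, take $m=2$, $n=5$, $\nu=\varnothing$ and $\lambda=(1)$: then $\nu\notin\mathscr{R}_{2,5}$, $(\nu,\lambda)$ is a Dyck pair of degree $1$, and $1<3=|m-n|$. More drastically, $\lambda=\nu$ non-regular gives degree $0$. Your final paragraph also inverts the arc-diagram characterisation: the $|m-n|$ unmatched south-easterly strands are present for \emph{every} weight in $\mathscr{P}_{m,n}$ when $m<n$, and it is the \emph{regular} partitions that have exactly this many rays; non-regular $\nu$ have $|d(\nu)|$ \emph{additional} south-westerly rays.

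There is also a conflation in your reduction step. You pass from linearity of the projective resolution of $\Delta_{m,n}(\lambda)$ to the claim that nonvanishing of ${\rm Ext}^j(\Delta_{m,n}(\lambda),L_{m,n}(\nu))$ forces a Dyck pair $(\nu,\lambda)$ of degree at most $j$. But the $j$th term of that resolution records which $P(\nu)\langle j\rangle$ occur as summands, not which $L(\nu)$ occur as composition factors of $\Delta_{m,n}(\lambda)$; paths in the quiver of length $j$ can move both up and down, so there is no reason they should yield a Dyck pair at all. Finally, the Rouquier criterion you invoke is not quite the correct one: taking $\lambda=\nu$ non-regular gives ${\rm Hom}(\Delta(\nu),L(\nu))=\Bbbk\neq 0$, which under your stated criterion would already obstruct $0$-faithfulness, contradicting the theorem for $|m-n|=1$. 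The actual criterion in \cite{rouquier} is formulated differently (in terms of tilting modules, or of the derived unit map for the adjunction), and the argument in \cite{BDDHMS} proceeds along those lines.
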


 \begin{cor}
The extended arc algebras   $K^m_n $ are $0$-faithful covers of the 
Khovanov arc algebras $H^m_n $ if and only if $m\neq n$.  
\end{cor}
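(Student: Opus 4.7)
The proof has two directions, handled by different ingredients.

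For the forward direction, when $m \neq n$ we have $|m-n| - 1 \geq 0$, so \cref{citeme} immediately yields that $K^m_n$ is at least a $0$-faithful cover of $H^m_n$.

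For the converse, the plan is to exhibit an explicit Hom-space mismatch at the pair of standard modules $(\Delta(\emptyset), \Delta((m^m)))$, witnessing the failure of the $i = 0$ case of the isomorphism asserted in \cref{citeme}. The key preliminary observations are: (a) since $(m^m)$ is maximal in $\mathscr{P}_{m,m}$, we have ${\rm DP}((m^m)) = \{(m^m)\}$, so $\Delta((m^m)) = L((m^m))$ is simple and $S((m^m)) = D((m^m))$; (b) the regularization algorithm of Section 4 applied to $\emptyset$ gives ${\sf reg}(\emptyset) = (m^m)$, since at each height $k \in \{-m+1, \ldots, 0\}$ the maximal-breadth Dyck path $P^k$ connects the newly-available height-$k$ boundary boxes via boxes of height $> k$, yielding inductively ${\sf reg}_k(\emptyset) = ((k+m)^{k+m})$. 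Hence by \cref{submodule2}, $S(\emptyset)$ has simple head $D({\sf reg}(\emptyset)) = D((m^m))$. On the $K^m_m$ side, $\Hom_{K^m_m}(\Delta(\emptyset), \Delta((m^m))) = \Hom_{K^m_m}(\Delta(\emptyset), L((m^m))) = 0$ by weight considerations: any homomorphism must send the generator $D^\emptyset_\emptyset$ of weight $\emptyset$ to an element of weight $\emptyset$ in $L((m^m))$, but $L((m^m))$ is $1$-dimensional of weight $(m^m) \neq \emptyset$. On the $H^m_m$ side, the canonical surjection $S(\emptyset) \twoheadrightarrow S(\emptyset)/\mathrm{rad}(S(\emptyset)) = D((m^m)) = S((m^m))$ provides a nonzero element of $\Hom_{H^m_m}(S(\emptyset), S((m^m)))$. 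The resulting mismatch $0 \neq \dim \Hom_{H^m_m}(S(\emptyset), S((m^m)))$ contradicts $0$-faithfulness.

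The main obstacle in this plan is step (b), verifying that ${\sf reg}(\emptyset) = (m^m)$, which is a straightforward induction on $m$: the inductive step $((k+m)^{k+m}) \to ((k+m+1)^{k+m+1})$ requires identifying that the boundary boxes of height $k+1$ form a valid Dyck path in the sense of \cref{Dyckpair}, which follows from the observation that these boxes all lie along a single ``L-shaped'' boundary valley.
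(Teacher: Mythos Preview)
Your proof is correct and follows essentially the same strategy as the paper. The paper's proof is terser: for the converse it simply observes that $S_{m,m}(m^m) \cong D_{m,m}(m^m) \cong S_{m,m}(\varnothing)$, i.e.\ that $S(\varnothing)$ is itself the one-dimensional simple $D((m^m))$, and leaves the reader to infer the Hom-space mismatch. You instead use the (sufficient but weaker) fact that $D((m^m))$ is the simple head of $S(\varnothing)$, which you justify via ${\sf reg}(\varnothing)=(m^m)$ and the paper's Proposition on heads of Specht modules; this gives the nonzero map $S(\varnothing)\twoheadrightarrow S((m^m))$ while $\Hom_{K^m_m}(\Delta(\varnothing),\Delta((m^m)))=0$. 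Both arguments identify the same obstruction and the same pair of standard modules.
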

 
 \begin{proof}One direction is immediate from \cref{citeme}.  To see that the $m=n$ case cannot have a 0-faithful cover, we observe that 
 $S_{m,m}(m^m) \cong D_{m,m}(m^m)\cong S_{m,m}(\varnothing)$. 
 \end{proof}

Putting together the results of this section and those from our previous work, we obtain the following.

\begin{thm}
The ${\rm Ext}$-quiver of the symmetric algebra $ H^m_n $ is loop-free if and only if 
$   K^m_n$ is an $i$-faithful quasi-hereditary cover for some $i\geq 0$. 
\end{thm}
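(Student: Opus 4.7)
The plan is straightforward: both conditions on the two sides of the biconditional are equivalent to the single arithmetic condition $m \neq n$, and this equivalence has been essentially established in the three preceding theorems together with the preceding corollary. So the proof is just a matter of chaining these results together.

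First, I would invoke the explicit descriptions of the ${\rm Ext}$-quivers computed earlier in Section 8. For $m < n$, the theorem describing the quiver in this case produces no loops whatsoever. For $m = n$ with $p \neq 2$, the quiver has $m$ loops attached at the partitions $(m^a, (m-a)^{m-a})$ for $1 \leq a \leq m$. For $m = n$ with $p = 2$, every vertex carries a loop. Consequently, the ${\rm Ext}$-quiver of $H^m_n$ is loop-free if and only if $m \neq n$.

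Second, I would recall \cref{citeme}, which asserts that $K^m_n$ is always an $(|n-m|-1)$-faithful cover of $H^m_n$. When $m \neq n$ this exponent is non-negative, so $K^m_n$ is in particular a $0$-faithful (and hence $i$-faithful for $i = |n-m|-1 \geq 0$) quasi-hereditary cover. Conversely, the preceding corollary shows that $K^m_n$ fails to be a $0$-faithful cover when $m = n$: the obstruction is the non-trivial isomorphism $S_{m,m}(m^m) \cong D_{m,m}(m^m) \cong S_{m,m}(\varnothing)$, which distinguishes $\Bbbk$-dimensions of $\Hom$-spaces between standard modules on the two sides of the Schur functor $e \cdot$. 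Since $i$-faithfulness for any $i \geq 0$ implies $0$-faithfulness, this rules out any such cover structure when $m = n$.

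Combining these two observations gives the biconditional: both sides of the equivalence are equivalent to $m \neq n$. No further work is required; the main conceptual content already sits in the preceding computations of the ${\rm Ext}$-quivers and in the Rouquier cover statement from \cite{BDDHMS}. The only subtlety to double-check is that the quantifier ``for some $i \geq 0$'' on the right-hand side really does collapse to ``$0$-faithful'', which is immediate since $i$-faithfulness is monotone decreasing in $i$.
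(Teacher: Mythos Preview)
Your proposal is correct and matches the paper's approach exactly: the paper does not even supply a formal proof, merely prefacing the theorem with ``Putting together the results of this section and those from our previous work, we obtain the following.'' Your argument is precisely this combination, and the only point you flag for care --- that $i$-faithfulness for some $i\geq 0$ collapses to $0$-faithfulness by monotonicity --- is the right observation to make explicit.
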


 Finally, we recall that this behaviour was predicted in another context  and a slightly different language.  Firstly, the conjecture is somewhat folkloreish and so the best citation we have for its formulation is the very recent work of Geranios--Morotti--Kleshchev.  
In their  statement  the  condition for the existence of  loops is that $p=2$, 
 but this can reformulated in terms of faithfulness of quasi-hereditary covers using 
  \cite[Corollary 3.9.1]{MR2050037}
or  \cite{donkinhandbook}  
 to obtain the following.

\begin{conj}[The  Kleshchev--Martin conjecture {\cite[Introduction]{MR4391729}}]
The ${\rm Ext}$-quiver of the group algebra of the symmetric group $\Bbbk S_r$
 is loop-free if and only if 
the classical Schur algebra $S_\Bbbk (r,r)$  is an $i$-faithful quasi-hereditary cover for some $i\geq 0$. 
\end{conj}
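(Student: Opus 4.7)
The plan is to assemble this as a direct corollary of the two chains of equivalences already in place, matching on the pivotal condition $m=n$. The proof reduces to showing that both sides of the biconditional are equivalent to $m<n$ (recalling the standing assumption $m\leq n$).

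First, I will handle the cover-theoretic side. Since ``$i$-faithful for some $i\geq 0$'' is simply the weakest non-trivial faithfulness assertion (any $i$-faithful cover is in particular $0$-faithful), it suffices to show that $K^{m}_{n}$ is $0$-faithful if and only if $m<n$. One direction is immediate from \cref{citeme}: when $m<n$, $K^{m}_{n}$ is an $(|n-m|-1)$-faithful cover of $H^{m}_{n}$, and $|n-m|-1\geq 0$. The other direction is the content of the Corollary proved just above: in the case $m=n$ the standard modules $\Delta_{m,m}(\varnothing)$ and $\Delta_{m,m}(m^{m})$ both restrict under the Schur functor $e$ to the same (simple) Specht module, so $e$ cannot be fully faithful on standardly filtered objects, obstructing $0$-faithfulness.

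Next, I will handle the quiver-theoretic side, namely that the ${\rm Ext}$-quiver of $H^{m}_{n}$ is loop-free if and only if $m<n$. This is essentially a reading of the two ${\rm Ext}$-quiver theorems already established. For $m<n$, the first of the two theorems in Section~8 gives a presentation whose vertex-quiver contains only the arrows $\mathbb{D}^{\la}_{\mu}$ between distinct regular partitions, with no loops of any kind. For $m=n$, both the $p\neq 2$ and $p=2$ theorems exhibit vertices carrying an irredundant loop $\mathbb{L}^{\la}_{\la}$: in the $p=2$ case every vertex, in the $p\neq 2$ case at least the vertices $\la=(m^{a},(m-a)^{m-a})$. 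In either case the arguments given in Section~8 show that the corresponding loop cannot be rewritten as a $\Bbbk$-linear combination of compositions of arrows between distinct vertices (the obstruction being precisely the non-invertibility of the self-dual relation's coefficient matrix, cf.\ \cref{kjsahgljkdsfhgjlkdfshgjfdshgljkdfshg}, combined with the fact that the other end of the would-be factorisation factors through a non-regular idempotent killed by $e$).

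Combining the two equivalences, the ${\rm Ext}$-quiver of $H^{m}_{n}$ is loop-free $\iff m<n \iff K^{m}_{n}$ is an $i$-faithful quasi-hereditary cover for some $i\geq 0$. The main subtlety—indeed the only part that requires genuine work rather than bookkeeping—is the loop non-eliminability in the $m=n$ case, and this has already been discharged during the construction of the ${\rm Ext}$-quiver presentations; no new argument is required.
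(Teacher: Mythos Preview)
You have proved the wrong statement. The conjecture in question concerns the symmetric group algebra $\Bbbk S_r$ and the classical Schur algebra $S_\Bbbk(r,r)$, whereas your entire argument is about the Khovanov arc algebra $H^m_n$ and its extended cover $K^m_n$. What you have written is a correct sketch of the proof of the preceding theorem in the paper (the Khovanov analogue, Theorem~B), not of the Kleshchev--Martin conjecture itself.

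The paper does not prove this conjecture: it is stated as an open problem, recast in the language of faithful covers, to highlight the parallel with the result just established for Khovanov arc algebras. None of the ingredients you invoke --- \cref{citeme}, the explicit ${\rm Ext}$-quiver presentations of $H^m_n$, the matrix of \cref{kjsahgljkdsfhgjlkdfshgjfdshgljkdfshg} --- say anything about $\Bbbk S_r$ or $S_\Bbbk(r,r)$, and there is no transfer mechanism in the paper from one setting to the other. A proof of the conjecture would require entirely different input (representation theory of symmetric groups in positive characteristic), and indeed the paper's remark following the conjecture notes that only partial progress exists, most recently for RoCK blocks.
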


\begin{rmk}
For some historical context, we remark that the first results on this conjecture were implicit 
in \cite{MR1308984} and later made explicit and extended in \cite{MR1728406}.  
While the conjecture is well-known and often referenced in the literature, 
progress in this direction  has been incredibly limited.
Arguably the first major step towards resolving this  conjecture was recently taken in \cite{MR4391729},
 where the authors  ``generically" verify this conjecture by proving that it holds for all 
RoCK blocks (which constitute ``most blocks"). 
\end{rmk}

The original Kleshchev--Martin context concerns truncation from a quasi-hereditary algebra
to a symmetric algebra 
using a highest weight idempotent ${\sf 1}_\omega$ for $\omega=(1^r,0,0,\dots)$.  
Our truncation is from a quasi-hereditary algebra to a symmetric algebra 
by a highest weight idempotent $\sum_{\rho\subseteq \la} e_\la$ where   
$\rho=(m,m-1,\dots,2,1)\in \mathscr{P}_{m,n}$.  
Such (``co-saturated") truncations appear in many different contexts in Lie theory, for example they were the subject of   conjectures of Khovanov  \cite{MR2078414} proven in \cite{MR2369489}.
All of the quasi-hereditary algebras discussed above are Morita equivalent to 
(singular) anti-spherical Hecke categories. 
We propose the following  vast generalisation of the above self-extension conjecture.

Let $(W,P)$ denote a parabolic Coxeter system with generators $s_i\in S_W$. 
Let $\mathcal{H}_{(W,P)}$ denote the category algebra of the Elias--Libedinsky--Williamson
 anti-spherical Hecke category (see \cite{MR4510171} for the definition of this category algebra and \cite{MR4437613,MR3555156} for the original definition of the anti-spherical  Hecke category).  Given a  reduced word $\w=s_{i_1}s_{i_2}\dots s_{i_k}$ for some  $w \in {^PW}$,  we have an idempotent 
${\sf 1}_\w={\sf 1}_{s_{i_1}}\otimes {\sf 1}_{s_{i_2}} \otimes \dots\otimes  {\sf 1}_{s_{i_k}}\in \mathcal{H}_{(W,P)}$.

\begin{conj}[Generalised Kleshchev--Martin conjecture]
Let   $x,w\in {^PW}$ and consider the subalgebra  
$$\textstyle
(\sum_{x<y\leq w}{\sf 1}_{\y})\mathcal{H}_{(W,P)}(\sum_{x<y\leq w}{\sf 1}_{\y})
\subseteq  (\sum_{ y\leq w}{\sf 1}_{\y})\mathcal{H}_{(W,P)}(\sum_{ y\leq w}{\sf 1}_{\y})
 $$where the latter is a quasi-hereditary cover of the former,  by construction. 
 If the subalgebra is a symmetric algebra, then 
  its ${\rm Ext}$-quiver is loop-free if and only if the  quasi-hereditary cover 
  is   $i$-faithful, for some $i\geq 0$.
\end{conj}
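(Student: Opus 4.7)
The plan is to extend the strategy developed in the body of this paper for the Khovanov arc case to the general anti-spherical setting. Writing $B=(\sum_{y\leq w}{\sf 1}_{\underline{y}})\mathcal{H}_{(W,P)}(\sum_{y\leq w}{\sf 1}_{\underline{y}})$ for the quasi-hereditary cover and $A=eBe$ for its (by hypothesis symmetric) truncation via $e=\sum_{x<y\leq w}{\sf 1}_{\underline{y}}$, I would attack the two directions with complementary tools.

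For the "easy direction" (faithful cover $\Rightarrow$ loop-free Ext-quiver) the strategy mirrors the proof of Theorem~B of this paper. Assuming $B$ is an $i$-faithful cover of $A$ with $i\geq 0$, Rouquier's cover machinery yields
\begin{equation*}
\mathrm{Ext}^1_A(eN,eN')\;\cong\;\mathrm{Ext}^1_B(N,N')
\end{equation*}
for standard-filtered $B$-modules $N,N'$. Specialising to $N=\Delta_B(y)$ and $N'=\nabla_B(y)$ and using the quasi-hereditary structure, one obtains $\mathrm{Ext}^1_B(L_B(y),L_B(y))=0$, which then transports to the absence of loops in the Ext-quiver of $A$ after checking that the only simples of $A$ arising from this Schur functor are the ones labelled by $y$ in the chosen interval, in direct analogy with our Section~5.

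For the "hard direction" (loop-free $\Rightarrow$ faithful cover) I would proceed by contrapositive. If $B$ fails to be $0$-faithful, then in the spirit of the corollary preceding this conjecture there must exist some $y_0$ in the interval for which the cell module $\Delta_B(y_0)$ coincides with its simple head and its simple socle after applying the Schur functor — the prototypical obstruction being the identification $S_{m,m}(m^m)\cong D_{m,m}(m^m)\cong S_{m,m}(\varnothing)$ of this paper. The symmetric structure of $A$ then forces a non-split self-extension of $D_A(y_0)=eL_B(y_0)$, producing a loop in the Ext-quiver. The combinatorial heart of this step, in our setting, is the regularisation map of Section~4 together with Propositions~3.7 and its consequences; one needs general analogues in order to describe the heads of cell modules intrinsically.

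The main obstacle is exactly this: replacing the explicit Dyck-combinatorial machinery (Sections~2--4, culminating in the presentation of $\mathcal{A}_{m,n}$) with an intrinsic model for arbitrary $(W,P)$ and arbitrary $x\leq w$. At the level of (singular) Soergel--Williamson diagrammatics the regularisation map should correspond to a canonical "rotation" operation on alcove paths or reduced expressions, but finding such a structure in full generality is not currently available. A reasonable intermediate step would therefore be to verify the conjecture in concrete families where one already has explicit cellular bases — for instance the Iwahori type $C$/$D$ analogues of the Khovanov arc algebras, or the dihedral-type anti-spherical categories — and then isolate the common algebraic mechanism. I would also expect the cubic relation \eqref{cubic} and the self-duality relation \eqref{selfdualrel} to have natural counterparts in the general setting (the cubic relation being a shadow of the two-colour Zamolodchikov/Jones--Wenzl relation), so that the obstruction to rewriting loops as products of non-loop generators — captured in this paper by \cref{kjsahgljkdsfhgjlkdfshgjfdshgljkdfshg} — becomes a combinatorial rank condition on a $p$-adically defined bilinear form, presumably governed by the $p$-canonical basis of the anti-spherical module.
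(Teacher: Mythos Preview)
The statement you are attempting to prove is a \emph{conjecture}, not a theorem: the paper states it as an open problem at the very end of Section~8 and offers no proof. Even the specific Khovanov arc instance (Theorem~B) is not proved by an abstract argument of the kind you sketch; rather, the paper computes the Ext-quiver explicitly via the presentation (Theorem~A and Section~8) and independently cites the faithfulness result from \cite{BDDHMS}, then observes that both conditions coincide with $m\neq n$. So there is no ``paper's proof'' to compare against, and your proposal should be read as a research strategy rather than a proof --- which, to your credit, you essentially acknowledge in the final two paragraphs.

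That said, your ``easy direction'' contains genuine mathematical gaps that you should not pass over. First, $0$-faithfulness only yields $\mathrm{Hom}$-isomorphisms on standard-filtered modules, not $\mathrm{Ext}^1$-isomorphisms; the displayed isomorphism $\mathrm{Ext}^1_A(eN,eN')\cong\mathrm{Ext}^1_B(N,N')$ requires $i\geq 1$, so the argument already fails for $i=0$, which is precisely the borderline case at stake. Second, simple modules are not in general standard-filtered, so even a $1$-faithful cover does not directly transport $\mathrm{Ext}^1$ between simples. Third, the vanishing $\mathrm{Ext}^1_B(\Delta(y),\nabla(y))=0$ holds in \emph{every} quasi-hereditary algebra and says nothing about $\mathrm{Ext}^1_B(L(y),L(y))$; the step ``one obtains $\mathrm{Ext}^1_B(L_B(y),L_B(y))=0$'' is a non sequitur. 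In the paper's setting this vanishing is a consequence of the explicit description of the radical layers of projectives, not of an abstract cover argument. Your contrapositive sketch for the hard direction is more honest about its obstacles, but note that the phenomenon $S_{m,m}(m^m)\cong S_{m,m}(\varnothing)$ is very special to the ``square'' case and it is not clear what its analogue should be for arbitrary $(W,P,x,w)$.
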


In order to truly generalise the classical Kleshchev--Martin conjecture, we should state the above conjecture for {\em singular} Hecke categories (and indeed we do believe this holds).  However these objects  are in relative infancy and even defining reduced  words in this context is a difficult problem and the subject of very recent work of   \cite{geordiethesis,MR4563864}.

        \bibliographystyle{amsalpha}   
\bibliography{master}

 \end{document}